\tikzset{
	dotA/.style={
		transform shape, fill,circle,inner sep=1.5pt, label distance=-1pt,
		%	transform shape,inner sep=0pt, label distance=-1pt,
		font={\normalsize }
	},
	>=stealth,
}
\newcommand{\R}{\mathbb{R}}
\newcommand{\jap}[1]{\langle #1 \rangle}
\renewcommand{\Re}{\mathop{\text{Re}}}
\newcommand{\fia}{\mathbbm{1}_{|\Phi-\alpha|<M}}
\newcommand{\psia}{\mathbbm{1}_{|\Psi-\alpha|<M}}
\theoremstyle{plain}
\newtheorem{thm}{Theorem}[section]
\newtheorem*{thm*}{Theorem}
\newtheorem{prop}[thm]{Proposition}
\newtheorem{cor}[thm]{Corollary}
\newtheorem{lem}[thm]{Lemma}
\theoremstyle{definition}
\newtheorem{defi}[thm]{Definition}
\theoremstyle{remark}
\newtheorem{nb}[thm]{Remark}
\numberwithin{equation}{section}
\newtheoremstyle{mytheoremstyle} % name
{\topsep}                    % Space above
{\topsep}                    % Space below
{}                   % Body font
{}                           % Indent amount
{\scshape}                   % Theorem head font
{.}                          % Punctuation after theorem head
{.5em}                       % Space after theorem head
{}  % Theorem head spec (can be left empty, meaning ‘normal’)
\theoremstyle{mytheoremstyle} 
\theoremstyle{mytheoremstyle} 
\date{}
\author{Simão Correia, Felipe Linares and Jorge Drumond Silva}
\title{Sharp local well-posedness for the Schrödinger-Korteweg-de Vries system}
\keywords{Schrödinger-KdV system, local well-posedness, global existence.}
\subjclass[2020]{35A01, 35B65, 35Q53, 35Q55}
\thanks{S. C. and J.D.S. were partially supported by Funda\c{c}\~ao para a Ci\^encia e Tecnologia, through CAMGSD, IST-ID
	(projects UIDB/04459/2020 and UIDP/04459/2020) and through the project NoDES (PTDC/MAT-PUR/1788/2020).
	F.L. was partially supported by CNPq grant 310329/2023-0 and FAPERJ grant E-26/200.465/2023}
\begin{document}
\maketitle
\begin{abstract}
	We prove a sharp local existence result for the Schrödinger-Korteweg-de Vries system with initial data in $H^k(\R)\times H^s(\R)$. The proof is based on the concept of \textit{integrated-by-parts strong solution}, which generalizes the classical notion of strong solution, and on frequency-restricted estimates. Moreover, we extend the known global well-posedness result to regularities $k,s>1/2$.
\end{abstract}
\section{Introduction}

\subsection{Setting and main results}
In this work, we are interested in the well-posedness theory for the system
\begin{equation}\tag{NLS-KdV}\label{nlskdv}
	\begin{cases}
		iu_t + u_{xx} = \alpha uv + \beta |u|^2u\\
		v_t+v_{xxx} +\frac{1}{2}(v^2)_x = \gamma (|u|^2)_x
	\end{cases},\quad (t,x)\in \R \times \R, \quad \alpha, \beta, \gamma\in \R\setminus \{0\}.
\end{equation}

This system models interactions between short-wave ($u$) and long-wave ($v$), in fluid mechanics and plasma physics. From a merely mathematical
point of view, the first equation is a nonlinear Schrödinger type equation for the complex valued unknown $u$, whereas the second
equation is a KdV type one for real valued $v$, with coupling terms between the two with parameters $\alpha$ and $\gamma$. The case $\beta=0$
has particular significance, as it occurs when modeling the resonant interaction between short and long capillary-gravity
waves on water of uniform finite depth, in plasma physics and in a diatomic
lattice system (see \cite{corcholinares} and references therein for more background on this system).

\medskip
Concerning the local well-posedness theory for the Cauchy problem with initial data $(u_0, v_0) \in H^k(\R)\times H^s(\R)$, the first result was obtained by Corcho and the second author \cite{corcholinares} for exponents satisfying
$$k \ge 0,\quad s> -\frac{3}{4},\quad 
\begin{cases}
	k-1\le s \le 2k-\frac{1}{2},\quad \mbox{for }0\le k \le \frac{1}{2}\\
	k-1\le s \le k+\frac{1}{2},\quad \mbox{for }k>\frac{1}{2}
\end{cases}.
$$
Their proof is based on the Fourier restriction norm (or Bourgain space, see \eqref{eq:defi_bourg}) method \cite{bourg1, bourg2}. Later, Wu \cite{wu} improved the multilinear estimates derived in \cite{corcholinares} and extended the local well-posedness theory to regularities in
the planar region of exponent pairs $(k,s)$ given by
$$
\mathcal{A}_0:=\left\{ (k,s)\in \R^2 : k\ge 0,\quad s>-\frac{3}{4}, \quad s<4k, \quad -1<k-s<2\right\}.
$$
Moreover, he showed that the first three restrictions are sharp for the existence of a (at least) $C^2$ data-to-solution flow in $H^k(\R)\times H^s(\R)$. We also refer to \cite{guowang}, where the local existence was shown for the endpoint case $(k,s)=(0,-3/4)$ (see also \cite{wangcui}). Previous well-posedness results can also be found in the works of Bekiranov, Ogawa and Ponce \cite{BOP97} and Tsutsumi \cite{tsutsumi}.

The obstructions $-1<k-s<2$ found in \cite{wu} are due to the fact that the bilinear estimates in Bourgain spaces, crucial for the employment of this method,  fail outside of this strip. This, however, is not really due to a true obstruction in the flow, as one can see by plugging linear solutions in the coupling terms, but possibly a consequence of the particular choice of functional space in which one seeks to prove the well-posedness result by contraction through a Picard iteration scheme. 

The problem of bypassing possibly artificial obstructions caused by the framework of obtaining solutions as fixed points of the Duhamel formula in certain functional spaces has been considered for other dispersive equations. In \cite{taobejenaru}, the authors consider a quadratic Schrödinger equation, for which the straightforward  use of standard Bourgain spaces yields local well-posedness for $s>-3/4$, quite far from the scaling $s_c=-3/2$. The restriction $s>-3/4$ appears in a highly time-oscillatory region, on which linear solutions are actually well-behaved. The idea was to adapt the Bourgain space in order to capture more precisely the nonlinear interaction of near-linear solutions. This led to the threshold $s\ge -1$, which is sharp in the sense of $C^2$-flows. Another example is the KP-II equation in anisotropic Sobolev spaces $H^{s,0}(\R^2)$, where the regularity threshold was successively improved  by Bourgain \cite{bourg_kpii} ($s=0$), Takaoka and Tzvetkov \cite{takaokatzvetkov} ($s=-1/3$) and finally Hadac \cite{hadac}, down to the scaling-regularity $s=-1/2$, by means of suitable tweaking of the definition of the Fourier restriction norm space.

While faced with the same problem, we present here a different approach, which does not rely on any modification of the functional space, but rather exploits more thoroughly the time-oscillations observed in the problematic region. This is achieved by introducing the concept of \textit{integrated-by-parts strong solution}, which can be seen as a generalization of the usual concept of strong integral solution (i.e., a solution to Duhamel's integral formula in the given functional space). 
As a result, we will complete the local well-posedness theory for \eqref{nlskdv} for initial data in $H^k(\R)\times H^s(\R)$.
Define the admissible regularity region
$$
\mathcal{A}:=\left\{ (k,s)\in \R^2 : k\ge 0,\quad s>-\frac{3}{4}, \quad s<4k, \quad -2<k-s<3\right\}.
$$

The first result can then be roughly stated as
\begin{thm}\label{thm:well}
	System \eqref{nlskdv} is locally well-posed in $H^k(\R)\times H^s(\R)$ for $(k,s)\in \mathcal{A}$.
\end{thm}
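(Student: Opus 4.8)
The plan is to recast \eqref{nlskdv} through Duhamel's formula and then to remove, by a single integration by parts in time --- the differentiation-by-parts mechanism familiar from normal-form methods --- the two \emph{coupling} nonlinearities $\alpha uv$ and $\gamma(|u|^2)_x$, which are exactly the terms whose bilinear Bourgain estimates break down once $|k-s|$ is large. Passing to the profiles $w=e^{-it\partial_{xx}}u$ and $z=e^{t\partial_{xxx}}v$, each coupling term becomes an oscillatory integral of the form $\int_0^t\!\!\int e^{it'\Phi(\xi,\xi_1)}\hat w(t',\xi_1)\hat z(t',\xi-\xi_1)\,d\xi_1\,dt'$ (and similarly with a resonance function $\Psi$ for the KdV coupling), where $\Phi,\Psi$ are low-degree polynomials in $\xi_1$; the obstruction to the bilinear estimate sits entirely in the \emph{non-resonant} region $\{|\Phi|\gtrsim M\}$ (resp.\ $\{|\Psi|\gtrsim M\}$), where the integrand oscillates fast in $t'$. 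There one writes $e^{it'\Phi}=(i\Phi)^{-1}\partial_{t'}e^{it'\Phi}$ and integrates by parts, producing (i) bilinear boundary terms at $t'=0$ and $t'=t$ that carry the favorable weight $\Phi^{-1}$, and (ii) a new space-time integral in which $\partial_{t'}\hat w$ (resp.\ $\partial_{t'}\hat z$) is replaced, via the equations, by the transported nonlinearity --- this raises the multilinearity but introduces \emph{no derivative loss}, since in profile variables the time derivative is precisely the nonlinear term. The resulting integral system, meaningful for all $(k,s)\in\mathcal{A}$, is the \emph{integrated-by-parts strong solution} formulation, and it collapses to the classical strong (Duhamel) solution whenever the latter exists.

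With this reformulation fixed, the contraction is run in the standard Bourgain spaces $X^{k,b}\times X^{s,b}$ (see \eqref{eq:defi_bourg}) with $b$ slightly above $1/2$ on a short interval $[0,T]$, $T$ depending only on $\|u_0\|_{H^k}+\|v_0\|_{H^s}$; crucially no modification of the functional space is needed. The multilinear estimates to be proved fall into three groups: the internal nonlinearities $\tfrac12(v^2)_x$ and $\beta|u|^2u$, controlled by the classical bilinear and trilinear $X^{s,b}$, $X^{k,b}$ estimates valid for $s>-3/4$ and $k\ge0$; the \emph{resonant} pieces $\{|\Phi|\lesssim M\}$, $\{|\Psi|\lesssim M\}$ of the coupling terms, on which the multilinear bounds of \cite{wu} apply because the frequency localization compensates; and the non-resonant boundary and remainder terms, bounded by \emph{frequency-restricted estimates} --- inequalities of the schematic shape $\big\|\mathbbm{1}_{|\Phi-\alpha|<M}\,\widehat{f_1f_2}\,\big\|_{L^2}\lesssim\theta(\alpha,M)\,\|f_1\|\,\|f_2\|$, with $\alpha$ a dyadic scale $\gtrsim M$ and $\theta$ summable once the $\Phi^{-1}$ weight is included, together with their trilinear/quadrilinear analogues. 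It is exactly in these frequency-restricted estimates that the additional room $-2<k-s<3$ is recovered.

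To establish the frequency-restricted estimates I would, after a Littlewood--Paley decomposition, apply Cauchy--Schwarz in the convolution variable and reduce to a pointwise bound on the measure of $\{\xi_1:\,|\Phi(\xi,\xi_1)-\alpha|<M\}$ (resp.\ for $\Psi$) at fixed output frequency $\xi$; since $\Phi,\Psi$ are polynomial in $\xi_1$, this is handled by root-counting together with lower bounds on $\partial_{\xi_1}\Phi$, the only delicate cases being the near-degenerate configurations (two interacting frequencies almost equal, or one of them small) where these level sets fatten and must be treated separately. Feeding these bounds, alongside the classical estimates, into the fixed-point scheme yields existence and uniqueness of an integrated-by-parts strong solution in $X^{k,b}\times X^{s,b}\hookrightarrow C([0,T];H^k(\R)\times H^s(\R))$; applying the same multilinear bounds to differences of solutions gives Lipschitz --- hence smooth --- dependence on the data; and a short separate argument shows that on the previously treated range of exponents the new solutions agree with the known ones, so the construction genuinely extends the local theory to all of $\mathcal{A}$.

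The main obstacle, I expect, is proving the frequency-restricted estimates uniformly over the enlarged strip: one must check that the $\Phi^{-1}$ (resp.\ $\Psi^{-1}$) gain, summed over all non-resonant dyadic scales and over every frequency interaction, together with the measure bounds for the level sets, really closes the (tri- and quadrilinear) remainder terms for \emph{every} $(k,s)\in\mathcal{A}$ and not merely for $-1<k-s<2$. The degenerate frequency configurations --- where the level sets of $\Phi,\Psi$ are largest and where the admissible choice of the cutoff $M$, which must be allowed to depend on the frequencies, is most constrained --- are where the argument is tightest, and it is there that one must verify that a single integration by parts, rather than an iterated normal-form expansion, already suffices.
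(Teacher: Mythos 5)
Your overall strategy --- integrate by parts in time in the non-resonant region of the two coupling terms, substitute the time derivatives of the profiles via the equations, and control the resulting boundary and higher-order terms through frequency-restricted estimates obtained from measure bounds on level sets of the resonance functions --- is exactly the paper's strategy, and your description of the FREs and of where the room $-2<k-s<3$ comes from is accurate. However, there is a genuine gap at the decisive step: you propose to close the contraction directly in $X^{k,b}\times Y^{s,b}$ at the target regularity $(k,s)$. This fails. The integration by parts produces the boundary terms $\mathcal{B}^u[u,v](t)$ and $\mathcal{B}^v[u,u](t)$, which are evaluated at the endpoint times and carry no time integration; they can be bounded in $H^k\times H^s$ for all $(k,s)\in\mathcal{A}$ (Lemma \ref{lem:bdryHs}), but they can be placed in the Bourgain space $X^{k,b}\times Y^{s,b}$ with $b>1/2$ \emph{only} for $(k,s)\in\mathcal{A}_0$ (Lemma \ref{lem:bordosX}); indeed the failure of the bilinear Bourgain estimates outside $-1<k-s<2$ (Corollaries \ref{cor:counter} and \ref{cor:counter2}) is exhibited precisely on near-linear interactions, which is what the boundary terms are. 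So the map you would iterate does not send $X^{k,b}\times Y^{s,b}$ into itself when $(k,s)\in\mathcal{A}\setminus\mathcal{A}_0$, and the solution is in fact only the sum of a Bourgain-space element (the time-integrated terms) and boundary terms living merely in $C_tH^k_x\times C_tH^s_x$.

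The missing idea is the paper's two-step structure. First, one runs the fixed point at an auxiliary \emph{lower} regularity $(k',s)$ or $(k,s')$ lying in $\mathcal{A}_0$, where the boundary terms are controllable in the Bourgain norm with a small constant $(\delta^u)^{0^+}$ or $(\delta^v)^{0^+}$ coming from the frequency truncation parameters (which is also why those parameters are needed: the boundary terms carry no factor $T^{0^+}$, so smallness must come from $\delta^u,\delta^v$); in Case 1 the resolution space is moreover $X^{k,b}\times(Y^{s,b-1/3}\cap Y^{s',b})$ because $\partial_x(v^2)$ admits no smoothing in the high--low interaction. Second, one upgrades to the full regularity $(k,s)$ \emph{a posteriori} using the nonlinear smoothing estimates of Section \ref{sec:multi}; this upgrade is itself nontrivial, since the equation for $u$ (say) expresses $u-i\mathcal{B}^u[u,v]$ plus a term involving $\mathcal{N}_0^u[u,v]$ as an element of $X^{k,b}$, and one must invert the affine operator $L_v$ (using Lemmas \ref{lem:1} and \ref{lem:N0B}) and then invert $u\mapsto u-i\mathcal{B}^u[u,v]$ in $C_tH^k_x$ to recover $u\in C_tH^k_x$. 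Without this detour through $\mathcal{A}_0$ and the subsequent bootstrap, the argument as you outline it cannot close.
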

For a more precise statement, see Theorem \ref{thm:well2} below.

\begin{nb}
	The exploitation of oscillations in time through an integration-by-parts has appeared in other contexts under the names of \textit{differentiation-by-parts} or \textit{normal form reduction}, see for example \cite{bit}, \cite{GMS},\cite{kishimoto} and \cite{OS12}.
\end{nb}

The main argument of the proof of Theorem \ref{thm:well} consists of two steps. First, we construct the solution for some lower regularity near the diagonal case $k=s$ (more precisely, in region $\mathcal{A}_0$). Then, via the nonlinear smoothing effect achieved through the integration-by-parts, we recover the persistence of the solution at the higher level of regularity of the initial data.

As we derive the required multilinear bounds necessary to prove Theorem \ref{thm:well}, we observe that the restrictions $-2<k-s<3$ are caused exactly by the interaction of linear solutions in the nonlinear terms (this can be seen in the estimates for the time-boundary terms, see Lemma \ref{lem:bdryHs}). Consequently, we are able to pinpoint the exact obstructions to a local well-posedness result away from the regularity region $\mathcal{A}$:
\begin{thm}\label{thm:illposed}
	System \eqref{nlskdv} is $C^2$-ill-posed in $H^k(\R)\times H^s(\R)$ for $(k,s)\notin \overline{\mathcal{A}}$.
\end{thm}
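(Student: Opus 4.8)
The plan is to establish $C^2$-ill-posedness by exhibiting the failure of the second Picard iterate to be bounded in the relevant norms whenever $(k,s)\notin\overline{\mathcal{A}}$. Concretely, we linearize the data-to-solution map: writing $u=u_1+u_2+\dots$, $v=v_1+v_2+\dots$ as the formal power series in the size of the data $(u_0,v_0)$, the first iterates $u_1=e^{it\partial_{xx}}u_0$, $v_1=e^{-t\partial_{xxx}}v_0$ are linear solutions, and the quadratic corrections are given by Duhamel applied to the coupling terms,
\begin{equation}\label{eq:illposed_duhamel}
u_2(t)=-i\alpha\int_0^t e^{i(t-t')\partial_{xx}}\big(u_1(t')v_1(t')\big)\,dt',\qquad v_2(t)=\gamma\int_0^t e^{-(t-t')\partial_{xxx}}\partial_x\big(|u_1(t')|^2\big)\,dt'.
\end{equation}
If the flow were $C^2$ in $H^k\times H^s$, then $(u_2,v_2)$ would be bounded in $C([0,T];H^k\times H^s)$ by $\|u_0\|_{H^k}^2+\|u_0\|_{H^k}\|v_0\|_{H^s}$ (actually by the appropriate bilinear combination). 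The strategy is therefore to choose one-parameter families of data supported on suitable frequency shells and show that this bilinear bound is violated in each of the two boundary regimes $k-s=3$ and $k-s=-2$ (and to recall that the constraints $k\ge 0$, $s>-3/4$, $s<4k$ are already shown sharp in \cite{wu}, so it suffices to handle the two new thresholds).

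The key computation is an explicit Fourier-side evaluation of the Duhamel integrals in \eqref{eq:illposed_duhamel}. For $v_2$, take $u_0$ with $\widehat{u_0}=\mathbbm{1}_{[N,N+1]}+\mathbbm{1}_{[-N-1,-N]}$, so that $|u_1|^2$ has frequencies near $0$ and near $\pm 2N$; the resonance function for the interaction of two Schrödinger waves feeding the KdV equation is $\omega(\xi)=\xi^3+\xi_1^2-\xi_2^2$ with $\xi=\xi_1-\xi_2$, which on the low-frequency output $|\xi|\lesssim 1$ behaves like $\xi(\xi_1+\xi_2)\sim N\xi$, giving a genuine gain, but on the high output $\xi\sim 2N$ one has $\omega\sim N^3$ and the time integral $\int_0^t e^{is\omega}\,ds$ contributes only $\omega^{-1}\sim N^{-3}$ against the derivative loss $\partial_x\sim N$; balancing the $H^s$ norm of the output ($\sim N^{s}\cdot N\cdot N^{-3}\cdot$(measure factors)) against $\|u_0\|_{H^k}^2\sim N^{2k}$ produces the threshold $s-2\le 2k$, i.e. $k-s\ge -2$, and reversing the roles (data concentrated so that the Schrödinger equation sees a high-high-to-high KdV interaction in its source) yields $k-s\le 3$. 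For $u_2$ one similarly tests with $\widehat{u_0}$ on a shell of size $N$ and $\widehat{v_0}$ on a shell of size $N'$, computes the Schrödinger resonance $\xi^2-\xi_1^2-\xi_x=2\xi_1\xi_x+\xi_x^2$ with $\xi_x$ the KdV frequency and $\xi_1$ the input Schrödinger frequency, and optimizes the ratio $N'/N$ to extract exactly the same two inequalities — this matches the assertion in the excerpt that the obstruction comes from the time-boundary terms analyzed in Lemma \ref{lem:bdryHs}, since those boundary terms are precisely the stationary/near-resonant contributions to \eqref{eq:illposed_duhamel}.

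The main obstacle I anticipate is not the size estimate of a single resonant piece but rather ensuring that the divergent contribution is not cancelled by other pieces of $u_2$ or $v_2$: one must verify that for the chosen data the problematic region of frequency space genuinely dominates, i.e. produce a matching lower bound, which typically requires choosing data with a definite sign or phase (e.g. real nonnegative $\widehat{u_0}$) so that no destructive interference occurs, and checking that the off-resonant and other-frequency contributions are of strictly lower order. A secondary technical point is passing from "the second Picard iterate is unbounded" to "the flow is not $C^2$": this is standard (differentiate the flow twice at $0$ in the direction of the data and identify the result with $(u_2,v_2)$, using that a $C^2$ map has a bounded second derivative on bounded sets), but it does require that the flow be at least well-defined on a neighborhood, so strictly speaking the statement is that \emph{no} $C^2$ flow can exist, which is exactly what an unbounded second variation rules out. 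Once the two one-parameter families are in hand, the proof of Theorem \ref{thm:illposed} reduces to these two explicit asymptotic computations together with the cited sharpness of the remaining three constraints.
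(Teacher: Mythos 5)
Your overall strategy --- showing that the second Gateaux derivative of the flow at the origin (equivalently, the second Picard iterate) is unbounded, and handling the remaining constraints $k\ge 0$, $s>-3/4$, $s<4k$ by citing the known results --- is exactly the paper's. For the region $k-s>3$ your plan is workable: the optimization over the two shell sizes lands on the choice $\hat{u}_0=\mathbbm{1}_{[-1,1]}$, $\hat{v}_0=\mathbbm{1}_{[N-1,N+1]}$, for which the output of $\int_0^t e^{i(t-s)\partial_x^2}(u_1v_1)\,ds$ at frequency $\sim N$ carries the factor $|\Phi|^{-1}\sim N^{-3}$ and gives $N^{k-3}\lesssim N^{s}$, the sharp threshold.

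There is, however, a genuine gap in your treatment of the region $k-s<-2$. With $\hat{u}_0=\mathbbm{1}_{[N,N+1]}+\mathbbm{1}_{[-N-1,-N]}$, the high-output piece of $\partial_x\int_0^t e^{-(t-s)\partial_x^3}(|u_1|^2)\,ds$ has $H^s$ size $\sim N^{s}\cdot N\cdot N^{-3}\cdot O(1)=N^{s-2}$, while $\|u_0\|_{H^k}^2\sim N^{2k}$; this contradicts boundedness only when $s>2k+2$. Your step ``$s-2\le 2k$, i.e.\ $k-s\ge -2$'' is an algebra slip: $s\le 2k+2$ is strictly weaker than $s\le k+2$ for $k>0$, so two unit bumps at frequency $\pm N$ do not detect ill-posedness in, say, $(k,s)=(1,3.5)\in\mathcal{I}_2$. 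The sharp obstruction is the $u_{\mathrm{low}}\times u_{\mathrm{high}}$ interaction inside $|u_1|^2$ (the one isolated in Lemma \ref{lem:probV}), so the data must put $O(1)$ mass in $H^k$ at \emph{low} frequencies while still producing output at frequency $\sim N$. The paper achieves this with the spread-out data $\hat{u}_0=\jap{\xi}^{-\rho}\mathbbm{1}_{[0,N]}$, $k+\tfrac12<\rho<s-\tfrac32$ (so $\|u_0\|_{H^k}\sim 1$ and the convolution over all low$\times$high pairs accumulates to give an $H^s$ lower bound $\gtrsim N^{s-2-\rho+1/2}\to\infty$); one could equally use a two-bump datum with one bump at frequency $O(1)$ and one normalized bump $N^{-k}\mathbbm{1}_{[N,N+1]}$. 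Either fix is routine, but without it your families of data do not cover all of $\mathcal{I}_2$. Your remarks on non-cancellation (nonnegative $\hat{u}_0$, choosing $t\sim N^{-3}$ so that $\Re\frac{e^{it\Phi}-1}{i\Phi}\gtrsim N^{-3}$, and projecting away the frequencies reached by the $\partial_x(v^2)$ contribution) are exactly the points the paper checks.
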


By $C^2$-ill-posedness we mean, of course, that the data-to-solution map from $H^k(\R)\times H^s(\R)$ to $C_tH^k(\R)\times C_tH^s(\R)$, $(u_0,v_0)\mapsto (u,v),$ fails to be $C^2$.

\begin{figure}[h]
	\includegraphics[width=6cm]{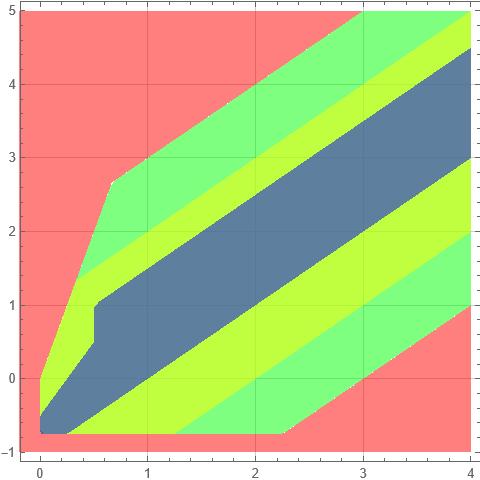}
	\caption{The $(k,s)$ regularity plane. One has ill-posedness in the red region (\cite{wu} and Theorem \ref{thm:illposed}) and local well-posedness in the blue \cite{corcholinares}, yellow \cite{wu} and green regions (Theorem \ref{thm:well})}
\end{figure}

\medskip

Finally, we turn to the question of global well-posedness.
In \cite{corcholinares}, Corcho and the second author proved the global well-posedness of \eqref{nlskdv} in $H^1(\R)\times H^1(\R)$ using the mass and energy conservation laws. Pecher \cite{pecher} applied the I-method to prove global well-posedness in $H^k(\R)\times H^s(\R)$ for $k=s\ge 3/5$. This restriction was improved by Wu \cite{wu} by refining the estimate for the local time of existence, reaching the current state-of-the-art $k=s>1/2$. 

A common feature of these global results is the restriction $k=s$. This is due to the fact that persistence of regularity is only known when both $k$ and $s$ are incremented by the same amount. Indeed, as one attempts to control higher-order derivatives of $u$ through the equation, these extra derivatives also fall onto $v$ (and vice-versa). In other words, the persistence of regularity is only valid for equal increments in both $k$ and $s$. As such, a global result at $k_0=s_0$ can only be extended to $k=s>s_0$.

In order to bypass this problem, one must control higher-order derivatives of the nonlinear terms without distributing the extra derivatives onto the various factors. This means that the structure of the nonlinearity must itself be capable of absorbing these derivatives, leaving the factors in the original regularity level. This is once again tantamount to proving a nonlinear smoothing effect for the coupling terms. Fortunately, our approach to proving local well-posedness already exploits the nonlinear smoothing effects and the persistence of regularity away from the diagonal $k=s$ will come almost for free, yielding the following result.

\begin{thm}\label{thm:gwp}
System \eqref{nlskdv} is globally well-posed in $H^k(\R)\times H^s(\R)$ for $(k,s)\in \mathcal{A}$, $k,s> 1/2$.
\end{thm}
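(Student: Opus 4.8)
The plan is to bootstrap from the two ingredients already in hand: the local well-posedness of Theorem~\ref{thm:well} on the full region $\mathcal{A}$, and the global well-posedness of Wu \cite{wu} at the diagonal $k=s>1/2$. The key conceptual point, as advertised in the introduction, is that the integrated-by-parts formulation yields a nonlinear smoothing estimate that decouples the regularity gain in $u$ from that in $v$, so persistence of regularity no longer forces equal increments. Concretely, I would first prove a \emph{propagation of regularity} lemma: if $(u_0,v_0)\in H^{k}\times H^{s}$ with $(k,s)\in\mathcal{A}$ and $k,s>1/2$, and if the corresponding solution exists on $[0,T]$ at the diagonal level $H^{\sigma}\times H^{\sigma}$ for $\sigma=\min(k,s)>1/2$, then in fact $(u,v)\in C([0,T];H^{k}\times H^{s})$ with a bound on the norm depending only on $T$ and the diagonal norm. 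This is where the nonlinear smoothing does the work: applying $\langle\partial_x\rangle^{k}$ (resp. $\langle\partial_x\rangle^{s}$) to the Duhamel/integrated-by-parts formula, the boundary terms and the resulting multilinear expressions must be estimated so that the extra $k-\sigma$ (resp. $s-\sigma$) derivatives are absorbed by the structure of the coupling nonlinearity $\alpha uv$ and $\gamma(|u|^2)_x$ rather than landing on a factor that only lives in the lower diagonal space — this is exactly the mechanism encoded in the multilinear bounds (in particular Lemma~\ref{lem:bdryHs}) used to prove Theorem~\ref{thm:well}, now run with unequal weights.

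Granting that lemma, the global existence argument is a standard continuation/a priori bound scheme. Fix $(k,s)\in\mathcal{A}$ with $k,s>1/2$ and set $\sigma=\min(k,s)$. By Wu's theorem the solution with data the $H^{\sigma}$-truncation exists globally at the diagonal level $H^{\sigma}\times H^{\sigma}$, with the relevant norm controlled on every finite time interval by the conserved mass of $u$ and the Hamiltonian of the system (these control the $H^{1}\times H^{1}$ norm, and the refined $I$-method / local-time argument of \cite{wu} pushes this down to $\sigma>1/2$; since $k,s>1/2$ our data is in particular in $H^{\sigma}\times H^{\sigma}$). Feeding this global-in-time diagonal bound into the propagation lemma on each interval $[0,T]$ yields an a priori bound on $\|(u(t),v(t))\|_{H^{k}\times H^{s}}$ that is finite for every $t$, hence the local solution of Theorem~\ref{thm:well} cannot blow up in finite time and extends globally. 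Uniqueness and continuous dependence at the $(k,s)$ level are inherited from Theorem~\ref{thm:well} (local well-posedness) combined with the uniform-in-time bounds.

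The main obstacle I anticipate is the propagation lemma, and within it the time-boundary terms produced by the integration by parts. At the diagonal level these terms were shown to be controlled (that is how $-2<k-s<3$ was identified as sharp), but here one must check that the \emph{same} bilinear/trilinear estimates survive when the Sobolev weights on the two unknowns are unequal — i.e. that the frequency-restricted estimates underlying Theorem~\ref{thm:well} are genuinely weighted estimates in which a derivative gain on the product can be freely allocated to whichever factor needs it. One must be slightly careful in the regime where $k$ and $s$ are far apart (close to the edges $k-s=-2$ or $k-s=3$, or close to $s=4k$), since there the margins in the multilinear estimates are thin; but because $(k,s)\in\mathcal{A}$ is assumed, the strict inequalities give exactly the room needed, and the estimates are the ones already established for Theorem~\ref{thm:well}. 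A secondary, more bookkeeping-level point is to make sure the global diagonal bound from \cite{wu} is quantitative enough (polynomial in $T$, say) so that the iteration of the propagation lemma over successive unit time steps does not accumulate a super-exponential constant — but since we only need finiteness on each compact interval, any explicit bound suffices.
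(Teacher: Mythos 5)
Your proposal is correct and is essentially the paper's own argument: the authors prove Theorem~\ref{thm:gwp} by combining Wu's global result at the diagonal $k=s>1/2$ with Proposition~\ref{prop:persist} (persistence of regularity), which is precisely your ``propagation of regularity lemma'' applied between $(\sigma,\sigma)$ with $\sigma=\min(k,s)$ and $(k,s)$, and whose proof indeed runs the unequal-weight multilinear/smoothing estimates of Section~\ref{sec:multi} exactly as you anticipate.
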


\medskip

\begin{nb}
	The arguments employed in this work are not intrinsic to the \eqref{nlskdv} system and should be applicable to other systems of coupled dispersive equations. For example, \cite{domingues} considers the Schrödinger-Benjamin-Ono system
	$$
	\begin{cases}
		iu_t + u_{xx} = \alpha uv\\
		v_t + \nu \mathcal{H}v_{xx} = \beta (|u|^2)_x
	\end{cases},\quad \mathcal{H} = \mbox{ Hilbert transform,}
	$$
	while \cite{barbosa} focuses on the quadratic Schrödinger system
		$$
	\begin{cases}
		iu_t + u_{xx} + \bar{u}v=0\\
		i\sigma v_t + v_{xx} +\frac{1}{2\sigma} u^2 =0
	\end{cases},\quad \sigma\in \R.
	$$
	The local well-posedness results obtained therein include a gap in regularities $(k,s)$ similar to that of \cite{wu} (see, for example, Figure 1 in \cite{domingues}). It would be interesting to see if such gaps can be closed through our technique or if more serious obstructions are in effect.
\end{nb}

\medskip

The paper is structured as follows. In Section \ref{sec:method}, we introduce the functional framework and the concept of integrated-by-parts strong solution. In Section \ref{sec:fre}, we reduce the required estimates to frequency-restricted estimates (or FRE). Section \ref{sec:multi} is devoted to the proof of the various estimates required for the local well-posedness theory. In Section \ref{sec:well}, we prove the local and global results, while the proof of the ill-posedness results is done in Section \ref{sec:ill}.

\bigskip

\noindent\textit{Notation.} Throughout this work, given $a\in \R$, $a^+$ (resp. $a^-$) denotes a number slightly larger (resp. smaller) than $a$. Given $a,b\in \R$, $a\lesssim b$ means that there exists a universal constant $C>0$ such that $a\le Cb$. If $C$ can be chosen to be small, we write $a\ll b$. If $a\lesssim b$ and $b\lesssim a$, we write $a\sim b$. Moreover, if $|b-a|\ll |a|$, we say that $a\simeq b$. Finally, $\mathcal{F}_\ast$ denotes the Fourier transform in the $\ast$ variable. If the Fourier transform is taken in all variables, we may also denote it as $\hat{\cdot}$.

\section{Description of the method}\label{sec:method}

We define the Schrödinger Bourgain space $X^{s,b}$ in the usual way, as the closure of $\mathcal{S}(\R \times \R)$ induced by the norm
\begin{equation}\label{eq:defi_bourg}
	\|u\|_{X^{s,b}}:= \left( \int \jap{\tau+\xi^2}^{2b}\jap{\xi}^{2s}|\hat{u}(\tau,\xi)|^2 d\tau d\xi\right)^{1/2},
\end{equation}
where $\hat{u}(\tau,\xi)$ here denotes the Fourier transform of $u$ in both the time and space variables.
Analogously, the Airy Bourgain space $Y^{s,b}$ is defined via the norm
\begin{equation}\label{eq:defi_bourg2}
\|v\|_{Y^{s,b}}:= \left( \int \jap{\tau-\xi^3}^{2b}\jap{\xi}^{2s}|\hat{v}(\tau,\xi)|^2 d\tau d\xi\right)^{1/2}.
\end{equation}

We will apply a Picard iteration scheme to the Duhamel formulation for \eqref{nlskdv} seeking fixed points in these 
Bourgain spaces. As is well-known, this method hinges on multilinear estimates for 
the nonlinear terms of the equations. We begin by recalling the classic multilinear estimates for the cubic (NLS) and (KdV).

\begin{lem}[\cite{kpv_bilin}]\label{lem:est_kdv}
	For $s>-3/4$, $b=\frac{1}{2}^+$ and $b'=(b-1)^+$,
	$$
	\|\partial_x(v^2)\|_{Y^{s,b'}}\lesssim \|v\|_{Y^{s,b}}^2.
	$$
\end{lem}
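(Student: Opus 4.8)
The plan is to reduce the estimate, by duality and Plancherel, to a weighted $L^2$ convolution inequality on the Fourier side. Write $F(\tau,\xi):=\jap{\tau-\xi^3}^{b}\jap{\xi}^{s}|\hat v(\tau,\xi)|\ge 0$, so that $\|F\|_{L^2_{\tau,\xi}}=\|v\|_{Y^{s,b}}$, put $F_j:=F(\tau_j,\xi_j)$, and let $G\ge 0$ with $\|G\|_{L^2}=1$ represent a generic element of the dual space $Y^{-s,-b'}$. Abbreviating the modulations by $\sigma_0:=\tau-\xi^3$, $\sigma_1:=\tau_1-\xi_1^3$, $\sigma_2:=\tau_2-\xi_2^3$ and integrating over $\tau=\tau_1+\tau_2$, $\xi=\xi_1+\xi_2$, it suffices to prove
\begin{equation*}
\int \frac{|\xi|\,\jap{\xi}^{s}}{\jap{\xi_1}^{s}\jap{\xi_2}^{s}\,\jap{\sigma_0}^{|b'|}\,\jap{\sigma_1}^{b}\,\jap{\sigma_2}^{b}}\,F_1\,F_2\,G\;\lesssim\;\|F\|_{L^2}^2,
\end{equation*}
and from here on everything is about the size of the displayed weight.

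The engine is the resonance identity
\begin{equation*}
\sigma_0-\sigma_1-\sigma_2=-3\,\xi\,\xi_1\,\xi_2,
\end{equation*}
which holds on the support of the integral and forces $\max(\jap{\sigma_0},\jap{\sigma_1},\jap{\sigma_2})\gtrsim|\xi\,\xi_1\,\xi_2|$. In a high-frequency interaction this is a large power of the frequencies (of size $\sim N^{3}$ when all three are comparable to $N$), and since $b$ and $|b'|$ are close to $\tfrac12$, a fraction of this smoothing more than compensates the derivative $|\xi|$ together with the negative-order frequency weights $\jap{\xi}^{s}\jap{\xi_1}^{-s}\jap{\xi_2}^{-s}$; this is precisely why the estimate descends from $s\ge0$ down to $s>-3/4$.

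I would then decompose dyadically in the three frequencies and the three modulations and carry out the standard case analysis according to (i) which of $\jap{\sigma_0},\jap{\sigma_1},\jap{\sigma_2}$ is the largest and (ii) the relative sizes of $\jap{\xi},\jap{\xi_1},\jap{\xi_2}$, the delicate configuration being a high-high interaction whose output frequency is small. In each case one applies the Cauchy--Schwarz inequality in two of the four integration variables, which reduces matters to a one-dimensional calculus bound of the type
\begin{equation*}
\int_{\R}\frac{d\mu}{\jap{\mu-a}^{\theta}\,\jap{\mu-c}^{\theta'}}\;\lesssim\;\jap{a-c}^{\,1-\theta-\theta'},\qquad \theta,\theta'\in(0,1),\quad\theta+\theta'>1,
\end{equation*}
followed by a change of variables in $\xi_1$ that turns the cubic resonance $3\xi\xi_1\xi_2$ in the denominator into a square-root-type gain. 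Collecting the powers of the dyadic parameters and summing the resulting geometric series gives the estimate, the summation being convergent precisely for $s>-3/4$.

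The main obstacle is the genuinely high-high interaction with comparable frequencies, where the resonance bound $\max\jap{\sigma_j}\gtrsim N^{3}$ has to be used at full strength: a crude Cauchy--Schwarz here loses an endpoint power, and one must instead invoke the sharp $L^{4}_{t,x}$ Airy--Strichartz estimate, equivalently the square-root smoothing coming from the Jacobian of $\xi_1\mapsto\xi\xi_1\xi_2$, to land exactly on $s>-3/4$. This is the delicate part of the argument of Kenig--Ponce--Vega \cite{kpv_bilin}; as the lemma is quoted verbatim from that reference, in practice one simply cites it, but the above is the route one would reproduce.
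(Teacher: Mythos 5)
The paper does not prove this lemma; it is quoted directly from Kenig--Ponce--Vega \cite{kpv_bilin}, so there is no internal proof to compare against. Your outline faithfully reproduces the structure of the cited argument: the duality/Plancherel reduction, the resonance identity $\sigma_0-\sigma_1-\sigma_2=-3\xi\xi_1\xi_2$, the case split on the dominant modulation and on the relative frequency sizes, and the square-root gain from the change of variables in the cubic resonance, with the dyadic sums converging exactly for $s>-3/4$. The only minor inaccuracy is attributing the endpoint high-high case to the $L^4$ Airy--Strichartz estimate --- KPV handle every case by elementary calculus lemmas of the type $\int \jap{x}^{-2b}|a-x|^{-1/2}dx\lesssim \jap{a}^{-1/2}$, which is the Jacobian mechanism you also name --- but this is an equivalent route, not a gap.
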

\begin{lem}[\cite{BOP98}]\label{lem:est_nls}
	For $k\ge 0$, $b=\frac{1}{2}^+$ and $b'=(b-1)^+$,
	$$
	\||u|^2u\|_{X^{k,b'}}\lesssim \|u\|_{X^{k,b}}^3.
	$$
\end{lem}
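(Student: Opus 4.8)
The plan is to prove this in the simplest possible way, using that the cubic Schrödinger nonlinearity is far subcritical in one space dimension ($\dot H^{-1/2}$ is the scaling-critical space for the cubic term, so $H^k$ with $k\ge 0$ leaves ample room). Since $b=\tfrac12^+$ we have $b'=(b-1)^+\in(-\tfrac12,0)$, hence $\langle\tau+\xi^2\rangle^{b'}\le 1$ and consequently $\|F\|_{X^{k,b'}}\le\|F\|_{X^{k,0}}=\|\langle D_x\rangle^k F\|_{L^2_{t,x}}$ for every $F$. It therefore suffices to prove the (much) stronger bound
\[
\|\langle D_x\rangle^k(|u|^2u)\|_{L^2_{t,x}(\R\times\R)}\lesssim\|u\|_{X^{k,b}}^3 .
\]

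First I would distribute the derivative by the fractional Leibniz (Kato--Ponce) inequality, applied twice to the product $u\,\bar u\,u$. Using that the multiplier $\langle\xi\rangle^k$ has an even real symbol — so $\langle D_x\rangle^k\bar u=\overline{\langle D_x\rangle^k u}$ and complex conjugation plays no role — together with the Hölder relation $\tfrac12=\tfrac16+\tfrac16+\tfrac16$, this gives, for a.e.\ fixed $t$,
\[
\|\langle D_x\rangle^k(u\,\bar u\,u)(t)\|_{L^2_x}\lesssim\|\langle D_x\rangle^k u(t)\|_{L^6_x}\,\|u(t)\|_{L^6_x}^2+(\text{two symmetric terms}).
\]
Squaring, integrating in $t$ and applying Hölder in time with exponents $(3,\tfrac32)$ then yields
\[
\|\langle D_x\rangle^k(|u|^2u)\|_{L^2_{t,x}}\lesssim\|\langle D_x\rangle^k u\|_{L^6_{t,x}}\,\|u\|_{L^6_{t,x}}^2 .
\]

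To close the estimate I would invoke the transference to Bourgain space of the (non-endpoint, elementary) one-dimensional Strichartz estimate $\|e^{it\partial_x^2}\varphi\|_{L^6_{t,x}(\R\times\R)}\lesssim\|\varphi\|_{L^2(\R)}$, i.e.\ $\|v\|_{L^6_{t,x}}\lesssim\|v\|_{X^{0,1/2^+}}=\|v\|_{X^{0,b}}$ for every $v$. Applying it to $v=\langle D_x\rangle^k u$ (whose $X^{0,b}$-norm equals $\|u\|_{X^{k,b}}$, since $\langle D_x\rangle^k$ commutes with the Schrödinger modulation weight) and to $v=u$ (with $\|u\|_{X^{0,b}}\le\|u\|_{X^{k,b}}$ for $k\ge 0$) gives $\|\langle D_x\rangle^k(|u|^2u)\|_{L^2_{t,x}}\lesssim\|u\|_{X^{k,b}}^3$, which combined with the first reduction proves the lemma.

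I do not expect any real obstacle here: the lossy bound $\|\cdot\|_{X^{k,b'}}\le\|\cdot\|_{X^{k,0}}$ is already far more than enough for the cubic term, so no bilinear or frequency-restricted analysis is needed (in contrast with the KdV estimate of Lemma~\ref{lem:est_kdv}, whose negative $b'$ is essential near $s=-\tfrac34$). The only things to get right are bookkeeping: the choice $b=\tfrac12^+$ is exactly what the $L^6$-Strichartz transference bound $X^{0,b}\hookrightarrow L^6_{t,x}$ requires, and one should apply Strichartz (and Kato--Ponce) to honest Schrödinger-type functions — harmless here, since $\langle D_x\rangle^k$ has an even symbol and the $L^6_{t,x}$ norm is insensitive to conjugation. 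One could equivalently argue by duality, writing $\||u|^2u\|_{X^{k,b'}}=\sup_{\|w\|_{X^{-k,-b'}}\le1}\big|\iint|u|^2u\,\bar w\,dx\,dt\big|$ and using Hölder in $L^6_{t,x}\times L^6_{t,x}\times L^6_{t,x}\times L^2_{t,x}$ together with $X^{0,-b'}\hookrightarrow L^2_{t,x}$; the content is the same.
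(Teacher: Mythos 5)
Your proof is correct and complete. Note that the paper does not actually prove this lemma --- it is quoted from \cite{BOP98} --- so there is no in-paper argument to compare against; your route (discarding the negative modulation weight via $\langle\tau+\xi^2\rangle^{b'}\le 1$, reducing to an $L^2_{t,x}$ bound, fractional Leibniz plus H\"older $L^6\times L^6\times L^6\to L^2$, and the transferred one-dimensional Strichartz embedding $X^{0,1/2^+}\hookrightarrow L^6_{t,x}$) is the standard proof of this cubic estimate and needs no frequency-restricted analysis. You also correctly dispose of the two points where one could slip: the conjugate factor (harmless because $\langle\xi\rangle^k$ has an even real symbol and the $L^6$ norm is conjugation-invariant, whereas $\bar u$ would \emph{not} lie in the same Bourgain space) and the requirement $b>1/2$ for the transference principle.
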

These estimates are enough to control the corresponding terms in system \eqref{nlskdv} and we are left with the derivation of sharp regularity estimates for the coupling terms. 

\begin{lem}[\cite{wu}]\label{lem:est_wu}
	Given $b=\frac{1}{2}^+$ and $b'=(b-1)^+$, the following inequalities hold
\begin{equation}\label{eq:estwu1}
		\|uv\|_{X^{k,b'}}\lesssim \|u\|_{X^{k,b}}\|v\|_{Y^{s,b}},\quad \mbox{ for } s>-1,\  k-s<2, 
\end{equation}
and
\begin{equation}\label{eq:estwu2}
		\|\partial_x(u_1\overline{u_2})\|_{Y^{s,b'}}\lesssim \|u_1\|_{X^{k,b}}\|u_2\|_{X^{k,b}},\quad \mbox{ for }s<4k,\  s-k<1.
\end{equation}
\end{lem}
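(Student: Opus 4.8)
The plan is to prove both bilinear estimates by the standard duality/Fourier restriction argument: pass to the Fourier side, reduce to an $L^2$-weighted convolution estimate, dyadically decompose all the frequency and modulation variables, and in each dyadic regime use the largest available modulation weight together with elementary Cauchy–Schwarz and change-of-variables bounds. Concretely, for \eqref{eq:estwu1} I would write $\|uv\|_{X^{k,b'}}$ via duality against a function $w$ with $\|w\|_{X^{-k,-b'}}\le 1$, so that the quantity to bound becomes a trilinear integral $\int \widehat{w}\,\widehat{u}\,\widehat{v}$ over the hyperplane $\tau=\tau_1+\tau_2$, $\xi=\xi_1+\xi_2$. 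Using the resonance identity $(\tau-\xi^2)-(\tau_1-\xi_1^2)-(\tau_2-\xi_2^3) = \text{(algebraic expression in } \xi,\xi_1,\xi_2)$ — here the Schrödinger symbol $-\xi^2$ for the output, the Schrödinger symbol $-\xi_1^2$ for $u$, and the Airy symbol $-\xi_2^3$ for $v$ — one identifies the maximal modulation and peels off a power of $\langle\tau-\xi^2\rangle$, $\langle\tau_1-\xi_1^2\rangle$ or $\langle\tau_2-\xi_2^3\rangle$; since $b=\frac12^+$ the two "unused" modulation weights are integrable in the respective dual $\tau$-variable, reducing matters to a frequency-only inequality. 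One then splits into the regimes governed by which of $|\xi_1|,|\xi_2|$ dominates (high-high, high-low, low-high) and checks that the frequency gains $\langle\xi\rangle^k\langle\xi_1\rangle^{-k}\langle\xi_2\rangle^{-s}$ are controlled by the modulation gain; the conditions $s>-1$ and $k-s<2$ are precisely what is needed to close the worst cases (large $|\xi_2|$ with small $|\xi_1|$, and the high-high interaction producing a small output frequency). The second estimate \eqref{eq:estwu2} is entirely parallel, with the output now living in the Airy space $Y^{s,b'}$: dualize against $w$ with $\|w\|_{Y^{-s,-b'}}\le 1$, use the resonance function for $(\tau-\xi^3)-(\tau_1+\xi_1^2)-(\tau_2+\xi_2^2)$ (note $\overline{u_2}$ contributes the conjugated Schrödinger symbol), account for the derivative $\partial_x$ by a factor $|\xi|\le |\xi_1|+|\xi_2|$, and run the same dyadic case analysis; the conditions $s<4k$ and $s-k<1$ emerge from balancing $\langle\xi\rangle^{s}|\xi|\langle\xi_1\rangle^{-k}\langle\xi_2\rangle^{-k}$ against the modulation.

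Since the paper explicitly attributes this lemma to Wu \cite{wu}, the cleanest route is simply to cite that reference: the statement is exactly (a repackaging of) the multilinear estimates proved there, and no new argument is needed. For completeness I would, at most, sketch the reduction to the frequency-restricted estimates that Section \ref{sec:fre} sets up — i.e. observe that after the dyadic decomposition and after using the modulation weights the estimate is equivalent to a bound on a certain counting/measure quantity, which is then verified in Section \ref{sec:multi} — and then note that \eqref{eq:estwu1}–\eqref{eq:estwu2} follow from those frequency-restricted estimates by summing the dyadic pieces (the sums converge because each piece carries a small negative power of the dyadic parameters, coming from the $b,b'$ being $\frac12^{\pm}$ rather than exactly $\frac12$).

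The main obstacle, and the only genuinely delicate point, is the behavior near the boundary of the stated ranges: when $k-s$ approaches $2$ in \eqref{eq:estwu1} (or $s-k$ approaches $1$ in \eqref{eq:estwu2}), the frequency loss in the high-low interaction just barely matches the modulation gain, so one must be careful that the resonance function is genuinely as large as claimed — this uses that for the NLS $\times$ Airy interaction the symbols are of different homogeneities, so the resonance function $-\xi^2+\xi_1^2+\xi_2^3$ cannot vanish to high order along the relevant frequency configurations unless $|\xi_2|$ is comparable to a power of $|\xi_1|$, in which case the $\langle\xi_2\rangle^{-s}$ (resp. $\langle\xi_1\rangle^{-k},\langle\xi_2\rangle^{-k}$) factor is itself small. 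Handling this endpoint correctly is where the constraints $s>-1$, $k-s<2$, $s<4k$, $s-k<1$ are truly used; away from the boundary the estimates have room to spare. I do not expect the low-frequency (bounded $\xi$) pieces to cause trouble, as there all weights are $\sim 1$ and the estimate reduces to the free-wave bilinear bounds for $L^2$ data.
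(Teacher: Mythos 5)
Your proposal is correct and matches the paper's treatment: the paper itself offers no proof of this lemma beyond the citation to \cite{wu}, and the alternative derivation you sketch (duality, identifying the dominant modulation via the resonance identity, integrating out the remaining modulation weights using $2b>1$, and reducing to frequency-restricted estimates split by which frequency dominates) is exactly the route the paper takes in Lemmas \ref{lem:fre_bi} and \ref{lem:1}--\ref{lem:probV}, which it explicitly notes constitute an alternative proof of Lemma \ref{lem:est_wu}. The only quibbles are cosmetic sign conventions in your resonance functions and the attribution of the constraint $s>-1$, which in the paper's analysis arises from the low-$v$-frequency interaction $|\xi_2|\ll|\xi_1|\simeq|\xi|$ rather than the high-high case.
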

These estimates are optimal for $b=1/2^+$ (see Corollaries \ref{cor:counter} and \ref{cor:counter2}).
In order to extend the admissible regularity region considered in \cite{wu} to $\mathcal{A}$, one must first understand precisely the frequency interactions responsible for the obstructions.

First, in the bilinear estimate
	$$
\|uv\|_{X^{k,b'}}\lesssim \|u\|_{X^{k,b}}\|v\|_{Y^{s,b}},
$$
the obstruction $k-s<2$ is caused specifically by the spatial frequency interaction $u_{\text{low}} \times v_{\text{high}}$  (see Lemma \ref{lem:probU}). In this frequency domain, the resonance function, $\Phi_1^u=\xi^2-\xi_1^2+\xi_2^3$, is comparable to $\xi^3$, inducing a strong oscillatory effect in the nonlinear term.

On the other hand, in the estimate
\begin{equation}
\|\partial_x(|u|^2)\|_{Y^{s,b'}}\lesssim \|u\|_{X^{k,b}}^2,
\end{equation}
the restriction $k-s>-1$ stems from the spatial frequency interaction $u_{\text{low}} \times u_{\text{high}}$ (see Lemma \ref{lem:probV}). For this interaction, the corresponding resonance function $\Phi^v_1=-\xi^3-\xi_1^2+\xi_2^2$ is once again comparable to $\xi^3$.

The main idea for our improvement on the local well-posedness is to explore more deeply the strong oscillation in time in these problematic regions. Given $\delta^u, \delta^v>0$, define sets
$$
U_\xi=\{\xi_1\in \R: 100|\xi_1|<|\xi| \text{ and }1/\delta^u<|\xi|\},\quad V_\xi=\left\{ \xi_1\in \R: 1/\delta^v<|\xi_1|< 100|\xi| \right\}
$$
and the profiles
$$
\tilde{u}(t)=e^{it\xi^2}\mathcal{F}_xu(t),\quad \tilde{v}(t)=e^{-it\xi^3}\mathcal{F}_xv(t),
$$
where
 $$\mathcal{F}_xu(t,\xi)=\int e^{-i\xi x}u(t,x)dx,$$
 denotes the Fourier transform of $u$ in the spatial variable $x$ only, and analogously for $v$.
In what follows, we abbreviate $\tilde{u}(s,\xi_k)$ as $\tilde{u}_k$, for any index $k$, and write $\tilde{u}^\ast(\xi)=\overline{\tilde{u}(-\xi)}$. In terms of these profiles, the Duhamel formulas for the \eqref{nlskdv} system can be written as
\begin{align}
	\tilde{u}(t,\xi)&=\tilde{u}(0,\xi)\label{eq:perfilu} -i\alpha\int_0^t\int_{\xi=\xi_1+\xi_2} e^{is\Phi_1^u}\tilde{u}_1\tilde{v}_2 d\xi_1 ds  -i\beta \int_0^t\int_{\xi=\xi_1+\xi_2+\xi_3} e^{is\Phi_2^u}\tilde{u}_1\tilde{u}_2^\ast\tilde{u}_3 d\xi_1d\xi_2ds\\
		\tilde{v}(t,\xi)&=\tilde{v}(0,\xi) +i\gamma \int_0^t\int_{\xi=\xi_1+\xi_2}\xi\label{eq:perfilv} e^{is\Phi_1^v}\tilde{u}_1\tilde{u}_2^\ast d\xi_1 ds  -i\frac12 \int_0^t\int_{\xi=\xi_1+\xi_2} e^{is\Phi_2^v}\xi \tilde{v}_1\tilde{v}_2 d\xi_1ds
\end{align}
where the resonance functions are given by
$$
\Phi^u_1=\xi^2-\xi_1^2+\xi_2^3,\quad \Phi^u_2=\xi^2-\xi_1^2+\xi_2^2-\xi_3^2
$$
and
$$
\Phi^v_1=-\xi^3-\xi_1^2+\xi_2^2,\quad \Phi^v_2=-\xi^3+\xi_1^3+\xi_2^3.
$$
Consider the coupling term in \eqref{eq:perfilu}. We split the integration in the regions $U_\xi$ and $U^c_\xi$ and integrate by parts in time in the first:
\begin{align*}
\int_0^t\int_{\xi=\xi_1+\xi_2} e^{is\Phi_1^u}\tilde{u}_1\tilde{v}_2 d\xi_1 ds &= \left[\int_{U_\xi}\frac{e^{is\Phi^u_1}}{i\Phi^u_1}\tilde{u}_1\tilde{v}_2d\xi_1\right]_{s=0}^{s=t} - \int_0^t\int_{U_\xi} \frac{e^{is\Phi_1^u}}{i\Phi_1^u}\partial_s(\tilde{u}_1\tilde{v}_2) d\xi_1 ds \\&\qquad+\int_0^t\int_{U_\xi^c} e^{is\Phi_1^u}\tilde{u}_1\tilde{v}_2 d\xi_1 ds\\&= [B^u[u,v](s,\xi)]_{s=0}^{s=t} - \int_0^t\int_{U_\xi} \frac{e^{is\Phi_1^u}}{i\Phi_1^u}\partial_s(\tilde{u}_1\tilde{v}_2) d\xi_1 ds +\int_0^t N_0^u[u,v](s,\xi)ds,
\end{align*}
where $$N_0^u[u,v](s,\xi)=\int_{U_\xi^c} e^{is\Phi_1^u}\tilde{u}_1\tilde{v}_2 d\xi_1.$$ 

We now replace the term $\partial_s(\tilde{u}_1\tilde{v}_2)$ using \eqref{eq:perfilu} and \eqref{eq:perfilv}:
\begin{align*}
-\int_0^t\int_{U_\xi} \frac{e^{is\Phi_1^u}}{i\Phi_1^u}\partial_s(\tilde{u}_1\tilde{v}_2) d\xi_1 ds &=i\alpha \int_0^t \int_{\xi_1\in U_\xi,\, \xi_{1}=\xi_{11}+\xi_{12}} \frac{e^{is\Psi_1^u}}{i\Phi_1^u}\tilde{u}_{11}\tilde{v}_{12}\tilde{v}_2 d\xi_{11} d\xi_{12} ds \\&+i\beta \int_0^t \int_{\xi_1\in U_\xi,\, \xi_{1}=\xi_{11}+\xi_{12}+\xi_{13}} \frac{e^{is\Psi_2^u}}{i\Phi_1^u}\tilde{u}_{11}\tilde{u}_{12}^\ast\tilde{u}_{13}\tilde{v}_2 d\xi_{11} d\xi_{12} d\xi_{13}ds \\ &-i\gamma\int_0^t \int_{\xi_1\in U_\xi,\, \xi_{2}=\xi_{21}+\xi_{22}} \frac{e^{is\Psi_3^u}}{i\Phi_1^u}\xi_2\tilde{u}_{1}\tilde{u}_{21}\tilde{u}_{22}^\ast d\xi_{21} d\xi_{22} ds\\ &+i\frac12 \int_0^t \int_{\xi_1\in U_\xi,\, \xi_{2}=\xi_{21}+\xi_{22}} \frac{e^{is\Psi_4^u}}{i\Phi_1^u}\xi_2\tilde{u}_{1}\tilde{v}_{21}\tilde{v}_{22} d\xi_{21} d\xi_{22} ds\\&= \int_0^t\left(N_1^u[u,v,v]+N_2^u[u,u,u,v]+N_3^u[u,u,u]+N_4^u[u,v,v]\right)(s,\xi)ds.
\end{align*}
where the new resonance functions are given by
$$
\Psi_1^u= \xi^2-\xi_{11}^2+\xi_{12}^3+\xi_2^3,\quad \Psi_2^u= \xi^2-\xi_{11}^2+\xi_{12}^2-\xi_{13}^2+\xi_2^3, 
$$
$$
\Psi_3^u= \xi^2-\xi_{1}^2-\xi_{21}^2+\xi_{22}^2,\quad \Psi_4^u= \xi^2-\xi_{1}^2+\xi_{21}^3+\xi_{22}^3. 
$$
In the same way, we split the coupling term in \eqref{eq:perfilv} into the domains $V_\xi$ and $V^c_\xi$, integrate by parts in the first and use once again \eqref{eq:perfilu}-\eqref{eq:perfilv} to replace time derivatives:

\begin{align*}
	\int_0^t\int_{\xi=\xi_1+\xi_2}\xi e^{is\Phi_1^v}\tilde{u}_1\tilde{u}_2^\ast d\xi_1 ds &= \left[ \int_{\xi=\xi_1+\xi_2}\xi \frac{e^{is\Phi_1^v}}{i\Phi_1^v}\tilde{u}_1\tilde{u}_2^\ast d\xi_1 \right]_{s=0}^{s=t} - \int_0^t\int_{V_\xi}\xi \frac{e^{is\Phi_1^v}}{i\Phi_1^v}\partial_s(\tilde{u}_1\tilde{u}_2^\ast) d\xi_1 ds  \\&+ \int_0^t\int_{V_\xi^c}\xi e^{is\Phi_1^v}\tilde{u}_1\tilde{u}_2^\ast d\xi_1 ds \\&= \left[ \int_{\xi=\xi_1+\xi_2}\xi \frac{e^{is\Phi_1^v}}{i\Phi_1^v}\tilde{u}_1\tilde{u}_2^\ast d\xi_1 \right]_{s=0}^{s=t} + \int_0^t\int_{V_\xi^c}\xi e^{is\Phi_1^v}\tilde{u}_1\tilde{u}_2^\ast d\xi_1 ds\\ &+i\alpha \int_0^t\int_{\xi_1\in V_\xi,\, \xi_1=\xi_{11}+\xi_{12}}\xi \frac{e^{is\Psi_1^v}}{i\Phi_1^v}\tilde{u}_{11}\tilde{v}_{12}\tilde{u}_2^\ast d\xi_{11}d\xi_{12} ds
	\\&+i\beta \int_0^t\int_{\xi_1\in V_\xi,\, \xi_1=\xi_{11}+\xi_{12}+\xi_{13}}\xi \frac{e^{is\Psi_2^v}}{i\Phi_1^v}\tilde{u}_{11}\tilde{u}_{12}^\ast\tilde{u}_{13}\tilde{u}_2^\ast d\xi_{11}d\xi_{12}d\xi_{13} ds \\ &-i \alpha \int_0^t\int_{\xi_1\in V_\xi,\, \xi_2=\xi_{21}+\xi_{22}}\xi \frac{e^{is\Psi_3^v}}{i\Phi_1^v}\tilde{u}_1\tilde{u}_{21}^\ast \tilde{v}^\ast_{22} d\xi_{21}d\xi_{22} ds
	\\&- i\beta\int_0^t\int_{\xi_1\in V_\xi,\, \xi_2=\xi_{21}+\xi_{22}+\xi_{23}}\xi \frac{e^{is\Psi_4^v}}{i\Phi_1^v}\tilde{u}_{1}\tilde{u}^\ast_{21}\tilde{u}_{22}\tilde{u}^\ast_{23} d\xi_{11}d\xi_{12}d\xi_{13} ds \\&=\left[ B^v[u,u](s,\xi) \right]_{s=0}^{s=t} + \int_0^tN_0^v[u,u](s,\xi)ds \\&+\int_0^t  \left(N^v_1[u,v,u]+N^v_2[u,u,u,u]+N^v_3[u,u,v]+N^v_4[u,u,u,u]\right)(s,\xi)ds
\end{align*}
with resonance functions
$$
\Psi^v_1=-\xi^3-\xi_{11}^2+\xi_{12}^3+\xi_2^2,\quad \Psi_2^v=-\xi^3-\xi_{11}^2+\xi_{12}^2-\xi_{13}^2+\xi_2^2
$$
$$
\Psi^v_3=-\xi^3-\xi_{1}^2+\xi_{21}^2+\xi_{22}^3,\quad \Psi_4^v=-\xi^3-\xi_{1}^2+\xi_{21}^2-\xi_{22}^2+\xi_{23}^2
$$
Finally, we define
$$
N_5^u[u,u,u](t,\xi)=\int_{\xi=\xi_1+\xi_2+\xi_3} e^{it\Phi_2^u}\tilde{u}_1\tilde{u}_2^\ast\tilde{u}_3 d\xi_1d\xi_2,\quad N_5^v[v,v](t,\xi)=\int_{\xi=\xi_1+\xi_2} e^{it\Phi_2^v}\xi \tilde{v}_1\tilde{v}_2 d\xi_1.
$$
As such, we may formally recast equation \eqref{eq:perfilu} as
\begin{align}
	\tilde{u}(t,\xi)&=\tilde{u}(0,\xi)\label{eq:perfilu1} -i\alpha\left[B^u(s,\xi)\right]_{s=0}^{s=t}-i\alpha\sum_{j=0}^4\int_0^t  N_j^u(s,\xi)ds -i\beta\int_0^t  N_5^u(s,\xi)ds
\end{align}
and \eqref{eq:perfilv} as
\begin{align}
	\tilde{v}(t,\xi)&=\tilde{v}(0,\xi) +i\gamma\left[ B^v(s,\xi) \right]_{s=0}^{s=t} +i\gamma \sum_{j=0}^4 \int_0^tN_j^v(s,\xi)ds-i \frac12 \int_0^tN_5^v(s,\xi)ds\label{eq:perfilv1}.
\end{align}
\begin{defi}[Integrated-by-parts strong solution]\label{defi:ibpsol} Given an initial data $(u_0,v_0)\in H^k(\R)\times H^s(\R)$, we say that $(u,v)\in C([0,T]; H^k(\R)\times H^s(\R))$ is an integrated-by-parts strong solution to \eqref{nlskdv} if there exist $\delta^u, \delta^v>0$ such that the profiles $(\tilde{u},\tilde{v})$ satisfy either \eqref{eq:perfilu}-\eqref{eq:perfilv1} or \eqref{eq:perfilu1}-\eqref{eq:perfilv} for all $t\in [0,T]$. 
\end{defi}
Depending on the regularities $(k,s)\in \mathcal{A}$, we will use different definitions. More precisely, for $-2<k-s\le -1$, we will look for solutions of \eqref{eq:perfilu}-\eqref{eq:perfilv1}, and, for $2\le k-s<3$, the suitable definition is \eqref{eq:perfilu1}-\eqref{eq:perfilv}.

The passage from the original system to the integrated-by-parts versions can be made rigorous for smooth solutions and therefore, at high levels of regularity, integrated-by-parts strong solutions and classical strong solutions coincide, {independently of the choice of $\delta^u, \delta^v>0$}. In rougher regularities, however, the two definitions are not \textit{a priori} equivalent.

We are in position of providing a more precise statement of Theorem \ref{thm:well}.

\begin{thm}\label{thm:well2}
	Let $(k,s)\in \mathcal{A}$ and $(u_0,v_0)\in H^k(\R)\times H^s(\R)$. There exist $\delta^u, \delta^v>0$, $T>0$, depending only on $\|u_0\|_{H^k}+\|v_0\|_{H^s}$, and
	$$
	(u,v)\in C([0,T],H^k(\R)) \times C([0,T],H^s(\R)) 
	$$
	such that $(u,v)$ is an integrated-by-parts strong solution to \eqref{nlskdv} over $[0,T]$. Moreover, the solution depends smoothly on the initial data. 
\end{thm}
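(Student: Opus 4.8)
The plan is to set up a standard Picard iteration for the integrated-by-parts system \eqref{eq:perfilu}-\eqref{eq:perfilv1} (for $-2<k-s\le -1$) or \eqref{eq:perfilu1}-\eqref{eq:perfilv} (for $2\le k-s<3$), but working in two stages as announced after Theorem \ref{thm:well}. First I would treat the ``easy'' case $(k,s)\in\mathcal{A}_0$, where Lemmas \ref{lem:est_kdv}, \ref{lem:est_nls} and \ref{lem:est_wu} furnish all the needed bilinear and trilinear bounds in the Bourgain spaces $X^{k,b}$, $Y^{s,b}$ with $b=\frac12^+$; a contraction mapping argument on a time-localized version of the Duhamel map (using the usual $\|\chi_{[0,T]}\cdot\|_{X^{s,b}}\lesssim T^{0+}\|\cdot\|_{X^{s,b}}$ gain) yields a unique fixed point on $[0,T]$ with $T$ depending only on $\|u_0\|_{H^k}+\|v_0\|_{H^s}$, together with smooth dependence on the data by the standard implicit-function-theorem/analyticity argument for Picard iterations. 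Since on $\mathcal{A}_0$ the integration by parts is just an identity for these solutions, this already proves Theorem \ref{thm:well2} in that region, for any admissible $\delta^u,\delta^v$.

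For the remaining regularities — $(k,s)$ with $-2<k-s\le -1$ or $2\le k-s<3$ — I would fix a pivot pair $(k_0,s_0)\in\mathcal{A}_0$ on the same ``line'' (e.g. with $k-k_0=s-s_0$, so the difference $k-s$ is preserved), produce the solution at level $(k_0,s_0)$ by the first step, and then run the Picard iteration for the integrated-by-parts formulation at the target level $(k,s)$. The point is that each term appearing in \eqref{eq:perfilu1}-\eqml{eq:perfilv1} — the boundary terms $B^u$, $B^v$, the non-resonant pieces $N_0^u,N_0^v$, the ``twice-integrated'' multilinear terms $N_1^u,\dots,N_4^u$, $N_1^v,\dots,N_4^v$, and the cubic/quadratic self-interaction terms $N_5^u,N_5^v$ — must be estimated in $C_tH^k\times C_tH^s$ by the norms $\|u\|_{X^{k,b}}$, $\|v\|_{Y^{s,b}}$ (and the lower norms at level $(k_0,s_0)$ where the nonlinear smoothing is not needed). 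These are exactly the frequency-restricted estimates of Section \ref{sec:fre} and the multilinear bounds of Section \ref{sec:multi}; I would invoke them as black boxes here, the key gain being that the factor $1/\Phi_1^{u}$ or $1/\Phi_1^{v}$ (of size $\sim\jap{\xi}^{-3}$ on $U_\xi$, resp. $V_\xi$) absorbs up to three extra spatial derivatives on the high-frequency output without redistributing derivatives onto the inputs — this is precisely what allows persistence of regularity away from the diagonal. The contraction is then closed on the ball of the space $C([0,T];H^k)\cap X^{k,b}\times C([0,T];H^s)\cap Y^{s,b}$ intersected with the already-constructed solution at level $(k_0,s_0)$, for $T$ possibly smaller but still depending only on the $H^k\times H^s$ norm of the data.

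The main obstacle, and the place where real work is hidden, is verifying that the boundary terms $B^u[u,v]$, $B^v[u,u]$ and the doubly-integrated multilinear terms genuinely map into $C_tH^k\times C_tH^s$ with the right dependence: these are not of ``$X^{s,b'}$-type'' but live directly in fixed-time Sobolev spaces, so one cannot simply quote a Bourgain-space bilinear estimate; instead one reduces, via Cauchy–Schwarz in the modulation variables, to the frequency-restricted estimates of Section \ref{sec:fre} (cf. Lemma \ref{lem:bdryHs}), and the restrictions $-2<k-s<3$ enter exactly through the worst-case interactions $u_{\mathrm{low}}\times v_{\mathrm{high}}$ and $u_{\mathrm{low}}\times u_{\mathrm{high}}$ on $U_\xi$, $V_\xi$. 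A secondary technical point is continuity in time of these terms (needed for the $C([0,T];\cdot)$ conclusion and for the initial condition at $t=0$, where $B^u$, $B^v$ must be subtracted consistently), which follows from dominated convergence once the defining integrals are shown to be absolutely convergent with the frequency-restricted bounds. Once these estimates are in hand, assembling the fixed-point argument, checking that the limit solves the original \eqref{nlskdv} in the distributional sense (so that the integrated-by-parts identities hold as stated), and reading off smooth dependence on the data are routine.
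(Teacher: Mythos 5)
Your two-stage outline (fixed point at a lower regularity in $\mathcal{A}_0$, then recovery of the full regularity via the nonlinear smoothing of the integrated-by-parts terms) is the right philosophy and matches the paper's announced strategy, but two of your concrete choices would fail. First, the pivot: you propose $(k_0,s_0)\in\mathcal{A}_0$ with $k-k_0=s-s_0$, which preserves $k-s$; but if $(k,s)$ violates $-1<k-s<2$ then so does any such pivot, so it cannot lie in $\mathcal{A}_0$. The paper instead lowers only \emph{one} index: for $-2<k-s\le-1$ it keeps $k$ and takes $s'\in[s-1,s]$ with $(k,s')\in\mathcal{A}_0$, and for $2\le k-s<3$ it keeps $s$ and takes $k'\in(k-1,k]$ with $(k',s)\in\mathcal{A}_0$. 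This asymmetry is forced precisely because persistence of regularity is only automatic for equal increments in $k$ and $s$.

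Second, and more seriously, the contraction space $\bigl(C_tH^k\cap X^{k,b}\bigr)\times\bigl(C_tH^s\cap Y^{s,b}\bigr)$ at the target regularity cannot be closed: the boundary terms $\mathcal{B}^u$, $\mathcal{B}^v$ admit Bourgain-norm bounds only for $(k,s)\in\mathcal{A}_0$ (Lemma \ref{lem:bordosX}), and the solution at the target regularity is genuinely \emph{not} in $X^{k,b}\times Y^{s,b}$ --- it is the sum of a Bourgain-space piece and a boundary term living only in $C_tH^k_x\times C_tH^s_x$ (this is exactly the content of the remark following the paper's proof). You correctly flag this as ``where real work is hidden,'' but reducing to Lemma \ref{lem:bdryHs} does not resolve it: an $L^\infty_tH^k$ bound on $u$ cannot be fed back into the multilinear Bourgain-space estimates for the remaining nonlinear terms, so the iteration does not close. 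The paper's actual resolution differs in each case. For $2\le k-s<3$ one contracts in $X^{k',b}\times Y^{s,b}$ and then, to upgrade $u$, writes $r=u-i\mathcal{B}^u[u,v]$ and shows that $r$ solves $L_v[r]\in X^{k,b}$ with $L_v$ a small perturbation of the identity; invertibility of $L_v$ together with the dedicated composition estimate of Lemma \ref{lem:N0B} (controlling $\mathcal{N}_0^u[\mathcal{B}^u[u,v],v]$) yields $r\in X^{k,b}$ and hence $u\in C_tH^k_x$. For $-2<k-s\le-1$ the contraction must be run in $X^{k,b}\times(Y^{s,b-1/3}\cap Y^{s',b})$, because $\mathcal{B}^v$ only lands in $Y^{s,b-1/3}$, and the KdV self-interaction $\partial_x(v^2)$ must then be split into high-low and comparable-frequency pieces and estimated via Lemma \ref{lem:est_dxv2} to accommodate the weakened modulation exponent. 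Neither device appears in your sketch, and without them the argument has a genuine gap.
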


\begin{nb}
	Together with the ill-posedness result \ref{thm:illposed}, we see that the regularity region $\mathcal{A}$ is sharp in terms of the existence of a $C^2$ flow.
\end{nb}

\begin{nb}
	The solution given by Theorem \ref{thm:well2} can be extended up to a maximal time of existence $T_{max}$. Since the parameters $\delta^u,\delta^v$ and the local time of existence $T$ depend only on the size of the initial data, if $T_{max}<\infty$, then
	$$
	\lim_{t\to T_{max}} \|u(t)\|_{H^k}+\|v(t)\|_{H^s}=+\infty.
	$$
\end{nb}

The continuous dependence on the initial data ensures that the flow $(u_0,v_0)\mapsto (u,v)$ given by Theorem \ref{thm:well2} is the continuous extension of the known flow for smooth initial data. More precisely,

\begin{cor}\label{cor:extension}
	Let $(k,s)\in \mathcal{A}_0$, $M>0$ and
	$$
	B_M^{k,s}=\{(u,v)\in H^k(\R)\times H^s(\R): \|u\|_{H^k}+\|v\|_{H^s}\le M\}.
	$$ 
	Consider the data-to-solution map (\cite{wu})
	$$
	\Theta: B_M^{k,s} \to C([0, T]; B_{2M}^{k,s})
	$$
	$$
	\Theta[u_0,v_0](t)=(u(t),v(t)), \quad t\in [0,T].
	$$
	Then, for any $(k',s')\in \mathcal{A}$ with $k'\le k$ and $s'\le s$, $\Theta$ admits a unique continuous extension to a map
	$$
	\Theta: B_M^{k',s'} \to C([0, T]; B_{2M}^{k',s'}).
	$$
\end{cor}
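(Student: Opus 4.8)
The plan is to take as the extension the solution map already furnished by Theorem \ref{thm:well2} at the lower regularity $(k',s')$, and to check that it restricts to $\Theta$ on the smaller ball. First I would set up the candidate: since $(k',s')\in\mathcal{A}$, Theorem \ref{thm:well2} produces, for every datum of $H^{k'}\times H^{s'}$-norm at most $M$, parameters $\delta^u,\delta^v>0$ and an integrated-by-parts strong solution (in the form of Definition \ref{defi:ibpsol} appropriate to $(k',s')$) which depends smoothly on the datum and is defined on a time interval depending only on $M$. After replacing $T$ by the minimum of that time and the existence time from \cite{wu} at $(k,s)$ — and shrinking further so that the solution remains within $B_{2M}$ throughout $[0,T]$ — this yields a continuous, in fact smooth, map $\widetilde\Theta : B_M^{k',s'}\to C([0,T];B_{2M}^{k',s'})$. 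Because $k'\le k$ and $s'\le s$ we have $B_M^{k,s}\subset B_M^{k',s'}$ and $C([0,T];H^k\times H^s)\hookrightarrow C([0,T];H^{k'}\times H^{s'})$ continuously, so comparing $\widetilde\Theta$ with $\Theta$ is meaningful.

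Next I would show that $\widetilde\Theta$ extends $\Theta$, the crux being that on smooth data the two notions of solution coincide. For $(u_0,v_0)\in H^\infty(\R)\times H^\infty(\R)$ the classical solution of \eqref{nlskdv} is unique and, by the remark following Definition \ref{defi:ibpsol}, equals the integrated-by-parts strong solution for every choice of $\delta^u,\delta^v$; hence it is the output of $\widetilde\Theta$, it lives on all of $[0,T]$, and — being classical — it is also $\Theta[u_0,v_0]$ when the datum additionally lies in $B_M^{k,s}$. I would then propagate this to all of $B_M^{k,s}$ by density: mollification $f\mapsto f\ast\phi_\e$, with $\widehat{\phi_\e}(\xi)=\widehat\phi(\e\xi)$, $\widehat\phi(0)=1$, $|\widehat\phi|\le1$, does not increase any $H^r$ norm and produces $H^\infty$ functions, so $H^\infty$ data is dense in $B_M^{k,s}$ for the $H^k\times H^s$ topology. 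Given $(u_0,v_0)\in B_M^{k,s}$ and smooth $(u_0^n,v_0^n)\in B_M^{k,s}$ converging to it in $H^k\times H^s$, the continuous dependence of \cite{wu} gives $\Theta[u_0^n,v_0^n]\to\Theta[u_0,v_0]$ in $C([0,T];H^k\times H^s)$, hence in $C([0,T];H^{k'}\times H^{s'})$, while continuity of $\widetilde\Theta$ gives $\widetilde\Theta[u_0^n,v_0^n]=\Theta[u_0^n,v_0^n]\to\widetilde\Theta[u_0,v_0]$ in the same space; equating the limits yields $\widetilde\Theta[u_0,v_0]=\Theta[u_0,v_0]$.

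Uniqueness I would handle by the same density idea at the lower regularity: any continuous extension $\Theta'$ of $\Theta$ into $C([0,T];B_{2M}^{k',s'})$ that returns integrated-by-parts strong solutions must, on $H^\infty$ data (which is dense in $B_M^{k',s'}$, mollification being a contraction on $H^{k'}$ and $H^{s'}$ as well), return the classical solution, i.e.\ agree with $\widetilde\Theta$ there; continuity then forces $\Theta'=\widetilde\Theta$ on all of $B_M^{k',s'}$.

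The step I expect to be the main obstacle is the reconciliation at the overlap. A priori, an integrated-by-parts strong solution at a regularity $(k',s')$ lying in the enlarged part $\mathcal{A}\setminus\mathcal{A}_0$ need not solve the bare Duhamel formula, so one cannot invoke uniqueness across regularity levels directly; the argument is forced to funnel through $H^\infty$ data, where Definition \ref{defi:ibpsol} collapses to the classical notion independently of $\delta^u,\delta^v$, and then to glue the levels together via the two continuous-dependence statements — from \cite{wu} at $(k,s)$ and from Theorem \ref{thm:well2} at $(k',s')$ — on a common time interval $[0,T]$ depending only on $M$. The single genuinely non-formal ingredient is this uniformity of the existence time across the two regularity levels, equivalently the persistence of smooth solutions up to time $T$, which is already built into the local well-posedness results quoted above.
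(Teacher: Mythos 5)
Your argument is correct and is essentially the route the paper intends: the extension is the flow furnished by Theorem \ref{thm:well2} at $(k',s')$, reconciled with Wu's flow on the dense set of smooth data (where integrated-by-parts and classical solutions coincide, with Proposition \ref{prop:persist} supplying the persistence needed to see that the low-regularity fixed point applied to smooth data really is the classical solution), and then glued via the two continuous-dependence statements on a common time interval depending only on $M$. Your insistence that a competing extension return integrated-by-parts solutions is in fact necessary, not a redundancy: the ball $B_M^{k,s}$ is weakly closed in $H^k\times H^s$ and hence closed, not dense, in $B_M^{k',s'}$ for the $H^{k'}\times H^{s'}$ topology, so bare topological uniqueness of a continuous extension would fail, and uniqueness must be understood among continuous flows of integrated-by-parts solutions exactly as in the remark following the corollary.
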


\begin{nb}
	While Theorem \ref{thm:well2} involves the existence of the parameters $\delta^u$, $\delta^v$, the flow does not depend on the particular choice of parameters. More precisely, if one is able to build a continuous flow of integrated-by-parts solutions for two different pairs $(\delta^u,\delta^v)$ and $(\tilde{\delta}^u,\tilde{\delta}^v)$ (which are \textit{a priori} unrelated), the two flows must coincide.
	In fact, the role of these parameters is merely technical, allowing for a successful construction of the flow.
\end{nb}

\begin{nb}
For regularities $(k,s)\in \mathcal{A}_0$, the notions of integrated-by-parts strong solution and classical strong solution \textit{do} coincide, as the flows constructed in \cite{wu} and Theorem \ref{thm:well2} are the same. As such, integrated-by-parts strong solutions are truly a generalization of the usual concept of solution which allows for rougher levels of regularity.
\end{nb}

\section{Reduction to frequency-restricted estimates (FRE)}\label{sec:fre}

In this section, we reduce the derivation of multilinear estimates in Bourgain spaces to frequency-restricted estimates. To that end, observe that, after recovering the solutions from the profiles
$$\mathcal{F}_xu(t)=e^{-it\xi^2}\tilde{u}(t),\quad \mathcal{F}_xv(t)=e^{it\xi^3}\tilde{v}(t),$$
 the time integrated nonlinear terms in \eqref{eq:perfilu1}-\eqref{eq:perfilv1} can be written in Duhamel form generically as\footnote{For the sake of simplicity, we omit here the possible instances of the conjugation operator $\ast$, with no impairment in the validity of the results below.}
$$\int_0^t e^{it\phi(\xi)}N(s,\xi)ds=\int_0^t e^{i(t-s)\phi(\xi)}e^{is\phi(\xi)}N(s,\xi)ds,$$
where $\phi(\xi)=-\xi^2$ or $\phi(\xi)=\xi^3$ is the phase of the linear Schrödinger or KdV propagator, respectively, depending on which of the two equations we are looking at. As such, writing $$\mathcal{N}(t,\xi)=\mathcal{F}_\xi^{-1}\left(e^{it\phi(\xi)}N(t,\xi)\right),$$
we focus on generic terms of the form
\begin{equation}
	\left(\mathcal{F}_{x}\mathcal{N}[g_1,\dots,g_m]\right)(\xi)= e^{it\phi(\xi)}\int e^{it\Phi}\mathcal{M} \tilde{g}_1\dots \tilde{g}_md\xi_1\dots d\xi_{m-1}
\end{equation}
where $\mathcal{M}$ is a precise Fourier multiplier and 
$$
\Phi=-\phi(\xi)+\sum_{j=1}^m\phi_j(\xi_j).
$$
Therefore we aim to derive general multilinear estimates of the form 
\begin{equation}
	\label{eq:estmulti}
	\| \mathcal{N}[g_1,\dots,g_m]\|_{Z^{s,b'}}\lesssim \prod_{j=1}^{m} \|g_j\|_{Z_j^{s_j,b}},\quad Z, Z_j\in \{X, Y\},\quad s, s_j\in \R
\end{equation}
 for $b=(1/2)^+$ and $b'=(b-1)^+$. First we consider the case of quadratic nonlinearities.

\begin{lem}[Case $m=2$]\label{lem:fre_bi}
	The  estimate \eqref{eq:estmulti} holds if, for some $0<\eta<1$, either:
	\begin{itemize}
		\item for all\footnote{The empty set symbol $\emptyset$ here corresponds to the frequency component $\xi$, without indices.} $j\in\{\emptyset,1,2\}$, $l\neq j$, $\alpha\in \R$ and $1<M\lesssim |\alpha|$,
		\begin{equation}\label{eq:frequad}
			\sup_{\xi_j}\int_\Gamma \frac{|\mathcal{M}|^2\jap{\xi}^{2s}}{\jap{\xi_{1}}^{2s_1}\jap{\xi_{2}}^{2s_2}}\mathbbm{1}_{|\Phi-\alpha|<M} d\xi_{l} \lesssim \jap{\alpha}^\eta M^\eta.
		\end{equation}
		\item for all $\alpha\in \R$ and $M>1$, there exists $j\in\{1,2\}$ such that, with $l\neq j$
		\begin{equation}\label{eq:frequad2}
			\sup_{\xi_j}\int_\Gamma \frac{|\mathcal{M}|^2\jap{\xi}^{2s}}{\jap{\xi_{1}}^{2s_1}\jap{\xi_{2}}^{2s_2}}\mathbbm{1}_{|\Phi-\alpha|<M} d\xi_{l}  \lesssim \jap{\alpha} M^\eta.
		\end{equation}
	\item for all $\alpha\in \R$ and $M>1$,
	\begin{equation}\label{eq:frequad3}
		\sup_{\xi}\int_\Gamma \frac{|\mathcal{M}|^2\jap{\xi}^{2s}}{\jap{\xi_{1}}^{2s_1}\jap{\xi_{2}}^{2s_2}}\mathbbm{1}_{|\Phi-\alpha|<M} d\xi_{1}  \lesssim \jap{\alpha}^\eta M.
	\end{equation}
	\end{itemize}
	%This remains true if one replaces $(\xi,\eta)$ with $(\xi_1,\eta_1)$.
\end{lem}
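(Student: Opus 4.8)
The plan is to run the standard duality and Cauchy--Schwarz scheme for multilinear Bourgain estimates, arranged so that the frequency integral surviving the Cauchy--Schwarz is exactly one of the quantities in \eqref{eq:frequad}--\eqref{eq:frequad3}.

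First I would unwind the definition of $\mathcal{N}$. Because $\tilde g_j=e^{-it\phi_j(\xi_j)}\mathcal{F}_xg_j$ and $\Phi=-\phi(\xi)+\phi_1(\xi_1)+\phi_2(\xi_2)$, all the oscillatory factors telescope and
\[
\mathcal{F}_x\mathcal{N}[g_1,g_2](t,\xi)=\int_{\xi=\xi_1+\xi_2}\mathcal{M}\,\mathcal{F}_xg_1(t,\xi_1)\,\mathcal{F}_xg_2(t,\xi_2)\,d\xi_1,
\]
i.e.\ for the present estimate $\mathcal{N}$ is just the bilinear term in physical variables; the profiles matter only for the integration by parts. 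Passing to the space--time Fourier side and writing $\sigma_0=\tau-\phi(\xi)$ (so $\langle\sigma_0\rangle=\langle\tau+\xi^2\rangle$ if $Z=X$ and $\langle\sigma_0\rangle=\langle\tau-\xi^3\rangle$ if $Z=Y$) and $\sigma_j=\tau_j-\phi_j(\xi_j)$ for $j=1,2$, one has, on the support $\{\xi=\xi_1+\xi_2,\ \tau=\tau_1+\tau_2\}$, the exact identity $\Phi=\sigma_0-\sigma_1-\sigma_2$, whence $\langle\Phi\rangle\lesssim\max(\langle\sigma_0\rangle,\langle\sigma_1\rangle,\langle\sigma_2\rangle)$. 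Duality turns \eqref{eq:estmulti} into the weighted trilinear inequality
\[
\int_{\substack{\xi=\xi_1+\xi_2\\ \tau=\tau_1+\tau_2}}\frac{|\mathcal{M}|\langle\xi\rangle^{s}}{\langle\xi_1\rangle^{s_1}\langle\xi_2\rangle^{s_2}}\cdot\frac{\langle\sigma_0\rangle^{b'}}{\langle\sigma_1\rangle^{b}\langle\sigma_2\rangle^{b}}\,f_0f_1f_2\lesssim\|f_0\|_{L^2}\|f_1\|_{L^2}\|f_2\|_{L^2}
\]
for nonnegative $f_0,f_1,f_2\in L^2_{\tau,\xi}$, where $f_j$ stands for $\langle\xi_j\rangle^{s_j}\langle\sigma_j\rangle^{b}|\hat g_j|$ ($j=1,2$) and $f_0$ for $\langle\xi\rangle^{-s}\langle\sigma_0\rangle^{-b'}|\hat h|$ with $h$ dual to $Z^{s,b'}$. (A conjugation $\tilde g_j^\ast$ only inserts a reflection $\xi_j\mapsto-\xi_j$ and a sign change of $\phi_j$; as the footnote notes, this changes nothing below.)

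Next I would decompose each modulation dyadically, $\langle\sigma_j\rangle\sim L_j$ (dyadic $L_j\geq1$, the scale $1$ absorbing the bounded range), and bound each block. On a block one isolates one factor in $L^2$ and applies Cauchy--Schwarz to the remaining two in their two free Fourier variables: integrating out the free time variable costs $\lesssim\min(L_i,L_j)$ of the two non-isolated scales (each such $\sigma$ being affine in that variable), and $\Phi=\sigma_0-\sigma_1-\sigma_2$ confines $\Phi$ to within $O(\max(L_i,L_j))$ of $\pm$(the isolated modulation), so what survives is exactly $\int_\Gamma\frac{|\mathcal{M}|^2\langle\xi\rangle^{2s}}{\langle\xi_1\rangle^{2s_1}\langle\xi_2\rangle^{2s_2}}\mathbbm{1}_{|\Phi-\alpha|<M}\,d\xi_\ell$ with $\langle\alpha\rangle$ comparable to the isolated modulation and $M\sim\max(L_i,L_j)$. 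The choice of isolated factor and of which of \eqref{eq:frequad}--\eqref{eq:frequad3} to use is dictated by the hypothesis: if \eqref{eq:frequad3} holds one always isolates $f_0$, getting $\langle\alpha\rangle\sim L_0$ and, from the FRE, $L_0^{\eta}\max(L_1,L_2)$, which combined with the $\min(L_1,L_2)$ from the time integral and the modulation weights yields the block bound $\lesssim L_0^{b'+\eta/2}L_1^{-b+1/2}L_2^{-b+1/2}\prod_j\|f_j\|_{L^2}$, whose exponents are all strictly negative, so the dyadic sum converges; if \eqref{eq:frequad} holds one isolates the factor of largest modulation, which makes the side condition $M\lesssim|\alpha|$ automatic (the neighbourhood size being the max of the two \emph{remaining} scales); if \eqref{eq:frequad2} holds one isolates whichever of $f_1,f_2$ the hypothesis applies to. In the latter two cases a short case split according to which $L_j$ is largest again produces a summable bound, the only delicate point being that $\langle\sigma_0\rangle^{b'}$ combined with the $\sqrt{M}$ of the Cauchy--Schwarz can give a small \emph{positive} power of $L_0$, which one absorbs into a neighbouring factor (possible since $M\leq\max_jL_j$).

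The bulk of the work, and the main obstacle, is exactly this bookkeeping: one must check that in every configuration of $(L_0,L_1,L_2)$ there is an admissible isolated factor for which the assumed hypothesis applies, and that all resulting exponents of $L_0,L_1,L_2$ are strictly negative. This goes through because $b>1/2$ (so $\sum_L\langle L\rangle^{-2b}<\infty$), $b'+1=1/2^+$, and $\eta$ may be chosen small; the three forms of the FRE are tailored so that for each of the multilinear terms built in Section~\ref{sec:method} at least one of them is verifiable. The cubic and higher-degree terms are treated by the analogous higher-$m$ versions of this reduction; only $m=2$ is used here.
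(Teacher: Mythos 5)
Your argument is correct and follows essentially the same route as the paper: duality, Cauchy--Schwarz isolating the factor dictated by the hypothesis (the one of largest modulation in the case of \eqref{eq:frequad}, which makes $M\lesssim|\alpha|$ automatic), then the frequency-restricted estimate with $\alpha$ equal to the isolated modulation and summation of the resulting negative powers. The only cosmetic difference is that you decompose all three modulations dyadically and measure the time slab, whereas the paper integrates the free time variable exactly via $\int\jap{\tau_1-\phi_1}^{-2b}\jap{\tau_2-\phi_2}^{-2b}d\tau_1\lesssim\jap{\alpha-\Phi}^{-2b}$ and only afterwards decomposes dyadically in $\Phi-\alpha$; the exponent bookkeeping is identical.
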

%
%\begin{nb}
%	In practice, the first case allows one to come closer to scaling-critical regularity (which corresponds to a full exponent in both $\alpha$ and $M$) but can only be used efficiently if the multiplier is controlled by the resonance function. If this does not happen, it is preferable to use the second option.
%\end{nb}

\begin{proof}
	We start with the bilinear estimate as a consequence of \eqref{eq:frequad}. By duality, it is equivalent to
\begin{align}\label{eq:duality}
	I=\left|\int_{\xi=\xi_1+\xi_2, \tau=\tau_1+\tau_2}\frac{ \mathcal{M} \jap{\xi}^{s}\jap{\tau-\phi(\xi)}^{b'}hh_1h_2}{\jap{\xi_1}^{s_1}\jap{\tau_1-\phi_1(\xi_1)}^b\jap{\xi_2}^{s_2}\jap{\tau_2-\phi_2(\xi_2)}^b}d\xi_1d\tau_1d\xi  d\tau \right|
	\lesssim \|h\|_{L^2}\|h_1\|_{L^2}\|h_2\|_{L^2}.
\end{align}
%{\red talvez valha a pena aqui fazer a rápida passagem da função $Z$ para a função $L^2$ para fazer a ponte entre os $u$ acima e os $v$ aqui e uniformizar a notação.... se bem que agora $u$ e $v$ confundem-se com as soluções do sistema...}
We decompose the domain into three regions, depending on whether
\begin{equation}\label{eq:decompdom}
	|\tau-\phi(\xi)|,\quad |\tau_1-\phi_1(\xi_1)|\mbox{ or }|\tau_2-\phi_2(\xi_2)|
\end{equation}
is the largest of the three. Assume that $|\tau-\phi(\xi)|$ is the largest. Since
$$
\Phi=\left(\tau-\phi(\xi)\right) - \left(\tau_1-\phi_1(\xi_1)\right)-\left(\tau_2-\phi_2(\xi_2)\right),
$$ 
this implies that $|\Phi|\lesssim |\tau-\phi(\xi)|$. By Cauchy-Schwarz,
\begin{align}\label{eq:cs}
	I&\lesssim \left(\int\frac{|\mathcal{M}|^2\jap{\xi}^{2s}\jap{\tau-\phi(\xi)}^{2b'}|h|^2d\xi_1d\tau_1d\xi d\tau}{\jap{\xi_1}^{2s_1}\jap{\tau_1-\phi_1(\xi_1)}^{2b}\jap{\xi_2}^{2s_2}\jap{\tau_2-\phi_2(\xi_2)}^{2b}} \right)^{\frac{1}{2}}\|h_1\|_{L^2}\|h_2\|_{L^2}\\&\lesssim\sup_{\tau,\xi} \left(\int\frac{|\mathcal{M}|^2\jap{\xi}^{2s}\jap{\tau-\phi(\xi)}^{2b'}d\xi_1d\tau_1}{\jap{\xi_1}^{2s_1}\jap{\tau_1-\phi_1(\xi_1)}^{2b}\jap{\xi_2}^{2s_2}\jap{\tau_2-\phi_2(\xi_2)}^{2b}}\right)^{\frac{1}{2}}
	\|h\|_{L^2}\|h_1\|_{L^2}\|h_2\|_{L^2}.
\end{align}
Since $2b>1$, integration in $\tau_1$ yields 
\begin{equation}\label{eq:integratetau}
	\int_{\tau=\tau_1+\tau_2} \frac{1}{\jap{\tau_1-\phi_1(\xi_1)}^{2b}\jap{\tau_2-\phi_2(\xi_2)}^{2b}}d\tau_1 \lesssim \frac{1}{\jap{\tau-\phi_1(\xi_1)-\phi_2(\xi_2)}^{2b}}= \frac{1}{\jap{\tau-\phi(\xi) - \Phi}^{2b}}.
\end{equation}
Therefore
\begin{align*}
	I&\lesssim \sup_{\tau,\xi}\left(\int_{\xi=\xi_1+\xi_2}\frac{|\mathcal{M}|^2\jap{\xi}^{2s}}{\jap{\xi_1}^{2s_1}\jap{\xi_2}^{2s_2}}\frac{\jap{\tau-\phi(\xi)}^{2b'}}{\jap{\tau-\phi(\xi) - \Phi}^{2b}}d\xi_1\right)^{\frac{1}{2}}\|h\|_{L^2}\|h_1\|_{L^2}\|h_2\|_{L^2}\\&\lesssim \sup_{\alpha,\xi}\left(\int_{|\Phi|\lesssim |\alpha|, \xi=\xi_1+\xi_2}\frac{|\mathcal{M}|^2\jap{\xi}^{2s}}{\jap{\xi_1}^{2s_1}\jap{\xi_2}^{2s_2}}\frac{\jap{\alpha}^{2b'}}{\jap{ \Phi-\alpha}^{2b}}d\xi_1\right)^{\frac{1}{2}}\|h\|_{L^2}\|h_1\|_{L^2}\|h_2\|_{L^2}.
\end{align*}
Performing a dyadic decomposition in $\Phi-\alpha$ and using the frequency-restricted estimate \eqref{eq:frequad},
\begin{align*}
	I&\lesssim \sup_{\alpha,\xi}\sum_{\stackrel{M\ \text{dyadic}}{ M\lesssim |\alpha|} }\left(\int\frac{|\mathcal{M}|^2\jap{\xi}^{2s}}{\jap{\xi_1}^{2s_1}\jap{\xi_2}^{2s_2}}\frac{\jap{\alpha}^{2b'}}{M^{2b}}\mathbbm{1}_{|\Phi-\alpha|<M}d\xi_1\right)^{\frac{1}{2}}\|h\|_{L^2}\|h_1\|_{L^2}\|h_2\|_{L^2} \\&\lesssim \sup_\alpha\sum_{\stackrel{M\ \text{dyadic}}{ M\lesssim |\alpha|} } \jap{\alpha}^{b'+\eta/2}M^{-b+\eta/2}\|h\|_{L^2}\|h_1\|_{L^2}\|h_2\|_{L^2}\lesssim \|h\|_{L^2}\|h_1\|_{L^2}\|h_2\|_{L^2}
\end{align*}
and the claim follows. In the case where $|\tau_1-\phi_1(\xi_1)|$ is the largest in \eqref{eq:decompdom}, we have $|\Phi|\lesssim |\tau_1-\phi_1(\xi_1)|$. Applying Cauchy-Schwarz and proceeding as above,
\begin{align}
	I&\lesssim \left(\int\frac{|\mathcal{M}|^2\jap{\xi}^{2s}\jap{\tau-\phi(\xi)}^{2b'}|h_1|^2d\xi_1d\tau_1d\xi d\tau}{\jap{\xi_1}^{2s_1}\jap{\tau_1-\phi_1(\xi_1)}^{2b}\jap{\xi_2}^{2s_2}\jap{\tau_2-\phi_2(\xi_2)}^{2b}} \right)^{\frac{1}{2}}\|h\|_{L^2}\|h_2\|_{L^2}\\&\lesssim\sup_{\tau_1,\xi_1} \left(\int\frac{|\mathcal{M}|^2\jap{\xi}^{2s}\jap{\tau-\phi(\xi)}^{2b'}d\xi d\tau}{\jap{\xi_1}^{2s_1}\jap{\tau_1-\phi_1(\xi_1)}^{2b}\jap{\xi_2}^{2s_2}\jap{\tau_2-\phi_2(\xi_2)}^{2b}}\right)^{\frac{1}{2}}
	\|h\|_{L^2}\|h_1\|_{L^2}\|h_2\|_{L^2}\\&\lesssim\label{eq:estlemma31} \sup_{\tau_1,\xi_1}\left(\int_{\xi=\xi_1+\xi_2}\frac{|\mathcal{M}|^2\jap{\xi}^{2s}}{\jap{\xi_1}^{2s_1}\jap{\xi_2}^{2s_2}}\frac{\jap{\tau_1-\phi_1(\xi_1)}^{-2b}}{\jap{\tau_1-\phi_1(\xi_1) + \Phi}^{-2b'}}d\xi\right)^{\frac{1}{2}}\|h\|_{L^2}\|h_1\|_{L^2}\|h_2\|_{L^2}\\&\lesssim \sup_{\alpha,\xi_1}\left(\int_{|\Phi|\lesssim |\alpha|, \xi=\xi_1+\xi_2}\frac{|\mathcal{M}|^2\jap{\xi}^{2s}}{\jap{\xi_1}^{2s_1}\jap{\xi_2}^{2s_2}}\frac{\jap{\alpha}^{-2b}}{\jap{ \Phi-\alpha}^{-2b'}}d\xi\right)^{\frac{1}{2}}\|h\|_{L^2}\|h_1\|_{L^2}\|h_2\|_{L^2}\\&\lesssim \sup_{\alpha,\xi_1}\sum_{\stackrel{M\ \text{dyadic}}{ M\lesssim |\alpha|} }\left(\int\frac{|\mathcal{M}|^2\jap{\xi}^{2s}}{\jap{\xi_1}^{2s_1}\jap{\xi_2}^{2s_2}}\frac{\jap{\alpha}^{-2b}}{M^{-2b'}}\mathbbm{1}_{|\Phi-\alpha|<M}d\xi\right)^{\frac{1}{2}}\|h\|_{L^2}\|h_1\|_{L^2}\|h_2\|_{L^2} \\&\lesssim \sup_\alpha \sum_{\stackrel{M\ \text{dyadic}}{ M\lesssim |\alpha|} } \jap{\alpha}^{-b+\eta/2}M^{b'+\eta/2}\|h\|_{L^2}\|h_1\|_{L^2}\|h_2\|_{L^2}\lesssim \|h\|_{L^2}\|h_1\|_{L^2}\|h_2\|_{L^2}
\end{align}
The case where $|\tau_2-\phi_2(\xi_2)|$ is the largest in \eqref{eq:decompdom} is treated analogously. Thus \eqref{eq:frequad} implies the bilinear estimate \eqref{eq:estmulti}.

Concerning the bilinear estimate through \eqref{eq:frequad2} or \eqref{eq:frequad3}, one follows the steps above without decomposing the domain as in \eqref{eq:decompdom} and consequently without the extra condition $|\Phi|\lesssim |\alpha|$. More precisely, by \eqref{eq:cs} and \eqref{eq:integratetau},
\begin{align*}
	I&\lesssim \sup_{\tau,\xi}\left(\int_{\xi=\xi_1+\xi_2}\frac{|\mathcal{M}|^2\jap{\xi}^{2s}}{\jap{\xi_1}^{2s_1}\jap{\xi_2}^{2s_2}}\frac{\jap{\tau-\phi(\xi)}^{2b'}}{\jap{\tau-\phi(\xi) + \Phi}^{2b}}d\xi_1\right)^{\frac{1}{2}}\|h\|_{L^2}\|h_1\|_{L^2}\|h_2\|_{L^2}\\&\lesssim \sup_{\alpha,\xi}\left(\int_{ \xi=\xi_1+\xi_2}\frac{|\mathcal{M}|^2\jap{\xi}^{2s}}{\jap{\xi_1}^{2s_1}\jap{\xi_2}^{2s_2}}\frac{\jap{\alpha}^{2b'}}{\jap{ \Phi-\alpha}^{2b}}d\xi_1\right)^{\frac{1}{2}}\|h\|_{L^2}\|h_1\|_{L^2}\|h_2\|_{L^2}.
\end{align*}
Decomposing dyadically in $\Phi-\alpha$ and applying \eqref{eq:frequad3},
\begin{align*}
	I&\lesssim \sup_{\alpha,\xi}\sum_{M\ \text{dyadic}} \left(\int\frac{|\mathcal{M}|^2\jap{\xi}^{2s}}{\jap{\xi_1}^{2s_1}\jap{\xi_2}^{2s_2}}\frac{\jap{\alpha}^{2b'}}{M^{2b}}\mathbbm{1}_{|\Phi-\alpha|<M}d\xi_1\right)^{\frac{1}{2}}\|h\|_{L^2}\|h_1\|_{L^2}\|h_2\|_{L^2} \\&\lesssim \sup_\alpha\sum_{M\ \text{dyadic}} \jap{\alpha}^{b'+\eta/2}M^{-b+1/2}\|h\|_{L^2}\|h_1\|_{L^2}\|h_2\|_{L^2}\lesssim \|h\|_{L^2}\|h_1\|_{L^2}\|h_2\|_{L^2}
\end{align*}
and thus \eqref{eq:frequad3} also implies \eqref{eq:estmulti}. On the other hand, proceeding as in \eqref{eq:estlemma31} and using \eqref{eq:frequad2},
\begin{align}
	I&\lesssim  \sup_{\alpha,\xi_1}\sum_{M\ \text{dyadic} }\left(\int\frac{|\mathcal{M}|^2\jap{\xi}^{2s}}{\jap{\xi_1}^{2s_1}\jap{\xi_2}^{2s_2}}\frac{\jap{\alpha}^{-2b}}{M^{-2b'}}\mathbbm{1}_{|\Phi-\alpha|<M}d\xi\right)^{\frac{1}{2}}\|h\|_{L^2}\|h_1\|_{L^2}\|h_2\|_{L^2} \\&\lesssim \sup_\alpha \sum_{M\ \text{dyadic} } \jap{\alpha}^{-b+1/2}M^{b'+\eta/2}\|h\|_{L^2}\|h_1\|_{L^2}\|h_2\|_{L^2}\lesssim \|h\|_{L^2}\|h_1\|_{L^2}\|h_2\|_{L^2}.
\end{align}
Therefore \eqref{eq:frequad2} implies \eqref{eq:estmulti} and the proof is finished.
\end{proof}

For higher-order nonlinear terms, one could apply a Cauchy-Schwarz argument. However, this is usually not optimal in terms of regularity. In \cite{COS}, using a standard interpolation argument, the first and third authors derived a Schur's test-type criterion for the validity of \eqref{eq:estmulti}:

\begin{lem}[Case $m\ge 3$, \cite{COS}]\label{lem:fre_tri}
	The estimate \eqref{eq:estmulti} holds if, for some $0<\eta<1$, there exist a proper nonempty subset $A\subset \{\emptyset,1,\dots, m\}$ and multipliers $\mathcal{M}_1,\mathcal{M}_2\ge 0$ such that
		$$
		\sup_{\xi_{j\in A}} \int_{\Gamma} \mathcal{M}_1 \mathbbm{1}_{|\Psi-\alpha|<M} d\xi_{j\notin A} + \sup_{\xi_ j\notin A} \int_{\Gamma} \mathcal{M}_2 \mathbbm{1}_{|\Psi-\alpha|<M} d\xi_{j\in A} \lesssim M^\eta
		$$
		and
		$$
		\mathcal{M}_1\mathcal{M}_2=\frac{|\mathcal{M}|^2\jap{\xi}^{2s}}{\prod_{j=1}^k\jap{\xi_{j}}^{2s_j}}.
		$$
\end{lem}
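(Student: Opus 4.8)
The plan is to deduce \eqref{eq:estmulti} from a multilinear Schur test on the frequency hyperplane $\Gamma=\{\xi=\xi_1+\dots+\xi_m\}$, after the standard $X^{s,b}$-type reduction; this is the route of \cite{COS}, so I only sketch the main points.

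\emph{Step 1: duality and reduction to frequencies.} Setting $f_j(\tau_j,\xi_j)=\jap{\xi_j}^{s_j}\jap{\tau_j-\phi_j(\xi_j)}^{b}\widehat{g_j}$ and pairing $\mathcal N[g_1,\dots,g_m]$ against a function $h$ dual to the $Z^{s,b'}$-norm, estimate \eqref{eq:estmulti} follows once we show
\[
I:=\int_{\substack{\xi=\sum_j\xi_j\\ \tau=\sum_j\tau_j}}\frac{|\mathcal M|\,\jap{\xi}^{s}\,\jap{\sigma}^{b'}}{\prod_{j=1}^m\jap{\xi_j}^{s_j}\jap{\sigma_j}^{b}}\,|h|\,\prod_{j=1}^m|f_j|\;\lesssim\; \|h\|_{L^2}\prod_{j=1}^m\|f_j\|_{L^2},
\]
where $\sigma=\tau-\phi(\xi)$, $\sigma_j=\tau_j-\phi_j(\xi_j)$ are the modulations. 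The crucial arithmetic fact is the identity $\Phi=\sigma-\sum_{j=1}^m\sigma_j$, with $\Phi=-\phi(\xi)+\sum_j\phi_j(\xi_j)$ the resonance function; it is what links the temporal weights to $\Phi$.

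\emph{Step 2: removing the time variables.} Decompose each of $\jap{\sigma},\jap{\sigma_1},\dots,\jap{\sigma_m}$ into dyadic shells; on a fixed block let $M$ be the largest size and observe, via the identity above, that $|\Phi-\alpha|<M$ for $\alpha$ the (fixed, on that block) value of $\sigma$ minus the non-maximal modulations. Since $b=\frac{1}{2}^+$, the factors $\jap{\sigma_j}^{-2b}$ are integrable in $\tau_j$, so a Cauchy--Schwarz pulling out the $|f_j|^2$ together with the $\tau$-integrations reduces $I$, on each dyadic block, to
\[
I\;\lesssim\; M^{-\delta}\,\int_\Gamma \frac{|\mathcal M|\,\jap{\xi}^{s}}{\prod_{j=1}^m\jap{\xi_j}^{s_j}}\,\mathbbm{1}_{|\Phi-\alpha|<M}\,\prod_{j\in\{\emptyset,1,\dots,m\}}F_j\,,\qquad F_\emptyset=|h|,\ F_j=|f_j|,
\]
for some $\delta>0$; that $\delta$ can be taken larger than $\eta$ — so that the dyadic sum over $M$, and the subsequent sum over $\alpha$, converge — is precisely where $\eta<1$, $b=\frac{1}{2}^+$ and $m\ge3$ (which leaves surplus modulation weights after the $\tau$-integrations) enter.

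\emph{Step 3: the multilinear Schur test.} Fix the partition $\{\emptyset,1,\dots,m\}=A\sqcup A^{c}$ and the factorization $\mathcal M_1\mathcal M_2=|\mathcal M|^2\jap{\xi}^{2s}/\prod_j\jap{\xi_j}^{2s_j}$ from the statement (by the symmetry of the hypotheses we may assume $\emptyset\in A$); then $\frac{|\mathcal M|\jap{\xi}^s}{\prod_j\jap{\xi_j}^{s_j}}\mathbbm{1}_{|\Phi-\alpha|<M}=\big(\mathcal M_1^{1/2}\mathbbm{1}\big)\big(\mathcal M_2^{1/2}\mathbbm{1}\big)$. Estimating the spatial form by two successive Cauchy--Schwarz inequalities — first in the variables $\xi_{j\notin A}$, pairing $\mathcal M_2^{1/2}\mathbbm{1}$ with $\prod_{j\notin A}F_j$ and $\mathcal M_1^{1/2}\mathbbm{1}$ with the remaining factors, and then in the variables $\xi_{j\in A}$ — one obtains
\[
\int_\Gamma \frac{|\mathcal M|\jap{\xi}^s}{\prod_j\jap{\xi_j}^{s_j}}\mathbbm{1}_{|\Phi-\alpha|<M}\prod_j F_j\;\le\;\Big(\sup_{\xi_{j\in A}}\!\int_\Gamma \mathcal M_1\mathbbm{1}_{|\Phi-\alpha|<M}\,d\xi_{j\notin A}\Big)^{\!1/2}\Big(\sup_{\xi_{j\notin A}}\!\int_\Gamma \mathcal M_2\mathbbm{1}_{|\Phi-\alpha|<M}\,d\xi_{j\in A}\Big)^{\!1/2}\prod_j\|F_j\|_{L^2}.
\]
By hypothesis each supremum is $\lesssim M^{\eta}$, hence the spatial form is $\lesssim M^{\eta}\|h\|_{L^2}\prod_j\|f_j\|_{L^2}$; combined with Step 2 and summed over the dyadic parameters, this yields \eqref{eq:estmulti}.

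\emph{Main obstacle.} The delicate step is Step 3 performed on the \emph{constrained} set $\Gamma$: one has to track how the single linear relation $\xi=\sum_j\xi_j$ threads through the two Cauchy--Schwarz steps — which frequency plays the role of the "determined" variable, and how the treatment differs according to whether $\emptyset\in A$ or $\emptyset\notin A$ — so that the functions group exactly as $A\mid A^{c}$ and the two suprema of the hypothesis appear with the correct integration domains. The secondary technical point is the bookkeeping in Step 2 that guarantees $\delta>\eta$ when $m\ge3$; this is the "standard interpolation argument" referred to, carried out in \cite{COS}.
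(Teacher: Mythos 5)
First, a point of comparison: the paper does not prove this lemma at all — it is quoted from \cite{COS} — so your proposal is being measured against the cited argument rather than an internal proof. Your Steps 1 and 3 are the right ingredients (duality, then the weighted Schur test on $\Gamma$ with the factorization $\mathcal M_1\mathcal M_2=|\mathcal M|^2\jap{\xi}^{2s}/\prod_j\jap{\xi_j}^{2s_j}$). The gap is in Step 2. You reduce, on each dyadic block, to a \emph{single} spatial form carrying one indicator $\mathbbm{1}_{|\Phi-\alpha|<M}$ and a prefactor $M^{-\delta}$, and assert that $\delta>\eta$ is achievable. This fails for $\eta\ge 1/2$: in the worst configuration exactly one modulation is large, say $\jap{\sigma}\sim M\sim\jap{\Phi}$ while $\sigma_1,\dots,\sigma_m=O(1)$, and then the only usable temporal weight is $\jap{\sigma}^{b'}\sim M^{-(1-b)^-}=M^{-1/2^{+}}$; no Cauchy--Schwarz in the $\tau$-variables extracts more, and $m\ge3$ gives no ``surplus modulation weight'' in this configuration. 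Your route therefore proves the lemma only for $\eta<1/2$, whereas the paper invokes it, through Lemmas \ref{lem:eta1} and \ref{lem:fre_tri1}, with $\eta=1/p$ arbitrarily close to $1$; the restricted range would not cover any of the trilinear or quadrilinear estimates of Section \ref{sec:multi}.

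The repair is to run the Schur splitting on the full space-time form instead of after a global reduction in time. Decompose each modulation into dyadic shells $\jap{\sigma_j}\sim L_j$. In the Schur factor where $(\tau_j,\xi_j)_{j\in A}$ is frozen and one integrates over $A^c$, the shells force $|\Phi-\alpha_1|\lesssim M_1:=\max_{j\in A^c}L_j$ with $\alpha_1$ determined by the frozen variables (this is exactly why the hypothesis must hold uniformly in $\alpha$), and the constrained $\tau$-volume is $\prod_{j\in A^c}L_j/M_1$; so that factor is $\lesssim M_1^{\eta-1}\prod_{j\in A^c}L_j$, and symmetrically with $M_2:=\max_{j\in A}L_j$ for the other. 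Taking square roots and reinstating the weights $\jap{\sigma}^{b'}\prod_j\jap{\sigma_j}^{-b}$ yields a block prefactor $\sim L_0^{b'+1/2}\prod_{j\ge1}L_j^{1/2-b}\,(M_1M_2)^{-(1-\eta)/2}$, which is summable over the shells precisely when $\eta<1$. In the worst case above this gives $M^{b'}\cdot 1\cdot M^{\eta/2}$ rather than your $M^{b'}\cdot M^{\eta}$: only the Schur factor containing the large modulation sees a localization of width $M$, the other sees width $O(1)$, so the frequency-restricted estimate enters at half strength — the same mechanism by which the bilinear Lemma \ref{lem:fre_bi} trades $\jap{\alpha}^{2b'}M^{-2b}$ against $\jap{\alpha}^{\eta}M^{\eta}$ to reach $\eta<1$. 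This bookkeeping is the substance of the ``interpolation argument'' of \cite{COS}, and it cannot be relegated to a remark: as stated, your Step 2 is false in the range of $\eta$ actually used.
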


In both Lemma \ref{lem:fre_bi} and Lemma \ref{lem:fre_tri}, the frequency-restricted estimate requires $\eta<1$. The next lemma shows how one can reduce the problem to $\eta=1$ at the expense of slightly increasing the multiplier.

\begin{lem}\label{lem:eta1}
	Let $K:\R^n\times \R^m\times \R\to \R^+$ be a positive measurable function such that, for some $N\in \mathbb{N}$,
	$$
	|K(x,y,M)|\lesssim \max\{1,|x|,|y|\}^N,\quad \forall x\in\R^n,\ y\in \R^m, \ M\in \R.
	$$
	Suppose that
	$$
	\sup_y \int K(x,y,M)\max\{1,|x|,|y|\}^{0^+} dx \lesssim M,\quad \mbox{for all }M>1.
	$$
	Then there exists $0<\eta<1$ such that
	$$
	\sup_y \int K(x,y,M) dx \lesssim M^{\eta},\quad \mbox{for all } M>1.
	$$
\end{lem}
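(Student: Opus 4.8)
The plan is to exploit the polynomial bound on $K$ together with the hypothesis at the $0^+$ level by interpolating in the exponent of the weight $\max\{1,|x|,|y|\}$. First I would fix notation: write $W=W(x,y)=\max\{1,|x|,|y|\}\geq 1$, and for $\theta\geq 0$ set $I_\theta(y,M)=\int K(x,y,M)\,W^\theta\,dx$. The hypothesis gives $\sup_y I_{\varepsilon_0}(y,M)\lesssim M$ for some fixed $\varepsilon_0>0$ (the ``$0^+$'' exponent), and we want $\sup_y I_0(y,M)\lesssim M^\eta$ for some $\eta<1$. The key point is that $I_0$ is, up to the extra weight, smaller than $I_{\varepsilon_0}$; the gain comes because the region where $W$ is large is exactly where the $W^{\varepsilon_0}$ factor is forcing decay.

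The main step is a splitting of the $x$-integral according to the size of $W$ relative to a threshold $R=R(M)$ to be chosen. On the region $\{W\leq R\}$ we simply bound $W^0\leq W^{\varepsilon_0}\cdot R^{-\varepsilon_0}\cdot R^{\varepsilon_0}$... more precisely $1\leq R^{\varepsilon_0} W^{-\varepsilon_0}$ is false, so instead use $1 = W^{-\varepsilon_0}\cdot W^{\varepsilon_0}\leq W^{-\varepsilon_0}\cdot R^{\varepsilon_0}$ on $\{W\le R\}$... again the direction is wrong. The correct move is: on $\{W\leq R\}$, bound the integrand $K\cdot 1 \leq K\cdot W^{\varepsilon_0}$ and accept the full $I_{\varepsilon_0}\lesssim M$ there — no, that loses the gain. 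Let me instead split as $\{W\le R\}$ and $\{W>R\}$ and on the first region use $1\le W^{\varepsilon_0}$ trivially giving $\lesssim M$, which is too weak. So the genuine argument must instead interpolate the hypothesis with a \emph{crude} bound. Here is the clean route: iterate the hypothesis. Since $\sup_y\int K\,W^{\varepsilon_0}\,dx\lesssim M$ and $K\geq 0$, on the set $\{W> M^{1/\varepsilon_0}\cdot M^{\delta}\}=\{W>M^{(1+\varepsilon_0\delta)/\varepsilon_0}\}$ we have $W^{\varepsilon_0}> M^{1+\varepsilon_0\delta}$, so $\int_{\{W>M^{(1+\varepsilon_0\delta)/\varepsilon_0}\}}K\,dx \lesssim M\cdot M^{-1-\varepsilon_0\delta}=M^{-\varepsilon_0\delta}$, which is $\lesssim M^\eta$ for any $\eta>0$. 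On the complementary set $\{W\leq M^{(1+\varepsilon_0\delta)/\varepsilon_0}\}$ we use the polynomial bound $K\lesssim W^N$ and the \emph{same} hypothesis once more: $\int_{\{W\le R\}}K\,dx = \int_{\{W\le R\}} K\, W^{\varepsilon_0}\, W^{-\varepsilon_0}\,dx\leq \inf_{\{W\geq 1\}}(\text{nothing})$; instead bound $W^{-\varepsilon_0}$ by $1$... this still gives only $M$.

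So the actual mechanism has to be an honest interpolation: one shows the map is in fact bounded with a \emph{power strictly below} $M^1$ by combining $\sup_y I_{\varepsilon_0}\lesssim M$ with a second, slightly \emph{sublinear} estimate that comes for free. The way I would extract this: apply the hypothesis with $M$ replaced by $M^{1/2}$ is not allowed since $K$ depends on $M$; but the $M$-dependence of $K$ is only through the polynomial cap $\max\{1,|x|,|y|\}^N$, which does not help directly. Therefore the honest argument is: decompose the integration domain dyadically in the quantity $W$, say $W\sim 2^\ell$ for $\ell\geq 0$. On each piece, $\int_{W\sim 2^\ell}K\,dx \leq 2^{-\ell\varepsilon_0}\int_{W\sim 2^\ell}K\,W^{\varepsilon_0}\,dx\leq 2^{-\ell\varepsilon_0}\cdot CM$, using the hypothesis (which controls the \emph{full} weighted integral, hence each dyadic piece). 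Summing over $\ell\ge 0$ gives $\int K\,dx\lesssim M\sum_{\ell\ge0}2^{-\ell\varepsilon_0}\lesssim M$. This again yields exponent $1$, not $\eta<1$ — so the stated conclusion with $\eta<1$ must rely on the restriction $M>1$ being exploited more cleverly, e.g. by noting that for the dyadic shells with $2^\ell\lesssim M^{\varepsilon_0^{-1}(1-\eta)}$ one uses the trivial bound $\int_{W\sim 2^\ell}K\,dx\lesssim 2^{\ell N}\cdot(\text{measure})$ controlled by a fixed power, while for $2^\ell$ large one uses the weighted hypothesis to gain $2^{-\ell\varepsilon_0}M$; optimizing the crossover threshold against these two bounds produces a net power $M^\eta$ with $\eta=1-c\varepsilon_0<1$. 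I would carry this out by writing $\int K\,dx = \sum_{2^\ell\le T}+\sum_{2^\ell>T}$, bounding the first sum by $\lesssim T^{N+d}$ (some fixed power depending on $N$ and the ambient dimension $d=n$, after also using the weighted hypothesis with $M$ on the bounded part to control the measure of $\{W\sim 2^\ell\}$) and the second by $\lesssim T^{-\varepsilon_0}M$, then choosing $T=M^{\kappa}$ with $\kappa$ small enough that $\kappa(N+d)<1$, giving $\int K\,dx\lesssim M^{\kappa(N+d)}+M^{1-\kappa\varepsilon_0}\lesssim M^{\eta}$ with $\eta=\max\{\kappa(N+d),\,1-\kappa\varepsilon_0\}<1$.

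The main obstacle is precisely controlling the ``small-$W$'' part by a fixed power of $T$ rather than by $M$: a priori $K\lesssim W^N\lesssim T^N$ on $\{W\le T\}$, but one still needs the Lebesgue measure of $\{x: W(x,y)\le T\}$ restricted to $\Gamma$ to be finite and polynomially bounded in $T$ uniformly in $y$; this is where one must invoke that the original integrals in Lemmas \ref{lem:fre_bi} and \ref{lem:fre_tri} are over a bounded-in-frequency slice $\Gamma$ for each fixed $M$ (indeed $\mathbbm{1}_{|\Psi-\alpha|<M}$ together with the weights confines the effective domain), or else absorb this into the hypothesis by using it once with exponent $\varepsilon_0$ to see $\mathrm{meas}(\{W\sim 2^\ell\}\cap\{K\gtrsim 1\})\lesssim 2^{-\ell\varepsilon_0}M$. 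Once the measure bound $\lesssim 2^{\ell \cdot 0^+}M$ (or a fixed power) is in hand, the dyadic sum and the choice of $T$ are routine, and the proof concludes.
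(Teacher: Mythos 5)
Your final argument (the dyadic/threshold splitting at the end) is correct, but you leave as an ``obstacle'' something that is trivially true, and your route is genuinely different from the paper's. On the substance first: the measure bound you worry about needs no appeal to $\Gamma$, to the indicator $\mathbbm{1}_{|\Psi-\alpha|<M}$, or to a second use of the weighted hypothesis. Writing $W=\max\{1,|x|,|y|\}$, the set $\{x\in\R^n : W(x,y)\le T\}$ is contained in the ball $\{|x|\le T\}$, whose Lebesgue measure is $\lesssim T^n$ uniformly in $y$; combined with $K\lesssim W^N\le T^N$ there, the small-$W$ part is $\lesssim T^{N+n}$ with no further input. With that observation your scheme closes: $\int K\,dx\lesssim T^{N+n}+T^{-\varepsilon_0}M$, and $T=M^\kappa$ with $0<\kappa<1/(N+n)$ gives $M^\eta$ with $\eta=\max\{\kappa(N+n),\,1-\kappa\varepsilon_0\}<1$. (Here $\varepsilon_0$ is the fixed small exponent hidden in the ``$0^+$'' of the hypothesis.) The many recorded false starts before this are dead weight; only the last paragraph is the proof.

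The paper's proof is a one-line Hölder argument instead: write $K=K^{1/p}K^{1/p'}$, bound $K^{1/p'}\lesssim W^{N/p'}$ by the polynomial hypothesis, insert and remove a factor $W^{(n+1)/p'}$, and apply Hölder with exponents $p,p'$ to get
$$
\sup_y\int K\,dx\lesssim \sup_y\Big(\int K\,W^{(N+n+1)(p-1)}dx\Big)^{1/p}\Big(\int\frac{dx}{\jap{x}^{n+1}}\Big)^{1/p'}\lesssim M^{1/p},
$$
taking $p$ close enough to $1$ that the exponent $(N+n+1)(p-1)$ falls under the ``$0^+$'' hypothesis. Both arguments trade the crude polynomial bound on $K$ against the weighted hypothesis; the paper does it in one stroke via Hölder (and gets the cleaner exponent $\eta=1/p$), while yours does it by hand via a spatial threshold and an optimization. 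Yours is more elementary but longer; either is acceptable, provided you replace the hedged discussion of the measure bound with the trivial containment above.
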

%\begin{lem}\label{lem:eta1}
%	Let $K:\R^n\times \R^m\times \R\to \R^+$ be a positive measurable function. Fix $p>1$ and suppose that
%	$$
%	\sup_y \int \jap{x}^{2p-2}K(x,y,M)^p dx \lesssim M,\quad \mbox{for all }M>1.
%	$$
%	Then 
%	$$
%	\sup_y \int K(x,y,M) dx \lesssim M^{1/p},\quad \mbox{for all } M>1.
%	$$
%\end{lem}
\begin{proof}
	The proof is an immediate consequence of Hölder's inequality. For $p>1$,
	\begin{align*}
	\sup_y \int K(x,y,M) dx &\lesssim \sup_y \int K(x,y,M)^{\frac{1}{p}}K(x,y,M)^{\frac{1}{p'}} dx \\&\lesssim \sup_y \int \frac{K(x,y,M)^{\frac{1}{p}}\max\{1,|x|,|y|\}^{\frac{N+n+1}{p'}}}{\max\{1,|x|,|y|\}^{\frac{n+1}{p'}}} dx \\&\lesssim \sup_y \left(\int K(x,y,M)\max\{1,|x|,|y|\}^{(N+n+1)(p-1)} dx\right)^{\frac{1}{p}}\left(\int\frac{dx}{\jap{x}^{n+1}}\right)^{\frac{p-1}{p}}\lesssim M^{1/p},
	\end{align*}
as long as we take $p$ sufficiently close to 1.
\end{proof}

From now on, whenever one considers a multilinear estimate in the variables $\xi_1,\dots, \xi_m$, we set
\begin{equation}\label{eq:XI}
	\Xi=\max_{1\le j\le m} \jap{\xi_j}.
\end{equation}
The combination of the previous lemma with Lemma \ref{lem:fre_bi} yields
\begin{lem}[Case $m=2$]\label{lem:fre_bi1}
	The  estimate \eqref{eq:estmulti} holds if
\begin{equation}\label{eq:condm}
		|\mathcal{M}|\lesssim \Xi^N\quad \mbox{for some }N\in\mathbb{N}
\end{equation}
	and either:
	\begin{itemize}
		\item for all $j\in\{\emptyset,1,2\}$, $l\neq j$, $\alpha\in \R$ and $1<M\lesssim |\alpha|$,
		\begin{equation}\label{eq:frequad1}
			\sup_{\xi_j}\int_\Gamma \frac{|\mathcal{M}|^2\jap{\xi}^{2s}}{\jap{\xi_{1}}^{2s_1}\jap{\xi_{2}}^{2s_2}}\Xi^{0^+}\mathbbm{1}_{|\Phi-\alpha|<M} d\xi_{l} \lesssim \jap{\alpha}M.
		\end{equation}
		\item for all $\alpha\in \R$ and $M>1$, there exists $j\in\{1,2\}$ such that, with $l\neq j$
		\begin{equation}\label{eq:frequad21}
			\sup_{\xi_j}\int_\Gamma \frac{|\mathcal{M}|^2\jap{\xi}^{2s}}{\jap{\xi_{1}}^{2s_1}\jap{\xi_{2}}^{2s_2}}\Xi^{0^+}\mathbbm{1}_{|\Phi-\alpha|<M} d\xi_{l}  \lesssim \jap{\alpha} M.
		\end{equation}
		\item for all $\alpha\in \R$ and $M>1$,
		\begin{equation}\label{eq:frequad31}
			\sup_{\xi}\int_\Gamma \frac{|\mathcal{M}|^2\jap{\xi}^{2s}}{\jap{\xi_{1}}^{2s_1}\jap{\xi_{2}}^{2s_2}}\Xi^{0^+}\mathbbm{1}_{|\Phi-\alpha|<M} d\xi_{1}  \lesssim \jap{\alpha} M.
		\end{equation}
	\end{itemize}
	%This remains true if one replaces $(\xi,\eta)$ with $(\xi_1,\eta_1)$.
\end{lem}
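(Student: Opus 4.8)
The plan is to feed the output of Lemma~\ref{lem:eta1} into Lemma~\ref{lem:fre_bi}: assuming the polynomial bound \eqref{eq:condm}, I will show that each of \eqref{eq:frequad1}, \eqref{eq:frequad21}, \eqref{eq:frequad31} implies, for some $0<\eta<1$, the corresponding estimate \eqref{eq:frequad}, \eqref{eq:frequad2}, \eqref{eq:frequad3} of Lemma~\ref{lem:fre_bi}, after which that lemma yields \eqref{eq:estmulti} directly. Write the common integrand as
\[
G(\xi_1,\xi_2)=\frac{|\mathcal{M}|^2\jap{\xi}^{2s}}{\jap{\xi_1}^{2s_1}\jap{\xi_2}^{2s_2}}\,\mathbbm{1}_{|\Phi-\alpha|<M},\qquad \xi=\xi_1+\xi_2\ \text{on }\Gamma .
\]
Two elementary facts will be used throughout: by \eqref{eq:condm} together with $\jap{\xi}\lesssim\Xi$ and $\jap{\xi_j}^{-2s_j}\lesssim\Xi^{2|s_j|}$ one has $G\lesssim\Xi^{N'}$ for some $N'\in\mathbb{N}$; and if $x$ denotes the variable of integration and $y$ the frozen frequency among $\{\xi,\xi_1,\xi_2\}$ (the third being determined by $\Gamma$), then $\Xi\sim\max\{1,|x|,|y|\}$. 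Hence $G\lesssim\max\{1,|x|,|y|\}^{N'}$ and $\Xi^{0^+}\sim\max\{1,|x|,|y|\}^{0^+}$, so $G$ (or a harmless normalization of it) always qualifies as an admissible kernel $K$ in Lemma~\ref{lem:eta1}.

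For the implication \eqref{eq:frequad21}$\Rightarrow$\eqref{eq:frequad2} I would fix $\alpha$ and the index $j\in\{1,2\}$, set $x=\xi_l$, $y=\xi_j$, and apply Lemma~\ref{lem:eta1} in the parameter $M$ to $K(x,y,M)=\jap{\alpha}^{-1}G\,\mathbbm{1}_{E}(M)$, where $E\subset(1,\infty)$ is the measurable set of $M$ for which \eqref{eq:frequad21} is asserted with that $j$: its hypothesis is then \eqref{eq:frequad21} divided by $\jap{\alpha}$ for $M\in E$ and trivial otherwise, and its conclusion $\sup_y\int K\,dx\lesssim M^{\eta}$ is precisely \eqref{eq:frequad2} on $E$. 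Since for each $(\alpha,M)$ some such $j$ works, this gives \eqref{eq:frequad2}. Symmetrically, for \eqref{eq:frequad31}$\Rightarrow$\eqref{eq:frequad3}: when $|\alpha|\le 1$ the bound is immediate from \eqref{eq:frequad31} because $\Xi^{0^+}\ge1$; when $|\alpha|>1$ I would freeze $M$, set $x=\xi_1$, $y=\xi$, and apply Lemma~\ref{lem:eta1} in the parameter $\sigma=|\alpha|$ to $K=M^{-1}G$, whose hypothesis is \eqref{eq:frequad31} divided by $M$ and whose conclusion $\sup_\xi\int K\,d\xi_1\lesssim\sigma^{\eta}$ is \eqref{eq:frequad3}. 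In both cases $\eta<1$ is uniform in the frozen variable, because by the two facts above the polynomial degree $N'$ and the constants entering Lemma~\ref{lem:eta1} are.

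The main obstacle is the implication \eqref{eq:frequad1}$\Rightarrow$\eqref{eq:frequad}: the target $\jap{\alpha}^{\eta}M^{\eta}$ requires gaining on \emph{both} $\jap{\alpha}$ and $M$, whereas Lemma~\ref{lem:eta1} only improves a single scalar parameter at a time. I would get around this by proving two partial gains and interpolating. Fix $j\in\{\emptyset,1,2\}$, $l\neq j$, $x=\xi_l$, $y=\xi_j$. Freezing $\alpha$ and applying Lemma~\ref{lem:eta1} in $M$ to $\jap{\alpha}^{-1}G$, truncated to the range $1<M\lesssim|\alpha|$ on which \eqref{eq:frequad1} is available (and zero beyond, so that the hypothesis holds for all values of the parameter), yields $\int_\Gamma G\,d\xi_l\lesssim\jap{\alpha}M^{\eta_1}$ there; freezing $M$ and applying Lemma~\ref{lem:eta1} in the parameter $\sigma=|\alpha|$ to $M^{-1}G$, truncated the same way, yields $\int_\Gamma G\,d\xi_l\lesssim\jap{\alpha}^{\eta_2}M$ on the same range, with $\eta_1,\eta_2<1$ uniform. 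Taking the geometric mean of these two bounds with weight $\theta=\frac{1-\eta_2}{(1-\eta_1)+(1-\eta_2)}$ on the first balances the exponents of $\jap{\alpha}$ and $M$ and gives, for $1<M\lesssim|\alpha|$ and every $j$,
\[
\int_\Gamma G\,d\xi_l\;\lesssim\;\jap{\alpha}^{\eta}M^{\eta},\qquad \eta=1-\frac{(1-\eta_1)(1-\eta_2)}{(1-\eta_1)+(1-\eta_2)}<1,
\]
which is exactly \eqref{eq:frequad}. Once \eqref{eq:frequad}, \eqref{eq:frequad2} or \eqref{eq:frequad3} is established, Lemma~\ref{lem:fre_bi} delivers \eqref{eq:estmulti} and the proof is complete. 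Beyond the two-parameter interpolation just described, what remains is routine: measurability of the truncating sets, the polynomial bound $G\lesssim\max\{1,|x|,|y|\}^{N'}$, and the uniformity of the exponents $\eta_1,\eta_2$.
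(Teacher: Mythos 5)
Your proof is correct and follows the paper's (implicit) argument: the lemma is obtained exactly by feeding Lemma \ref{lem:eta1} into Lemma \ref{lem:fre_bi}, with the truncations in the parameter handling the restricted ranges of $M$ and $\alpha$. The only stylistic difference is in deriving \eqref{eq:frequad} from \eqref{eq:frequad1}: your two applications of Lemma \ref{lem:eta1} followed by a geometric-mean interpolation can be collapsed into a single use of the H\"older argument underlying that lemma, which for each fixed pair $(\alpha,M)$ gives $\int_\Gamma G\,d\xi_l\lesssim\bigl(\int_\Gamma G\,\Xi^{0^+}d\xi_l\bigr)^{1/p}\lesssim\bigl(\jap{\alpha}M\bigr)^{1/p}=\jap{\alpha}^{\eta}M^{\eta}$ in one stroke.
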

Analogously, Lemmas \ref{lem:fre_tri} and \ref{lem:eta1} give the following useful lemma.

\begin{lem}[Case $m\ge 3$]\label{lem:fre_tri1}
	The estimate \eqref{eq:estmulti} holds if 
\begin{equation}\label{eq:condm1}
	|\mathcal{M}|\lesssim \Xi^N\quad \mbox{for some }N\in\mathbb{N},
\end{equation}
	 there exist a proper nonempty subset $A\subset \{\emptyset,1,\dots, m\}$ and multipliers $\mathcal{M}_1,\mathcal{M}_2\ge 0$ such that
	$$
	\sup_{\xi_{j\in A}} \int_{\Gamma} \mathcal{M}_1\Xi^{0^+} \mathbbm{1}_{|\Psi-\alpha|<M} d\xi_{j\notin A} + \sup_{\xi_ j\notin A} \int_{\Gamma} \mathcal{M}_2\Xi^{0^+} \mathbbm{1}_{|\Psi-\alpha|<M} d\xi_{j\in A} \lesssim M
	$$
	and
	$$
	\mathcal{M}_1\mathcal{M}_2=\frac{|\mathcal{M}|^2\jap{\xi}^{2s}}{\prod_{j=1}^k\jap{\xi_{j}}^{2s_j}}.
	$$
\end{lem}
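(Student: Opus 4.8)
The plan is to deduce the statement from Lemma~\ref{lem:fre_tri} by using Lemma~\ref{lem:eta1} to trade the weighted Schur-type bound with right-hand side $M$ appearing in the hypothesis for the unweighted Schur-type bound with right-hand side $M^\eta$ that Lemma~\ref{lem:fre_tri} requires. I would keep exactly the same proper subset $A$ and the same nonnegative multipliers $\mathcal{M}_1,\mathcal{M}_2$, so that the factorization $\mathcal{M}_1\mathcal{M}_2=|\mathcal{M}|^2\jap{\xi}^{2s}\prod_{j}\jap{\xi_j}^{-2s_j}$ is untouched, and only improve the first displayed hypothesis. Once this improvement is in place, Lemma~\ref{lem:fre_tri} applies with no further input and yields \eqref{eq:estmulti}.

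The improvement is a one-line interpolation for each of the two terms in the Schur condition. Fix $\alpha\in\R$ and look at the first term, $\sup_{\xi_{j\in A}}\int_\Gamma\mathcal{M}_1\,\mathbbm{1}_{|\Psi-\alpha|<M}\,d\xi_{j\notin A}$. Parametrize the hyperplane $\Gamma$ by the free frequencies, writing $y=(\xi_j)_{j\in A}$ for the variables over which the supremum is taken and $x$ for the remaining integration variables, and set $K_\alpha(x,y,M)=\mathcal{M}_1\,\mathbbm{1}_{|\Psi-\alpha|<M}$. On $\Gamma$ the triangle inequality gives $\jap{\xi}\lesssim\max_{1\le j\le m}\jap{\xi_j}$, so the quantity $\Xi$ of \eqref{eq:XI} satisfies $\Xi\sim\max\{1,|x|,|y|\}$; hence, using $\mathbbm{1}\le1$ and the polynomial boundedness of $\mathcal{M}_1$ (which, like $\mathcal{M}$, is a monomial in the frequencies in all our applications), $K_\alpha$ obeys the pointwise bound $K_\alpha\lesssim\max\{1,|x|,|y|\}^{N'}$ of Lemma~\ref{lem:eta1} uniformly in $\alpha$ and $M$, while the weighted hypothesis of the present lemma becomes exactly $\sup_y\int K_\alpha(x,y,M)\max\{1,|x|,|y|\}^{0^+}\,dx\lesssim M$ for $M>1$, with implied constant independent of $\alpha$. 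Lemma~\ref{lem:eta1} then supplies some $\eta_1\in(0,1)$ with $\sup_y\int K_\alpha\,dx\lesssim M^{\eta_1}$; since the Hölder argument proving Lemma~\ref{lem:eta1} fixes the admissible $\eta_1$ purely in terms of the number of integration variables and the exponents $N'$ and $0^+$, both $\eta_1$ and the constant are uniform in $\alpha$. Swapping the roles of $A$ and its complement and replacing $\mathcal{M}_1$ by $\mathcal{M}_2$ treats the second term identically, giving $\eta_2\in(0,1)$.

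Taking $\eta=\max\{\eta_1,\eta_2\}\in(0,1)$ and adding the two bounds gives, for all $\alpha\in\R$ and $M>1$,
\[
\sup_{\xi_{j\in A}}\int_{\Gamma}\mathcal{M}_1\,\mathbbm{1}_{|\Psi-\alpha|<M}\,d\xi_{j\notin A}+\sup_{\xi_{j\notin A}}\int_{\Gamma}\mathcal{M}_2\,\mathbbm{1}_{|\Psi-\alpha|<M}\,d\xi_{j\in A}\lesssim M^{\eta},
\]
which together with the unchanged factorization is precisely the hypothesis of Lemma~\ref{lem:fre_tri}; invoking that lemma closes the argument. I do not anticipate any real obstacle: the proof is a bookkeeping combination of Lemmas~\ref{lem:fre_tri} and \ref{lem:eta1}, and the only points needing care are the uniformity of all constants (and of $\eta$) in the translation parameter $\alpha$, obtained by tracking the explicit exponents in the proof of Lemma~\ref{lem:eta1}, and the benign identification $\Xi\sim\max\{1,|x|,|y|\}$ on $\Gamma$ that lets $\Xi^{0^+}$ serve as the weight $\max\{1,|x|,|y|\}^{0^+}$ of Lemma~\ref{lem:eta1}.
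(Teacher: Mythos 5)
Your proposal is correct and is exactly the argument the paper intends: the paper gives no written proof, stating only that Lemma \ref{lem:fre_tri1} follows by combining Lemmas \ref{lem:fre_tri} and \ref{lem:eta1}, and your interpolation step (with the identification $\Xi\sim\max\{1,|x|,|y|\}$ on $\Gamma$ and the uniformity of $\eta$ and the constants in $\alpha$) is the right way to fill in that combination. Your parenthetical observation that the polynomial bound on $\mathcal{M}_1,\mathcal{M}_2$ individually is not literally implied by the stated hypotheses, but holds in all applications since the factors are monomials, correctly identifies the one implicit assumption the paper is also making.
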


\section{Multilinear estimates}\label{sec:multi}

In this section, we prove the multilinear estimates associated to each of the nonlinear terms in \eqref{eq:perfilu1}-\eqref{eq:perfilv1}. 
In the computations performed below, we will often apply Lemmas \ref{lem:fre_bi1} and \ref{lem:fre_tri1}. In either case, it will be trivial to check that the multiplier $\mathcal{M}$ satisfies \eqref{eq:condm} (or \eqref{eq:condm1}) and we focus on the derivation of the frequency-restricted estimates. 

Generally speaking, the procedure will be as follows. First, we perform a change of variables in order to integrate in the resonance function $\Phi$ itself. This will introduce some frequency decay coming from the jacobian of the change of variables. In a second step, one either bounds the resulting multiplier by 1 or by $\Phi$ itself. In the first case, the integration in $\Phi$ gives the desired power of $M$ and the estimate is done. In the second case, one must apply \eqref{eq:frequad1} in order to have access to the bound $|\Phi|\lesssim |\alpha|$. Using this bound and then integrating in $\Phi$, we find $|\alpha|M$, which is the required estimate. 

 We write the nonlinear terms in physical space as
$$
\mathcal{N}_j^u = \mathcal{F}_\xi^{-1}\left(e^{-it\xi^2}N_j^u\right),\quad \mathcal{N}_j^v = \mathcal{F}_\xi^{-1}\left(e^{it\xi^3}N_j^v\right),\quad j=0,\dots, 4.
$$
Throughout this section, $b=(1/2)^+$, $b'=(b-1)^+$ and $\Xi$ is defined as in \eqref{eq:XI}.

\begin{lem}\label{lem:1}
For $s>-1$ and $\epsilon<\min\{(s+1)/2,1/2\}$, 
\begin{equation}
	\left\| \mathcal{N}_0^u[u,v]\right\|_{X^{k+\epsilon,b'}}\lesssim \|u\|_{X^{k,b}}\|v\|_{Y^{s,b}}.
\end{equation}	
\end{lem}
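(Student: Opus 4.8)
The plan is to view $\mathcal N_0^u[u,v]$ as an instance of the generic quadratic nonlinear term of Section \ref{sec:fre}: the Schrödinger output phase is $\phi(\xi)=-\xi^2$, the input phases are $\phi_1(\xi_1)=-\xi_1^2$ (for $u$) and $\phi_2(\xi_2)=\xi_2^3$ (for $v$), the resonance is $\Phi=\Phi_1^u=\xi^2-\xi_1^2+\xi_2^3$ on $\Gamma=\{\xi=\xi_1+\xi_2\}$, the multiplier is $\mathcal M=\mathbbm{1}_{\xi_1\in U_\xi^c}$, the target regularity is $k+\epsilon$, and the input regularities are $k$ and $s$. Since $|\mathcal M|\le1$, condition \eqref{eq:condm} is immediate, so by Lemma \ref{lem:fre_bi1} it suffices to prove one of the frequency-restricted estimates (with constants allowed to depend on the fixed parameter $\delta^u$). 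The variant \eqref{eq:frequad1} is unavailable, because its case $j=2$ (fixing $\xi_2$ and integrating $\xi_1$) diverges as $\xi_2\to0$ on $U_\xi^c$; I would instead verify \eqref{eq:frequad31} (equivalently \eqref{eq:frequad21} with $j=1$), namely
\begin{equation*}
	\sup_{\xi}\int_{\xi_1\in U_\xi^c}\frac{\jap{\xi}^{2(k+\epsilon)}}{\jap{\xi_1}^{2k}\jap{\xi_2}^{2s}}\,\Xi^{0^+}\,\mathbbm{1}_{|\Phi_1^u-\alpha|<M}\,d\xi_1\lesssim\jap{\alpha}M,\qquad \xi_2=\xi-\xi_1,
\end{equation*}
for all $\alpha\in\R$ and $M>1$.

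The first step is to reduce the integrand. On $U_\xi^c$ one has, by the very definition of $U_\xi$, either $|\xi|\le1/\delta^u$ or $|\xi_1|\ge|\xi|/100$; in both cases $\jap{\xi}\lesssim\jap{\xi_1}$ and $\jap{\xi_2}\lesssim\jap{\xi_1}$, so $\Xi\sim\jap{\xi_1}$ and $\jap{\xi}^{2(k+\epsilon)}\jap{\xi_1}^{-2k}\lesssim\jap{\xi_1}^{2\epsilon}$; thus the integrand is $\lesssim\jap{\xi_1}^{2\epsilon+0^+}\jap{\xi_2}^{-2s}\mathbbm{1}_{|\Phi_1^u-\alpha|<M}$. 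With $\xi$ fixed I would then change variables $\xi_1\mapsto\Phi=\Phi_1^u$. Because $\partial_{\xi_1}\Phi=-2\xi_1-3\xi_2^2=-3\xi_2^2+2\xi_2-2\xi$, the map has at most two critical points in $\xi_1$, and these occur exactly when $\xi\le1/6$; at such a point $\partial_{\xi_1}^2\Phi=-2+6\xi_2=\pm2\sqrt{1-6\xi}\ne0$ unless $\xi=1/6$, so they are non-degenerate for $\xi\ne1/6$. Away from the critical points $\Phi$ is strictly monotone in $\xi_1$ with $|\partial_{\xi_1}\Phi|$ bounded below by a positive power of the frequencies, so the substitution is a diffeomorphism onto its image, and moreover $|\Phi|\gtrsim1$ once $|\xi_1|$ is large. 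Substituting, and using $|\Phi|\sim|\xi|\,|\xi_2|$ in the regime $|\xi_2|\lesssim|\xi|^{1/2}$ and $|\Phi|\sim\jap{\xi_2}^3$ in the regime $|\xi_2|\gtrsim|\xi|^{1/2}$ to re-express $\jap{\xi_1}$ and $\jap{\xi_2}$ through $|\Phi|$ and $|\xi|$, one is reduced to one-dimensional estimates of the type $\jap{\xi}^{\theta}\int|\Phi|^{\gamma}\mathbbm{1}_{|\Phi-\alpha|<M}\,d\Phi\lesssim\jap{\alpha}M$. Here $\gamma<0$ depends on $s$ (so the $\Phi$-integral is $\lesssim\jap\alpha M$), while the exponent $\theta$ is of the shape $2\epsilon-1$ or $2\epsilon-s-1$; the hypotheses $\epsilon<1/2$ and $\epsilon<(s+1)/2$ are used \emph{precisely} to make $\theta<0$, after which $\jap{\xi}^{\theta}\lesssim1$ (and the contribution where $\jap{\xi}$ is large compared to $M$ is absorbed into the factor $M$).

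Near the critical points the argument is a standard stationary-phase estimate: the integrand is comparable to a fixed power $\jap{\xi_1^\ast}^{2\epsilon+0^+}\jap{\xi_2^\ast}^{-2s}$, non-degeneracy yields $|\xi_1-\xi_1^\ast|\lesssim|\partial_{\xi_1}^2\Phi|^{-1/2}|\Phi-\Phi^\ast|^{1/2}$, and integrating the resulting factor $|\Phi-\Phi^\ast|^{-1/2}$ against $\mathbbm{1}_{|\Phi-\alpha|<M}$ produces a gain $M^{1/2}$. For $|\xi|$ large the critical points sit at $|\xi_1^\ast|\sim|\xi|$, $|\xi_2^\ast|\sim|\xi|^{1/2}$, with $\partial_{\xi_1}^2\Phi\sim|\xi|^{1/2}$ and $|\Phi^\ast|\sim\jap{\xi}^{3/2}$; combining $|\Phi^\ast|\sim\jap{\xi}^{3/2}$ with the restriction $|\Phi-\alpha|<M$ allows one to trade the (possibly large) power of $\jap{\xi}$ for $\jap{\alpha}$, and here again $\epsilon<(s+1)/2$ makes the arithmetic close. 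The case $|\xi|\lesssim1$ is trivial, since then $\jap{\xi}^{2(k+\epsilon)}\lesssim1$, $\jap{\xi_1}\sim\jap{\xi_2}$, and the contribution of large $|\xi_1|$ is controlled directly by the change of variables.

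I expect the treatment of the two critical points of $\Phi_1^u$ --- which for every large negative $\xi$ genuinely belong to $U_\xi^c$ --- to be the only real obstacle: one must simultaneously extract the $M^{1/2}$ gain from the stationary-phase geometry and use $|\Phi^\ast|\sim\jap{\xi}^{3/2}$ to convert powers of $\jap{\xi}$ into powers of $\jap{\alpha}$. Everything else is bookkeeping over the frequency regimes (output bounded; $u$-high/$v$-low; $u$-high/$v$-high; near-resonance). It is worth emphasising that the obstruction $k-s<2$ behind the failure of Wu's estimate \eqref{eq:estwu1} never surfaces here, precisely because the interaction $u_{\text{low}}\times v_{\text{high}}$ responsible for it is the region $U_\xi$, which has been excised from $N_0^u$.
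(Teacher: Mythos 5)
Your proposal is correct and follows essentially the same route as the paper: reduction to the frequency-restricted estimates of Lemma \ref{lem:fre_bi1}, a case analysis over the frequency regimes of $U_\xi^c$, the change of variables $\xi_1\mapsto\Phi_1^u$ where the Jacobian is non-degenerate, and, on the near-stationary set $|\xi_2|^2\sim|\xi|$, the use of $|\Phi_1^u|\sim|\xi|^{3/2}\lesssim\jap{\alpha}M$ to trade powers of $\jap{\xi}$ for powers of $\jap{\alpha}M$ — exactly the place where the paper also extracts the constraint $\epsilon<(s+1)/2$. The only cosmetic differences are that you work exclusively with the variant \eqref{eq:frequad31} (the paper mixes \eqref{eq:frequad1} and \eqref{eq:frequad3} according to the subregion) and that near the critical points you obtain the measure bound by a Morse-lemma $M^{1/2}$ estimate, whereas the paper simply integrates $d\xi_2$ over the set $|\xi_2|\sim|\xi|^{1/2}$; both close under the same hypotheses.
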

\begin{proof}
	We recall the resonance function $\Phi=\Phi_1^u=\xi^2-\xi_1^2+\xi_2^3=\xi_2(\xi_1+\xi+\xi_2^2)$ and define the multiplier
	$$
	\mathcal{M}=\frac{\jap{\xi}^{k+\epsilon}}{\jap{\xi_1}^k\jap{\xi_2}^s}.
	$$ 
	If all frequencies are smaller than $1/\delta^u$, the FRE is direct. As we are integrating over $U_\xi^c$, there are only two cases:
	\vskip10pt
	\noindent\textbf{\underline{Case A.}} $|\xi_1|\sim |\xi_2|\sim |\xi|$. We use \eqref{eq:frequad3}: since
	$$
	\frac{\partial \Phi}{\partial \xi_1} = -2\xi_1-3\xi_2^2 \sim \xi_2^2,
	$$
	the change of variables $\xi_1\mapsto \Phi$ gives
	$$
	\sup_\xi \int \mathcal{M}^2\fia d\xi_1 \lesssim \sup_\xi \int \xi^{2\epsilon-2s-2}\fia d\Phi \lesssim M.
	$$
	\vskip10pt
	\noindent\textbf{\underline{Case B.}} $|\xi_2|\ll |\xi_1|\simeq |\xi|$, in which case
	$$
	\mathcal{M}^2\sim \jap{\xi_2}^{-2s}|\xi|^{2\epsilon}.
	$$
	If $|\xi_2|<1$, we use \eqref{eq:frequad3}:
	$$
	\sup_\xi \int \mathcal{M}^2\fia d\xi_1 \lesssim \sup_\xi \int \jap{\xi_2}^{-2s}|\xi|^{2\epsilon}\fia \frac{d\Phi}{|\xi_1|} \lesssim M.
	$$
	If $|\xi_2|>1$, we divide into subcases:
	\begin{enumerate}
		\item $|\xi_1+\xi+\xi_2^2|\gtrsim |\xi|$ and $|2\xi_1 + 3\xi_2^2|\gtrsim |\xi_1|$, which implies
		$$
		|\Phi|\gtrsim |\xi_2\xi|.
		$$
		We apply \eqref{eq:frequad1}. First fix $\xi$. If $s<0$, then
		$$
		|\xi_2|^{-2s}|\xi_1|^{2\epsilon-1+0^+}\lesssim |\xi_2|^{-2s}|\xi|^{1+2s}\lesssim |\Phi|
		$$
		while, if $s>0$,
		$$
		|\xi_2|^{-2s}|\xi_1|^{2\epsilon-1+0^+}\lesssim |\xi_2|^{-2s}|\xi|\lesssim |\Phi|.
		$$
		In either case,
		\begin{align*}
			\sup_\xi \int \mathcal{M}^{2}\Xi^{0^+} \fia d\xi_1 &\lesssim \sup_\xi \int |\xi_2|^{-2s}|\xi|^{-1+2\epsilon+0^+} \fia d\Phi \\&\lesssim \sup_\xi \int |\Phi| \fia d\Phi \lesssim |\alpha|M.
		\end{align*}
	By symmetry, the same goes when fixing $\xi_1$. Finally, fixing $\xi_2$, since $$|\partial_{\xi_1}\Phi|\sim |\xi_2|\quad \mbox{ and }\quad
	|\xi_2|^{-2s-1}|\xi|^{2\epsilon}\lesssim |\Phi|^{1^-},
	$$
		\begin{align*}
			\sup_{\xi_2} \int \mathcal{M}^{2}\Xi^{0^+}\fia d\xi_1 & \lesssim \sup_{\xi_2} \int |\xi_2|^{-2s-1}|\xi|^{2\epsilon+0^+}\fia d\Phi\\&\lesssim \sup_{\xi_2} \int |\Phi| \fia d\Phi \lesssim |\alpha|M.
		\end{align*}
		\item $|\xi+\xi_1+\xi_2^2|\ll |\xi|$ or $|2\xi_1 + 3\xi_2^2|\ll |\xi_1|$, which implies $|\xi_2|^2 \sim |\xi|$. In particular,
		$$
		|\xi|^{3/2}\lesssim |\Phi| \lesssim \jap{\alpha}M.
		$$
		Applying \eqref{eq:frequad3},
		\begin{align*}
			\sup_\xi \int \mathcal{M}^2\fia d\xi_2  &\lesssim \sup_\xi \int |\xi_2|^{-2s}|\xi|^{2\epsilon} \fia d\xi_2 \lesssim \sup_\xi |\xi|^{-s+1/2+2\epsilon}\\ & \lesssim(\jap{\alpha}M)^{(1+4\epsilon-2s)/3}.
		\end{align*}
		
	\end{enumerate}
\end{proof}

Over the problematic region $U_\xi$, one can easily identify where the restriction $k-s<2$ arises.
\begin{lem}\label{lem:probU}
	For $k-s<2$ and $\epsilon<2-k+s$,
	\begin{equation}
		\left\|uv- \mathcal{N}_0^u[u,v]\right\|_{X^{k+\epsilon,b'}}\lesssim (\delta^u)^{0^+}\|u\|_{X^{k,b}}\|v\|_{Y^{s,b}}.
	\end{equation}
\end{lem}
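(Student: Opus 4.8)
The plan is to run the same change-of-variables machinery as in Lemma \ref{lem:1}, but now over the resonant region $U_\xi$, where $|\xi_1|\ll|\xi|$ and $|\xi|\gtrsim 1/\delta^u$. Recall that on this region we have integrated by parts, so the effective multiplier carries the extra factor $1/\Phi_1^u$; since $\Phi_1^u=\xi^2-\xi_1^2+\xi_2^3=\xi_2(\xi_1+\xi+\xi_2^2)$ and $|\xi_1|\ll|\xi|$ forces $|\xi_2|\simeq|\xi|$, one has $|\Phi_1^u|\sim|\xi|^3$. Hence the relevant multiplier for the frequency-restricted estimate is
\[
\mathcal{M}=\frac{\jap{\xi}^{k+\epsilon}}{\jap{\xi_1}^k\jap{\xi_2}^s\,|\Phi_1^u|}\sim \frac{\jap{\xi}^{k+\epsilon}}{\jap{\xi_1}^k\jap{\xi}^s\,\jap{\xi}^3}=\frac{\jap{\xi}^{k-s-3+\epsilon}}{\jap{\xi_1}^k},
\]
which, thanks to $k-s<2$ and $\epsilon<2-k+s$, has a genuinely negative power of $\jap{\xi}$ (strictly less than $-1$). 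This is exactly the mechanism: the $1/\Phi$ gained from the integration by parts converts the obstruction $k-s<2$ into a summable decay. The smallness factor $(\delta^u)^{0^+}$ then comes from the restriction $|\xi|\gtrsim1/\delta^u$, since we can trade a sliver of the negative $\jap{\xi}$-power for $(\delta^u)^{0^+}$.

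Concretely, I would apply Lemma \ref{lem:fre_bi1}, splitting on which of $|\xi|,|\xi_1|,|\xi_2|$ is largest exactly as in Cases A, B of Lemma \ref{lem:1}. On $U_\xi$ only Case B survives (since $|\xi_1|\ll|\xi|$), and there $|\xi_2|\simeq|\xi|$. Fixing $\xi$ (or $\xi_2$) and changing variables $\xi_1\mapsto\Phi$ — using $\partial_{\xi_1}\Phi=-2\xi_1-3\xi_2^2\sim\xi^2$ — gives a Jacobian factor $|\xi|^{-2}$, so
\[
\sup_\xi\int \mathcal{M}^2\Xi^{0^+}\,\fia\,d\xi_1 \lesssim \sup_{|\xi|\gtrsim 1/\delta^u}\jap{\xi}^{2(k-s-3+\epsilon)+0^+-2}\int \fia\,d\Phi \lesssim (\delta^u)^{0^+}M,
\]
since $2(k-s-3+\epsilon)-2<2(2-3)-2=-4<0$ leaves room to extract $(\delta^u)^{0^+}$ and conclude via \eqref{eq:frequad31}. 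The case of fixing $\xi_1$ needs the alternative $\partial_{\xi_2}\Phi$: here $\partial_{\xi_2}\Phi=3\xi_2^2-\cdots\sim\xi^2$ as well (using $|\xi_2|\simeq|\xi|\gg|\xi_1|$), so the same Jacobian gain applies and one invokes \eqref{eq:frequad1} or \eqref{eq:frequad21}. One should also dispose of the subregion where the two "large" factors in $\Phi_1^u$ nearly cancel, i.e. $|\xi_1+\xi+\xi_2^2|\ll|\xi|$; but on $U_\xi$ we have $|\xi_1|\ll|\xi|\ll|\xi_2|^2$, so $|\xi_1+\xi+\xi_2^2|\sim|\xi_2|^2\sim|\xi|^2$ and no such cancellation occurs, which actually makes the resonant region cleaner than the complementary one.

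The main obstacle, I expect, is bookkeeping rather than anything deep: one must be careful that after the integration by parts the new multiplier $\mathcal{M}$ (now including $1/\Phi_1^u$) still satisfies the polynomial bound \eqref{eq:condm} of Lemma \ref{lem:fre_bi1} — this is fine because on $U_\xi$ we have $|\Phi_1^u|\gtrsim(\delta^u)^{-3}\gtrsim 1$, so dividing by it only helps — and that the $\Xi^{0^+}$ loss and the $(\delta^u)^{0^+}$ gain are correctly tracked through the two sub-cases. A secondary point is verifying $|\xi_2|\simeq|\xi|$ is truly forced on $U_\xi$ (it follows from $\xi=\xi_1+\xi_2$ with $100|\xi_1|<|\xi|$, giving $|\xi_2-\xi|=|\xi_1|<|\xi|/100$, hence $|\xi_2|\simeq|\xi|$), so that the estimate $|\Phi_1^u|\sim|\xi|^3$ and the Jacobian $\partial_{\xi_1}\Phi\sim-3\xi_2^2\sim-3\xi^2$ are both legitimate. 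Once these are in place, summing the dyadic pieces in $M$ as in Lemma \ref{lem:fre_bi} closes the estimate.
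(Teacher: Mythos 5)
There is a genuine gap, and it comes right at the start: you are estimating the wrong object. The quantity $uv-\mathcal{N}_0^u[u,v]$ is the \emph{original} product $uv$ with its frequency integration restricted to $U_\xi$ — recall $N_0^u$ is by definition the integral over $U_\xi^c$, so the difference is $\mathcal{F}_\xi^{-1}\bigl(e^{-it\xi^2}\int_{U_\xi}e^{it\Phi_1^u}\tilde u_1\tilde v_2\,d\xi_1\bigr)$, with \emph{no} factor $1/\Phi_1^u$. The objects that carry the $1/\Phi_1^u$ produced by the integration by parts are the boundary term $B^u$ and the terms $N_1^u,\dots,N_4^u$, which are handled in Lemmas \ref{lem:bdryHs}, \ref{lem:bordosX} and \ref{lem:3}--\ref{lem:6}, not here. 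The correct multiplier is therefore $\mathcal{M}=\jap{\xi}^{k+\epsilon}/(\jap{\xi_1}^k\jap{\xi_2}^s)\sim\jap{\xi}^{k-s+\epsilon}/\jap{\xi_1}^k$, which \emph{grows} in $\xi$ on $U_\xi$; inserting the spurious $|\Phi_1^u|^{-1}\sim|\xi|^{-3}$ makes the estimate trivially strong, which is why in your argument the hypothesis $\epsilon<2-k+s$ never binds (you end up with ``$-4<0$'' and lots of slack). A correct proof must explain exactly where that hypothesis is used.

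With the correct multiplier the route through \eqref{eq:frequad31} and the bound $(\delta^u)^{0^+}M$ is no longer available: fixing $\xi$ and changing variables $\xi_1\mapsto\Phi$ (Jacobian $|\partial_{\xi_1}\Phi|\sim\xi_2^2\sim\xi^2$) leaves $|\xi|^{2(k-s+\epsilon)-2+0^+}$, which is a positive power of $|\xi|$ when $k-s+\epsilon$ is close to $2$. The only way to absorb it is to invoke \eqref{eq:frequad1}, which supplies the constraint $|\Phi|\lesssim|\alpha|$ and permits the larger bound $\jap{\alpha}M$: since $|\Phi|\sim|\xi|^3$ on $U_\xi$, one writes $|\xi|^{2(k-s+\epsilon)-2+0^+}\sim|\Phi|^{(2(k-s+\epsilon)-2+0^+)/3}\lesssim|\alpha|$ provided $k-s+\epsilon<5/2$, and then $\int\fia\,d\Phi\lesssim M$. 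Using \eqref{eq:frequad1} forces you to also treat the cases where $\xi_1$ or $\xi_2$ is the fixed variable; the binding case is $\xi_2$ fixed, where the Jacobian is only $|\partial_\xi\Phi|=|2\xi_2|\sim|\xi|$, leaving $|\Phi|^{(2(k-s+\epsilon)-1+0^+)/3}\lesssim|\alpha|$, which requires exactly $k-s+\epsilon<2$ — this is where the hypothesis of the lemma enters. The factor $(\delta^u)^{0^+}$ is then extracted from $|\xi|>1/\delta^u$ as you describe; that part of your proposal, as well as the observation that $|\xi_2|\simeq|\xi|$ and $|\Phi_1^u|\sim|\xi|^3$ on $U_\xi$, is fine.
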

\begin{proof}
	In frequency space, the integration is made over $U_\xi$. Over this region, $|\xi_1|\ll |\xi_2|\simeq |\xi|$ and $1/\delta^u<|\xi|$, and thus
	$$
	\mathcal{M}^2\lesssim (\delta^u)^{0^+}|\xi|^{2k-2s+2\epsilon},\quad |\Phi|\sim |\xi_2|^3.
	$$
	We apply \eqref{eq:frequad1}. Fixing $\xi$,
	\begin{align*}
		\sup_\xi \int \mathcal{M}^{2}\Xi^{0^+} \fia d\xi_1 &\lesssim (\delta^u)^{0^+} \sup_\xi \int |\xi|^{2k-2s+2\epsilon-2+0^+}\fia d\Phi \\&\lesssim (\delta^u)^{0^+}\int |\Phi|^{(2k-2s+2\epsilon-2+0^+)/3}\fia d\Phi \\&\lesssim (\delta^u)^{0^+} |\alpha|M\quad \mbox{if}\quad k-s+\epsilon<5/2.
	\end{align*}
	The same goes when one fixes $\xi_1$. Finally, fixing $\xi_2$,
	\begin{align*}
		\sup_{\xi_2}\int \mathcal{M}^{2}\Xi^{0^+} \fia d\xi &\lesssim (\delta^u)^{0^+}\sup_{\xi_2}\int |\xi|^{2k-2s+2\epsilon-1+0^+}\fia d\Phi \\&\lesssim (\delta^u)^{0^+}\int |\Phi|^{(2k-2s+2\epsilon-1+0^+)/3}\fia d\Phi \\&\lesssim (\delta^u)^{0^+} |\alpha|M\quad \mbox{if}\quad k-s+\epsilon<2.
	\end{align*} 
\end{proof}

\begin{lem}\label{lem:2}
	For $s<\min\{4k,2k+3/2\}$ and $\epsilon<\min\{4k-s, 2k-s+3/2\}$, 
		\begin{equation}
		\left\| \mathcal{N}_0^v[u_1,u_2]\right\|_{Y^{s+\epsilon,b'}}\lesssim \|u_1\|_{X^{k,b}}\|u_2\|_{X^{k,b}}.
	\end{equation}	
\end{lem}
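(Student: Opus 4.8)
The plan is to invoke the frequency-restricted-estimate machinery of Lemma \ref{lem:fre_bi1} with the multiplier
$$
\mathcal{M}=\frac{\jap{\xi}^{s+\epsilon}\,|\xi|}{\jap{\xi_1}^k\jap{\xi_2}^k},\quad \Phi=\Phi_1^v=-\xi^3-\xi_1^2+\xi_2^2=-\xi^3-(\xi_1-\xi_2)(\xi_1+\xi_2)=-\xi^3-(\xi_1-\xi_2)\xi,
$$
using $\xi=\xi_1+\xi_2$, so that in fact $\Phi=-\xi^3-\xi(\xi_1-\xi_2)=\xi\left(-\xi^2-\xi_1+\xi_2\right)$. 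The extra factor $|\xi|$ comes from the derivative in the coupling term $N_0^v$. Since $N_0^v$ integrates over the \emph{complement} $V_\xi^c=\{\xi_1: |\xi_1|\le 1/\delta^v \text{ or } |\xi_1|\ge 100|\xi|\}$, the relevant frequency configuration is either that all frequencies are bounded (where the FRE is immediate), or $|\xi_1|\gtrsim |\xi|$. Because $\xi=\xi_1+\xi_2$, the case $|\xi_1|\gg|\xi|$ forces $|\xi_1|\sim|\xi_2|$, so I expect to split into \textbf{Case A:} $|\xi|\lesssim|\xi_1|\sim|\xi_2|$ (the genuinely ``high-high to low'' interaction), and \textbf{Case B:} $|\xi_1|\sim|\xi_2|\sim|\xi|$ (all comparable), which is where the resonance is non-degenerate.

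First I would treat the comparable case. Here $\partial\Phi/\partial\xi_1=-2\xi_1$ (differentiating $-\xi^3-\xi_1^2+\xi_2^2$ with $\xi$ fixed, so $\xi_2$ is not free — better: with $\xi$ fixed and $\xi_1$ the variable, $\xi_2=\xi-\xi_1$, so $\partial_{\xi_1}\Phi=-2\xi_1-2\xi_2=-2\xi$). Thus the change of variables $\xi_1\mapsto\Phi$ costs $1/|\xi|$, and one checks
$$
\mathcal{M}^2\sim \frac{|\xi|^{2s+2\epsilon+2}}{|\xi|^{4k}}=|\xi|^{2s+2\epsilon+2-4k},
$$
so, applying \eqref{eq:frequad31} with $\xi$ fixed,
$$
\sup_\xi\int \mathcal{M}^2\,\Xi^{0^+}\,\fia\,d\xi_1\lesssim \sup_\xi\int |\xi|^{2s+2\epsilon+1-4k+0^+}\,\fia\,d\Phi\lesssim \jap{\alpha}M
$$
provided $2s+2\epsilon+1-4k\le 0$, i.e.\ $\epsilon<2k-s-1/2$; and if the exponent of $|\xi|$ is negative this bound is uniform, while if it is small and positive one uses $|\Phi|\sim|\xi|^3$ (since $|\xi^2+\xi_1-\xi_2|\sim|\xi|^2$ generically in this regime, after removing a small degenerate sub-case $|\xi^2+\xi_1-\xi_2|\ll|\xi|^2$, handled as Case (2) of Lemma \ref{lem:1}) to convert powers of $|\xi|$ into powers of $\jap{\alpha}M$. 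One should keep track that this sub-case forces $|\xi_1|\sim|\xi_2|\sim|\xi|^2$, contradicting $|\xi_1|\sim|\xi|$ unless $|\xi|\sim 1$, so it is in fact vacuous here — I would verify this.

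Next, Case A: $|\xi|\lesssim|\xi_1|\sim|\xi_2|=:N$. Now $\mathcal{M}^2\sim \jap{\xi}^{2s+2\epsilon+2}N^{-4k}$, which is largest when $\jap{\xi}$ is as large as allowed, and since $|\xi|\lesssim N$ one has $\mathcal{M}^2\lesssim N^{2s+2\epsilon+2-4k}$ when $s+\epsilon+1\ge 0$ (otherwise $\mathcal{M}^2\lesssim N^{-4k}$, which is even better). The resonance: with $\xi$ fixed, $\partial_{\xi_1}\Phi=-2\xi$, which is now \emph{small} — so integrating in $\xi_1$ against $\Phi$ costs $1/|\xi|$ and is wasteful; instead I would fix $\xi_2$ and vary $\xi_1$ (so $\xi=\xi_1+\xi_2$ varies too), where $\partial_{\xi_1}\Phi = -3\xi^2\cdot 1 - 2\xi_1 = -3(\xi_1+\xi_2)^2-2\xi_1\sim N^2$ when $|\xi|\sim N$... but here $|\xi|\ll N$ is possible, giving $\partial_{\xi_1}\Phi\sim \xi_1\sim N$. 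Either way the Jacobian is $\gtrsim N$, so with \eqref{eq:frequad21} (the version allowing a choice of which variable to fix), fixing $\xi_2$,
$$
\sup_{\xi_2}\int \mathcal{M}^2\,\Xi^{0^+}\,\fia\,d\xi_1\lesssim \sup_{\xi_2}\int N^{2s+2\epsilon+1-4k+0^+}\,\fia\,d\Phi\lesssim \jap{\alpha}M
$$
when $2s+2\epsilon+1-4k\le 0$, and by symmetry fixing $\xi_1$ works too. When instead the exponent is positive one exploits $|\Phi|\gtrsim |\xi_2|^2-|\xi_1|^2$... which can vanish — so here one genuinely needs the bound $|\Phi|\lesssim\jap{\alpha}$ from \eqref{eq:frequad1}, writing powers of $N$ in terms of $|\Phi|$ only after confirming $|\Phi|\gtrsim N^{3/2}$ or similar in the non-degenerate part, and absorbing the degenerate part ($|\xi_1^2-\xi_2^2|\ll N^2$, forcing $|\xi^3|\sim N^2$, i.e.\ $|\xi|\sim N^{2/3}$) by a direct integration as in Case (2) of Lemma \ref{lem:1}, which yields a bound of the form $(\jap{\alpha}M)^{\theta}$ for some $\theta<1$. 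Collecting the constraints from all cases gives the stated hypotheses $s<\min\{4k,2k+3/2\}$ and $\epsilon<\min\{4k-s,2k-s+3/2\}$.

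The main obstacle I anticipate is the bookkeeping around the \emph{degenerate sub-cases} where the relevant partial derivative of $\Phi$ degenerates (i.e.\ $|\xi^2+\xi_1-\xi_2|\ll|\xi|^2$ in Case B, or $|\xi_1^2-\xi_2^2|\ll N^2$ in Case A): there the change of variables in $\xi_1$ is no longer uniformly non-degenerate, one must instead pin down the forced relation between $|\xi|$ and $N$, integrate in the transverse variable, and check that the resulting power of $\jap{\alpha}M$ stays below $1$ precisely when $\epsilon$ obeys the stated strict inequalities. This is exactly the structure already executed in Lemma \ref{lem:1}, so I would follow that template; the sharp endpoints $4k-s$ and $2k-s+3/2$ should emerge as the two boundary conditions, the former from the degenerate resonance (low output frequency relative to the square root of the input) and the latter from the non-degenerate comparable case. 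Everything else is routine change-of-variables plus the reductions of Lemma \ref{lem:fre_bi1}.
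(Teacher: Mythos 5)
Your reduction to the frequency‑restricted estimates of Lemma \ref{lem:fre_bi1}, the multiplier, and the idea of changing variables to $\Phi=-\xi(\xi^2+\xi_1-\xi_2)$ all match the paper, but your case decomposition is organized around the wrong threshold and, as a consequence, the estimates you actually write down do not reach the stated range of $\epsilon$. On $V_\xi^c$ the non‑trivial configuration is $|\xi|\ll|\xi_1|\sim|\xi_2|=:N$ with $\xi_1\simeq-\xi_2$ (your ``all comparable'' Case B is not really present there and adds nothing), so $\xi_1-\xi_2\simeq 2\xi_1$ and the relevant dichotomy is $N\gtrsim|\xi|^2$ versus $N\ll|\xi|^2$ — not $N$ versus $|\xi|$. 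In your Case A you bound $\jap{\xi}^{2s+2\epsilon+2}$ by $N^{2s+2\epsilon+2}$, which after dividing by the Jacobian $\sim N$ leaves $N^{2s+2\epsilon+1-4k}$ and hence the requirement $\epsilon\le 2k-s-\tfrac12$. This is strictly smaller than $\min\{4k-s,\,2k-s+\tfrac32\}$ for every $k\ge0$; e.g.\ at $(k,s)=(1/4,1/2)\in\mathcal{A}$ your constraint fails already at $\epsilon=0$ while the lemma allows $\epsilon<1/2$. The missing idea is that for $N\gtrsim|\xi|^2$ one must exploit the smallness of the \emph{output} frequency via $\jap{\xi}^2\lesssim\jap{\xi_1}$, so that $\jap{\xi}^{2s+2\epsilon}|\xi|^2\lesssim\jap{\xi_1}^{s+\epsilon+1}$ and the integrand becomes $\jap{\xi_1}^{s+\epsilon-4k}$ — this is what produces the threshold $4k-s$; in the complementary regime $N\ll|\xi|^2$ one has $|\Phi|\sim|\xi|^3$, and converting $|\xi|\lesssim|\alpha|^{1/3}$ through \eqref{eq:frequad1} is what produces $2k-s+\tfrac32$. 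Neither mechanism appears in your argument, and your fallback ``$|\Phi|\gtrsim N^{3/2}$ or similar'' is false: $|\Phi|=|\xi|\,|\xi^2+\xi_1-\xi_2|$ can be as small as $\sim N$ when $|\xi|\sim 1$.

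There is also a concretely false step. In Case A you assert that, with $\xi_2$ frozen, $|\partial_{\xi_1}\Phi|=|3\xi^2+2\xi_1|\gtrsim N$ ``either way''. Taking $\xi_1=-\tfrac32\xi^2$ with $|\xi|$ large gives $N\sim\xi^2\gg|\xi|$, so you are inside Case A, yet the Jacobian vanishes. You then try to quarantine a degenerate sub‑case, but you identify it as $|\xi_1^2-\xi_2^2|\ll N^2$; since $|\xi_1^2-\xi_2^2|=|\xi|\,|\xi_1-\xi_2|\sim|\xi|N\ll N^2$ holds throughout the region, this describes all of Case A rather than a small exceptional set, and the scale you assign to it, $|\xi|\sim N^{2/3}$, is also wrong — the degeneracy occurs at $|\xi|\sim N^{1/2}$, i.e.\ exactly at the threshold $N\sim|\xi|^2$ around which the paper's proof is built. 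The repair is to use the freedom in \eqref{eq:frequad21} of choosing which of $\xi_1,\xi_2$ to freeze according to which of the two Jacobians $|3\xi^2-2\xi_2|$, $|3\xi^2+2\xi_1|$ is non‑degenerate on the sub‑region at hand (this is the paper's sub‑split inside its Case A), rather than asserting a uniform lower bound that does not hold.
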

\begin{proof}
	The resonance function is $\Phi=\Phi_1^v=-\xi^3-\xi_1^2+\xi_2^2=-\xi(\xi^2-\xi_2+\xi_1)$. W.l.o.g., suppose that $|\xi_1|\ge |\xi_2|$. The case where $|\xi_1|<1/\delta^v$ is trivial, we focus on $|\xi_1|>1/\delta^v$.
Recall that we are restricted to the region $|\xi|\ll |\xi_1|\simeq |\xi_2|$.

\vskip10pt
\noindent \textbf{\underline{Case A.}} $|\xi_1|\simeq |\xi_2|\gtrsim |\xi|^2$. We consider two possibilities:
	\begin{enumerate}
		\item $|3\xi^2-2\xi_2|\gtrsim |\xi_2|$. Then we apply \eqref{eq:frequad21}:
		$$
		\sup_{\xi_1} \int \frac{\jap{\xi}^{2s+2\epsilon}|\xi|^2}{\jap{\xi_1}^{4k}}\Xi^{0^+}\fia d\xi \lesssim \sup_{\xi_1}\int |\xi_1|^{s+\epsilon-4k+0^+}\fia d\Phi\lesssim M.
		$$
		\item  $|3\xi^2-2\xi_2|\ll |\xi_2|$, which implies $|3\xi^2+2\xi_1|\gtrsim |\xi_1|$. Applying \eqref{eq:frequad21},
		$$
		\sup_{\xi_2} \int \frac{\jap{\xi}^{2s+2\epsilon}|\xi|^2}{\jap{\xi_2}^{4k}}\Xi^{0^+}\fia d\xi\lesssim \sup_{\xi_2} \int |\xi_2|^{s+\epsilon-4k+0^+}\fia d\Phi \lesssim M.
		$$
	\end{enumerate}
\noindent \textbf{\underline{Case B.}} $|\xi_1|\simeq |\xi_2|\ll |\xi|^2$, which implies $|\Phi|\sim |\xi|^3$. We apply \eqref{eq:frequad1}. First, fixing $\xi_1$, $|\partial_\xi \Phi|\gtrsim |\xi|^2$ and thus
$$
\sup_{\xi_1} \int \frac{\jap{\xi}^{2s+2\epsilon}|\xi|^2}{\jap{\xi_1}^{4k}}\Xi^{0^+}\fia d\xi \lesssim \sup_{\xi_1} \int |\xi|^{2s+2\epsilon-4k+0^+}\fia d\Phi \lesssim |\alpha|^{(2s+2\epsilon-4k+0^+)/3}M.
$$
The same computation holds when $\xi_2$ is fixed. Finally, fixing $\xi$, since $|\partial_{\xi_1}\Phi|\sim |\xi\xi_1|\gtrsim |\xi|^2$, 
$$
\sup_{\xi} \int \frac{\jap{\xi}^{2s+2\epsilon}|\xi|^2}{\jap{\xi_1}^{4k}}\Xi^{0^+}\fia d\xi_1 \lesssim \sup_{\xi} \int |\xi|^{2s+2\epsilon-4k+0^+}\fia d\Phi \lesssim |\alpha|^{(2s+2\epsilon-4k+0^+)/3}M.
$$
\end{proof}
\begin{lem}\label{lem:probV}
	For $k-s>-1$ and $\epsilon<k-s+1$, 
	\begin{equation}
		\left\|\partial_x(u_1\overline{u_2})- \mathcal{N}_0^v[u_1,u_2]\right\|_{Y^{s+\epsilon,b'}}\lesssim (\delta^v)^{0^+}\|u_1\|_{X^{k,b}}\|u_2\|_{X^{k,b}}.
	\end{equation}	
\end{lem}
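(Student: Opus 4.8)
The plan is to mirror the proof of Lemma~\ref{lem:probU}, with the Schrödinger resonance $\Phi_1^u$ replaced by the Airy one. The term $\partial_x(u_1\overline{u_2})-\mathcal{N}_0^v[u_1,u_2]$ is, in profile variables, the integral over $V_\xi=\{1/\delta^v<|\xi_1|<100|\xi|\}$ with resonance function $\Phi=\Phi_1^v=-\xi^3-\xi_1^2+\xi_2^2=-\xi(\xi^2-\xi_2+\xi_1)$ and multiplier $\mathcal{M}^2=\xi^2\jap{\xi}^{2(s+\epsilon)}\jap{\xi_1}^{-2k}\jap{\xi_2}^{-2k}$ (the extra $\xi^2$ coming from $\partial_x$). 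So via Lemma~\ref{lem:fre_bi1} it suffices to establish the frequency-restricted estimate~\eqref{eq:frequad1} on $V_\xi$. First I would record the geometry of this region: since $|\xi_1|<100|\xi|$ and $|\xi_1|>1/\delta^v$, one has $|\xi|>1/(100\delta^v)$ large (for $\delta^v$ small), hence $\jap{\xi}\sim|\xi|$; moreover $|\xi_2|\le|\xi|+|\xi_1|\lesssim|\xi|$, so $\Xi\sim|\xi|$; and since $|\xi_2-\xi_1|\lesssim|\xi|\ll|\xi|^2$, the factor $\xi^2-\xi_2+\xi_1$ is comparable to $\xi^2$, so that $|\Phi|\sim|\xi|^3$ throughout $V_\xi$. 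This strong time-oscillation is precisely what lets this interaction be absorbed into the $(\delta^v)^{0^+}$ prefactor.

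Next I would bound the multiplier. On $V_\xi$ at least one of $|\xi_1|,|\xi_2|$ is $\sim|\xi|$ and the other is $\lesssim|\xi|$, so (using $k\ge 0$) $\jap{\xi_1}^{2k}\jap{\xi_2}^{2k}\gtrsim|\xi|^{2k}$; peeling off a factor $\jap{\xi_1}^{-0^+}\le(\delta^v)^{0^+}$ (allowed because $|\xi_1|>1/\delta^v$) and absorbing the residual $\jap{\xi_1}^{0^+}\lesssim|\xi|^{0^+}$ (since $|\xi_1|<100|\xi|$), one gets $\mathcal{M}^2\lesssim(\delta^v)^{0^+}|\xi|^{2+2s+2\epsilon-2k+0^+}$ on $V_\xi$. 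With this bound I would verify~\eqref{eq:frequad1} for each of the three choices $j\in\{\emptyset,1,2\}$, changing variables to integrate in $\Phi$ and using $|\xi|\sim|\Phi|^{1/3}$. When $\xi$ is fixed, $\partial_{\xi_1}\Phi=-2(\xi_1+\xi_2)=-2\xi$, so the jacobian contributes only $|\xi|^{-1}$ and the integrand becomes $\lesssim(\delta^v)^{0^+}|\Phi|^{(1+2s+2\epsilon-2k+0^+)/3}$; invoking the bound $|\Phi|\lesssim|\alpha|$ afforded by~\eqref{eq:frequad1}, this is $\lesssim(\delta^v)^{0^+}|\alpha|$ precisely when $(1+2s+2\epsilon-2k)/3\le 1$, i.e. $\epsilon<k-s+1$, and integrating $\fia$ then gives $\lesssim(\delta^v)^{0^+}|\alpha|M$. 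When $\xi_1$ (resp. $\xi_2$) is fixed one checks $|\partial_\xi\Phi|=|-3\xi^2+2\xi_2|\sim|\xi|^2$ (resp. $|\partial_{\xi_1}\Phi|=|-3\xi^2-2\xi_1|\sim|\xi|^2$, with $\xi=\xi_1+\xi_2$), so the jacobian now yields $|\xi|^{-2}$ and the integrand is $\lesssim(\delta^v)^{0^+}|\Phi|^{(2s+2\epsilon-2k+0^+)/3}$, controlled by $(\delta^v)^{0^+}|\alpha|$ under the weaker requirement $\epsilon<k-s+3/2$. Hence~\eqref{eq:frequad1} holds and Lemma~\ref{lem:fre_bi1} concludes.

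I do not anticipate a genuine obstacle; the argument is structurally identical to Lemma~\ref{lem:probU}. The two points needing a little care are: first, that the jacobians $\partial_{\xi_1}\Phi$ and $\partial_\xi\Phi$ never degenerate on $V_\xi$ — in particular at the corner $\xi_1\approx\xi$, $\xi_2\approx 0$ — which is guaranteed by $|\xi_1|<100|\xi|$ together with the largeness of $|\xi|$ forced by $|\xi_1|>1/\delta^v$, so that $|\xi_1|,|\xi_2|\ll|\xi|^2$; and second, tracking that the binding constraint $\epsilon<k-s+1$ comes exactly from the case where $\xi$ is fixed, in which the change of variables recovers only one power of $|\xi|$ instead of two. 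This is the direct counterpart of the fact that the restriction $k-s>-1$ in~\eqref{eq:estwu2} originates in the $u_{\text{low}}\times u_{\text{high}}$ interaction. In the sub-range where $2+2s+2\epsilon-2k\le 0$ one may instead invoke~\eqref{eq:frequad31} and dispense with the $|\Phi|\lesssim|\alpha|$ bound entirely, which only makes the estimate easier.
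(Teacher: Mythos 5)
Your proposal is correct and follows essentially the same route as the paper: identify that $|\Phi_1^v|\sim|\xi|^3$ on $V_\xi$, bound the multiplier by $(\delta^v)^{0^+}|\xi|^{2+2s+2\epsilon-2k+0^+}$, and verify \eqref{eq:frequad1} for the three fixed variables via the change of variables to $\Phi$, with the binding constraint $\epsilon<k-s+1$ arising exactly from the $\xi$-fixed case where the Jacobian only yields $|\xi|^{-1}$. The Jacobian computations and the origin of the $(\delta^v)^{0^+}$ gain all match the paper's argument.
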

\begin{proof}
	First, observe that, in frequency space, the integration is being held over $V_\xi$, where $|\xi|\gtrsim |\xi_2|$ and $|\xi|>1/\delta^v$. In particular, $|\Phi|\sim |\xi|^3$. If we apply \eqref{eq:frequad1}:
	\begin{align*}
		\sup_{\xi_2} \int \frac{|\xi|^{2s+2\epsilon+2}}{\jap{\xi_1}^{2k}\jap{\xi_2}^{2k}}\Xi^{0^+}\fia d\xi &\lesssim \sup_{\xi_2} \int |\xi|^{2s+2\epsilon-2k+0^+}\fia d\Phi \\&\lesssim (\delta^v)^{0^+}\int |\Phi|^{(2s+2\epsilon-2k+0^+)/3}\fia d\Phi \\&\lesssim (\delta^v)^{0^+}\jap{\alpha}M\quad \mbox{if}\quad {s+\epsilon-k<3/2};
	\end{align*}
	the estimate for $\xi_1$ fixed is analogous; and
	\begin{align*}
		\sup_{\xi} \int \frac{|\xi|^{2s+2\epsilon+2}}{\jap{\xi_1}^{2k}\jap{\xi_2}^{2k}}\Xi^{0^+}\fia d\xi_1 &\lesssim \sup_{\xi_2} \int |\xi|^{2s+2\epsilon-2k+1+0^+}\fia d\Phi \\&\lesssim (\delta^v)^{0^+}\int |\Phi|^{(2s+2\epsilon-2k+1+0^+)/3}\fia d\Phi \\&\lesssim (\delta^v)^{0^+}\jap{\alpha}M\quad \mbox{if}\quad {s+\epsilon-k<1}.
	\end{align*}
\end{proof}

\begin{nb}
	The  four lemmas above give an alternative proof of Lemma \ref{lem:est_wu} through a complete analysis in frequency variables, with a quantified nonlinear smoothing effect.
\end{nb}

Having understood, in Lemmas \ref{lem:probU} and $\ref{lem:probV}$, the precise obstructions, it is now an easy task to build counterexamples for estimates \eqref{eq:estwu1} and \eqref{eq:estwu2}:

\begin{cor}\label{cor:counter}
For $k-s>2$, $b=(1/2)^+$ and $b'=(-1/2)^+$, estimate \eqref{eq:estwu1} fails.
\end{cor}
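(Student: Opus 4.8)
The plan is to exhibit, for each large parameter $N$, an explicit counterexample that realizes the frequency interaction $U_\xi$ isolated in Lemma \ref{lem:probU}. Fix a smooth bump $\chi$ with $\chi\equiv 1$ on $[-1,1]$ and set
\[
u_N=\chi(t)\,\mathcal{F}_x^{-1}\!\big(e^{-it\xi^2}\,\mathbbm{1}_{\{|\xi|\le 1\}}\big),\qquad
v_N=\chi(t)\,\mathcal{F}_x^{-1}\!\big(e^{it\xi^3}\,\mathbbm{1}_{\{|\xi-N|\le 1\}}\big),
\]
i.e.\ time-truncated free Schrödinger and KdV evolutions, localized at frequency $\lesssim 1$ and at frequency $\sim N$ respectively. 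Since these are free solutions cut off in time, the space–time Fourier transform of each is essentially the indicator of a unit neighbourhood of the corresponding characteristic surface, so that $\|u_N\|_{X^{k,b}}\lesssim 1$ and $\|v_N\|_{Y^{s,b}}\sim N^s$. The pair $(u_N,v_N)$ sits precisely in the regime $|\xi_1|\ll|\xi_2|\simeq|\xi|$, i.e.\ in $U_\xi$, where, as in Lemma \ref{lem:probU}, the resonance function $\Phi_1^u=\xi^2-\xi_1^2+\xi_2^3=\xi_2(\xi_1+\xi+\xi_2^2)$ has size $|\Phi_1^u|\sim N^3$.

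One then analyzes $u_Nv_N$. Because this is, up to the smooth factor $\chi^2$, a superposition of products of free waves, its space–time Fourier transform concentrates on $\{\tau+\xi^2\simeq\Phi_1^u\}$; on the relevant set $\xi\sim N$, $|\xi_1|\lesssim 1$ this forces $u_Nv_N$ to live at spatial frequency $\sim N$ and at modulation $\langle\tau+\xi^2\rangle\sim N^3$. The one quantitative input is the location of the $L^2$-mass: writing $\widehat{u_Nv_N}(\tau,\xi)$ as an integral over $\xi_1$ and changing variables $\xi_1\mapsto\Phi_1^u$ — whose Jacobian $|\partial_{\xi_1}\Phi_1^u|=|2\xi_1+3\xi_2^2|\sim N^2$ is exactly the Jacobian appearing in the proof of Lemma \ref{lem:probU} — one sees that $\widehat{u_Nv_N}$ is comparable to $N^{-2}$ on a set of measure $\sim N^2$, so that $\|u_Nv_N\|_{L^2_{t,x}}\sim N^{-1}$. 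A suitable localization of the frequency support of $v_N$ (and, if needed, a modulation of $v_N$ off its characteristic surface, tuned so that the cancellation $\sigma_v\simeq-\Phi_1^u$ is exploited) gives the sharp version of this computation.

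Combining these ingredients, $\|u_Nv_N\|_{X^{k,b'}}\sim\langle N\rangle^{k}\langle N^3\rangle^{b'}\|u_Nv_N\|_{L^2}$, and comparing with $\|u_N\|_{X^{k,b}}\|v_N\|_{Y^{s,b}}\sim N^{s}$ one finds that the quotient in \eqref{eq:estwu1} grows like a positive power of $N$ precisely when $k-s>2$ — exactly the threshold, isolated in Lemma \ref{lem:probU}, at which the frequency-restricted estimate for this interaction ceases to hold — which contradicts \eqref{eq:estwu1}. The point requiring care is the second step: the size of $\|u_Nv_N\|_{X^{k,b'}}$ must be computed sharply, keeping track of the spatial frequency, of the modulation $\sim N^3$, and of the $L^2$-mass dictated by the Jacobian of $\Phi_1^u$, in order to land on the exponent governing the blow-up rather than on a weaker bound.
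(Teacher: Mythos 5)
Your overall strategy --- building an explicit pair $(u_N,v_N)$ realizing the $U_\xi$ interaction $|\xi_1|\ll|\xi_2|\simeq|\xi|$ and computing the two sides of \eqref{eq:estwu1} --- is a legitimate primal version of the paper's dual argument (the paper instead tests the dualized trilinear form \eqref{eq:counterdual} against indicator functions). However, as written your construction does not reach the claimed threshold $k-s>2$, and your final bookkeeping is internally inconsistent. With \emph{both} factors taken as time-truncated free waves, the whole resonance $\Phi_1^u\sim N^3$ is transferred to the output modulation: on the support of $\widehat{u_Nv_N}$ one has $\langle\tau+\xi^2\rangle\sim N^3$, so the weight contributes $\langle N^3\rangle^{b'}\sim N^{-3/2^+}$, and together with $\|u_Nv_N\|_{L^2_{t,x}}\sim N^{-1}$ (which is correct) you get $\|u_Nv_N\|_{X^{k,b'}}\sim N^{k-5/2^+}$ against $\|u_N\|_{X^{k,b}}\|v_N\|_{Y^{s,b}}\sim N^{s}$. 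This contradicts \eqref{eq:estwu1} only for $k-s>5/2$, leaving the range $2<k-s\le 5/2$ untouched; your assertion that the quotient grows ``precisely when $k-s>2$'' does not follow from the computation you set up.

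The repair is the parenthetical remark you defer (``a modulation of $v_N$ off its characteristic surface''), but it is not optional --- it is the whole point, and it changes every exponent in the final line. Take instead $\hat v_N(\tau_2,\xi_2)=\mathbbm{1}_{[N,N+1]}(\xi_2)\,\mathbbm{1}_{[-N^2-1,-N^2]}(\tau_2)$, i.e. $\sigma_2=\tau_2-\xi_2^3\simeq-\Phi_1^u\sim -N^3$. Then $\|v_N\|_{Y^{s,b}}\sim N^{s+3b}$ (you pay $N^{3b}$ on the right-hand side), the output now satisfies $\langle\tau+\xi^2\rangle\lesssim N$ so the left-hand weight only gains $N^{b'}$, and $\|u_Nv_N\|_{L^2}\sim 1$ since the modulated packet no longer travels out of the support of $u_N$. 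The comparison becomes $N^{k+b'}\lesssim N^{s+3b}$, i.e. $k-s\le 3b-b'=2^+$, which fails for $k-s>2$ and large $N$. At that point your example is exactly the paper's choice of $h_1,h_2,h$ in \eqref{eq:counter1}--\eqref{eq:counter2} read back through duality, where the distribution of the large modulation onto the KdV input factor (cost $N^{3b}$) rather than onto the output (cost $N^{-b'}$) is what produces the sharp exponent $3b-b'=2$ instead of $3b'$-type losses. So: correct skeleton, but you must commit to the modulated $v_N$ and redo the three norms; the free--free version you actually compute proves a strictly weaker statement.
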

\begin{proof}
	It suffices to prove that the dualized version of \eqref{eq:estwu1},
\begin{equation}\label{eq:counterdual}
		\left|\int \frac{\jap{\xi}^k\jap{\tau-\xi^2}^{b'}h_1h_2h}{\jap{\xi_1}^k\jap{\tau_1-\xi_1^2}^{b}\jap{\xi_2}^s\jap{\tau_2+\xi_2^3}^{b}}d\tau d\xi d\tau_1 d\xi_1\right| \lesssim \|h_1\|_{L^2}\|h_2\|_{L^2} \|h\|_{L^2},
\end{equation}
	fails. Take
\begin{equation}\label{eq:counter1}	h_1(\tau_1,\xi_1)=\mathbbm{1}_{[0,1]}(\xi_1)\mathbbm{1}_{[0,1]}(\tau_1),\quad 	h_2(\tau_2,\xi_2)=\mathbbm{1}_{[N,N+1]}(\xi_2)\mathbbm{1}_{[N^2, N^2+1]}(\tau_2),
\end{equation}
	\begin{equation}\label{eq:counter2}
		h(\tau,\xi)=\mathbbm{1}_{[N,N+1]}(\xi)\mathbbm{1}_{[N^2, N^2+1]}(\tau).
	\end{equation}
Then $\|h_1\|_{L^2}=\|h_2\|_{L^2}= \|h\|_{L^2}=1$, 
$$\tau_1-\xi_1^2 \sim 1, \quad \tau_2+\xi_2^3 \sim N^3, \quad \tau-\xi^2 \sim N
$$
and plugging into \eqref{eq:counterdual} yields
$$
N^{b'-3b+k-s}\lesssim 1.
$$
As $b=(1/2)^+$ and $b'=(-1/2)^+$, the contradiction follows for large $N$.
\end{proof}
\begin{cor}\label{cor:counter2}
	For $k-s<-1$, $b=(1/2)^+$ and $b'=(-1/2)^+$, estimate \eqref{eq:estwu2} fails.
\end{cor}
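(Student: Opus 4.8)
The plan is to mimic the proof of Corollary \ref{cor:counter}, dualizing the claimed estimate \eqref{eq:estwu2} and exhibiting a sequence of test functions concentrated on the frequency interaction $u_{\text{low}}\times u_{\text{high}}$ identified in Lemma \ref{lem:probV} as the source of the restriction $k-s>-1$. First I would write the dualized version of \eqref{eq:estwu2}: after taking the Fourier transform and moving the derivative multiplier $\xi$ out, the inequality $\|\partial_x(u_1\overline{u_2})\|_{Y^{s,b'}}\lesssim \|u_1\|_{X^{k,b}}\|u_2\|_{X^{k,b}}$ is equivalent to
\begin{equation}\label{eq:counterdual2}
	\left|\int \frac{|\xi|\jap{\xi}^s\jap{\tau+\xi^3}^{b'}h h_1 h_2}{\jap{\xi_1}^k\jap{\tau_1-\xi_1^2}^b\jap{\xi_2}^k\jap{\tau_2-\xi_2^2}^b}\, d\tau\, d\xi\, d\tau_1\, d\xi_1\right|\lesssim \|h\|_{L^2}\|h_1\|_{L^2}\|h_2\|_{L^2},
\end{equation}
where $\xi=\xi_1-\xi_2$ (the conjugation on $u_2$ flips the sign, so $\xi_2$ here plays the role of the frequency of $\overline{u_2}$) and $\tau=\tau_1+\tau_2$; the precise placement of signs is routine bookkeeping.

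Next I would choose the test functions. Guided by Lemma \ref{lem:probV}, the bad interaction is $|\xi_1|\simeq|\xi_2|$ large with $|\xi|=|\xi_1-\xi_2|\lesssim|\xi_2|$ and, in fact, the worst case is $|\xi|\sim 1$ while $|\xi_1|\sim|\xi_2|\sim N$, since then the output frequency $\jap{\xi}^s$ costs nothing but the two input high frequencies are heavily penalized by $\jap{\xi_j}^{-k}$, and the KdV resonance $\Phi_1^v=-\xi^3-\xi_1^2+\xi_2^2$ is comparable to $\xi_2^2-\xi_1^2 = (\xi_2-\xi_1)(\xi_2+\xi_1)\sim N$ (not $N^3$, since $|\xi|\sim1$), so $\tau+\xi^3$ can be localized at size $\sim N$. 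Concretely I would take
$$
h_1(\tau_1,\xi_1)=\mathbbm{1}_{[N,N+1]}(\xi_1)\mathbbm{1}_{[N^2,N^2+1]}(\tau_1),\qquad h_2(\tau_2,\xi_2)=\mathbbm{1}_{[N-1,N]}(\xi_2)\mathbbm{1}_{[(N-1)^2,(N-1)^2+1]}(\tau_2),
$$
and $h$ supported on the compatible box $\xi\in[0,2]$, $\tau$ in an interval of length $O(1)$ around $\xi_1^2-\xi_2^2-\xi^3$ (size $\sim N$), all normalized to have unit $L^2$ norm; the convolution constraints are then satisfiable on a set of measure $\gtrsim 1$. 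On this configuration $\jap{\tau_1-\xi_1^2}\sim 1$, $\jap{\tau_2-\xi_2^2}\sim1$, $\jap{\tau+\xi^3}\sim N$, $|\xi|\jap{\xi}^s\sim 1$, and $\jap{\xi_1}^{-k}\jap{\xi_2}^{-k}\sim N^{-2k}$, so the left side of \eqref{eq:counterdual2} is $\gtrsim N^{b'}N^{-2k}\cdot N^{s}$ — wait, more carefully: the numerator contributes $\jap{\xi}^s\sim1$ so the power of $N$ is $b' - 2k$ from those factors, but I must recheck where $\jap{\xi_2}^s$ or the output Sobolev weight enters; since the output has frequency $\xi\sim1$, the weight $\jap{\xi}^s$ is harmless, and the estimate reduces to $N^{b'-2k}\lesssim N^{-?}$. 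This does not yet see $k-s<-1$, which signals I have the concentration wrong — the relevant regime must instead be the one where the \emph{output} carries a large frequency.

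So the correct choice, matching the second displayed bound in the proof of Lemma \ref{lem:probV} (the ``$s+\epsilon-k<1$'' line, i.e. the $\xi$-fixed case with the $+1$ from the Jacobian), is $|\xi|\sim N$ large with $|\xi_2|\lesssim|\xi|$, say $|\xi_1|\sim|\xi|\sim N$ and $|\xi_2|\sim1$; then $\Phi_1^v\sim-\xi^3\sim N^3$, the output weight is $\jap{\xi}^s\sim N^s$, and $\jap{\xi_1}^{-k}\jap{\xi_2}^{-k}\sim N^{-k}$. Taking
$$
h_1=\mathbbm{1}_{[N,N+1]}(\xi_1)\mathbbm{1}_{[N^2,N^2+1]}(\tau_1),\quad h_2=\mathbbm{1}_{[0,1]}(\xi_2)\mathbbm{1}_{[0,1]}(\tau_2),\quad h=\mathbbm{1}_{[N,N+1]}(\xi)\mathbbm{1}_{[-N^3,-N^3+1]}(\tau),
$$
all of unit norm and with compatible supports (one checks $\tau-\tau_1-\tau_2 = -N^3 - N^2 - O(1)$ does not match $-\xi^3$ exactly, so one adjusts the $\tau$-interval of $h$ to be centered at $\xi_1^2 - \xi_2^2 - \xi^3$ reduced appropriately; since conjugation sends $\tau_2-\xi_2^2$ to $-\tau_2+\xi_2^2$, the resonance identity is $\Phi = (\tau+\xi^3) - (\tau_1-\xi_1^2) + (\tau_2 - \xi_2^2)$, and I localize $h$ so that $|\tau+\xi^3|\sim N^3$, consistent with $\Phi\sim-\xi^3$). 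Plugging in, the left side of \eqref{eq:counterdual2} is $\gtrsim |\xi|\cdot\jap{\xi}^s\cdot N^{3b'}\cdot N^{-k}\sim N^{1+s+3b'-k}$ and the right side is $\lesssim1$, giving the contradiction $N^{1+s+3b'-k}\lesssim 1$ for large $N$ precisely when $1+s+3b'-k>0$; with $b'=(-1/2)^+$ this is $1+s - 3/2 - k = s-k-1/2>0$... The upshot is that the exponent bookkeeping must be redone cleanly once the signs in \eqref{eq:counterdual2} are fixed, and the supports of $h,h_1,h_2$ chosen so that all three resonance factors $\jap{\tau_1-\xi_1^2}$, $\jap{\tau_2-\xi_2^2}$ are $\sim1$ while $\jap{\tau+\xi^3}\sim N^3$; the main obstacle — and the only nontrivial point — is to pick the frequency scales so that the resulting power of $N$ is exactly $-(k-s)-1$ plus the harmless $3b'+1 = 0^-$, so that the inequality fails iff $k-s<-1$. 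Once the scales are pinned down (high output frequency $\sim N$ against one high and one $O(1)$ input frequency, with the resonance at $\sim N^3$), the verification of $\|h\|_{L^2}=\|h_1\|_{L^2}=\|h_2\|_{L^2}=1$ and of the contradiction is immediate, exactly as in Corollary \ref{cor:counter}.
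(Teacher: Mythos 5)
Your overall strategy (dualize \eqref{eq:estwu2} and plug in characteristic functions of boxes concentrated on the low$\times$high $\to$ high interaction) is exactly the paper's, and your second frequency configuration --- one input at frequency $O(1)$, one input and the output at frequency $\sim N$, with output modulation $\jap{\tau+\xi^3}\sim N^3$ --- is the right one. But the proof is not actually completed, and the point at which you stall is the only genuinely nontrivial point. Your final exponent count assumes you can take \emph{both} input modulations of size $O(1)$, i.e.\ $\jap{\tau_1-\xi_1^2}\sim\jap{\tau_2-\xi_2^2}\sim 1$, while keeping $h,h_1,h_2$ unit-normalized on boxes of measure $O(1)$. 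That configuration is not realizable: with $\xi_1\sim N$, $\xi_2\sim 1$ and $\tau_j\approx\xi_j^2$, the output time frequency $\tau\approx\xi_1^2+(\xi-\xi_1)^2$ sweeps an interval of length $\sim N$ as $\xi_1$ varies over a unit interval with $\xi$ fixed, so the support of $h$ has measure $\sim N$ and normalizing it costs a factor $N^{-1/2}$. This is why your count produces the threshold $s-k>1/2$, which would contradict the validity of \eqref{eq:estwu2} for $s-k<1$ (Lemma \ref{lem:est_wu}); the missing $N^{-1/2}$ is precisely what turns $s-k-1/2$ into $s-k-1$ and makes the threshold come out as $k-s<-1$.

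The paper sidesteps this by reusing verbatim the boxes \eqref{eq:counter1}--\eqref{eq:counter2}: $\xi_1,\tau_1\in[0,1]$, $\xi_2\in[N,N+1]$ with $\tau_2\in[N^2,N^2+1]$, and $\xi\in[N,N+1]$, $\tau\in[N^2,N^2+1]$. All three supports then have measure $O(1)$ and are compatible with the convolution constraints, at the price of $\jap{\tau_2-\xi_2^2}\sim N$ (since $\tau_2-\xi_2^2$ ranges over an interval of length $\sim N$), i.e.\ one pays $N^{-b}$ in the high-frequency input rather than $N^{-1/2}$ in the normalization of $h$. The integrand is then $\sim N^{1+s}\cdot N^{3b'}/(N^k\cdot N^b)$, giving $N^{1+s-k+3b'-b}=N^{s-k-1^+}$ and hence failure exactly for $k-s<-1$. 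You should also discard your explicit choice $\tau\in[-N^3,-N^3+1]$ for $h$: it is incompatible with $\tau=\tau_1+\tau_2\sim N^2$ (the integral would vanish), and in any case $\jap{\tau+\xi^3}$ is forced by the resonance identity once the other supports are fixed --- it cannot be prescribed independently. So the gap is concrete: you never produce an admissible triple of test functions, and the bookkeeping you defer is exactly where the claimed threshold is decided.
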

\begin{proof}
	As in the previous proof, we show that
\begin{equation}\label{eq:counterdual2}
		\left|\int \frac{\xi\jap{\xi}^s\jap{\tau+\xi^3}^{b'}h_1h_2h}{\jap{\xi_1}^k\jap{\tau_1-\xi_1^2}^{b}\jap{\xi_2}^k\jap{\tau_2-\xi_2^2}^{b}}d\tau d\xi d\tau_1 d\xi_1\right| \lesssim \|h_1\|_{L^2}\|h_2\|_{L^2} \|h\|_{L^2}
\end{equation}
	cannot hold. Choose $h_1,h_2, h$ as in \eqref{eq:counter1} and \eqref{eq:counter2}. Then \eqref{eq:counterdual2} would yield
	$$
	N^{1+s-k+3b'-b}\lesssim 1,
	$$
	which does not hold for large $N$.
\end{proof}

In the next lemmas, we prove the necessary multilinear estimates associated with the $N_j^u$ terms, $j=1,\dots,4$. 

The proof of each estimate will follow from the application of Lemma \ref{lem:fre_tri1}, with suitable choices of multipliers $\mathcal{M}_1, \mathcal{M}_2$ and of the set $\mathcal{A}$. This choice is made taking into account two factors: first, we are restricted to the region $|\xi_1|\ll |\xi_2|\simeq |\xi|$; second, we aim to perform a change of variables in the frequency-restricted estimates in order to integrate in the resonance function itself. By considering appropriate pairings of variables, we can ensure that this change of variables can be made, with a gain of frequency decay in the process.

\begin{lem}\label{lem:3}
	Let $s,k\in \R$ be such that
	$$
	k-s<3,\quad s+k>-\frac{3}{2},\quad s>-\frac{5}{2}.
	$$
	Then, for $\epsilon<\min\{3-k+s,5+2s\}$,
	\begin{equation}
		\left\| \mathcal{N}_1^u[u_{11},v_{12},v_2]\right\|_{X^{k+\epsilon,b'}}\lesssim \|u_{11}\|_{X^{k,b}}\|v_{12}\|_{Y^{s,b}}\|v_2\|_{Y^{s,b}}.
	\end{equation}	
\end{lem}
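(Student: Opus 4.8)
The plan is to apply Lemma \ref{lem:fre_tri1} with $m=3$, to the nonlinearity $N_1^u[u_{11},v_{12},v_2]$, whose associated resonance function is $\Psi=\Psi_1^u=\xi^2-\xi_{11}^2+\xi_{12}^3+\xi_2^3$ with $\xi=\xi_1+\xi_2$, $\xi_1=\xi_{11}+\xi_{12}$, and whose multiplier is $\mathcal{M}=\jap{\xi}^{k+\epsilon}/(\jap{\xi_1}^k\jap{\xi_{11}}^k\jap{\xi_{12}}^s\jap{\xi_2}^s\jap{\xi_1})$, the last factor being the $1/\Phi_1^u$ gain (recall $|\Phi_1^u|\sim|\xi_2|^3\sim|\xi|^3$ on the region $U_\xi$ where this term lives). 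So in fact $|\mathcal{M}|\lesssim(\delta^u)^{0^+}\jap{\xi}^{k+\epsilon-3}/(\jap{\xi_{11}}^k\jap{\xi_{12}}^s\jap{\xi_2}^s)$, since on $U_\xi$ we have $|\xi_1|\ll|\xi_2|\simeq|\xi|$; this already explains the constraint $k-s+\epsilon<3$ (when $\xi_{11},\xi_{12},\xi_2$ are all $O(|\xi|)$ one needs the net power of $\jap{\xi}$ to be nonpositive-ish before using the resonance integration, and the $\xi_2^3$ available in $\Psi$ buys exactly the room). Condition \eqref{eq:condm1} is immediate. The trivial subregion where all frequencies are $\lesssim 1/\delta^u$ is handled directly as in the earlier lemmas, so one may assume $|\xi|\gtrsim 1/\delta^u$.

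The key step is the choice of the splitting set $A$ and the factorization $\mathcal{M}_1\mathcal{M}_2=|\mathcal{M}|^2\jap{\xi}^{2(k+\epsilon)}/(\jap{\xi_{11}}^{2k}\jap{\xi_{12}}^{2s}\jap{\xi_2}^{2s}\jap{\xi_1}^2)$. Since the integration variables are two of $\{\xi_{11},\xi_{12}\}$ (with $\xi_1$, $\xi$ fixed by the constraints or playing the role of the outer variable), I would take, e.g., $A=\{\emptyset\}$ or $A=\{2\}$ so that the two inner integrations are over $\xi_{11}$ and $\xi_{12}$, or over $\xi_2$ and one of the $\xi_{1j}$. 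The point is that $\partial_{\xi_{11}}\Psi=-2\xi_{11}$ and $\partial_{\xi_{12}}\Psi=3\xi_{12}^2$ (at fixed $\xi_1$, moving $\xi_{11}$ with $\xi_{12}=\xi_1-\xi_{11}$ gives $\partial_{\xi_{11}}\Psi=-2\xi_{11}-3\xi_{12}^2$), and $\partial_{\xi_2}\Psi=3\xi_2^2\sim|\xi|^2$ at fixed $\xi_1$ — the last being the cleanest, large derivative. So the natural plan: fix $\xi_1$ (hence $\xi=\xi_1+\xi_2$ depends on $\xi_2$), integrate in $\xi_2$ using $|\partial_{\xi_2}\Psi|\sim|\xi|^2$ to change variables $\xi_2\mapsto\Psi$, picking up $|\xi|^{-2}$; then integrate in $\xi_{11}$ (or $\xi_{12}$) and distribute the remaining frequency weights between $\mathcal{M}_1$ and $\mathcal{M}_2$. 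After the $\Psi$-change of variables the remaining multiplier is bounded either by $1$ (giving the $M^1$ needed) or, when it grows, by $|\Psi|^{\theta}$ for some $\theta<1$ forced by the hypotheses, and then one invokes the version with the $|\Phi|\lesssim|\alpha|$ bound — but note Lemma \ref{lem:fre_tri1} as stated only asks for $\lesssim M$, so I anticipate just needing the clean bound-by-$1$ route after the change of variables, which is exactly where the constraint $\epsilon<3-k+s$ kicks in (and $\epsilon<5+2s$, equivalently $s>-5/2$ after absorbing $\epsilon$, comes from the worst pairing, where $\xi_{12}$ or $\xi_2$ is small so that the $\jap{\xi_{12}}^{-2s}$ or $\jap{\xi_2}^{-2s}$ weight is large and must be controlled by the available powers of $|\xi|$; the exponent $5=3+2$ reflects the $|\xi|^3$ from $\Phi_1^u$ plus $|\xi|^2$ or a derivative gain from $\Psi$).

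Concretely the steps, in order: (i) reduce to $U_\xi$ with $|\xi|\gtrsim 1/\delta^u$, note $|\mathcal{M}|^2\lesssim(\delta^u)^{0^+}\jap{\xi}^{2(k+\epsilon-3)}\jap{\xi_{11}}^{-2k}\jap{\xi_{12}}^{-2s}\jap{\xi_2}^{-2s}$; (ii) split according to whether $|\xi_{12}|\lesssim|\xi|$ and $|\xi_2|\lesssim|\xi|$ hold (they must, on $U_\xi$, since $|\xi_1|\ll|\xi|$ forces $|\xi_{12}|\le|\xi_1|+\dots$, but we may still have $|\xi_{11}|$ large or small relative to $|\xi_1|$, and $|\xi_{12}|,|\xi_2|$ ranging from $O(1)$ to $O(|\xi|)$); really the two genuinely different cases are $|\xi_{11}|\sim|\xi_1|$ vs. $|\xi_{12}|\sim|\xi_1|$, and whether the small factor among $\xi_{12},\xi_2$ is $\lesssim 1$ or not; (iii) in each case pick $A$ and the factorization so that the two one-variable integrals each run over a variable with a controlled $\partial\Psi$, change variables to $\Psi$ each time with the right jacobian gain, bound the leftover multiplier by a constant (using the hypotheses on $k,s,\epsilon$ to make the residual power of $|\xi|$ nonpositive), and collect $M^1$; (iv) handle the degenerate subcases where $\partial_{\xi_{11}}\Psi=-2\xi_{11}-3\xi_{12}^2$ is small — i.e. $|\xi_{11}|\sim|\xi_{12}|^2$ — by using that then $|\Psi|$ is itself large, $|\Psi|\gtrsim|\xi_{12}|^3\gtrsim\jap{\xi_{12}}^3$, which lets one absorb the bad $\jap{\xi_{12}}^{-2s}$ weight. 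The main obstacle I expect is precisely this bookkeeping of the degenerate change-of-variables subcases (where $\partial_{\xi_{11}}\Psi$ or $\partial_{\xi_{12}}\Psi$ degenerates), making sure the resonance size $|\Psi|$ always compensates, and verifying that in every branch the final exponent of $|\xi|$ is controlled by the three stated inequalities with room for $\epsilon$ — the arithmetic of the competing constraints $3-k+s$ and $5+2s$ is where the sharpness of $\mathcal{A}$'s condition $k-s<3$ ultimately shows up.
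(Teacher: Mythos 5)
Your overall strategy is the paper's: reduce to Lemma \ref{lem:fre_tri1}, use $|\Phi_1^u|\sim|\xi_2|^3$ to write $|\mathcal{M}|\lesssim \jap{\xi}^{k+\epsilon}|\xi_2|^{-3}\jap{\xi_{11}}^{-k}\jap{\xi_{12}}^{-s}\jap{\xi_2}^{-s}$, split into cases by the relative sizes of the frequencies, and in each case pick the pair $(A,A^c)$ so that the change of variables to $\Psi$ is non-degenerate and yields the jacobian gain. However, there is a genuine gap in your case analysis. You assert that on $U_\xi$ one has $|\xi_{12}|\lesssim|\xi|$ ``since $|\xi_1|\ll|\xi|$ forces $|\xi_{12}|\le|\xi_1|+\dots$'', and accordingly you reduce to the two cases $|\xi_{11}|\sim|\xi_1|$ versus $|\xi_{12}|\sim|\xi_1|$. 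This is false: the constraint $|\xi_{11}+\xi_{12}|=|\xi_1|\ll|\xi|$ is perfectly compatible with $|\xi_{11}|\sim|\xi_{12}|\gg|\xi_2|\simeq|\xi|$ (near-cancellation $\xi_{11}\simeq-\xi_{12}$). This high--high interaction is the paper's Case C, and it is exactly the configuration that produces the hypothesis $s+k>-3/2$: there the available decay after the change of variables is $\jap{\xi_{11}}^{-(k+s+3/2)+0^+}$, which is bounded only under that hypothesis. Your write-up never invokes $s+k>-3/2$ at all, which is the symptom of the missing case; without it the argument cannot close (and indeed the estimate genuinely degenerates there for $k+s<-3/2$).

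Two smaller points. First, your heuristic for the constraint $\epsilon<5+2s$ is backwards: the weight $\jap{\xi_{12}}^{-2s}$ (for $s<0$) is dangerous when $\xi_{12}$ is \emph{large}, not small, and in the paper this constraint arises from the case $|\xi_{11}|\sim|\xi_{12}|\sim|\xi_2|\sim|\xi|$, where the multiplier reduces to $|\xi|^{\epsilon-3-2s}$ and the jacobian contributes a further $|\xi|^{-2}$, forcing $\epsilon-5-2s\le 0$. Second, your plan to ``fix $\xi_1$'' is not directly a choice of $A$ in Lemma \ref{lem:fre_tri1} (the admissible variables are $\xi,\xi_{11},\xi_{12},\xi_2$); this is repairable by fixing $\{\xi_{11},\xi_{12}\}$, but you should state it that way, and note that the lemma requires \emph{both} the $\sup_{A}\int_{A^c}$ and the $\sup_{A^c}\int_{A}$ bounds, so in each case you must verify two one-dimensional integrals, not one.
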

\begin{proof}
Recall the resonance function
$$
\Psi=\Psi_1^u=\xi^2-\xi_{11}^2+\xi_{12}^3+\xi_2^3
$$
and the multiplier
$$
\mathcal{M}=\frac{\jap{\xi}^{k+\epsilon}}{|\Phi_1^u|\jap{\xi_{11}}^k\jap{\xi_{12}}^s\jap{\xi_2}^s} \lesssim \frac{\jap{\xi}^{k+\epsilon}}{|\xi_2|^3\jap{\xi_{11}}^k\jap{\xi_{12}}^s\jap{\xi_2}^s}
$$
\vskip10pt
\noindent\textbf{\underline{Case A.}}  $|\xi_{11}|,|\xi_{12}|\ll |\xi_2|$. We interpolate between
$$
\sup_{\xi,\xi_{11}} \int \frac{\jap{\xi}^{k+\epsilon}}{|\xi_2|^{5/2}\jap{\xi_{11}}^k\jap{\xi_{12}}^{s}\jap{\xi_2}^s}\Xi^{0^+} \psia d\xi_{12} \lesssim \sup_{\xi,\xi_{11}} \int \frac{|\xi|^{k+\epsilon-s-3+0^+}}{\jap{\xi_{11}}^k\jap{\xi_{12}}^{s}}\psia \frac{d\Psi}{|\xi_2|^{3/2}} \lesssim M
$$
and
$$
\sup_{\xi_2,\xi_{12}} \int \frac{\jap{\xi}^{k+\epsilon}}{|\xi_2|^{7/2}\jap{\xi_{11}}^{k}\jap{\xi_{12}}^{s}\jap{\xi_2}^s}\Xi^{0^+} \psia d\xi_{11} \lesssim \sup_{\xi_2,\xi_{12}} \int \frac{|\xi|^{k+\epsilon-s-3+0^+}}{\jap{\xi_{11}}^{k}\jap{\xi_{12}}^{s}}\psia \frac{d\Psi}{|\xi_2|^{3/2}} \lesssim M.
$$

\vskip10pt
\noindent\textbf{\underline{Case B.}} $|\xi_{11}|\sim |\xi_{12}|\sim |\xi_2|$. Then we estimate
\begin{align*}
	\sup_{\xi,\xi_2} \int \frac{\jap{\xi}^{k+\epsilon}}{|\xi_2|^3\jap{\xi_{11}}^{k}\jap{\xi_{12}}^{s}\jap{\xi_2}^s}\Xi^{0^+} \psia d\xi_{11} &\lesssim 	\sup_{\xi,\xi_2} \int |\xi|^{\epsilon-5-2s+0^+} \psia d\Psi \lesssim M.
\end{align*}
The other side of the interpolation is completely analogous.

\vskip10pt
\noindent\textbf{\underline{Case C.}} $|\xi_{11}|, |\xi_{12}|\gg |\xi_2|$. Here the intepolation is made between
$$
\sup_{\xi,\xi_{11}} \int \frac{\jap{\xi}^{k+\epsilon}}{|\xi_2|^3 \jap{\xi_{11}}^k\jap{\xi_{12}}^{s-1/2}\jap{\xi_2}^s}\Xi^{0^+}\psia d\xi_{11} \lesssim \sup_{\xi,\xi_{11}} \int \frac{|\xi|^{k+\epsilon-3-s}}{ \jap{\xi_{11}}^{k+s+3/2+0^-}}\psia d\Psi \lesssim M
$$
and
$$
\sup_{\xi_2,\xi_{12}} \int \frac{\jap{\xi}^{k+\epsilon}}{|\xi_2|^3 \jap{\xi_{11}}^k\jap{\xi_{12}}^{s+1/2}\jap{\xi_2}^s}\Xi^{0^+}\psia d\xi_{11} \lesssim \sup_{\xi_2,\xi_{12}} \int \frac{|\xi|^{k+\epsilon-3-s}}{ \jap{\xi_{12}}^{k+s+3/2+0^-}}\psia d\Psi \lesssim M.
$$
\end{proof}

\begin{lem}\label{lem:4}
Let $s,k\in \R$ be such that
$$
k-s<3,\quad k\ge0.
$$
Then, for $\epsilon<3-k+s$,
	\begin{equation}
		\left\| \mathcal{N}_2^u[u_{11},u_{12},u_{13},v_2]\right\|_{X^{k+\epsilon,b'}}\lesssim \|u_{11}\|_{X^{k,b}}\|u_{12}\|_{X^{k,b}}\|u_{13}\|_{X^{k,b}}\|v_2\|_{Y^{s,b}}.
	\end{equation}	
\end{lem}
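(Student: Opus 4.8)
\textbf{Proof plan for Lemma \ref{lem:4}.}

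The plan is to apply Lemma \ref{lem:fre_tri1} with $m=4$, recalling the resonance function $\Psi=\Psi_2^u=\xi^2-\xi_{11}^2+\xi_{12}^2-\xi_{13}^2+\xi_2^3$ and the multiplier
$$
\mathcal{M}=\frac{\jap{\xi}^{k+\epsilon}}{|\Phi_1^u|\jap{\xi_{11}}^k\jap{\xi_{12}}^k\jap{\xi_{13}}^k}\lesssim \frac{\jap{\xi}^{k+\epsilon}}{|\xi_2|^3\jap{\xi_{11}}^k\jap{\xi_{12}}^k\jap{\xi_{13}}^k},
$$
where the last bound uses that we are on the problematic region $U_\xi$, so $|\xi_1|\ll|\xi_2|\simeq|\xi|$ and hence $|\Phi_1^u|=|\xi_2||\xi_1+\xi+\xi_2^2|\sim|\xi_2|^3$. (As usual, if all frequencies are $\lesssim 1/\delta^u$ the FRE is immediate, so we may assume $|\xi|\gtrsim 1/\delta^u$.) The key structural observation is that $\Psi$ depends on the Schrödinger variables only through squares, so $\partial_{\xi_{12}}\Psi=2\xi_{12}$ and $\partial_{\xi_{11}}\Psi=-2\xi_{11}$ (recall $\xi_{13}=\xi_1-\xi_{11}-\xi_{12}$, or one may treat $\xi_{13}$ as the integration variable, $\partial_{\xi_{13}}\Psi=-2\xi_{13}$); in all cases the change of variables from one $\xi_{1j}$ to $\Psi$ has Jacobian of size the corresponding frequency.

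The plan is then to choose the splitting $\mathcal{M}_1\mathcal{M}_2=|\mathcal{M}|^2\jap{\xi}^{2s}/(\jap{\xi_{11}}^{2k}\jap{\xi_{12}}^{2k}\jap{\xi_{13}}^{2k})$ — here the Airy factor is trivial since the target index carries $\jap{\xi_2}^{0}$, i.e. $s_2=0$ for the $v_2$ slot actually reads off as the KdV norm, but since $|\xi_2|\sim|\xi|$ this just contributes to the $\jap{\xi}$ power — adapted to which of the three Schrödinger frequencies is comparable to $|\xi_2|$. Splitting into cases according to whether $|\xi_{11}|,|\xi_{12}|,|\xi_{13}|$ are all $\ll|\xi_2|$, some are $\sim|\xi_2|$, or some are $\gg|\xi_2|$ (with at least two comparable to each other by the sum constraint), in each case one assigns roughly half of each frequency weight to $\mathcal{M}_1$ and half to $\mathcal{M}_2$, integrates $\mathcal{M}_1$ in the "small" variables using the Jacobian gains and $\mathcal{M}_2$ in the "large" variable $\xi$ (where $\partial_\xi\Psi=2\xi+3\xi_2^2\sim|\xi_2|^2$ since $|\xi|\sim|\xi_2|$), and checks that the net power of $|\xi|$ is $\le 0$ precisely under $k+\epsilon-3\le s$, i.e. $\epsilon<3-k+s$, while $k\ge0$ is used so that the weights $\jap{\xi_{1j}}^{-2k}$ actually help (or at least do not hurt) in the regions where those frequencies are large. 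Since $m=4\ge3$, after the FRE is verified Lemma \ref{lem:fre_tri1} closes the estimate; the harmless $\Xi^{0^+}$ factor is absorbed by taking $\epsilon$ a hair smaller.

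The main obstacle I expect is bookkeeping rather than a genuine difficulty: one must organize the (finitely many) frequency configurations of the three Schrödinger frequencies relative to $|\xi_2|$, and in each one make a choice of $\mathcal{M}_1,\mathcal{M}_2$ and of the subset $A\subset\{\emptyset,11,12,13,2\}$ so that both the "$\int d\xi_{j\notin A}$" and the "$\int d\xi_{j\in A}$" integrals are $\lesssim M$. The factor $|\xi_2|^{-3}$ coming from $1/|\Phi_1^u|$ is generous — it is the whole point of the integration by parts — so the numerology is comfortable; the only place one must be slightly careful is when two of the $\xi_{1j}$ are large and nearly cancel (so a third frequency is forced to be $\sim|\xi|$), but even there the cubic gain from $|\xi_2|^{-3}$ dominates. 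I do not anticipate needing the bound $|\Psi|\lesssim|\alpha|$ (i.e.\ the first bullet \eqref{eq:frequad1}-type mechanism), as the Jacobian gains alone should already furnish the full power $M$; if in some corner case they do not, one falls back on bounding the leftover multiplier by $|\Psi|$ and invoking that the restriction forces $|\Psi|\lesssim\jap{\alpha}M$, exactly as in the proof of Lemma \ref{lem:probU}.
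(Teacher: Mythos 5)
Your plan matches the paper's proof of Lemma \ref{lem:4}: apply Lemma \ref{lem:fre_tri1} to $\Psi_2^u$ with the multiplier bounded by $\jap{\xi}^{k+\epsilon}|\xi_2|^{-3}\jap{\xi_{11}}^{-k}\jap{\xi_{12}}^{-k}\jap{\xi_{13}}^{-k}\jap{\xi_2}^{-s}$, split into cases according to the sizes of the $\xi_{1j}$ relative to $|\xi_2|\simeq|\xi|$ (all $\lesssim|\xi_2|$, versus two large and nearly cancelling), and change variables to $\Psi$ with Jacobian $\sim|\xi_{1j}|$ when some Schr\"odinger frequency dominates or $\sim\xi_2^2$ when none does, arriving at exactly the power count $k+\epsilon-s-3<0$; like you, the paper never needs the $|\Psi|\lesssim|\alpha|$ fallback here. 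The only write-up point: the weight $\jap{\xi_2}^{-s}$ for the $Y^{s,b}$ factor must appear explicitly in $\mathcal{M}$ (your ``$s_2=0$'' remark is misstated), though since $|\xi_2|\simeq|\xi|$ you did absorb it correctly into the final exponent.
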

\begin{proof}
	Here, the resonance function is
	$$
	\Psi=\Psi_2^u=\xi^2-\xi_{11}^2+\xi_{12}^2-\xi_{13}^2+\xi_2^3
	$$
	and the multiplier is given by
	$$
	\mathcal{M}= \frac{\jap{\xi}^{k+\epsilon+1}}{|\Phi_1^u| \jap{\xi_{11}}^k \jap{\xi_{12}}^k \jap{\xi_{13}}^k \jap{\xi_{2}}^s} \lesssim \frac{\jap{\xi}^{k+\epsilon}}{|\xi_2|^3 \jap{\xi_{11}}^k \jap{\xi_{12}}^k \jap{\xi_{13}}^k \jap{\xi_{2}}^s}.
	$$
	\vskip10pt
	\noindent\textbf{\underline{Case A.}} $|\xi_{1j}|\lesssim |\xi_2|$ for $j=1,2,3$. In this case,
	\begin{align*}
		\sup_{\xi,\xi_{11}} \int \frac{\jap{\xi}^{k+\epsilon+1}}{|\xi_2|^3 \jap{\xi_{11}}^k \jap{\xi_{12}}^k \jap{\xi_{13}}^k \jap{\xi_{2}}^s}\Xi^{0^+}\psia d\xi_{12} d\xi_2 \lesssim \sup_{\xi,\xi_{11}} \int |\xi|^{k+\epsilon-s-4+0^+}\psia d\xi_{12} d\Psi \lesssim M
	\end{align*}
	and
	$$
	\sup_{\xi_2,\xi_{12},\xi_{13}} \int \frac{\jap{\xi}^{k+\epsilon-1}}{|\xi_2|^3 \jap{\xi_{11}}^k \jap{\xi_{12}}^k \jap{\xi_{13}}^k \jap{\xi_{2}}^s}\Xi^{0^+}\psia d\xi \lesssim \sup_{\xi_2,\xi_{12},\xi_{13}} \int |\xi|^{k+\epsilon-s-4+0^+} d\xi \lesssim 1,
	$$
	since $k+\epsilon-s-4<-1$.
	\vskip10pt
	\noindent\textbf{\underline{Case B.}} For some $j=1,2,3$. $|\xi_{1j}|\gg |\xi_2|$. Suppose, w.l.o.g., that $|\xi_{11}|\ge|\xi_{12}|\ge |\xi_{13}|$. Then $|\xi_{12}|\gg |\xi_2|\simeq |\xi|$ and we interpolate between
	\begin{align*}
		\sup_{\xi_{11},\xi_{13}} \int \frac{\jap{\xi}^{k+\epsilon}}{|\xi_2|^3 \jap{\xi_{11}}^k \jap{\xi_{12}}^k \jap{\xi_{13}}^k \jap{\xi_{2}}^s}&\Xi^{0^+}\psia d\xi_{12}d\xi_2\\& \lesssim  \sup_{\xi_{11},\xi_{13}}  \int |\xi_2|^{k+\epsilon-s-3}|\xi_{11}|^{0^+}\psia \frac{d\Psi}{|\xi_{11}|}d\xi_{12} \lesssim M
	\end{align*}
	and
	\begin{align*}
		\sup_{\xi,\xi_2\xi_{12}} \int \frac{\jap{\xi}^{k+\epsilon}}{|\xi_2|^3 \jap{\xi_{11}}^k \jap{\xi_{12}}^k \jap{\xi_{13}}^k \jap{\xi_{2}}^s}&\Xi^{0^+}\psia d\xi_{11} \\&\lesssim \sup_{\xi,\xi_2\xi_{12}} \int \jap{\xi}^{k+\epsilon-s-3}|\xi_{11}|^{0^+}\psia \frac{d\Psi}{|\xi_{11}|} \lesssim M.
	\end{align*}
\end{proof}

\begin{lem}\label{lem:5}
	For $k>-1/2$ and $\epsilon<\min\{k+3, 2k+3\}$, 
	\begin{equation}
		\left\| \mathcal{N}_3^u[u_1,u_{21},u_{22}]\right\|_{X^{k+\epsilon,b'}}\lesssim \|u_1\|_{X^{k,b}}\|u_{21}\|_{X^{k,b}}\|u_{22}\|_{X^{k,b}}.
	\end{equation}	
\end{lem}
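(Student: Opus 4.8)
The plan is to apply Lemma~\ref{lem:fre_tri1} with $m=3$. For $\mathcal{N}_3^u[u_1,u_{21},u_{22}]$ the resonance function is $\Psi=\Psi_3^u=\xi^2-\xi_1^2-\xi_{21}^2+\xi_{22}^2$, with $\xi=\xi_1+\xi_{21}+\xi_{22}$ and $\xi_2=\xi_{21}+\xi_{22}$, and the bare multiplier is $\mathcal{M}=\xi_2/\Phi_1^u$, so the quantity to be factored as $\mathcal{M}_1\mathcal{M}_2$ is
$$
\frac{|\xi_2|^2\,\jap{\xi}^{2(k+\epsilon)}}{|\Phi_1^u|^2\,\jap{\xi_1}^{2k}\jap{\xi_{21}}^{2k}\jap{\xi_{22}}^{2k}}.
$$
The $\xi_1$-integration is over $U_\xi$, where $|\xi_1|\ll|\xi|\simeq|\xi_2|$, $|\xi|>1/\delta^u$, and $|\Phi_1^u|=|\xi_2|\,|\xi+\xi_1+\xi_2^2|\simeq|\xi|^3$; hence $|\mathcal{M}|\simeq|\xi|^{-2}$ (in particular $|\mathcal{M}|\lesssim\Xi^N$, so \eqref{eq:condm1} holds) and the displayed quantity simplifies to $\simeq\jap{\xi}^{2(k+\epsilon)-4}\jap{\xi_1}^{-2k}\jap{\xi_{21}}^{-2k}\jap{\xi_{22}}^{-2k}$. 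The structural observation I would use throughout is the factorization $\Psi_3^u=2\xi_2(\xi_1+\xi_{22})$: holding $(\xi_{21},\xi_{22})$ fixed, $\partial_{\xi_1}\Psi_3^u=2\xi_2\simeq|\xi|$; holding $(\xi,\xi_1)$ fixed, $\partial_{\xi_{22}}\Psi_3^u=2\xi_2\simeq|\xi|$; holding $(\xi_1,\xi_{21})$ fixed, $\partial_{\xi_{22}}\Psi_3^u=2(\xi+\xi_{22})$; and holding $(\xi,\xi_{22})$ fixed, $\partial_{\xi_1}\Psi_3^u=2(\xi_{21}-\xi_1)$. Each of these is a candidate change of variables into $\Psi_3^u$, producing a Jacobian $\simeq|\xi|^{-1}$ whenever the corresponding derivative is $\simeq|\xi|$, after which an FRE integral is $\lesssim|\xi|^{-1}\sup(\mathcal{M}_i\Xi^{0^+})\,M$; the requirement becomes $\mathcal{M}_i\lesssim\jap{\xi}^{1^-}$, i.e. $\mathcal{M}_1\mathcal{M}_2\lesssim\jap{\xi}^{2^-}$.

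Next I would split according to the sizes of $\xi_{21},\xi_{22}$ (whose sum $\xi_2$ has $|\xi_2|\simeq|\xi|$, so at least one of them is $\simeq|\xi|$). When $|\xi_{22}|\simeq|\xi|$ one has $|\xi_1+\xi_{22}|\simeq|\xi|$, hence $|\Psi_3^u|\simeq|\xi|^2$, and the weight $\jap{\xi_{22}}^{-2k}\simeq\jap{\xi}^{-2k}$ is already available; using $A=\{\emptyset,1\}$ (for which both FRE derivatives are $2\xi_2\simeq|\xi|$) and distributing the multiplier reduces matters to $\epsilon<k+3$ when $k\ge0$, and — after absorbing the factor $\jap{\xi_1}^{-2k}=\jap{\xi_1}^{2|k|}\lesssim\jap{\xi}^{2|k|}$ (available precisely because $|\xi_1|<|\xi|/100$) together with $\jap{\xi_{21}}^{-2k}$ — to $\epsilon<2k+3$ when $k<0$. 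When instead $|\xi_{22}|\ll|\xi|$, so $|\xi_{21}|\simeq|\xi|$ and $\jap{\xi_{21}}^{-2k}\simeq\jap{\xi}^{-2k}$, I would use the pairing $A=\{1,21\}$: the first FRE integral, in $\xi_{22}$ with $\partial_{\xi_{22}}\Psi_3^u=2(\xi+\xi_{22})\simeq|\xi|$, and the second, in $\xi_1$ with $\partial_{\xi_1}\Psi_3^u=2(\xi_{21}-\xi_1)\simeq|\xi|$, are both legitimate changes of variables into $\Psi_3^u$, again yielding $\epsilon<k+3$ for $k\ge0$ and the corresponding negative-$k$ variant.

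The step I expect to be the main obstacle is the low-modulation sub-region of the case $|\xi_{22}|\ll|\xi|\simeq|\xi_{21}|$, in which $\xi_{22}\approx-\xi_1$ and therefore $|\Psi_3^u|=2|\xi_2|\,|\xi_1+\xi_{22}|$ is far smaller than $|\xi|^2$, with correspondingly short integration ranges for the small variables $\xi_1,\xi_{22}$. There the derivatives $\partial_{\xi_1}\Psi_3^u$ and $\partial_{\xi_{22}}\Psi_3^u$ of the $A=\{1,21\}$ pairing remain $\simeq|\xi|$, so the change of variables survives; but one must argue carefully that the Jacobian, the shortness of the remaining range, and the fact that $|\Psi_3^u-\alpha|<M$ already confines $\alpha$ together with the large frequency, still combine to $\lesssim M$. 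This delicate configuration — together with the $k<0$ bookkeeping in the previous paragraph — is exactly what pins down the thresholds $\epsilon<\min\{k+3,2k+3\}$.
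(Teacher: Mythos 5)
Your decomposition is incomplete: the claim that $|\xi_2|=|\xi_{21}+\xi_{22}|\simeq|\xi|$ forces at least one of $\xi_{21},\xi_{22}$ to be comparable to $|\xi|$ is false. The regime $|\xi_{21}|\sim|\xi_{22}|\gg|\xi|$ with $\xi_{21}\simeq-\xi_{22}$ (e.g.\ $\xi_{21}=N^2$, $\xi_{22}=-N^2+N$, $\xi_2=N$) is admissible and entirely absent from your argument. This is not a peripheral omission: it is exactly this high--high interaction that produces both hypotheses of the lemma. There the weight $\jap{\xi_{21}}^{-2k}\jap{\xi_{22}}^{-2k}\sim|\xi_{21}|^{-4k}$ involves a frequency unbounded relative to $|\xi|$, so for $k<0$ it cannot be absorbed into powers of $\jap{\xi}$ as you do elsewhere, and even for $k\ge0$ it must be tracked. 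The paper's proof (Case B) handles it by fixing $(\xi,\xi_{21})$ (resp.\ $(\xi_1,\xi_{22})$) and changing variables $\xi_{22}\mapsto\Psi$ (resp.\ $\xi_{21}\mapsto\Psi$), whose Jacobian $|2(\xi_1+\xi_{22})|=|2(\xi-\xi_{21})|\sim|\xi_{21}|$ converts $|\xi_{21}|^{-2k}$ into $|\xi_{21}|^{-(2k+1^-)}$; boundedness of that factor is precisely where $k>-1/2$ enters, and absorbing the remaining $|\xi|^{k+\epsilon-2}$ via $|\xi_{21}|^{-(2k+1^-)}\lesssim|\xi|^{-(2k+1^-)}$ is where $\epsilon<k+3$ (and, for $k<0$, $\epsilon<2k+3$) comes from. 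Tellingly, your proposal never invokes $k>-1/2$, and the pairings you chose (whose Jacobians are $2\xi_2\simeq|\xi|$ or $2(\xi+\xi_{22})$, $2(\xi_{21}-\xi_1)$, all of size $|\xi|$ rather than $|\xi_{21}|$) would not close this case for negative $k$.

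The two regimes you do treat, $|\xi_{22}|\sim|\xi|$ and $|\xi_{22}|\ll|\xi|\sim|\xi_{21}|$, coincide with the paper's Case A, and there your changes of variables and bookkeeping are sound (up to the same harmless imprecision as in the paper: the $|\xi_{22}|\sim|\xi|$ branch really yields $\epsilon<3$ rather than $\epsilon<k+3$ when $k>0$, since $\jap{\xi_1}^{-k}\jap{\xi_{21}}^{-k}$ gives no gain when $\xi_1,\xi_{21}$ are bounded; this is irrelevant because only $\epsilon<1$ is used downstream). The ``low-modulation subregion'' you single out as the main obstacle is in fact benign, for exactly the reason you give: the derivatives $2(\xi+\xi_{22})$ and $2(\xi_{21}-\xi_1)$ remain $\sim|\xi|$ there, and the frequency-restricted estimate only requires a pointwise bound on the integrand after the change of variables, with no further input from the shortness of the range. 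The genuinely delicate configuration is the one your case split excludes.
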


\begin{proof}
	Recall that the resonance function and the multiplier are  given by
	$$
	\Psi=\Psi_3^u=\xi^2-\xi_1^2-\xi_{21}^2+\xi_{22}^2,\quad \mathcal{M}=\frac{|\xi_2|\jap{\xi}^{k+\epsilon}}{|\Phi_1^u|\jap{\xi_1}^k\jap{\xi_{21}}^k\jap{\xi_{22}}^k}\lesssim \frac{\jap{\xi}^{k+\epsilon}}{\xi_2^2\jap{\xi_1}^k\jap{\xi_{21}}^k\jap{\xi_{22}}^k}.
	$$
\vskip10pt
	\noindent\textbf{\underline{Case A.}} $|\xi_{21}|, |\xi_{22}| \lesssim |\xi|$. By symmetry, assume $|\xi_{22}|\sim |\xi|$. Then
\begin{align*}
		\sup_{\xi,\xi_{21}} \int \frac{\jap{\xi}^{k+\epsilon}}{\xi_2^2\jap{\xi_1}^k\jap{\xi_{21}}^k\jap{\xi_{22}}^k}\Xi^{0^+}\psia d\xi_{22} &\lesssim \sup_{\xi,\xi_{21}} \int \frac{|\xi|^{\epsilon-2+0^+}}{\jap{\xi_1}^k\jap{\xi_{21}}^k}\psia d\xi_{22} \\&\lesssim\sup_{\xi,\xi_{21}} \int \frac{|\xi|^{\epsilon-3+0^+}}{\jap{\xi_1}^k\jap{\xi_{21}}^k}\psia d\Psi \lesssim M.
\end{align*}
	The other side of the interpolation follows from analogous computations.
	
	\vskip10pt
	\noindent\textbf{\underline{Case B.}} $|\xi_{21}| \gg |\xi|\simeq |\xi_2|$, which implies $\xi_{21}\simeq -\xi_{22}$. In this case, the interpolation is made between
	$$
	\sup_{\xi_{21},\xi} \int \frac{\jap{\xi}^{k+\epsilon}}{\xi_2^2\jap{\xi_1}^k\jap{\xi_{21}}^k\jap{\xi_{22}}^k}\Xi^{0^+}\psia d\xi_{22} \lesssim \sup_{\xi_{21},\xi} \int \frac{|\xi|^{k+\epsilon-2}}{\jap{\xi_1}^k|\xi_{21}|^{2k+1^-}}\psia d\Psi \lesssim M
	$$
	and
	$$
	\sup_{\xi_{22},\xi_1} \int \frac{\jap{\xi}^{k+\epsilon}}{\xi_2^2\jap{\xi_1}^k\jap{\xi_{21}}^k\jap{\xi_{22}}^k}\Xi^{0^+}\psia d\xi_{21} \lesssim \sup_{\xi_{22},\xi_1} \int \frac{|\xi|^{k+\epsilon-2}}{\jap{\xi_1}^k|\xi_{22}|^{2k+1^-}}\psia d\Psi \lesssim M.
	$$
\end{proof}

\begin{lem}\label{lem:6}
	Let $s,k\in \R$ be such that
	$$
	k-s<3,\quad s\ge -1.
	$$
	Then, for $\epsilon<3-k+s$,
	\begin{equation}
		\left\| \mathcal{N}_4^u[u_1,v_{21},v_{22}]\right\|_{X^{k+\epsilon,b'}}\lesssim \|u_1\|_{X^{k,b}}\|v_{21}\|_{Y^{s,b}}\|v_{22}\|_{Y^{s,b}}.
	\end{equation}	
\end{lem}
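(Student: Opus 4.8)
The strategy follows the template laid out at the start of Section \ref{sec:multi} and used in Lemmas \ref{lem:3}--\ref{lem:5}: reduce to a frequency-restricted estimate via Lemma \ref{lem:fre_tri1}, then perform a change of variables integrating in the resonance function $\Psi=\Psi_4^u$, extracting frequency decay from the Jacobian. Here the resonance function is
$$
\Psi=\Psi_4^u=\xi^2-\xi_1^2+\xi_{21}^3+\xi_{22}^3,
$$
exactly as in Lemma \ref{lem:3} but with the $u$ and $v$ roles reorganized: now $\xi_{21},\xi_{22}$ carry the KdV phases. As in Lemma \ref{lem:1}, the factor $\frac{1}{|\Phi_1^u|}$ coming from the integration by parts is bounded over $U_\xi$ (where $|\xi_1|\ll|\xi_2|\simeq|\xi|$) by $|\xi_2|^{-3}$, so the multiplier is
$$
\mathcal{M}=\frac{\jap{\xi}^{k+\epsilon}}{|\Phi_1^u|\jap{\xi_1}^k\jap{\xi_{21}}^s\jap{\xi_{22}}^s}\lesssim \frac{\jap{\xi}^{k+\epsilon}}{|\xi_2|^3\jap{\xi_1}^k\jap{\xi_{21}}^s\jap{\xi_{22}}^s},
$$
and condition \eqref{eq:condm1} holds trivially since $s\ge -1$.

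\textbf{Case split.} I would split according to the relative size of $|\xi_{21}|,|\xi_{22}|$ versus $|\xi_2|\simeq|\xi|$, mirroring Lemma \ref{lem:3}. In \textbf{Case A} ($|\xi_{21}|,|\xi_{22}|\lesssim|\xi_2|$) one has $\mathcal{M}\lesssim |\xi_2|^{-3}\jap{\xi}^{k+\epsilon}\jap{\xi_1}^{-k}\jap{\xi_{21}}^{-s}\jap{\xi_{22}}^{-s}$ with all factors controlled; I apply Lemma \ref{lem:fre_tri1} with, say, $A$ chosen so that one integrates in $\xi_{21}$ (or $\xi_{22}$) using $\partial_{\xi_{21}}\Psi=3\xi_{21}^2$, or in $\xi_1$ using $\partial_{\xi_1}\Psi=-2\xi_1$, and split the $|\xi_2|^{-3}$ decay between $\mathcal{M}_1,\mathcal{M}_2$ so that $k+\epsilon-s-3<0$ makes the power of $|\xi|$ integrable — this is where $\epsilon<3-k+s$ enters. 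In \textbf{Case B} ($|\xi_{21}|\sim|\xi_{22}|\sim|\xi_2|$, the fully resonant-like regime) the multiplier collapses to roughly $|\xi|^{k+\epsilon-2s-3}$ after pulling all the decay onto $\jap{\xi}$; integrating in the remaining variable against $\psia$ gives the required $M$ provided $k+\epsilon-2s-3<0$, which again follows from $\epsilon<3-k+s$ together with $s\ge-1$ (so $-2s-3\le 2s-1\cdot\dots$; more precisely one checks $k+\epsilon-2s-3 < -2s \le 0$ fails directly, so one must instead keep one power of $\jap{\xi_{21}}^{-s}$ or $\jap{\xi_{22}}^{-s}$ as decay — using $s\ge-1$ to control it — exactly as the $s>-5/2$ hypothesis was used in Case B of Lemma \ref{lem:3}). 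In \textbf{Case C} ($|\xi_{21}|,|\xi_{22}|\gg|\xi_2|$, forcing $\xi_{21}\simeq-\xi_{22}$) I interpolate, as in Lemma \ref{lem:3}, between an estimate fixing $(\xi,\xi_1)$ and one fixing $(\xi_2,\xi_{21})$, shifting a half-derivative $\jap{\xi_{21}}^{\pm 1/2}$ between the two multipliers so that the net decay on the large frequency is $\jap{\xi_{21}}^{-2s-1^-}<0$ — here the constraint $s\ge -1$ is what guarantees $2s+1>-1$, i.e. enough decay after the Jacobian $|\xi_{21}|^{-2}$ from $\partial_{\xi_{21}}\Psi\sim\xi_{21}^2$.

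\textbf{Main obstacle.} The delicate point is \textbf{Case C}: because both KdV-phase frequencies $\xi_{21},\xi_{22}$ are large and nearly opposite, the Jacobian $\partial_{\xi_{21}}\Psi=3(\xi_{21}^2-\xi_{22}^2)$ can degenerate — it is only $\sim|\xi_{21}|^2$ away from the diagonal $|\xi_{21}|\simeq|\xi_{22}|$, and near that diagonal one must instead use a different pairing (integrate in $\xi_1$ or $\xi$, where $\partial_{\xi_1}\Psi\sim\xi_1$ or $\partial_\xi\Psi\sim\xi$ is of lower order, costing regularity). Balancing these two sub-regimes via the interpolation in Lemma \ref{lem:fre_tri1}, while keeping the total power of $M$ equal to $1$ and the residual frequency power negative, is the computation that genuinely uses all three hypotheses $k-s<3$, $s\ge-1$, $\epsilon<3-k+s$; everything else is bookkeeping analogous to the preceding lemmas.
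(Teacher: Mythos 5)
Your overall skeleton (reduction to a frequency-restricted estimate via Lemma \ref{lem:fre_tri1}, change of variables to the resonance function $\Psi_4^u$, case split on the sizes of $|\xi_{21}|,|\xi_{22}|$ relative to $|\xi_2|\simeq|\xi|$, interpolation between two pairings of fixed variables) is indeed the paper's approach, but the execution has genuine gaps. The most basic one is that your multiplier is wrong: $N_4^u$ arises from substituting the KdV nonlinearity $\tfrac12(v^2)_x$ for $\partial_s\tilde v_2$, so it carries the derivative symbol $\xi_2$, and the correct bound over $U_\xi$ is
$$
\mathcal{M}=\frac{\jap{\xi}^{k+\epsilon}\,|\xi_2|}{|\Phi_1^u|\jap{\xi_1}^k\jap{\xi_{21}}^{s}\jap{\xi_{22}}^{s}}\lesssim \frac{\jap{\xi}^{k+\epsilon}}{|\xi_2|^{2}\jap{\xi_1}^k\jap{\xi_{21}}^{s}\jap{\xi_{22}}^{s}},
$$
not $|\xi_2|^{-3}$ as you wrote. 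You are budgeting a full extra power of decay in $|\xi|$ that is not available, so all your subsequent numerology is computed for the wrong quantity. Relatedly, your Case B ($|\xi_{21}|\sim|\xi_{22}|\sim|\xi_2|$) is not actually carried out: you observe yourself that the crude bound fails and that one must "keep one power of $\jap{\xi_{21}}^{-s}$", but you do not say how. The paper's resolution is to shift a factor $\jap{\xi_{21}}^{\mp1}$ between the two interpolated multipliers so that each side carries $\jap{\xi_{21}}^{-(s+1)}\lesssim1$ — this is exactly where $s\ge-1$ enters — paired with the Jacobians $|2\xi+3\xi_{21}^2|$ and $|2\xi_1+3\xi_{22}^2|$.

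Two further points about the degeneracies. Inside Case A there is a genuinely degenerate regime you do not address: when $|2\xi+3\xi_{21}^2|\ll|\xi_{21}|^2$ (forcing $\xi\simeq\xi_{22}$ and $|\xi_{21}|^2\gg|\xi_1|$), the pairing "fix $(\xi_1,\xi_{22})$, integrate in $\xi_{21}$" has a vanishing constrained Jacobian, and the paper must switch to fixing $(\xi,\xi_{22})$ there. (Note also that on the constraint surface $\xi=\xi_1+\xi_{21}+\xi_{22}$ the Jacobians are $2\xi+3\xi_{21}^2$, $2\xi_1+3\xi_{21}^2$, or $3\xi_{21}^2-3\xi_{22}^2$ depending on which variable co-varies — never the unconstrained $3\xi_{21}^2$ you quote.) Conversely, the obstacle you single out as "the delicate point" in your Case C is spurious: there one simply fixes $(\xi,\xi_{21})$ and integrates in $\xi_{22}$, with Jacobian $2\xi_1+3\xi_{22}^2\sim\xi_{22}^2$ (the difference $3(\xi_{21}^2-\xi_{22}^2)$ is the Jacobian for the other pairing), so no regularity-losing integration in $\xi_1$ or $\xi$ is needed; this yields the decay $|\xi_{21}|^{-(2s+2)^-}$, which is again where $s\ge-1$ is used.
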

\begin{proof}
	In this case, the resonance function and the multiplier are
	$$
	\Psi=\Psi_4^u=\xi^2-\xi_1^2+\xi_{21}^3+\xi_{22}^3, \quad \mathcal{M}=\frac{\jap{\xi}^{k+\epsilon}|\xi_2|}{|\Phi_1^u|\jap{\xi_1}^k\jap{\xi_{21}}^{s}\jap{\xi_{22}}^{s}}\lesssim \frac{\jap{\xi}^{k+\epsilon}}{|\xi_2|^2\jap{\xi_1}^k\jap{\xi_{21}}^{s}\jap{\xi_{22}}^{s}}.
	$$
	We suppose, w.l.o.g., that $|\xi_{22}|\ge |\xi_{21}|$.
	
		\vskip10pt
		\noindent\textbf{\underline{Case A.}} $|\xi_{22}|\lesssim |\xi|$, which implies $|\xi|\sim |\xi_{22}|\gg |\xi_1|$. We consider two cases:
		\begin{enumerate}
			\item $|2\xi+3\xi_{21}^2|\gtrsim |\xi_{21}|^2$. Then the interpolation is made between
			\begin{align*}
				\sup_{\xi_1,\xi_{22}} \int \frac{\jap{\xi}^{k+\epsilon}}{|\xi_2|^3\jap{\xi_1}^k\jap{\xi_{21}}^{s-1}\jap{\xi_{22}}^{s}}\Xi^{0^+}\psia d\xi_{21} \lesssim 	\sup_{\xi_1,\xi_{22}} \int \frac{|\xi|^{k+\epsilon-s-3+0^+}}{\jap{\xi_1}^k\jap{\xi_{21}}^{s+1}}\psia d\Psi \lesssim M
			\end{align*}
		and
			\begin{align*}
				\sup_{\xi,\xi_{21}} \int \frac{\jap{\xi}^{k+\epsilon}}{|\xi_2|\jap{\xi_1}^k\jap{\xi_{21}}^{s+1}\jap{\xi_{22}}^{s}}\Xi^{0^+}\psia d\xi_{22} \lesssim 	\sup_{\xi,\xi_{21}} \int \frac{|\xi|^{k+\epsilon-s-3+0^+}}{\jap{\xi_1}^k\jap{\xi_{21}}^{s+1}}\psia d\Psi \lesssim M.
			\end{align*}
			\item $|2\xi+3\xi_{21}^2|\ll |\xi_{21}|^2$, which implies $\xi \simeq \xi_{22}$ and $|\xi_{21}|^2\gg |\xi_1|$. In this case, we interpolate between
			\begin{align*}
				\sup_{\xi,\xi_{22}} \int \frac{\jap{\xi}^{k+\epsilon}}{|\xi_2|^3\jap{\xi_1}^k\jap{\xi_{21}}^{s-1}\jap{\xi_{22}}^{s}}\Xi^{0^+}\psia d\xi_{21} \lesssim 	\sup_{\xi,\xi_{22}} \int \frac{|\xi|^{k+\epsilon-s-3+0^+}}{\jap{\xi_{21}}^{s+1}\jap{\xi_1}^k}\psia d\Psi \lesssim M
			\end{align*}
		and
			\begin{align*}
				\sup_{\xi_1,\xi_{21}} \int \frac{\jap{\xi}^{k+\epsilon}}{|\xi_2|\jap{\xi_1}^k\jap{\xi_{21}}^{s+1}\jap{\xi_{22}}^{s}}\Xi^{0^+}\psia d\xi_{22} \lesssim 	\sup_{\xi_1,\xi_{21}} \int \frac{|\xi|^{k+\epsilon-s-3+0^+}}{\jap{\xi_{21}}^{s+1}\jap{\xi_1}^k}\psia d\Psi \lesssim M.
			\end{align*}
		\end{enumerate}
		\vskip10pt
		\noindent\textbf{\underline{Case B.}} $|\xi_{22}|\gg |\xi|$, which implies $\xi_{21}\simeq -\xi_{22}$. Then
\begin{align*}
			\sup_{\xi,\xi_{21}} \int \frac{\jap{\xi}^{k+\epsilon}}{\xi_2^2 \jap{\xi_1}^k\jap{\xi_{21}}^{s}\jap{\xi_{22}}^s}\Xi^{0^+} \psia d\xi_{22} \lesssim
	\sup_{\xi,\xi_{21}} \int \frac{|\xi|^{k+\epsilon-2}}{\jap{\xi_1}^k|\xi_{21}|^{2s+2^-}}\psia d\Psi \lesssim M
\end{align*}
	and the other side of the interpolation follows from similar computations.
\end{proof}

Finally, we prove the multilinear estimates associated with the $N^v_j$ terms. Recall that the integration occurs in the region $|\xi_2|\le|\xi_1|\lesssim |\xi|$.

\begin{lem}\label{lem:7}
	Let $s,k\in \R$ be such that
	$$
	k-s>-\frac{5}{2},\quad s+k>-\frac{3}{2},\quad k\ge 0.
	$$
	Then, for $\epsilon<\min\{5/2+k-s,2k+5/2,4\}$, 
	\begin{equation}
		\left\| \mathcal{N}_1^v[u_{11},v_{12},v_2]\right\|_{Y^{s+\epsilon,b'}}\lesssim \|u_{11}\|_{X^{k,b}}\|v_{12}\|_{Y^{s,b}}\|v_2\|_{Y^{s,b}}.
	\end{equation}	
and
	\begin{equation}
	\left\| \mathcal{N}_3^v[u_{11},v_{12},v_2]\right\|_{Y^{s+\epsilon,b'}}\lesssim \|u_{11}\|_{X^{k,b}}\|v_{12}\|_{Y^{s,b}}\|v_2\|_{Y^{s,b}}.
\end{equation}
\end{lem}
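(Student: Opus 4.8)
The plan is to handle $\mathcal{N}_1^v$ and $\mathcal{N}_3^v$ together, since both are cubic terms with two Schrödinger and one KdV factor, produced by the same integration by parts in \eqref{eq:perfilv} (splitting $\tilde v$ off $\partial_s(\tilde u_1\tilde u_2^\ast)$, from the first and the second factor respectively). As in Lemmas \ref{lem:3}--\ref{lem:6}, I would apply Lemma \ref{lem:fre_tri1}: the condition \eqref{eq:condm1} is immediate because the only non-polynomial factor present is $\xi/\Phi_1^v$ and $|\Phi_1^v|\sim|\xi|^3$ on $V_\xi$, so the task reduces to the frequency-restricted estimate for the resonance functions $\Psi_1^v=-\xi^3-\xi_{11}^2+\xi_{12}^3+\xi_2^2$ and $\Psi_3^v=-\xi^3-\xi_1^2+\xi_{21}^2+\xi_{22}^3$. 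The key structural fact is that on $V_\xi$ --- where $|\xi_2|\le|\xi_1|\lesssim|\xi|$, hence $|\xi_1|\sim|\xi|$, and $|\xi|\gtrsim1/\delta^v$ --- one has $|\Phi_1^v|=|\xi^3+\xi_1^2-\xi_2^2|\sim|\xi|^3$, so that $\xi/\Phi_1^v$ contributes an extra decay $|\xi|^{-2}$ and the multiplier $\mathcal{M}$ entering the frequency-restricted estimate satisfies $\mathcal{M}\lesssim\jap{\xi}^{s+\epsilon-2}\jap{\xi_v}^{-s}\jap{\xi_u}^{-k}\jap{\xi_{u'}}^{-k}$, where $\xi_v$ denotes the KdV frequency ($\xi_{12}$ for $\mathcal{N}_1^v$, $\xi_{22}$ for $\mathcal{N}_3^v$ --- precisely the one that appears cubically in $\Psi_j^v$) and $\xi_u,\xi_{u'}$ the two Schrödinger frequencies ($\xi_{11},\xi_2$ and $\xi_1,\xi_{21}$, respectively).

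I would then discard the trivial region where all frequencies are $\lesssim1/\delta^v$ and split according to the sizes of the two ``inner'' frequencies of the differentiated factor: a case in which both are $\lesssim|\xi|$, and a \emph{high-high} case in which both are $\gg|\xi|$ and nearly cancel (so that the two are comparable and $|\Psi_j^v|\sim|\xi_v|^3$). In the first case the relevant partial derivative of $\Psi_j^v$ equals $-3\xi^2+O(|\xi|)\sim|\xi|^2$, giving a Jacobian gain $|\xi|^{-2}$ upon changing variables to $\Psi_j^v$, and \eqref{eq:frequad1} lets one trade the leftover power of $|\xi|$ for a power of $|\Psi_j^v|$ in the borderline subcases; in the high-high case one changes variables in the cubic frequency $\xi_v$, whose partial derivative of $\Psi_j^v$ is $3\xi_v^2+O(|\xi|)\sim|\xi_v|^2$, gaining $|\xi_v|^{-2}$. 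As in Lemmas \ref{lem:3}--\ref{lem:6}, in each case I would interpolate between two admissible choices of the pairing $A$ in Lemma \ref{lem:fre_tri1}, typically moving half of the weight $\jap{\xi_v}^{\pm1/2}$ between the two factors, to make both suprema finite. The stated hypotheses are exactly what the exponent bookkeeping demands: $k\ge0$ renders the weights on the (possibly large) Schrödinger frequencies harmless; $k-s>-5/2$, i.e. $\epsilon<5/2+k-s$, is what allows the $|\xi|^{s+\epsilon-2}$-factor to be absorbed against the available decay in the low/high case; $s+k>-3/2$ surfaces in the high-high case, where after distributing the weights and using the $|\xi_v|^{-2}$ Jacobian one is left with $\jap{\xi_v}^{-(k+s+3/2)}$, which must stay bounded; and $\epsilon<2k+5/2$, $\epsilon<4$ are the binding constraints in the remaining near-resonant subcases.

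I expect the main obstacle to be $\mathcal{N}_3^v$ in the high-high case: unlike $\Psi_1^v$, in $\Psi_3^v=-\xi^3-\xi_1^2+\xi_{21}^2+\xi_{22}^3$ both the large Schrödinger frequency $\xi_{21}$ and the large KdV frequency $\xi_{22}$ are present, and whenever one is forced to integrate in $\xi_{21}$ the derivative $\partial_{\xi_{21}}\Psi_3^v=-3\xi^2+2\xi_{21}$ can nearly vanish. The remedy, exactly as in Case B of Lemma \ref{lem:1} and Case A of Lemma \ref{lem:6}, is to split off the subregion where this occurs: there $|\xi_{21}|\sim|\xi|^2$, hence $|\Psi_3^v|\sim|\xi_{22}|^3\gtrsim|\xi|^6$, and this lower bound supplies the decay in $|\xi|$ needed to close the estimate; on the complement one changes variables in the cubic frequency $\xi_{22}$ instead, where $\partial_{\xi_{22}}\Psi_3^v=-3\xi^2+3\xi_{22}^2\sim\xi_{22}^2$ is non-degenerate. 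Carrying out this subdivision and checking that all the resulting powers of $|\xi|$, $|\xi_{21}|$ and $|\xi_{22}|$ come out nonpositive precisely under the stated restrictions on $(k,s)$ and $\epsilon$ is the only genuinely computational part of the argument.
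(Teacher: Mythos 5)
Your proposal follows essentially the same route as the paper: reduce to frequency-restricted estimates via Lemma \ref{lem:fre_tri1}, use $|\Phi_1^v|\sim|\xi|^3$ on $V_\xi$ to obtain the multiplier bound $\jap{\xi}^{s+\epsilon-2}\jap{\xi_v}^{-s}\jap{\xi_u}^{-k}\jap{\xi_{u'}}^{-k}$, split on the relative sizes of the inner frequencies, change variables to the resonance function with the same Jacobian gains ($|\xi|^{-2}$ in the low cases, $|\xi_v|^{-2}$ or a $\jap{\xi_v}^{\pm1/2}$-shifted pairing in the high-high case), and the hypotheses enter exactly where you say they do. Your explicit treatment of the degenerate subregion $\partial_{\xi_{21}}\Psi_3^v\simeq 0$ for $\mathcal{N}_3^v$ is in fact more detailed than the paper, which only proves the $\mathcal{N}_1^v$ estimate and asserts the other is similar.
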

\begin{proof}
We prove only the first estimate, as the second follows from similar arguments. The resonance function is given by
$$
\Psi_1^v=\xi^3-\xi_{11}^2-\xi_{12}^3+\xi_2^2
$$
and the multiplier is
$$
\mathcal{M}=\frac{|\xi|\jap{\xi}^{s+\epsilon}}{|\Phi_1^v|\jap{\xi_{11}}^k\jap{\xi_{12}}^s\jap{\xi_2}^k} \lesssim \frac{\jap{\xi}^{s+\epsilon-2}}{\jap{\xi_{11}}^k\jap{\xi_{12}}^s\jap{\xi_2}^k}.
$$
\vskip10pt
\noindent\textbf{\underline{Case A.}} $|\xi_{12}|\sim |\xi|$. In particular, $|\xi_{11}|\lesssim |\xi|$ and $|\xi_2|\lesssim |\xi_{12}|$. Then
$$
\sup_{\xi, \xi_{11}} \int \frac{|\xi|^{s+\epsilon-2}}{\jap{\xi_{11}}^{k}\jap{\xi_{12}}^s\jap{\xi_2}^k}\Xi^{0^+}\psia d\xi_2 \lesssim \int |\xi|^{\epsilon-4+0^+}\psia d\Psi \lesssim M
$$
and the other side of the interpolation is analogous.

\vskip10pt
\noindent\textbf{\underline{Case B.}} $|\xi_{12}|\ll|\xi|$, which implies  $|\xi_{11}|\sim |\xi|$. Then we interpolate between
$$
\sup_{\xi_{12},\xi_2} \int \frac{|\xi|^{\epsilon-2+3/2}}{\jap{\xi_{11}}^{k}\jap{\xi_{12}}^s\jap{\xi_2}^k}\Xi^{0^+}\psia d\xi_{11}\lesssim \sup_{\xi_{12},\xi_2} \int \frac{|\xi|^{\epsilon+s-k-5/2+0^+}}{\jap{\xi_{12}}^s}\psia d\Psi \lesssim M
$$
and
$$
\sup_{\xi_{11},\xi} \int \frac{|\xi|^{s+\epsilon-2-3/2}}{\jap{\xi_{11}}^{k}\jap{\xi_{12}}^s\jap{\xi_2}^k}\Xi^{0^+}\psia d\xi_2\lesssim \sup_{\xi_{11},\xi} \int \frac{|\xi|^{s+\epsilon-k-7/2+0^+}}{\jap{\xi_{12}}^s} \fia d\Psi\lesssim M.
$$

\vskip10pt
\noindent\textbf{\underline{Case C.}} $|\xi_{12}|\gg |\xi|$ and thus $|\xi_{12}|\sim |\xi_{11}|$. In this region, the interpolation is made between
$$
\sup_{\xi,\xi_{12}} \int \frac{|\xi|^{s+\epsilon-2}}{\jap{\xi_{11}}^{k+1/2}\jap{\xi_{12}}^s\jap{\xi_2}^k}\Xi^{0^+}\psia d\xi_{11} \lesssim \sup_{\xi,\xi_{12}} \int \frac{|\xi|^{s+\epsilon-2}}{|\xi_{11}|^{k+s+(3/2)^-}}\psia d\Psi \lesssim M
$$
and
$$
\sup_{\xi_{11},\xi_2} \int \frac{|\xi|^{s+\epsilon-2}}{\jap{\xi_{11}}^{k-1/2}\jap{\xi_{12}}^s\jap{\xi_2}^k}\Xi^{0^+}\psia d\xi \lesssim \sup_{\xi_{11},\xi_2} \int \frac{|\xi|^{s+\epsilon-2}}{|\xi_{11}|^{k+s+(3/2)^-}}\psia d\Psi \lesssim M.
$$
\end{proof}

\begin{lem}\label{lem:8}
	Let $s,k\in \R$ be such that
	$$
s-k<3,\quad s<2k+5/2,\quad s<4k+1/2,\quad k\ge 0.
	$$
	Then, for $\epsilon<\min\{4k-s+1/2, 3+k-s, 2k-s+5/2\}$,
	\begin{equation}
		\left\| \mathcal{N}_2^v[u_{11},u_{12},u_{13}, u_2]\right\|_{Y^{s+\epsilon,b'}}\lesssim \|u_{11}\|_{X^{k,b}}\|u_{12}\|_{X^{k,b}}\|u_{13}\|_{X^{k,b}}\|u_{2}\|_{X^{k,b}}.
	\end{equation}	
	and
	\begin{equation}
	\left\| \mathcal{N}_4^v[u_{11},u_{12},u_{13}, u_2]\right\|_{Y^{s+\epsilon,b'}}\lesssim \|u_{11}\|_{X^{k,b}}\|u_{12}\|_{X^{k,b}}\|u_{13}\|_{X^{k,b}}\|u_{2}\|_{X^{k,b}}.
\end{equation}	
\end{lem}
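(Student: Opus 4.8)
The proof follows the template of Lemmas~\ref{lem:3}--\ref{lem:7}: each estimate is reduced to a frequency-restricted estimate via Lemma~\ref{lem:fre_tri1}, which is then verified by changing variables so as to integrate in the resonance function itself, picking up frequency decay from the Jacobian. It suffices to treat the estimate for $\mathcal{N}_2^v$; the one for $\mathcal{N}_4^v$ follows from an essentially identical case analysis (as in Lemma~\ref{lem:7}), the only structural difference being that the triple frequency split sits at $\xi_2$ rather than at $\xi_1$.

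For $\mathcal{N}_2^v$, recall $\Psi=\Psi_2^v=-\xi^3-\xi_{11}^2+\xi_{12}^2-\xi_{13}^2+\xi_2^2$, with $\xi_1=\xi_{11}+\xi_{12}+\xi_{13}$ and $\xi=\xi_1+\xi_2$. Since the integration occurs over $V_\xi$ and one may assume $|\xi_2|\le|\xi_1|$, we have $|\xi_1|\sim|\xi|$, $1/\delta^v<|\xi|$ and $|\Phi_1^v|=|{-\xi^3-\xi_1^2+\xi_2^2}|\sim|\xi|^3$, so the associated multiplier obeys
$$
\mathcal{M}=\frac{|\xi|\jap{\xi}^{s+\epsilon}}{|\Phi_1^v|\jap{\xi_{11}}^k\jap{\xi_{12}}^k\jap{\xi_{13}}^k\jap{\xi_2}^k}\lesssim\frac{\jap{\xi}^{s+\epsilon-2}}{\jap{\xi_{11}}^k\jap{\xi_{12}}^k\jap{\xi_{13}}^k\jap{\xi_2}^k}.
$$
Ordering $|\xi_{11}|\ge|\xi_{12}|\ge|\xi_{13}|$, the identity $\xi_{11}+\xi_{12}+\xi_{13}=\xi_1$ forces $|\xi_{11}|\gtrsim|\xi|$, so $\jap{\xi_{11}}^{-k}$ always supplies an extra $\jap{\xi}^{-k}$.

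The plan is to split into two frequency configurations. In Case~A, $|\xi_{11}|\sim|\xi|$, so all remaining frequencies are $\lesssim|\xi|$ and $\Xi\sim|\xi|$; here I would apply Lemma~\ref{lem:fre_tri1} with a suitable subset $A$ and the factorization $\mathcal{M}_1\mathcal{M}_2=\mathcal{M}^2$ arranged so that the single-variable integration is the one in $\xi$, for which $\partial_\xi\Psi=-3\xi^2+2\xi_2$ has size $\sim|\xi|^2$ (with no degeneracy, since $|\xi_2|\lesssim|\xi|\ll|\xi|^2$); changing variables to $\Psi$ and absorbing the surviving power of $|\xi|$ via $|\xi|>1/\delta^v$ is possible precisely when $\epsilon<3+k-s$. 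The complementary bound is a multi-variable integral controlled by the remaining Sobolev weights together with the localization $\psia$, and the thresholds $\epsilon<4k-s+1/2$ and $\epsilon<2k-s+5/2$ surface in the less favourable sub-configurations. In Case~B, $|\xi_{11}|\gg|\xi|$; cancellation in $\xi_{11}+\xi_{12}+\xi_{13}=\xi_1$ then forces $|\xi_{12}|\sim|\xi_{11}|\gg|\xi|$, so $\Xi\sim|\xi_{11}|$, and I would integrate in $\xi_{11}$ (for which $\partial_{\xi_{11}}\Psi=-2(\xi_{11}+\xi_2)$ has size $\sim|\xi_{11}|$), distributing the weight $\jap{\xi_{11}}^{-k}\jap{\xi_{12}}^{-k}\sim\jap{\xi_{11}}^{-2k}$ symmetrically as $\jap{\xi_{11}}^{-(k+1/2)}\times\jap{\xi_{11}}^{-(k-1/2)}$ across $\mathcal{M}_1,\mathcal{M}_2$ before interpolating.

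The main obstacle will be Case~B --- the genuine high $\times$ high interaction between the two NLS factors carrying the large frequency $\xi_{11}\sim\xi_{12}$ --- where one must check that the combined weight decay and the single power of $|\xi_{11}|^{-1}$ gained from the Jacobian outweigh the numerator $\jap{\xi}^{s+\epsilon-2}$ after the change of variables; pushed through the interpolation in Lemma~\ref{lem:fre_tri1}, this balance is exactly what yields the sharp thresholds $s<4k+1/2$ and $s<2k+5/2$. For $\mathcal{N}_4^v$ the analogous configurations are milder, since there the triple split sits at the lower frequency $\xi_2$ and forces extra cancellation, while the condition $s-k<3$ (equivalently $\epsilon<3+k-s$) reflects, exactly as in Lemma~\ref{lem:probV}, the factor $|\Phi_1^v|^{-1}\sim|\xi|^{-3}$ inherited from the integration by parts.
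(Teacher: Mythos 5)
Your overall strategy coincides with the paper's (reduction to a frequency-restricted estimate via Lemma \ref{lem:fre_tri1}, change of variables to the resonance function $\Psi_2^v$, case split on the size of the largest $\xi_{1j}$ relative to $\xi$), and your Jacobian computations are correct, but the quantitative core does not close as written. In your Case B --- the high$\times$high interaction $|\xi_{11}|\sim|\xi_{12}|\gg|\xi|$, which is precisely where the threshold $\epsilon<2k-s+5/2$ must be produced --- you assign $\jap{\xi_{11}}^{-(k+1/2)}$ to the single-integral factor and $\jap{\xi_{11}}^{-(k-1/2)}$ to the other while sharing the power of $\jap{\xi}$ evenly. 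The multi-variable factor $\sup_{\xi_{11},\xi_2}\int\mathcal{M}_2\,d\xi\,d\xi_{12}$ then pays an extra $\int_{|\xi|\lesssim|\xi_{11}|}\xi^{s+\epsilon-2}d\xi\sim\xi_{11}^{s+\epsilon-1}$ on top of the $M$ from the localization and the $\xi_{11}^{-1}$ from the Jacobian, yielding $M\,\xi_{11}^{\,s+\epsilon-1-2k}$, i.e.\ the condition $\epsilon\le 2k+1-s$; swapping the two $\xi_{11}$-weights instead makes the single integral demand $\epsilon\le 2k+2-s$. Either way you fall short of $2k+5/2-s$ by a fixed margin. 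The correct balance (the paper's) keeps $\jap{\xi_{11}}^{-2k}$ and $\jap{\xi_{12}}^{-2k}$ intact, one per factor, and splits the powers of $\jap{\xi}$ asymmetrically, $\xi^{s+\epsilon-5/2}$ on the multi-variable factor against $\xi^{s+\epsilon-3/2}$ on the single-variable one; both sides then close exactly under $\epsilon<2k+5/2-s$.

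Two further gaps. First, with your pairing $A^c=\{11,2\}$ the complementary integration runs over $(\xi,\xi_{12})$, where every available Jacobian degenerates on a nontrivial sub-region: $\partial_{\xi_{12}}\Psi=2(\xi_{12}+\xi_{13})$ vanishes when $\xi_{12}\simeq-\xi_{13}$ (compatible with your ordering) and $\partial_\xi\Psi=-3\xi^2-2\xi_{13}$ when $\xi_{13}\simeq-\tfrac32\xi^2$. The paper avoids this by taking the single integral in $\xi_{12}$ (Jacobian $2\xi_{12}-2\xi_2\sim\xi_{12}$) and the double integral in $(\xi_{11},\xi)$, after observing that $\xi_{11}\simeq\xi_{12}\simeq\xi_{13}$ is incompatible with $|\xi_1|\lesssim|\xi|$, so that one may assume $\xi_{11}\not\simeq\xi_{13}$ and $\partial_{\xi_{11}}\Psi=-2\xi_{11}+2\xi_{13}\sim\xi_{11}$. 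Second, in your Case A the threshold $\epsilon<4k-s+1/2$ is deferred to unspecified ``sub-configurations''; in the paper it arises from the configuration in which all four frequencies are comparable to $\xi$, handled by brute-force counting ($\int\xi^{s+\epsilon-4k-3/2}d\xi_2\sim\xi^{s+\epsilon-4k-1/2}$) with no change of variables at all, and the configuration with one frequency $\ll|\xi|$ is treated separately to produce $\epsilon<3+k-s$. These sub-cases must be isolated and verified explicitly, since each of the three restrictions in the statement is saturated in exactly one of them, and your uniform prescription of putting the single integration in $\xi$ does not obviously reach the stated range there.
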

\begin{proof}
	Once again, we prove only the first estimate, as the second follows by symmetry. The resonance function is
	$$
	\Psi=\Psi_2^v=-\xi^3-\xi_{11}^2+\xi_{12}^2-\xi_{13}^2+\xi_2^2
	$$
	and the multiplier is given by
	$$
	\mathcal{M}=\frac{|\xi|\jap{\xi}^s}{|\Phi_1^v|\jap{\xi_{11}}^k\jap{\xi_{12}}^k\jap{\xi_{13}}^k\jap{\xi_{2}}^k} \lesssim 	\frac{\jap{\xi}^{s+\epsilon-2}}{\jap{\xi_{11}}^k\jap{\xi_{12}}^k\jap{\xi_{13}}^k\jap{\xi_{2}}^k}
	$$
	We divide the analysis into three cases, depending on the size of $|\xi|$ when compared to the other frequencies.
	\vskip10pt
	\noindent\textbf{\underline{Case A.}} If all frequencies are $\gtrsim |\xi|$, but none is $\gg |\xi|$. Then
	$$
	\mathcal{M}\lesssim \xi^{s+\epsilon -4k-2}.
	$$
	Therefore
	$$
	\sup_{\xi_{13},\xi_2} \int\xi^{s+\epsilon-4k-5/2}d\xi_{11}d\xi \lesssim 1,\quad \sup_{\xi,\xi_{11},\xi_{12}} \int\xi^{s+\epsilon-4k-3/2} d\xi_2 \lesssim 1
	$$
	and we can apply directly Lemma \ref{lem:fre_tri}.
	\vskip10pt
	\noindent\textbf{\underline{Case B.}} All frequencies are $\lesssim |\xi|$ and one is $\ll |\xi|$ (say $|\xi_{13}|$). As $\xi=\xi_{11}+\xi_{12}+\xi_{13}+\xi_2$, there is another frequency, say $\xi_2$, such that $|\xi_{13}|\ll |\xi_2|$. Then
	$$
	\frac{\jap{\xi}^{s+\epsilon}}{\xi^2\jap{\xi_{11}}^k\jap{\xi_{12}}^k\jap{\xi_{13}}^k\jap{\xi_{2}}^k}\lesssim \xi^{s+\epsilon-k-2}
	$$
	and we estimate
	$$
	\sup_{\xi_2,\xi_{13}} \int \xi^{s+\epsilon-k-2}\Xi^{0^+}\psia d\xi d\xi_{11} \lesssim \sup_{\xi_2,\xi_{13}} \int \xi^{s+\epsilon-k-4+0^+}\psia d\Psi d\xi_{11}\lesssim M 
	$$
	and
	$$
	\sup_{\xi,\xi_{11},\xi_{12}} \int \xi^{s+\epsilon-k-2}\Xi^{0^+}\psia d\xi_2 \lesssim \sup_{\xi,\xi_{11},\xi_{12}} \int \xi^{s+\epsilon-k-3+0^+}\psia d\Psi \lesssim M.
	$$
	\vskip10pt
	\noindent\textbf{\underline{Case C.}} There are two frequencies $\gg |\xi|$. As $|\xi|\gtrsim |\xi_2|$, if we order $|\xi_{11}|\ge |\xi_{12}|\ge |\xi_{13}|$, we must have $|\xi_{11}|,|\xi_{12}|\gg |\xi|$. Moreover, the situation $|\xi_{11}|\simeq |\xi_{12}|\simeq |\xi_{13}|$ is impossible: otherwise $|\xi_1|\simeq |\xi_{11}|\gg |\xi|$. Without loss of generality, $|\xi_{11}|\not\simeq |\xi_{13}|$. Then
	$$
	\sup_{\xi_2,\xi_{12}} \int \frac{\xi^{s+\epsilon-5/2}}{\xi_{11}^{2k}}\Xi^{0^+}\psia d\xi_{11}d\xi \lesssim \sup_{\xi_2,\xi_{12}} \int \frac{\xi^{s+\epsilon-5/2}}{\xi_{11}^{2k+1^-}}\psia d\psi d\xi \lesssim M
	$$
	and
	$$
	\sup_{\xi,\xi_{11},\xi_{13}} \int \frac{|\xi|^{s+\epsilon-3/2}}{|\xi_{12}|^{2k}}\Xi^{0^+}\psia d\xi_{12} \lesssim \sup_{\xi_2,\xi_{12}} \int \frac{\xi^{s+\epsilon-3/2}}{\xi_{11}^{2k+1^-}}\psia d\psi \lesssim M.
	$$
\end{proof}

We now move to some estimates for the boundary terms, which we write in physical space as
$$
\mathcal{B}^u[u,v](t) = \mathcal{F}_\xi^{-1}\left(e^{-it\xi^2}B^u[u,v](t)\right),\quad \mathcal{B}^v[u,v](t) = \mathcal{F}_\xi^{-1}\left(e^{-it\xi^3}B^v[u,u]\right).
$$
%In order to close properly the fixed-point argument, we introduce a smooth cut-off $\eta\in C^\infty_c(\R)$ with $\eta\equiv 1$ over $[-1,1]$.
The first lemma shows the boundedness of the boundary terms in $H^k\times H^s$ for $k,s\in \mathcal{A}$.
\begin{lem}\label{lem:bdryHs}
	For $(k,s)\in \mathcal{A}$,
\begin{equation}\label{eq:est_bordo_u}
	\left\| \mathcal{B}^u[u_1,v_2] \right\|_{L^\infty_t H_x^{k+\epsilon}}\lesssim (\delta^u)^{0^+} \|u_1\|_{L^\infty_t H_x^{k}}\|v_2\|_{L^\infty_t H_x^{s}},\quad \epsilon<\min\{3-k+s,s+5/2\}
\end{equation}
and
\begin{equation}\label{eq:est_bordo_v}
	\left\| \mathcal{B}^v[u_1,u_2] \right\|_{L^\infty_t H_x^{s+\epsilon}}\lesssim(\delta^v)^{0^+} \|u_1\|_{L^\infty_t H_x^{k}}\|u_2\|_{L^\infty_t H_x^{k}},\quad \epsilon<\min\{2-s+k,4k-s\}.
\end{equation}
\end{lem}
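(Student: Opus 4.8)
The plan is to notice that, read on the spatial Fourier side at a \emph{fixed} time, the boundary operators $\mathcal{B}^u(t)$ and $\mathcal{B}^v(t)$ are ordinary \emph{time-independent} bilinear Fourier multipliers which carry the \emph{full} smoothing factor $1/\Phi_1^u$, resp.\ $1/\Phi_1^v$; so the estimates reduce to fixed-time bilinear bounds and the $L^\infty_t$ norm is recovered for free. First I would check that all the time-oscillations cancel: using $\mathcal{F}_xu(t)=e^{-it\xi^2}\tilde u(t)$, $\mathcal{F}_xv(t)=e^{it\xi^3}\tilde v(t)$ and $\Phi_1^u=\xi^2-\xi_1^2+\xi_2^3$, the definition of $B^u$ together with $\mathcal{B}^u(t)=\mathcal{F}_\xi^{-1}(e^{-it\xi^2}B^u(t))$ collapses to
$$
\mathcal{F}_x\mathcal{B}^u[u_1,v_2](t,\xi)=\int_{\xi=\xi_1+\xi_2,\ \xi_1\in U_\xi}\frac{1}{i\Phi_1^u}\,\mathcal{F}_x u_1(t,\xi_1)\,\mathcal{F}_x v_2(t,\xi_2)\,d\xi_1,
$$
which depends on $t$ only through $u_1(t),v_2(t)$; the identical computation (using, for $B^v$, the conjugation identity $\widehat{\bar u}(\xi)=\overline{\widehat u(-\xi)}$) describes $\mathcal{B}^v$, up to a unimodular factor irrelevant to any $H^\sigma_x$-norm. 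Thus \eqref{eq:est_bordo_u}--\eqref{eq:est_bordo_v} are equivalent to the time-independent bilinear estimates
$$
\Big\|\mathcal{F}_\xi^{-1}\!\!\int_{\xi=\xi_1+\xi_2,\ \xi_1\in U_\xi}\frac{\widehat f(\xi_1)\,\widehat g(\xi_2)}{i\Phi_1^u}\,d\xi_1\Big\|_{H^{k+\epsilon}}\lesssim (\delta^u)^{0^+}\|f\|_{H^k}\|g\|_{H^s}
$$
and its $\mathcal{B}^v$-analogue (with $V_\xi$, $\Phi_1^v$ and an extra multiplier $\xi$), after which one takes the supremum in $t$.

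Next I would dualize and run the Cauchy--Schwarz argument from the proof of Lemma \ref{lem:fre_bi}, but now with no modulation variables $\tau_j$, and hence no dyadic decomposition of the resonance function --- the factor $1/\Phi_1^\bullet$ is never split. This reduces the $\mathcal{B}^u$-estimate to the frequency-restricted bound
$$
\sup_{\xi}\int_{\xi=\xi_1+\xi_2}\frac{\jap{\xi}^{2(k+\epsilon)}}{|\Phi_1^u|^2\,\jap{\xi_1}^{2k}\jap{\xi_2}^{2s}}\,\mathbbm{1}_{U_\xi}(\xi_1)\,d\xi_1\ \lesssim\ (\delta^u)^{0^+},
$$
and the $\mathcal{B}^v$-estimate to $\sup_{\xi}\int \jap{\xi}^{2(s+\epsilon)}\,\xi^2\,|\Phi_1^v|^{-2}\,\jap{\xi_1}^{-2k}\jap{\xi_2}^{-2k}\,\mathbbm{1}_{V_\xi}(\xi_1)\,d\xi_1\lesssim (\delta^v)^{0^+}$.

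The remaining task is to evaluate these one-dimensional integrals. On $U_\xi$ one has $|\xi_1|\ll|\xi|\simeq|\xi_2|$, and factoring $\Phi_1^u=\xi_2(\xi_1+\xi+\xi_2^2)$ with $|\xi_1+\xi|\lesssim|\xi|\ll\xi_2^2$ yields $|\Phi_1^u|\sim|\xi|^3$; likewise $\Phi_1^v=-\xi(\xi^2+\xi_1-\xi_2)$ with $|\xi_1-\xi_2|\lesssim|\xi|\ll\xi^2$ yields $|\Phi_1^v|\sim|\xi|^3$ on $V_\xi$, where moreover $|\xi_1|,|\xi_2|\lesssim|\xi|$. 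Carrying out the remaining integral then produces, in the $\mathcal{B}^u$ case, a power $\jap{\xi}^a$ with $a=2(k+\epsilon)-6-2s$ when $k\ge1/2$ and $a=2\epsilon-5-2s$ when $k<1/2$ (with a logarithmic correction at $k=1/2$), which is a negative power of $\jap{\xi}$ precisely when $\epsilon<\min\{3-k+s,\,s+5/2\}$; the $\mathcal{B}^v$ integral similarly produces a negative power of $\jap{\xi}$ throughout the range $\epsilon<\min\{2-s+k,\,4k-s\}$. Since $|\xi|>1/\delta^u$ on $U_\xi$ (resp.\ $|\xi|>1/\delta^v$ on $V_\xi$), each such negative power of $\jap{\xi}$ contributes the asserted factor $(\delta^u)^{0^+}$ (resp.\ $(\delta^v)^{0^+}$), which finishes the proof.

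The hard part --- and indeed the whole reason the cut-off parameters $\delta^u,\delta^v$ were built into $U_\xi,V_\xi$ --- is exactly the \emph{uniform} lower bound $|\Phi_1^u|\sim|\xi|^3$ over $U_\xi$ (and $|\Phi_1^v|\sim|\xi|^3$ over $V_\xi$): it holds only once $|\xi|$ is large enough for the term $\xi_2^2$ (resp.\ $\xi^2$) to genuinely dominate the rest, which the constraints $|\xi|>1/\delta^u$ (resp.\ $1/\delta^v$) guarantee as soon as $\delta^u$ (resp.\ $\delta^v$) is chosen small. Granting that, everything else is a routine computation of one-variable integrals; in contrast with the space-time multilinear estimates of Section \ref{sec:multi}, no Bourgain-weight bookkeeping is needed here, precisely because the factor $1/\Phi_1^\bullet$ already carries the full nonlinear smoothing.
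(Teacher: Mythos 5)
Your proposal is correct and follows essentially the same route as the paper's proof: after observing (as the paper does implicitly) that the boundary terms are time-independent bilinear Fourier multipliers carrying the full factor $1/\Phi_1^{u}$, $1/\Phi_1^{v}$, you dualize, apply Cauchy--Schwarz, use $|\Phi_1^{u}|\sim|\xi|^3$ on $U_\xi$ and $|\Phi_1^{v}|\sim|\xi|^3$ on $V_\xi$, and evaluate the same one-variable integrals, splitting at $k=1/2$ exactly as in the paper and extracting $(\delta^{u})^{0^+}$, $(\delta^{v})^{0^+}$ from the leftover negative power of $\jap{\xi}$ together with $|\xi|\gtrsim 1/\delta^{u}$ (resp.\ $1/\delta^{v}$). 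The only cosmetic difference is that you spell out the phase cancellation and the factorizations of the resonance functions explicitly, which the paper takes for granted.
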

\begin{proof}
	By duality, estimate \eqref{eq:est_bordo_u} follows from
	$$
	\left|\int_{U_\xi}\frac{\jap{\xi}^{k+\epsilon}h_1(\xi_1)h_2(\xi_2)h(\xi)}{\Phi^u_1\jap{\xi_1}^k\jap{\xi_2}^s}d\xi d\xi_1\right|\lesssim  (\delta^u)^{0^+} \|h_1\|_{L^2}\|h_2\|_{L^2}\|h\|_{L^2}.
	$$
	Applying Cauchy-Schwarz, this reduces to a bound on
\begin{equation}\label{eq:sup_bordo}
		\sup_{\xi} \left( \int_{U_\xi}\frac{\jap{\xi}^{2k+2\epsilon}}{|\Phi^u_1|^2\jap{\xi_1}^{2k}\jap{\xi_2}^{2s}} d\xi_1 \right)^{\frac{1}{2}}.
\end{equation}
	Over $U_\xi$, $|\Phi_1^u|\simeq |\xi|^3$ and $|\xi_2|\simeq |\xi|>1/\delta^u$, 
	\begin{align*}
			\sup_{\xi}  \int_{U_\xi}\frac{\jap{\xi}^{2k+2\epsilon}}{|\Phi^u_1|^2\jap{\xi_1}^{2k}\jap{\xi_2}^{2s}} d\xi_1  \lesssim 	(\delta^u)^{0^+}\cdot\sup_{\xi} \int_{U_\xi}\frac{\jap{\xi}^{2k+2\epsilon-2s-6+0^+}}{\jap{\xi_1}^{2k}}d\xi_1 .
	\end{align*}
	If $k>1/2$, it suffices to have $k+\epsilon-s-3<0$. If $k<1/2$, the integral is bounded if $s>\epsilon-5/2$.
	
	Analogously, estimate \eqref{eq:est_bordo_v} reduces to a bound on
\begin{equation}\label{eq:sup_bordo1}
		\sup_{\xi}  \int_{V_\xi}\frac{\jap{\xi}^{2s+2\epsilon+2}}{|\Phi^v_1|^2\jap{\xi_1}^{2k}\jap{\xi_2}^{2k}} d\xi_1.
\end{equation}
	As $|\Phi_1^v|\gtrsim |\xi|^3$, $|\xi_1|\gtrsim |\xi|$ and $|\xi|>1/\delta^v$,
	$$
		\sup_{\xi}  \int_{V_\xi}\frac{\jap{\xi}^{2s+2\epsilon+2}}{|\Phi^v_1|^2\jap{\xi_1}^{2k}\jap{\xi_2}^{2k}} d\xi_1\lesssim \sup_\xi \int \frac{\jap{\xi}^{2s+2\epsilon-4}}{\jap{\xi_1}^{2k}\jap{\xi_2}^{2k}} d\xi_1\lesssim (\delta^v)^{0^+}\sup_\xi \int \frac{\jap{\xi}^{2s+2\epsilon-4-2k+0^+}}{\jap{\xi_2}^{2k}} d\xi_2
	$$
		If $k>1/2$, since $s+\epsilon-k-2<0$, the integral is bounded. If $k<1/2$, the integral is finite if $2s+2\epsilon<3+4k$ (which holds since $2s+2\epsilon<8k<3+4k$).
\end{proof}

The next lemma shows how one can control the boundary terms in Bourgain spaces. This can only be achieved for regularities $(k,s)\in \mathcal{A}_0$.
\begin{lem}\label{lem:bordosX}
For $(k,s)\in \mathcal{A}_0$ and $b=(1/2)^+$,
\begin{equation}\label{eq:est_bordo_uX}
	\left\| \mathcal{B}^u[u_1,v_2] \right\|_{X^{k,b}}\lesssim (\delta^u)^{0^+} \|u_1\|_{X^{k,b}}\|v_2\|_{Y^{s,b}},
\end{equation}
 and 
\begin{equation}\label{eq:est_bordo_vX}
	\left\| \mathcal{B}^v[u_1,u_2] \right\|_{Y^{s+\delta, b-\delta/3}}\lesssim(\delta^v)^{0^+} \|u_1\|_{X^{k,b}}\|u_2\|_{X^{k,b}},\quad 0\le\delta\le 1.
\end{equation}
\end{lem}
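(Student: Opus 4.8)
The plan is to treat both estimates by a single mechanism: recover the boundary operators from the profiles, recognize them as bilinear Fourier multiplier operators built from the resolvents $1/\Phi_1^u$, $1/\Phi_1^v$ localized to $U_\xi$, $V_\xi$, and then run a dualized trilinear estimate in the spirit of Sections~\ref{sec:fre}--\ref{sec:multi}; the extra room comes precisely from the fact that on these regions the resonance functions have size $|\xi|^3$ (which is why those regions were chosen for the integration by parts). Undoing the profile transform gives
$$\mathcal{F}_x\mathcal{B}^u[u_1,v_2](t,\xi)=\int_{U_\xi}\frac{\mathcal{F}_xu_1(t,\xi_1)\,\mathcal{F}_xv_2(t,\xi_2)}{i\Phi_1^u}\,d\xi_1,\qquad \mathcal{F}_x\mathcal{B}^v[u_1,u_2](t,\xi)=\int_{V_\xi}\frac{\xi\,\mathcal{F}_xu_1(t,\xi_1)\,\mathcal{F}_x\overline{u_2}(t,\xi_2)}{i\Phi_1^v}\,d\xi_1$$
(the conjugation being harmless, as in Section~\ref{sec:fre}), so in space--time these are the bilinear objects with multipliers $\mathbbm{1}_{U_\xi}/(i\Phi_1^u)$ and $\xi\,\mathbbm{1}_{V_\xi}/(i\Phi_1^v)$. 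Recall that on $U_\xi$ one has $|\xi_1|\ll|\xi_2|\simeq|\xi|$, $|\Phi_1^u|\simeq|\xi|^3$, $|\partial_{\xi_1}\Phi_1^u|\simeq|\xi|^2$, while on $V_\xi$ one has $|\xi_1|,|\xi_2|\lesssim|\xi|$, $|\Phi_1^v|\simeq|\xi|^3$, $|\partial_{\xi_1}\Phi_1^v|\simeq|\xi|$. By duality, \eqref{eq:est_bordo_uX} is equivalent to
$$\left|\int_{\xi_1\in U_\xi}\frac{\jap{\xi}^k\jap{\sigma}^b\,\overline{h}\,g_1g_2}{\Phi_1^u\jap{\xi_1}^k\jap{\sigma_1}^b\jap{\xi_2}^s\jap{\sigma_2}^b}\right|\lesssim(\delta^u)^{0^+}\|h\|_{L^2}\|g_1\|_{L^2}\|g_2\|_{L^2},$$
where $\sigma,\sigma_1,\sigma_2$ denote the output and input modulations, and likewise for \eqref{eq:est_bordo_vX} with output weight $\jap{\xi}^{s+\delta}\jap{\sigma}^{b-\delta/3}$ and multiplier $\xi\,\mathbbm{1}_{V_\xi}/\Phi_1^v$.

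Since $\sigma=\sigma_1+\sigma_2+\Phi_1^{u,v}$ and $|\Phi_1^{u,v}|\simeq|\xi|^3$, at least one of $\jap{\sigma},\jap{\sigma_1},\jap{\sigma_2}$ is $\gtrsim\jap{\xi}^3$, and I would split into two cases. If $\max(\jap{\sigma_1},\jap{\sigma_2})\gtrsim\jap{\xi}^3$, then $\jap{\sigma}\lesssim\max(\jap{\sigma_1},\jap{\sigma_2})$ and the output weight is absorbed into the corresponding input weight; for \eqref{eq:est_bordo_vX} this even leaves a surplus $\max(\jap{\sigma_1},\jap{\sigma_2})^{-\delta/3}\lesssim\jap{\xi}^{-\delta}$ which exactly cancels the regularity gain $\jap{\xi}^{\delta}$. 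The leftover multiplier is then $\lesssim\jap{\xi}^{k-s-3}\jap{\xi_1}^{-k}$ (resp.\ $\lesssim\jap{\xi}^{s-2}\jap{\xi_1}^{-k}\jap{\xi_2}^{-k}$) times a single modulation weight, and Cauchy--Schwarz together with the elementary bound $\int\jap{\cdot}^{-2b}\,d\tau\lesssim1$ (valid since $2b>1$) reduces matters to exactly the spatial integral estimated in the proof of Lemma~\ref{lem:bdryHs}, which closes for $(k,s)\in\mathcal{A}_0$ and $|\xi|>1/\delta^{u,v}$ (this last producing $(\delta^u)^{0^+}$, resp.\ $(\delta^v)^{0^+}$). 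In the complementary case $\max(\jap{\sigma_1},\jap{\sigma_2})\ll\jap{\xi}^3$ one has $\jap{\sigma}\simeq|\Phi_1^{u,v}|\simeq|\xi|^3$, so the output weight becomes $\jap{\xi}^{3b}$ (resp.\ $\jap{\xi}^{3b-\delta}$, again cancelling the $\jap{\xi}^{\delta}$); after Cauchy--Schwarz, the bound $\int\jap{\sigma_1}^{-2b}\jap{\sigma_2}^{-2b}\,d\tau_1\lesssim\jap{\sigma-\Phi_1^{u,v}}^{-2b}$, and the change of variables $\xi_1\mapsto\Phi_1^{u,v}$ (whose Jacobian contributes $|\xi|^{-2}$ for $U_\xi$ and $|\xi|^{-1}$ for $V_\xi$; in the latter case one must additionally retain the weights $\jap{\xi_1}^{-k}\jap{\xi_2}^{-k}$ rather than bounding them by $1$), one is left with $\lesssim\jap{\xi}^{2(k-s)-5+0^+}$ (resp.\ $\lesssim\jap{\xi}^{2(s-k)-2+0^+}$), which is $\lesssim(\delta^u)^{0^+}$ since $k-s<2$ (resp.\ $\lesssim(\delta^v)^{0^+}$ since $k-s>-1$).

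The main difficulty is the borderline value $b=1/2^+$ in this last case, where the output carries the dominant modulation so its weight cannot be transferred to an input factor: a pointwise bound of the multiplier would only produce $\jap{\xi}^{3b-3}=\jap{\xi}^{-3/2^+}$, which is insufficient, and one must instead combine the change of variables $\xi_1\mapsto\Phi_1^{u,v}$ with the temporal integration $\int\jap{\sigma_1}^{-2b}\jap{\sigma_2}^{-2b}\,d\tau_1$ to recover the correct power of $|\xi|$. A secondary point, specific to \eqref{eq:est_bordo_vX}, is to check that at every step the temporal loss $\jap{\sigma}^{-\delta/3}$ is converted --- via $|\Phi_1^v|\simeq|\xi|^3$ each time it meets the resonant factor --- into precisely the compensating spatial factor $\jap{\xi}^{-\delta}$, so that the final spatial integrals are independent of $\delta\in[0,1]$.
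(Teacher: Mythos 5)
Your proposal is correct and follows essentially the same route as the paper: dualize, split according to whether an input modulation reaches $\jap{\xi}^3$ (in which case the output weight $\jap{\sigma}^b$ is absorbed and Cauchy--Schwarz reduces everything to the spatial integrals \eqref{eq:sup_bordo}, \eqref{eq:sup_bordo1} of Lemma \ref{lem:bdryHs}) or the output modulation dominates, where $\jap{\sigma}\simeq|\Phi_1^{u,v}|\simeq|\xi|^3$; the paper handles this second case by simply invoking Lemmas \ref{lem:probU} and \ref{lem:probV} via $\jap{\sigma}^b/|\Phi_1^{u,v}|\simeq\jap{\sigma}^{b-1}$, while you re-derive their content directly through the change of variables $\xi_1\mapsto\Phi$, which is the same computation. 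Your explicit tracking of how $\jap{\sigma}^{-\delta/3}$ converts into $\jap{\xi}^{-\delta}$ in \eqref{eq:est_bordo_vX} is a correct (and slightly more careful than the paper's) treatment of the $\delta$-dependence.
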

\begin{proof}
	By duality, the proof of \eqref{eq:est_bordo_uX} reduces to
		$$
	\left| \int_{U_\xi, \tau=\tau_1+\tau_2} \frac{\jap{\tau-\xi^2}^{b}\jap{\xi}^{k}h(\tau,\xi)h_1(\tau_1,\xi_1)h_2(\tau_2,\xi_2)}{|\Phi^u_1|\jap{\tau_1-\xi_1^2}^{b}\jap{\tau_2-\xi_2^3}^{b}\jap{\xi_1}^{k}\jap{\xi_2}^{s}}d\tau d\tau_1 d\xi d\xi_1 \right|\lesssim \|h\|_{L^2}\|h_1\|_{L^2}\|h_2\|_{L^2}.
	$$
	If $|\tau-\xi^2|\lesssim |\tau_1-\xi_1^2|$ (or $|\tau-\xi^2|\lesssim |\tau_2-\xi_2^3|$), by Cauchy-Schwarz, the estimate reduces to
	$$
	\sup_{\xi,\tau} \int_{U_\xi} \left(\frac{\jap{\xi}^{k}}{|\Phi_1^u|\jap{\tau_2-\xi_2^3}^{b}\jap{\xi_1}^{k}\jap{\xi_2}^{s}}\right)^2 d\tau_2 d\xi_2 <\infty.
	$$
	After integrating in $\tau_2$, we find \eqref{eq:sup_bordo} and we are done. If $|\tau-\xi^2|\gg |\tau_1-\xi_1^2|+ |\tau_2-\xi_2^3|$, then $|\tau-\xi^2|\simeq |\Phi_1^u|$. In particular, over this region,
	$$
	\left\| \mathcal{B}^u[u_1,v_2] \right\|_{X^{k,b}} \sim \left\|  u_1v_2 -\mathcal{N}_0^u[u_1,v_2]\right\|_{X^{k,b-1}}\lesssim (\delta^u)^{0^+} \|u_1\|_{X^{k,b}}\|v_2\|_{Y^{s,b}}
	$$
	by Lemma \ref{lem:probU}.
	
\medskip
Similarly, \eqref{eq:est_bordo_vX} is equivalent to 
	$$
\left| \int_{V_\xi, \tau=\tau_1+\tau_2} \frac{\jap{\tau-\xi^3}^{b}|\xi|\jap{\xi}^{s}h(\tau,\xi)h_1(\tau_1,\xi_1)h_2(\tau_2,\xi_2)}{|\Phi^v_1|\jap{\tau_1-\xi_1^2}^{b}\jap{\tau_2-\xi_2^2}^{b}\jap{\xi_1}^{k}\jap{\xi_2}^{k}}d\tau d\tau_1 d\xi d\xi_1 \right|\lesssim \|h\|_{L^2}\|h_1\|_{L^2}\|h_2\|_{L^2}.
$$
If $|\tau-\xi^3|\lesssim |\tau_1-\xi_1^2|$ (or $|\tau-\xi^3|\lesssim |\tau_2-\xi_2^2|$ ), by Cauchy-Schwarz, it suffices to check that
$$
\sup_{\xi,\tau} \int_{V_\xi} \left(\frac{|\xi|\jap{\xi}^{s}}{|\Phi_1^v|\jap{\tau_2-\xi_2^2}^{b}\jap{\xi_1}^{k}\jap{\xi_2}^{k}}\right)^2 d\tau_2 d\xi_2 <\infty.
$$
Integrating in $\tau_2$, this becomes estimate \eqref{eq:sup_bordo1} and the claim follows. If $|\tau-\xi^3|\gg |\tau_1-\xi_1^2|+ |\tau_2-\xi_2^3|$, then $|\tau-\xi^2|\simeq |\Phi_1^v|$ and then
$$
\left\| \mathcal{B}^v[u_1,u_2] \right\|_{Y^{s,b}} \sim \left\|  \partial_x(u_1\overline{u_2}) - \mathcal{N}_0^v[u_1,u_2] \right\|_{X^{s,b-1}}\lesssim (\delta^v)^{0^+} \|u_1\|_{X^{k,b}}\|u_2\|_{X^{k,b}}
$$
by Lemma \ref{lem:probV}.
\end{proof}

The next step involves some smoothing estimates for the terms $\partial_x(v^2)$ and $|u|^2u$. For the latter, we have a standard nonlinear smoothing effect (see also \cite{BPSS}, \cite{CS20}).
\begin{lem}\label{lem:smooth_nls}
	For $k>0$ and $\epsilon<\min\{2k, 1\}$,
	\begin{equation}
		\||u|^2u\|_{X^{k+\epsilon,b'}} \lesssim \|u\|_{X^{k,b}}^3.
	\end{equation}
\end{lem}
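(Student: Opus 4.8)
The plan is to apply the trilinear criterion of Lemma~\ref{lem:fre_tri1}. Writing $|u|^2u=u\bar u u$ and suppressing the conjugations as the footnote permits, this term fits the abstract framework with multiplier $\mathcal M\equiv 1$ (so \eqref{eq:condm1} is immediate), resonance function $\Psi=\Phi_2^u=\xi^2-\xi_1^2+\xi_2^2-\xi_3^2$ on $\Gamma=\{\xi=\xi_1+\xi_2+\xi_3\}$, and
$$
\mathcal M_1\mathcal M_2=\frac{\jap{\xi}^{2(k+\epsilon)}}{\jap{\xi_1}^{2k}\jap{\xi_2}^{2k}\jap{\xi_3}^{2k}}.
$$
The algebraic input is the factorization
$$
\Psi=2(\xi-\xi_1)(\xi-\xi_3)=2(\xi_1+\xi_2)(\xi_2+\xi_3),
$$
which shows that $\Psi$ is affine, with slope $2P$ or $2Q$, in each of the natural integration variables, where $P:=|\xi-\xi_1|=|\xi_2+\xi_3|$ and $Q:=|\xi-\xi_3|=|\xi_1+\xi_2|$; moreover $\max\{P,Q\}\gtrsim\max\{|\xi+\xi_2|,|\xi_1-\xi_3|\}$, so a well-chosen change of variables $\xi_j\mapsto\Psi$ has Jacobian comparable to the separation of a suitable pair of frequencies.

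I would then carry out a dyadic case analysis on the relative sizes of $\jap\xi$ and the $\jap{\xi_j}$, choosing the splitting $\mathcal M_1,\mathcal M_2$ and the set $A$ so that on each side $\Psi$ is integrated against a variable of largest possible slope. When $\jap\xi$ is not comparable to $N:=\max_j\jap{\xi_j}$ — so two inputs are comparably high — one checks that $\mathcal M_1\mathcal M_2\lesssim N^{2\epsilon-4k}<1$ (this is where $\epsilon<2k$ first enters) and the estimate closes trivially. When $\jap\xi\sim N$ and exactly one input is high, $\mathcal M_1\mathcal M_2\sim N^{2\epsilon}$; splitting symmetrically, $\mathcal M_1=\mathcal M_2=\jap\xi^{k+\epsilon}\jap{\xi_1}^{-k}\jap{\xi_2}^{-k}\jap{\xi_3}^{-k}$, and integrating in $\Psi$ with slope $\sim|\xi+\xi_2|\sim N$ yields the bound $\lesssim N^{\epsilon-1}M\lesssim M$, which is where $\epsilon<1$ enters. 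The configurations with two or more of the $\jap{\xi_j}$ of size $N$ go through by the same mechanism with room to spare, once one observes that the relevant slope $\max\{P,Q\}$ is then $\gtrsim|\xi_1-\xi_3|\sim N$ (the case where this fails is isolated below); the trivial case in which all frequencies are bounded is immediate.

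The delicate region — and the expected main obstacle — is the fully resonant one, where all four frequencies are $\sim N$ and $\Psi$ is small, i.e.\ $\max\{P,Q\}\lesssim 1$, equivalently $\xi\simeq\xi_1\simeq\xi_3\simeq-\xi_2$: this is the classical $|u_{\mathrm{high}}|^2u$ interaction that limits cubic Schrödinger nonlinear smoothing. Here the change of variables gives no gain; instead one uses that the constraint $P\lesssim 1$ (or $Q\lesssim 1$) confines the relevant integration variable to an interval of length $\lesssim 1$, so each side of the Schur bound is controlled by $(\sup\mathcal M_1)\,\Xi^{0^+}$, which for the symmetric splitting is $\sim N^{\epsilon-2k+0^+}$; this is $\lesssim M$ for every $M>1$ precisely because $\epsilon<2k$. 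Assembling these pieces — enumerating the finitely many frequency and modulation regimes and verifying in each that some variable carries a slope that is either $\gtrsim N$ or offset by a negative power of $N$ — completes the proof; the scheme parallels those of \cite{BPSS} and \cite{CS20}.
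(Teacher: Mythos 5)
Your overall strategy is the same as the paper's: reduce to a frequency-restricted estimate via Lemma \ref{lem:fre_tri1}, exploit the factorization $\Psi=2(\xi-\xi_1)(\xi-\xi_3)$, and run a case analysis in which a change of variables to $\Psi$ supplies the power of $M$; your non-resonant cases match the paper's Cases A(1), A(2) and B, and you correctly locate where $\epsilon<1$ and $\epsilon<2k$ enter. The genuine gap is in your delimitation of the resonant configuration. You declare that ``all four frequencies $\sim N$ and $\Psi$ small'' is the same as $\max\{P,Q\}\lesssim 1$ and the same as $\xi\simeq\xi_1\simeq\xi_3\simeq-\xi_2$; with the paper's convention that $a\simeq b$ means $|a-b|\ll|a|$, the last condition only forces $P=|\xi-\xi_1|\ll N$ and $Q=|\xi-\xi_3|\ll N$, not $P,Q\lesssim 1$. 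Consequently the regime where all four frequencies are $\sim N$ but $1\ll \max\{P,Q\}\ll N$ (e.g.\ $|\xi-\xi_1|\sim|\xi-\xi_3|\sim N^{1/2}$, so that $|\xi_1-\xi_3|$ and $|\xi+\xi_2|$ are also $\ll N$) is covered neither by your ``slope $\gtrsim N$'' cases nor by your ``interval of length $\lesssim 1$'' argument. The hole is fillable with tools you already have — for $P\gtrsim 1$ the slope $|\partial_{\xi_2}\Psi|=2P\gtrsim 1$ gives sublevel measure $\lesssim M$, and $\sup\mathcal M_1\lesssim N^{\epsilon-2k+0^+}\lesssim 1$ then closes the estimate precisely because $\epsilon<2k$ — but as written your dichotomy does not exhaust the configurations, and this intermediate band is exactly where the hypothesis $\epsilon<2k$ is doing its work.

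For comparison, the paper avoids this subdivision entirely: in the resonant region $\xi\simeq\xi_1\simeq-\xi_2\simeq\xi_3$ it normalizes $p_j=\xi_j/\xi$, completes the square so that $\Psi=\xi^2(c(p_2)-q_1^2)$, and uses the square-root sublevel bound $|\{q_1:|\xi^2(c(p_2)-q_1^2)-\alpha|<M\}|\lesssim M^{1/2}/|\xi|$, yielding $|\xi|^{\epsilon-2k+0^+}M^{1/2}\lesssim M$ on the whole region $P,Q\ll N$ at once. A secondary point: in your regime ``two inputs comparably high, $\jap{\xi}\not\sim N$,'' boundedness of $\mathcal M_1\mathcal M_2$ does not by itself make the estimate ``close trivially'' — you still need the sublevel set in the remaining integration variable to have measure $\lesssim M$, which again requires observing that one of $P$, $Q$, $|\xi+\xi_2|$, $|\xi_1-\xi_3|$ is $\sim N$ in each such sign configuration (it is, so the conclusion stands, but the justification should be stated).
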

\begin{proof}
We apply Lemma \ref{lem:fre_tri1}. The associated multiplier is
$$
\mathcal{M}=\frac{\jap{\xi}^{k+\epsilon}}{\jap{\xi_1}^k \jap{\xi_2}^k\jap{\xi_3}^k}
$$
and the resonance function is $\Phi=\xi^2-\xi_1^2+\xi_2^2-\xi_3^2$. We order the frequencies $|\xi|\ge |\xi_2|$ and $|\xi_1|\ge |\xi_3|$. If all frequencies are smaller than $1$, the frequency-restricted estimates are trivial. Moreover, the worst-case scenario is $|\xi|\ge|\xi_1|$. We then focus on the case $|\xi|>1$.

\smallskip
\noindent\underline{\textbf{Case A.}} $|\xi|\sim |\xi_1|$. Here we consider three subcases:
\begin{enumerate}
	\item $\xi\not\simeq -\xi_2$, which implies that $\xi_1\not\simeq \xi_3$. Then we interpolate between
	$$
	\sup_{\xi,\xi_2} \int \frac{\jap{\xi}^{k+\epsilon}\Xi^{0^+}}{\jap{\xi_1}^k \jap{\xi_2}^k\jap{\xi_3}^k}\fia d\xi_1 \lesssim  	\sup_{\xi,\xi_2} \int \jap{\xi}^{\epsilon+0^+-1}\fia d\Phi \lesssim M
	$$
	and
		$$
	\sup_{\xi_1,\xi_3} \int \frac{\jap{\xi}^{k+\epsilon}\Xi^{0^+}}{\jap{\xi_1}^k \jap{\xi_2}^k\jap{\xi_3}^k}\fia d\xi \lesssim  	\sup_{\xi_!,\xi_3} \int \jap{\xi_1}^{\epsilon+0^+-1}\fia d\Phi \lesssim M.
	$$
	\item $\xi\simeq -\xi_2$ and $|\xi_1|\not\simeq |\xi|$. Then
		$$
	\sup_{\xi,\xi_1} \int \frac{\jap{\xi}^{k+\epsilon}\Xi^{0^+}}{\jap{\xi_1}^k \jap{\xi_2}^k\jap{\xi_3}^k}\fia d\xi_2 \lesssim  	\sup_{\xi,\xi_1} \int \jap{\xi}^{\epsilon+0^+-1}\fia d\Phi \lesssim M
	$$
	and the other side of the interpolation is completely analogous.
	\item $\xi\simeq -\xi_2$ and $|\xi|\simeq |\xi_1|$, which implies that $\xi\simeq \xi_1\simeq -\xi_2\simeq \xi_3$. For $\xi,\xi_2$ fixed, set
	$$
	p_j=\frac{\xi_j}{\xi},\quad q_1=p_1-\frac{1-p_2}{2}, \quad P=1-p_1^2+p_2^2-p_3^2 = p_2-\frac{(1-p_2)^2}{4}-q_1^2 =: c(p_2)-q_1^2.
	$$
	Then
	\begin{align*}
			\sup_{\xi,\xi_2} \int \frac{\jap{\xi}^{k+\epsilon}\Xi^{0^+}}{\jap{\xi_1}^k \jap{\xi_2}^k\jap{\xi_3}^k}\fia d\xi_1 &\lesssim \sup_{\xi,\xi_2} \int |\xi|^{\epsilon+0^+-2k+1}\fia dp_1 \\&\lesssim \sup_{\xi,\xi_2} \int |\xi|^{\epsilon+0^+-2k+1}\mathbbm{1}_{|\xi^2(c(p_2)-q_1^2)-\alpha|<M} \\&\lesssim |\xi|^{\epsilon+0^+-2k}M^{1/2}.
	\end{align*}
For $\xi_1,\xi_3$ fixed, the computation follows the same steps, exchanging the roles of $\xi,\xi_2$ and $\xi_1,\xi_3$.
\end{enumerate}

\smallskip
\noindent\underline{\textbf{Case B.}} $|\xi|\simeq |\xi_2|\gg |\xi_1|\ge |\xi_3|$. The estimate in this case is performed as in Case A.(b). 

\end{proof}

For the $\partial_x(v^2)$ term in the KdV equation, it is well-known that no global smoothing estimate (that is, a gain of derivatives measured in the $H^s$ scale) is possible \cite{imt}. This impediment comes from the $v_\text{high}\times v_\text{low}$ interaction. For this interaction, we cannot gain regularity in frequency, but instead we aim to gain some regularity in the time variable.
To formalize this, we split $N_5^v$ (which corresponds to the term $\partial_x(v^2)$) as
$$
N_{5,\ll}^v[v_1,v_2] = \int_{\xi=\xi_1+\xi_2, |\xi_2|\ll |\xi|\sim |\xi_1|} e^{it\Phi^v_2}\xi \tilde{v}_1\tilde{v}_2d\xi_1, \qquad N_{5,\gtrsim}^v[v_1,v_2] = \int_{\xi=\xi_1+\xi_2, |\xi_1|\gtrsim|\xi_2|\gtrsim |\xi|} e^{it\Phi^v_2}\xi \tilde{v}_1\tilde{v}_2d\xi_1.
$$
\begin{lem}\label{lem:est_dxv2}
For $s>0$,
\begin{equation}\label{eq:est_dxv2_1}
	\|\mathcal{N}_{5,\ll}^v[v_1,v_2]\|_{Y^{s+1,c'}} \lesssim \|v_1\|_{Y^{s+1,b-1/3}}\|v_2\|_{Y^{s,b}},\quad -c'+b>1.
\end{equation}
For $s>1/4$,
\begin{equation}\label{eq:est_dxv2_2}
	\|\mathcal{N}_{5,\gtrsim}^v[v_1,v_2]\|_{Y^{s+1,b'}} \lesssim \|v_1\|_{Y^{s,b}}\|v_2\|_{Y^{s,b}}.
\end{equation}
\end{lem}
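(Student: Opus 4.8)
The plan is to handle the two estimates separately, exploiting in both the exact resonance identity $\Phi_2^v=-\xi^3+\xi_1^3+\xi_2^3=3\xi\xi_1\xi_2$ on $\xi=\xi_1+\xi_2$, together with the observation that on the support of $N_{5,\gtrsim}^v$ one necessarily has $|\xi_1|\sim|\xi_2|$ (if $|\xi_1|\gg|\xi_2|$ with both $\gtrsim|\xi|$, then $|\xi|\sim|\xi_1|\gg|\xi_2|\gtrsim|\xi|$, a contradiction), so that $|\Phi_2^v|\sim|\xi||\xi_1|^2$ there, whereas on the support of $N_{5,\ll}^v$ one has $|\xi|\sim|\xi_1|$ and $|\Phi_2^v|\sim|\xi|^2|\xi_2|$, which degenerates as $\xi_2\to 0$.

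For \eqref{eq:est_dxv2_2} I would apply Lemma \ref{lem:fre_bi1} with multiplier $\mathcal{M}=\xi$ (times the frequency cut-off), target regularities $s\mapsto s+1$ and $s_1=s_2=s$. The region $|\xi|\lesssim1$ is disposed of first: there $\jap{\xi}^{2(s+1)}\sim1$ and $|\mathcal{M}|\lesssim1$, so the bound follows from the high$\times$high part of the classical bilinear KdV estimate of Lemma \ref{lem:est_kdv} (equivalently from $\|v_1v_2\|_{L^2_{t,x}}\lesssim\|v_1\|_{Y^{0,b}}\|v_2\|_{Y^{0,b}}$, a consequence of the $L^4_{t,x}$ Airy--Strichartz inequality), which is harmless for $s\ge0$. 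On $|\xi|\gtrsim1$ I would verify the frequency-restricted estimate \eqref{eq:frequad1} by the change of variables outlined at the start of Section \ref{sec:multi}: with $\xi$ fixed one uses $|\partial_{\xi_1}\Phi_2^v|=3|\xi|\,|\xi_1-\xi_2|\sim|\xi||\xi_1|$ away from the parallel locus $\xi_1\simeq\xi_2$ (on which one instead fixes $\xi_1$ and integrates in $\xi$, where $|\partial_\xi\Phi_2^v|\sim|\xi_1|^2$); substituting $|\xi_1|\sim(|\Phi_2^v|/|\xi|)^{1/2}$ and invoking the bound $|\Phi_2^v|\lesssim|\alpha|$ supplied by \eqref{eq:frequad1} together with the lower bound $|\alpha|\sim|\Phi_2^v|\gtrsim|\xi|^3$, the $\Phi$-integration produces the required $\jap{\alpha}M$. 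Balancing the powers of $|\xi|$, $|\xi_1|$ and $|\Phi_2^v|$ in this last step is exactly where the threshold $s>1/4$ is used.

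For \eqref{eq:est_dxv2_1}, since $|\xi|\sim|\xi_1|$ we may replace $\jap{\xi}^{s+1}$ by $\jap{\xi_1}^{s+1}$, so after duality the estimate reduces to a trilinear bound in which one of $\jap{\tau-\xi^3}$, $\jap{\tau_1-\xi_1^3}$, $\jap{\tau_2-\xi_2^3}$ is $\gtrsim|\Phi_2^v|\sim|\xi|^2|\xi_2|$. In the zone $|\xi_2|\gtrsim|\xi|^{-2}$ the dominant modulation is $\gtrsim|\xi|$, and the corresponding negative power of $\jap{\cdot}$, assisted by the surplus $-c'+b>1$ on the output factor, absorbs the derivative $\partial_x$ and leaves a convergent integral, which closes by Cauchy--Schwarz for $s>0$. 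In the complementary, resonance-free zone $|\xi_2|\lesssim|\xi|^{-2}$ (where $|\Phi_2^v|\lesssim1$ and $|\tau-\xi^3|\lesssim|\tau_1-\xi_1^3|+|\tau_2-\xi_2^3|+1$, so no modulation is large), I would apply Cauchy--Schwarz in $\xi_2$ over the set $\{|\xi_2|\lesssim|\xi|^{-2}\}$ of measure $\sim|\xi|^{-2}$, which gains the factor $|\xi|^{-1}$ needed to cancel $\partial_x$, and then use that along the part where $|\tau_1-\xi_1^3|$ dominates the combination $\jap{\tau-\xi^3}^{c'}\jap{\tau_1-\xi_1^3}^{-(b-1/3)}$ carries total exponent $c'-(b-1/3)< -1/2$ (precisely because $-c'+b>1$), which restores the $\tau_1$-integrability lost to $b-1/3<1/2$; the part where $|\tau_2-\xi_2^3|$ dominates is integrated directly in $\tau_2$ since $b>1/2$.

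The main obstacle is this last, resonance-free, zone of \eqref{eq:est_dxv2_1}: the resonance function gives nothing there, the reduced index $b-1/3<1/2$ defeats the naive $L^2$-in-time argument, and the estimate can only be salvaged by carefully trading the very negative output index $c'$ against $b-1/3$ through the modulation identity while also exploiting the narrowness of the $\xi_2$-support. (In this zone one may alternatively invoke a bilinear refined Airy--Strichartz estimate in place of the $c'$-versus-$(b-1/3)$ bookkeeping.) By comparison, the threshold $s>1/4$ in \eqref{eq:est_dxv2_2} is a soft byproduct of the resonant change of variables.
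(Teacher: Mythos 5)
Your treatment of \eqref{eq:est_dxv2_2} has a genuine gap at the stationary point $\xi_1\simeq\xi_2\simeq\xi/2$. If you verify \eqref{eq:frequad1}, you must establish the frequency-restricted bound for \emph{all three} choices of fixed variable $j\in\{\emptyset,1,2\}$; you are not free to ``instead fix $\xi_1$'' on the parallel locus. There, with $\xi$ fixed, $\partial_{\xi_1}\Phi_2^v=3\xi(\xi_1-\xi_2)$ degenerates, and the level set $\{\xi_1:|\Phi_2^v-\alpha|<M\}$ has measure $\sim(M/|\xi|^3)^{1/2}$ rather than $M/(|\xi||\xi_1|)$. The paper handles exactly this case by normalizing $p_j=\xi_j/\xi$, applying Morse's lemma near $p_1=1/2$ and counting, which yields $|\alpha|^{7/6-2s/3}M^{1/2}$; the threshold $s>1/4$ is precisely the condition $7/6-2s/3\le 1$. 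Your computation, which only ever uses non-degenerate changes of variables, would instead produce the threshold $s>-1/2$, so the asserted ``balancing of powers'' cannot be the source of $1/4$. Nor can the parallel locus be rescued by switching to \eqref{eq:frequad21} with $\xi_1$ fixed on that sub-region: that condition must hold for \emph{all} $M>1$, and for $M\gg\jap{\alpha}$ the support only gives $\jap{\xi}\lesssim M^{1/3}$, so the integral is of size $M^{1+(2-2s)/3}\gg\jap{\alpha}M$ whenever $s<1$. The restriction $M\lesssim|\alpha|$, which converts $|\Phi_2^v|\sim|\xi|^3$ into $\jap{\xi}\lesssim\jap{\alpha}^{1/3}$, is available only under the first bullet --- which forces you back to the $\xi$-fixed, stationary-point count.

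For \eqref{eq:est_dxv2_1} your plan is plausible but considerably more involved than necessary: after Cauchy--Schwarz the paper simply changes variables $\xi\mapsto\Phi_2^v$ (or $\xi_1\mapsto\Phi_2^v$), whose Jacobian $|3\xi^2-3\xi_2^2|\sim|\xi|^2$ on $|\xi_2|\ll|\xi|\sim|\xi_1|$ absorbs the derivative loss $|\xi|^2$ \emph{uniformly, regardless of the size of} $\Phi_2^v$. Hence the ``resonance-free zone'' $|\xi_2|\lesssim|\xi|^{-2}$, which you single out as the main obstacle, requires no separate argument; the hypothesis $-c'+b>1$ is used only to make $\jap{\sigma}^{2c'}$, paired with $\jap{\Phi}^{-(2b-2/3)}$, integrable after the $\sigma$- and $\Phi$-integrations.
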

\begin{proof}
	\textit{Step 1. Proof of \eqref{eq:est_dxv2_1}.}  Set
	$$
	\sigma_j = \tau_j - \xi_j^3,\quad j=\emptyset, 1, 2.
	$$
	By duality, the estimate is equivalent to
	$$
	\left| \int_{\substack{\xi=\xi_1+\xi_2\\ \sigma=\sigma_1+\sigma_2 + \Phi}} \frac{|\xi|\jap{\xi}^{s+1}\mathbbm{1}_{|\xi_2|\ll |\xi|}}{\jap{\sigma}^{-c'}\jap{\sigma_1}^{b-1/3}\jap{\sigma_2}^{b}\jap{\xi_1}^{s+1}\jap{\xi_2}^{s}}h(\tau,\xi)h_1(\tau_1,\xi_1)h_2(\tau_2,\xi_2)d\tau d\tau_1 d\xi d\xi_1 \right|\lesssim \|h\|_{L^2}\|h_1\|_{L^2}\|h_2\|_{L^2}
	$$
	By Cauchy-Schwarz, the estimate reduces to
	$$
	I:=\sup_{\tau_1,\xi_1} \int \frac{|\xi|^2}{\jap{\sigma}^{-2c'}\jap{\sigma_1}^{2b-2/3}\jap{\sigma_2}^{2b}} d\tau d\xi < \infty.
	$$
	If $|\sigma_1|\ge |\sigma|, |\sigma_2|$, then $|\sigma_1|\gtrsim |\Phi|$.  Performing the change of variables $\xi\mapsto \Phi$,
		$$
	I\lesssim \sup_{\tau_1,\xi_1} \int \frac{1}{\jap{\sigma}^{-2c'}\jap{\Phi}^{2b-2/3}\jap{\sigma_2}^{2b}} d\sigma d\Phi \lesssim \sup_{\tau_1,\xi_1} \int \frac{1}{\jap{\sigma_1+\Phi}^{-2c'}\jap{\Phi}^{2b-2/3}} d\Phi <\infty.
	$$
	If $|\sigma_2|\ge |\sigma|, |\sigma_1|$, then $|\sigma_2|\gtrsim |\Phi|$ and
	$$
	I\lesssim \sup_{\tau_1,\xi_1} \int \frac{1}{\jap{\sigma}^{-2c'}\jap{\sigma_2}^{2b}} d\sigma d\Phi \lesssim \sup_{\tau_1,\xi_1} \int \frac{1}{\jap{\sigma}^{-2c'+2b-1^+}\jap{\Phi}^{1^+}} d\sigma d\Phi <\infty.
	$$
	If $|\sigma|\ge |\sigma_1|, |\sigma_2|$, then $|\sigma|\gtrsim |\Phi|$. By Cauchy-Schwarz, the duality estimate reduces to
	 $$
	 I':=\sup_{\tau,\xi} \int \frac{|\xi|^2}{\jap{\sigma}^{-2c'}\jap{\sigma_1}^{2b-2/3}\jap{\sigma_2}^{2b}} d\tau_1 d\xi_1 < \infty.
	 $$
	Making the change of variables $\xi_1\mapsto\Phi$,
	$$
	I'\lesssim \sup_{\tau,\xi} \int \frac{1}{\jap{\Phi}^{-2c'}\jap{\sigma_1}^{2b-2/3}\jap{\sigma_2}^{2b}} d\sigma_1 d\Phi \lesssim \sup_{\tau,\xi} \int \frac{1}{\jap{\Phi}^{-2c'}\jap{\Phi-\sigma}^{2b-2/3}} d\Phi<\infty.
	$$
	\smallskip
	\noindent\textit{Step 2. Proof of \eqref{eq:est_dxv2_2}.} We are going to apply \eqref{eq:frequad}, taking $1<M\lesssim |\alpha|$. 
	
	We first consider the case $|\xi|\ge |\xi_1| \ge |\xi_2|$. Since $|\xi_2|\gtrsim |\xi|$, this implies that $|\Phi|\sim |\xi|^3$, $|\xi|\not\simeq |\xi_1|$ and $|\xi|\not\simeq |\xi_2|$. As such,
	$$
	\sup_{\xi_1} \int \frac{|\xi|^2\jap{\xi}^{2s +2+ 0^+}}{\jap{\xi_1}^{2s}\jap{\xi_2}^{2s}} \fia d\xi \lesssim 	\sup_{\xi_1} \int {\jap{\xi}^{2-2s + 0^+}} \fia d\Phi \lesssim |\alpha|^{(2-2s+0^+)/3}M.
	$$
	The estimate is the same if one fixes $\xi_2$ instead of $\xi_1$.
	
	Now we consider \eqref{eq:frequad} with $\xi$ fixed. If $ |\xi_1|\not\simeq |\xi_2|$, then we may once again perform the change of variables $\xi_1\mapsto \Phi$:
		$$
	\sup_{\xi} \int \frac{|\xi|^2\jap{\xi}^{2s +2+ 0^+}}{\jap{\xi_1}^{2s}\jap{\xi_2}^{2s}} \fia d\xi_1 \lesssim 	\sup_{\xi} \int {\jap{\xi}^{2-2s + 0^+}} \fia d\Phi \lesssim |\alpha|^{(2-2s+0^+)/3}M.
	$$
	If $ |\xi_1|\simeq |\xi_2|$, then we normalize the frequencies
	$$
	p_j=\frac{\xi_j}{\xi},\quad P=1-p_1^3-p_2^3.
	$$
	As we're near the stationary point $p_1=1/2$, by Morse's Lemma, there exists a change of coordinates $p_1\mapsto q$ so that
	$$
	P=\frac{3}{4} - q^2,\quad |q|\ll 1.
	$$
	Then
\begin{align*}
	\sup_{\xi} \int \frac{|\xi|^2\jap{\xi}^{2s +2+ 0^+}}{\jap{\xi_1}^{2s}\jap{\xi_2}^{2s}} \fia d\xi_1 &\lesssim 	\sup_{\xi} \int {\jap{\xi}^{5-2s + 0^+}} \fia dp_1 \\&\lesssim 	\sup_{\xi} \int {\jap{\xi}^{5-2s + 0^+}} \mathbbm{1}_{|\xi^3(3/4-q^2)-\alpha|<M} dq \\&\lesssim |\alpha|^{7/6^+-2s/3}M^{1/2}\lesssim |\alpha|M^{1/2}
\end{align*}
since $s>1/4$.

Finally, we consider the case $|\xi|\ll |\xi_1|\simeq |\xi_2|$. Then
	$$
\sup_{\xi_1} \int \frac{|\xi|^2\jap{\xi}^{2s +2+ 0^+}}{\jap{\xi_1}^{2s}\jap{\xi_2}^{2s}} \fia d\xi \lesssim 	\sup_{\xi_1} \int {|\xi|^2\jap{\xi_2}^{-2s + 0^+}} \fia d\Phi \lesssim |\alpha|^{(2-2s+0^+)/3}M,
$$
the estimate for $\xi_2$ fixed is the same, and
	$$
\sup_{\xi} \int \frac{|\xi|^2\jap{\xi}^{2s +2+ 0^+}}{\jap{\xi_1}^{2s}\jap{\xi_2}^{2s}} \fia d\xi_1 \lesssim 	\sup_{\xi} \int {|\xi|\jap{\xi_2}^{1-2s + 0^+}} \fia d\Phi \lesssim |\alpha|^{(2-2s+0^+)/3}M.
$$
Applying \eqref{eq:frequad}, this concludes the proof.
\end{proof}

We conclude this section with a multilinear estimate which will be essential for the local existence result.

\begin{lem}\label{lem:N0B}
	Let
	$$
	\mathcal{K}[u_{11}, v_{12}, v_2]= \mathcal{F}_\xi^{-1}\left(e^{-it\xi^2}N_0^u[B^u[u_{11}, v_{12}], v_2]\right).
	$$
	Then, for $(k,s)\in \mathcal{A}$ and $k\ge 1$,
	$$
	\|\mathcal{K}[u_{11}, v_{12}, v_2]\|_{X^{k,b'}} \lesssim \|u_{11}\|_{X^{k-1,b}}\|v_{12}\|_{Y^{s,b}}\|v_2\|_{Y^{s,b}}.
	$$
\end{lem}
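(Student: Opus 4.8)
The plan is to regard $\mathcal{K}$ as a single trilinear operator and feed it into the frequency-restricted machinery of Section~\ref{sec:fre}. Unfolding the two convolutions hidden in $N_0^u[B^u[\cdot,\cdot],\cdot]$, one finds
$$\mathcal{F}_x\mathcal{K}[u_{11},v_{12},v_2](t,\xi)=e^{-it\xi^2}\int_{R} \frac{e^{it\Psi}}{i\Phi'}\,\tilde u_{11}\tilde v_{12}\tilde v_2\,d\xi_{11}d\xi_1,\qquad \xi=\xi_{11}+\xi_{12}+\xi_2,\quad \xi_1=\xi_{11}+\xi_{12},$$
where the inner phase of $B^u$ and the $\Phi_1^u$-phase of $N_0^u$ add to $\Psi=\Psi_1^u=\xi^2-\xi_{11}^2+\xi_{12}^3+\xi_2^3$, where $\Phi'=\xi_1^2-\xi_{11}^2+\xi_{12}^3$, and where $R$ is carved out by the constraints defining $N_0^u$ and $B^u$: $|\xi_{11}|<|\xi_1|/100$, $|\xi_1|>1/\delta^u$, together with $\xi_1\in U_\xi^c$, i.e. $100|\xi_1|\ge|\xi|$ or $|\xi|\le1/\delta^u$. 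I would apply Lemma~\ref{lem:fre_tri1} with resonance $\Psi$ and multiplier $\mathcal{M}=\frac{\jap\xi^{k}}{|\Phi'|\,\jap{\xi_{11}}^{k-1}\jap{\xi_{12}}^{s}\jap{\xi_2}^{s}}$; since $|\Phi'|\gtrsim1$ on $R$, the polynomial bound on $\mathcal{M}$ is automatic.

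Two elementary facts on $R$ drive the argument. Since $|\xi_{11}|<|\xi_1|/100$ we have $|\xi_{12}|\sim|\xi_1|>1/\delta^u$, so the cubic term dominates in $\Phi'$ and $|\Phi'|\sim|\xi_{12}|^3$; and $\xi_1\in U_\xi^c$ forces $\jap\xi\lesssim\jap{\xi_{12}}$ (either $|\xi|\le1/\delta^u<|\xi_1|\sim|\xi_{12}|$, or $|\xi|\le100|\xi_1|\sim|\xi_{12}|$). Using $k\ge1$ to discard $\jap{\xi_{11}}^{-2(k-1)}\le1$, one gets
$$\mathcal{M}^2\ \lesssim\ \frac{\jap{\xi_{12}}^{2(k-3-s)}}{\jap{\xi_2}^{2s}},$$
which decays in $\xi_{12}$ because $k-s<3$. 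This — together with $k\ge1$ — is exactly where the hypotheses on $(k,s)$ enter.

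It then remains to split $R$ and, in each piece, to select two of $\xi,\xi_{11},\xi_{12},\xi_2$ along which $\partial\Psi$ is large enough for the change of variables into $\Psi$ to be performed. \emph{(i)} If $|\xi|\le1/\delta^u$ then $|\xi_2|\sim|\xi_{12}|$ and $\mathcal{M}^2\lesssim\jap{\xi_{12}}^{-6-4s}$ is integrable in $\xi_{12}$ (as $s>-3/4>-5/4$); take $\mathcal{M}_1=\mathcal{M}^2$, $\mathcal{M}_2=1$. \emph{(ii)} If $1/\delta^u<|\xi|\ll|\xi_1|$ then $|\xi_2|\sim|\xi_{12}|\sim|\xi_1|$, while $\partial_{\xi_{12}}\Psi=2\xi_{11}+3\xi_{12}^2\sim|\xi_{12}|^2$ and $\partial_\xi\Psi=2\xi+3\xi_2^2\sim|\xi_{12}|^2$; integrating $\xi_{12}$ and $\xi$ yields two Jacobian gains $|\xi_{12}|^{-2}$, and $\mathcal{M}^2\lesssim\jap{\xi_{12}}^{2k-6-4s}\lesssim\jap{\xi_{12}}^{4}$ since $2(k-s)-2s<6+\frac{3}{2}$. \emph{(iii)} If $|\xi|\sim|\xi_1|\sim|\xi_{12}|\gg|\xi_{11}|$ and $|\xi_2|\lesssim|\xi|$, split once more: when $|\xi_2-\xi_{12}|\gtrsim|\xi|$ one has $\partial_{\xi_2}\Psi=3(\xi_2-\xi_{12})(\xi_2+\xi_{12})\sim|\xi|^2$ (as $\xi_2+\xi_{12}=\xi-\xi_{11}\sim\xi$) and $\partial_{\xi_{11}}\Psi=2(\xi_{12}+\xi_2)\sim|\xi|$, so integrating $\xi_2$ and $\xi_{11}$ gives gains $|\xi|^{-2}$ and $|\xi|^{-1}$, with $\mathcal{M}^2\lesssim|\xi|^{\max\{2k-6-2s,\,2k-6-4s\}}\lesssim|\xi|^{3}$; when $\xi_2\approx\xi_{12}$ (so $|\xi_2|\sim|\xi|$) one has $\partial_\xi\Psi\approx4\xi_{12}+3\xi_{12}^2\sim|\xi|^2$ and $\partial_{\xi_{12}}\Psi\sim|\xi|^2$, so integrating $\xi$ and $\xi_{12}$ gives two gains $|\xi|^{-2}$, with $\mathcal{M}^2\lesssim|\xi|^{2k-6-4s}\lesssim|\xi|^{4}$. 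All the numerical inequalities reduce to $k-s<3$ and $s>-3/4$, i.e. to $(k,s)\in\mathcal{A}$.

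The only delicate point is this last dichotomy. Integrating in $\xi$ throughout the regime $|\xi|\sim|\xi_1|$ would meet the stationary point $2\xi+3\xi_2^2=0$ and force a van der Corput / Morse-Lemma estimate costing a factor $M^{1/2}$; but that configuration has $|\xi_2|\sim|\xi|^{1/2}\ll|\xi_{12}|$, hence $|\xi_2-\xi_{12}|\sim|\xi|$, so it lies in the subcase where one integrates in $\xi_2$, where $|\partial_{\xi_2}\Psi|\sim|\xi|^2$ is non-degenerate. Modulo a preliminary dyadic decomposition, everything else is bookkeeping entirely parallel to Lemmas~\ref{lem:1}--\ref{lem:3}.
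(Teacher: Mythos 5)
Your argument is correct and follows essentially the same route as the paper's proof: the same composite multiplier $\jap{\xi}^k/\bigl(|\Phi_1^u|\jap{\xi_{11}}^{k-1}\jap{\xi_{12}}^s\jap{\xi_2}^s\bigr)$ controlled via $|\Phi_1^u|\sim|\xi_{12}|^3$ and $k\ge 1$, the same trilinear frequency-restricted criterion, and the same device of isolating the stationary configuration $|2\xi+3\xi_2^2|\ll\xi^2$ (the paper's split on $3\xi_2^2\simeq 2|\xi|$) by switching to a non-degenerate integration variable there. Only cosmetic slips remain: $\partial_{\xi_{12}}\Psi$ should read $2\xi+3\xi_{12}^2$ rather than $2\xi_{11}+3\xi_{12}^2$, and in case (ii) the operative bound is $\mathcal{M}^2\lesssim\jap{\xi_{12}}^{3/2}$ — strictly below the total Jacobian budget $\jap{\xi_{12}}^{4}$, which is needed to absorb the $\Xi^{0^+}$ loss and is exactly what your displayed inequality $2(k-s)-2s<6+\tfrac32$ provides.
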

\begin{proof}
	We apply Lemma \ref{lem:fre_tri1}. The associated multiplier is controlled by
	$$
	\mathcal{M}=\frac{\jap{\xi}^k}{|\Phi^u_1|\jap{\xi_{11}}^{k-1}\jap{\xi_{12}}^s \jap{\xi_2}^s} \lesssim \frac{\jap{\xi}^k}{\jap{\xi_{12}}^{s+3}\jap{\xi_2}^s}
	$$
	and the resonance function is given by
	$$
	\Phi=\xi^2-\xi_{11}^2 - \xi_{12}^3 - \xi_2^3.
	$$
	Due to the restrictions in both $N_0^u$ and $B^u$, either all frequencies are bounded (in which case the result is trivial) or
	$$
	|\xi_{11}|\ll |\xi_{12}| \simeq |\xi_1|,\quad |\xi_1|\gtrsim |\xi_2|.
	$$
	We split into sveral cases, in order to ensure that the change of variables to $\Phi$ is large.

	\smallskip\noindent\underline{\textbf{Case A.}} $|\xi|\gtrsim |\xi_{12}| \gtrsim |\xi_2|$ and $3\xi_2^2 \not\simeq 2|\xi|$. Then we interpolate between
	$$
	\sup_{\xi,\xi_2} \int \frac{\jap{\xi}^{k+0^+}}{\jap{\xi_{12}}^{s+3}\jap{\xi_2}^s}\fia d\xi_{11} \lesssim \sup_{\xi,\xi_2} \int \xi^{k-2s-5+0^+} \fia d\Phi \lesssim M
	$$
	and
		$$
	\sup_{\xi_{11},\xi_{12}} \int \frac{\jap{\xi}^{k+0^+}}{\jap{\xi_{12}}^{s+3}\jap{\xi_2}^s}\fia d\xi_{2} \lesssim \sup_{\xi,\xi_2} \int \xi^{k-2s-4+0^+} \fia d\Phi \lesssim M.
	$$
	
	\smallskip\noindent\underline{\textbf{Case B.}} $|\xi|\gtrsim |\xi_{12}|$ and $3\xi_2^2\simeq 2|\xi|$. Then $|\xi_2|\not\simeq |\xi_{12}|$ and we interpolate between
	$$
	\sup_{\xi,\xi_{11}} \int \frac{\jap{\xi}^{k+0^+}}{\jap{\xi_{12}}^{s+3}\jap{\xi_2}^s}\fia d\xi_{12} \lesssim \sup_{\xi,\xi_{11}} \int \xi^{k-2s-5+0^+} \fia d\Phi \lesssim M
	$$
	and
	$$
	\sup_{\xi_{2},\xi_{12}} \int \frac{\jap{\xi}^{k+0^+}}{\jap{\xi_{12}}^{s+3}\jap{\xi_2}^s}\fia d\xi \lesssim \sup_{\xi,\xi_2} \int \xi^{k-2s-4+0^+} \fia d\Phi \lesssim M.
	$$
	
	\smallskip\noindent~\underline{\textbf{Case C.}} $|\xi_2|\simeq |\xi_{12}|\gg |\xi|$. Then
		$$
	\sup_{\xi,\xi_2} \int \frac{\jap{\xi}^{k+0^+}}{\jap{\xi_{12}}^{s+3}\jap{\xi_2}^s}\fia d\xi_{11} \lesssim \sup_{\xi,\xi_2} \int \xi_{2}^{k-2s-5+0^+} \fia d\Phi \lesssim M
	$$
	and
	$$
	\sup_{\xi_{11},\xi_{12}} \int \frac{\jap{\xi}^{k+0^+}}{\jap{\xi_{12}}^{s+3}\jap{\xi_2}^s}\fia d\xi_{2} \lesssim \sup_{\xi,\xi_2} \int \xi_{12}^{k-2s-5+0^+} \fia d\Phi \lesssim M.
	$$
\end{proof}

\section{Local and Global Well-Posedness}
\label{sec:well}

\begin{proof}[Proof of Theorem \ref{thm:well2}]
	We split the proof in the cases
	$$
	k-s<2,\quad -1< k-s.
	$$
	We recall that, in the first strip, the obstacle for local well-posedness lies in the term $\partial_x(|u|^2)$ in the equation for $v$. On the second strip, the impediment resides in the term $uv$ in the equation for $u$. To lighten the proof, we take $\alpha=\beta=\gamma=1$, as it is clear that these constants do not impact the argument in any meaningful way.
	
	\medskip
	
	\noindent \underline{\textbf{Case 1.}} $(k,s)\in\mathcal{A}$ with $-2<k-s\le - 1$.
	
	\smallskip\noindent
	\textit{Step 1. Fixed-point at low regularity.} Choose $s-1\le s'\le s$ so that $s'\ge 1$ and $(k,s')\in \mathcal{A}_0$.
	Consider $\eta\in C^\infty_c(\R)$ with $\eta\equiv 1$ in $[-1,1]$ and set $\eta_T(t)=\eta(t/T)$. Given $R>0$, define
	$$
	B_R:=\{(u,v)\in X^{k,b}\times (Y^{s,b-1/3}\cap Y^{s',b}): \|u\|_{X^{k,b}}+ \|v\|_{Y^{s,b-1/3}} + \|v\|_{Y^{s',b}}<R\},
	$$
	endowed with the induced metric, and consider the mapping $\Theta[u,v]=(w,z)$ defined as
	\begin{align}
		w(t)&=\eta(t)\left( e^{it\partial^2_x}u_0 -i \int_0^t \eta_T(s) e^{i(t-s)\partial_x^2}(uv + |u|^2u)(s)ds \right)\\
		z(t)&=\eta(t)\left( e^{-t\partial^3_x}v_0 - i[\mathcal{B}^v[u,u](s)]_{s=0}^{s=t} - i \sum_{j=0}^5 \int_0^t\eta_T(s) e^{-(t-s)\partial_x^3}\mathcal{N}_j^v(s)ds\right)
	\end{align}
Applying the known linear Bourgain space estimates and Lemmas \ref{lem:est_nls} and \ref{lem:est_wu} for the nonlinear terms, we find
\begin{align*}
	\|w\|_{X^{k,b}} &\le C \left(\|u_0\|_{H^k} + T^{0^+}\cdot \|u\|_{X^{k,b}}\|v\|_{Y^{s',b}} + T^{0^+} \|u\|_{X^{k,b}}^3 \right)\\&\le C\|u_0\|_{H^k}  +  CT^{0^+}R^2(1+R).
\end{align*}
Concerning the equation for $z$, Lemma \ref{lem:est_dxv2} gives
$$
\left\|\int_0^t\eta_T(s) e^{-(t-s)\partial_x^3}\mathcal{N}_5^v(s)ds \right\|_{Y^{s,b-1/3}} \lesssim T^{0^+}\|v\|_{Y^{s,b-1/3}} \|v\|_{Y^{s',b}} \lesssim T^{0^+}R^2 
$$
while Lemma \ref{lem:est_kdv} implies 
$$
\left\|\int_0^t\eta_T(s) e^{-(t-s)\partial_x^3}\mathcal{N}_5^v(s)ds \right\|_{Y^{s',b}} \lesssim T^{0^+}\|v\|_{Y^{s',b}} \|v\|_{Y^{s',b}} \lesssim T^{0^+}R^2.
$$
On the other hand, using the estimates of Lemmas \ref{lem:2}, \ref{lem:7} and \ref{lem:8}, 
\begin{align}
	\left\|\sum_{j=0}^4 \int_0^t\eta_T(s) e^{-(t-s)\partial_x^3}\mathcal{N}_j^v(s)ds \right\|_{Y^{s,b}} &\lesssim C_{\delta^v}T^{0^+}(\|u\|_{X^{k,b}}^2 + \|u\|_{X^{k,b}}^4 + \|u\|_{X^{k,b}}^2 \|v\|_{Y^{s',b}})\\& \lesssim C_{\delta^v}T^{0^+}R^2(1+R^2).\label{eq:est_ponto_fixo_v}
\end{align}
Together with \eqref{eq:est_bordo_vX},
\begin{align*}
	\|z\|_{Y^{s,b-1/3}} + \|z\|_{Y^{s',b}} \le C\left( \|v_0\|_{H^s} + (\delta^v)^{0^+}\|u\|_{X^{k,b}}^2 + T^{0^+}R^2(1+R^2) \right)
\end{align*}
We choose $R>2C(\|u_0\|_{H^k}+\|v_0\|_{H^s})$ and then we take $\delta^v>0$ so that
$$
(\delta^v)^{0^+}R^2 \le \frac{R}{10C}.
$$
Taking $T$ sufficiently small, we conclude that $\Theta:B_R\to B_R$. In a similar fashion, one may prove that $\Theta$ is a strict contraction over $B_R$. As such, we conclude that $\Theta$ has a fixed-point
$$
(u,v)\in X^{k,b}\times (Y^{s,b-1/3}\cap Y^{s',b}).
$$

\noindent\textit{Step 2. Upgrade in regularity.} Now we show that the fixed-point obtained in Step 1 satisfies
$$
(u,v)\in C_tH^k_x \times C_tH^s_x.
$$
First, since $u\in X^{k,b}\hookrightarrow  C_tH^k_x $, Lemma \ref{lem:bdryHs} shows that $\mathcal{B}^v[u,u]\in C_tH^s_x$. By \eqref{eq:est_ponto_fixo_v}, 
$$
\sum_{j=0}^4 \int_0^t\eta_T(s) e^{-(t-s)\partial_x^3}\mathcal{N}_j^v(s)ds \in Y^{s,b}\hookrightarrow C_tH^s_x.
$$
Taking $1/2<c<b$, Lemma \ref{lem:est_dxv2} implies that
$$
\int_0^t\eta_T(s) e^{-(t-s)\partial_x^3}\mathcal{N}_5^v(s)ds \in Y^{s,c}\hookrightarrow C_tH^s_x.
$$
As such, since
$$
	v(t)=\eta(t)\left( e^{-t\partial^3_x}v_0 - i[\mathcal{B}^v[u,u](s)]_{s=0}^{s=t} - i \sum_{j=0}^5 \int_0^t\eta_T(s) e^{-(t-s)\partial_x^3}\mathcal{N}_j^v(s)ds\right),
$$
we conclude that $v\in C_tH^s_x$, as claimed. The continuous dependence on initial data follows from the fixed-point argument and the multilinear bounds used throughout this step.

	\medskip

\noindent \underline{\textbf{Case 2.}} $(k,s)\in\mathcal{A}$ with $2\le k-s< 3$. 

\smallskip\noindent
\textit{Step 1. Fixed-point at low regularity.}
Choose $k-1<k'\le k$ so that $(k',s)\in \mathcal{A}_0$.  Given $R>0$, define
$$
B_R:=\{(u,v)\in X^{k',b}\times Y^{s,b}: \|u\|_{X^{k',b}}+  \|v\|_{Y^{s,b}}<R\},
$$
endowed with the induced metric, and consider the mapping $\Theta[u,v]=(w,z)$ defined as
\begin{align}
	w(t)&=\eta(t)\left( e^{it\partial^2_x}u_0 - i[\mathcal{B}^u[u,v](s)]_{s=0}^{s=t} - i \sum_{j=0}^5 \int_0^t \eta_T(s) e^{i(t-s)\partial_x^2}\mathcal{N}_j^u(s)ds\right)\\
	z(t)&=\eta(t)\left( e^{-t\partial^3_x}v_0 + \int_0^t\eta_T(s) e^{-(t-s)\partial_x^3}(\partial_x(|u|^2) + \partial_x(v^2))(s)ds\right)
\end{align}
Applying Lemmas \ref{lem:est_kdv} and \ref{lem:est_wu},
\begin{align*}
	\|z\|_{X^{k,b}} &\le C \left(\|v_0\|_{H^s} + T^{0^+}\cdot (\|u\|_{X^{k',b}}^2+\|v\|_{Y^{s,b}}^2) \right)\\&\le C\|v_0\|_{H^s}  +  CT^{0^+}R^2.
\end{align*}
We now bound the equation for $w$. By Lemma \ref{lem:smooth_nls},
\begin{align}\label{eq:ponto_fixo_u1}
	\left\|\int_0^t \eta_T(s) e^{i(t-s)\partial_x^2} \mathcal{N}_5^u(s)ds \right\|_{X^{k,b}} \lesssim T^{0^+} \|u\|_{X^{k',b}}^3 \lesssim T^{0^+}R^3.
\end{align}
Using the estimates in Lemmas \ref{lem:1} and \ref{lem:3}-\ref{lem:6},
\begin{align}\label{eq:ponto_fixo_u2}
	\left\|\sum_{j=1}^4\int_0^t \eta_T(s) e^{i(t-s)\partial_x^2} \mathcal{N}_j^u(s)ds \right\|_{X^{k,b}} &\lesssim T^{0^+}\|u\|_{X^{k',b}}\left( \|u\|_{X^{k',b}}^4 + \|v\|_{X^{s,b}}^2 \right)\\&\lesssim T^{0^+}R^3(1+R^2).
\end{align}
and
\begin{align*}
	\left\|\int_0^t \eta_T(s) e^{i(t-s)\partial_x^2} \mathcal{N}_0^u(s)ds \right\|_{X^{k',b}} \lesssim T^{0^+}\|u\|_{X^{k',b}} \|v\|_{X^{s,b}} \lesssim T^{0^+}R^2.
\end{align*}
Together with \eqref{eq:est_bordo_uX},
\begin{align*}
	\|w\|_{X^{k',b}} \le C(\|u_0\|_{H^k} + (\delta^u)^{0^+}\|u\|_{X^{k',b}}\|v\|_{Y^{s,b}} + T^{0^+}R^2(1+R^3)).
\end{align*}
As before, for $R>2C(\|u_0\|_{H^k}+\|v_0\|_{H^s})$, take $\delta^u>0$ so that
$$
(\delta^u)^{0^+}R^2 \le \frac{R}{10}.
$$
For $T$ small, we conclude that $\Theta:B_R\to B_R$. Analogously, it is easy to see that $\Theta$ is a strict contraction over $B_R$. In particular, $\Theta$ has a fixed-point
$$
(u,v)\in X^{k',b}\times Y^{s,b}.
$$
\smallskip\noindent
\textit{Step 2. Upgrade in regularity.} We now show that
$$
(u,v)\in C_t H^k_x \times C_t H^s_x.
$$
The regularity in $v$ comes directly from $v\in Y^{s,b}\hookrightarrow C_tH^s_x$. By \eqref{eq:ponto_fixo_u1} and \eqref{eq:ponto_fixo_u2},
$$
\sum_{j=1}^4\int_0^t \eta_T(s) e^{i(t-s)\partial_x^2} \mathcal{N}_j^u(s)ds \in X^{k,b}\hookrightarrow C_tH^k_x.
$$
By Lemma \ref{lem:bdryHs}, $\eta_T(t)\mathcal{B}^u[u,v](0)\in X^{k,b}$, as it corresponds to the linear evolution of an element in $H^k$. Since
$$
		u(t)=\eta(t)\left( e^{it\partial^2_x}u_0 - i[\mathcal{B}^u[u,v](s)]_{s=0}^{s=t} - i \sum_{j=0}^5 \int_0^t \eta_T(s) e^{i(t-s)\partial_x^2}\mathcal{N}_j^u(s)ds\right),
$$
we find that
$$
	u(t) - i\mathcal{B}^u[u,v](t) + i \int_0^t \eta_T(s) e^{i(t-s)\partial_x^2}\mathcal{N}_0^u[u,v](s)ds \in X^{k,b}.
$$
Define $r(t)= u(t) - i\mathcal{B}^u[u,v](t)$, so that
$$
r(t) + i \int_0^t \eta_T(s) e^{i(t-s)\partial_x^2}\mathcal{N}_0^u[r+i\mathcal{B}^u[u,v],v](s)ds \in X^{k,b}.
$$
Applying Lemma \ref{lem:N0B},
$$
L_v[r] := r(t) + i \int_0^t \eta_T(s) e^{i(t-s)\partial_x^2}\mathcal{N}_0^u[r,v](s)ds = \int_0^t \eta_T(s) e^{i(t-s)\partial_x^2}\mathcal{N}_0^u[\mathcal{B}^u[u,v],v](s)ds \in X^{k,b}.
$$
By Lemma \ref{lem:1},
$$
\|L_v[f]-f\|_{X^{k,b}} \le  CT^{0^+}\|v\|_{Y^{s,b}}\|f\|_{X^{k,b}} < \frac{1}{2}\|f\|_{X^{k,b}}
$$
which implies that $L_v$ is an invertible map in $X^{k,b}$. In particular,
$$
r(t)= u(t) - i\mathcal{B}^u[u,v](t) \in X^{k,b}\hookrightarrow C_tH^k_x.
$$
Finally, by Lemma \ref{lem:bdryHs}, the map $u\mapsto r$ is an invertible map in $C_tH^k_x$ and therefore $u\in C_tH^k_x$, as claimed.

\smallskip\noindent
\underline{\textbf{Case 3.}} $(k,s)\in \mathcal{A}_0$. In this case, using Lemma \ref{lem:bordosX} and arguing as before, we can show that the mapping $\Theta$ (of either Case 1 or 2) is a strict contraction in 
$$
B_R:=\{(u,v)\in X^{k,b}\times Y^{s,b}: \|u\|_{X^{k,b}}+  \|v\|_{Y^{s,b}}<R\},
$$
which concludes the proof.
\end{proof}
\begin{nb}
	One may wonder whether it is possible to perform a direct fixed-point argument at regularities $(k,s)\in \mathcal{A}\setminus \mathcal{A}_0$. Using the estimates of Section \ref{sec:multi}, the time-integral terms are all controllable in $X^{k,b}\times Y^{s,b}$. The problem resides in the boundary terms, which can be bounded in the Bourgain space only for regularities $(k,s)\in \mathcal{A}_0$ (Lemma \ref{lem:bordosX}), \textit{despite being controllable in }$H^k\times H^s$ \textit{for} $(k,s)\in \mathcal{A}$ (Lemma \ref{lem:bdryHs}).
	
	In other words, the integration-by-parts highlights the failure of a straightforward $X^{k,b}\times Y^{s,b}$ approach, as this auxiliary space ceases to be well-suited to build the solution to \eqref{nlskdv}. Instead, the above proof shows that the solution is the sum of an element in $X^{k,b}\times Y^{s,b}$ (the integral terms) and the boundary terms which are merely in $C_tH_x^k\times C_tH_x^s$.
\end{nb}

\begin{prop}[Persistence of regularity] \label{prop:persist} Let $(k,s), (k',s')\in \mathcal{A}$ with $k'\ge k$, $s'\ge s$ and $(u_0,v_0)\in H^{k'}(\R)\times H^{s'}(\R)$. Consider the maximal solutions $(u,v)$ and $(u',v')$ given by Theorem \ref{thm:well2} at the regularity levels $(k,s)$ and $(k',s')$, respectively. Then the maximal time of existence coincides and $(u,v)=(u',v')$.
\end{prop}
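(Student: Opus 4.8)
The statement has two halves: (i) that the higher-regularity solution $(u',v')$, viewed at regularity $(k,s)$, coincides with $(u,v)$, so in particular $T_{max}(k',s')\le T_{max}(k,s)$; and (ii) the reverse inequality $T_{max}(k,s)\le T_{max}(k',s')$. For (i): since $H^{k'}\hookrightarrow H^k$ and $H^{s'}\hookrightarrow H^s$, one has $(u',v')\in C([0,T_{max}(k',s'));H^k\times H^s)$, and because the passage to the integrated-by-parts formulation is rigorous at this regularity and does not depend on $\delta^u,\delta^v$ (as noted after Theorem~\ref{thm:well2}), $(u',v')$ is an integrated-by-parts strong solution at level $(k,s)$ for the same parameters used to build $(u,v)$; moreover, being a fixed point of the $(k',s')$-scheme, it lies locally in the Bourgain spaces $X^{k',b}\subset X^{k,b}$, $Y^{s',b}\subset Y^{s,b}$. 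By the uniqueness built into the contraction argument of Theorem~\ref{thm:well2} (uniqueness among solutions that are locally in the Bourgain spaces; cf. the continuous-extension property in Corollary~\ref{cor:extension}), $(u',v')$ and $(u,v)$ coincide on a neighbourhood of $t=0$; re-running the argument from later times shows the set of times of agreement is open and closed in $[0,\min\{T_{max}(k,s),T_{max}(k',s')\})$, hence equal to it. Since $(u,v)$ is the maximal solution at level $(k,s)$, this forces $T_{max}(k',s')\le T_{max}(k,s)$ and $(u,v)=(u',v')$ on $[0,T_{max}(k',s'))$.

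For (ii), suppose instead $T^*:=T_{max}(k',s')<T_{max}(k,s)$; then $T^*<\infty$ and $M:=\sup_{t\in[0,T^*]}(\|u(t)\|_{H^k}+\|v(t)\|_{H^s})<\infty$ by continuity. The idea is to bootstrap the $H^{k'}\times H^{s'}$ norm from the $H^k\times H^s$ norm and then invoke the blow-up criterion. As $\mathcal{A}$ is an intersection of open half-planes it is convex, so the segment from $(k,s)$ to $(k',s')$ lies in $\mathcal{A}$ and is monotone in both coordinates; split it into finitely many sub-steps $(k,s)=(k_0,s_0),\dots,(k_\ell,s_\ell)=(k',s')$, each increment smaller than all of the (continuous, strictly positive, hence uniformly bounded below on the compact segment) regularity gains $\epsilon$ furnished by Lemmas~\ref{lem:1}, \ref{lem:2}, \ref{lem:3}, \ref{lem:4}, \ref{lem:5}, \ref{lem:6}, \ref{lem:7}, \ref{lem:8}, \ref{lem:smooth_nls}, \ref{lem:est_dxv2}, \ref{lem:bdryHs} and \ref{lem:N0B}. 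Since the local time of existence $\tau=\tau(M)>0$ and the parameters $\delta^u,\delta^v$ depend only on the size of the data, cover $[0,T^*]$ by $N=O(T^*/\tau)$ intervals $I_1,\dots,I_N$ of length $\tau$. We argue by double induction: over the time intervals $I_n$, and within each $I_n$ over the sub-steps $j$. At the start of $I_n$ the data is already in $H^{k'}\times H^{s'}$ — at $t=0$ by hypothesis, and for $n\ge2$ by the conclusion on $I_{n-1}$; and on $I_n$ the solution is known in $H^{k_j}\times H^{s_j}$, with the relevant Bourgain norm controlled by a function of $M$, for $j=0$ from the low-regularity theory and for $j\ge1$ from the preceding sub-step. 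Writing the integrated-by-parts Duhamel formulas \eqref{eq:perfilu1}--\eqref{eq:perfilv1} (or \eqref{eq:perfilu}--\eqref{eq:perfilv}) on $I_n$, the linear term is the free evolution of the (already $H^{k'}\times H^{s'}$) endpoint data, and — by Lemma~\ref{lem:bdryHs} for $\mathcal{B}^u,\mathcal{B}^v$ and by the quoted multilinear lemmas for the terms $\mathcal{N}^u_i,\mathcal{N}^v_i,|u|^2u,\partial_x(v^2)$ — every remaining term gains regularity from $(k_j,s_j)$ to $(k_{j+1},s_{j+1})$, with constants depending only on $M$ and on the solution's $(k_j,s_j)$-norm on $I_n$. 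After $\ell$ sub-steps the solution lies in $H^{k'}\times H^{s'}$ on $I_n$; iterating over the $N$ intervals yields $\sup_{t\in[0,T^*)}(\|u(t)\|_{H^{k'}}+\|v(t)\|_{H^{s'}})<\infty$, contradicting the blow-up alternative for the $(k',s')$-solution (Remark after Theorem~\ref{thm:well2}). Hence $T_{max}(k,s)\le T_{max}(k',s')$, which together with (i) gives the claim.

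The crux is the bootstrap in (ii): one must check that a single small increment of $(k,s)$ is absorbed \emph{simultaneously} by every nonlinear and boundary term of both equations. The sensitive points are the mixed coupling terms — e.g. $\mathcal{N}_1^v[u,v,v]$ or $\mathcal{N}_3^v$, where the Schrödinger factor is still only known at the intermediate regularity while the Airy component is being upgraded — for which the estimates of Section~\ref{sec:multi} are tailored so that the gains in $k$ and in $s$ can be prescribed essentially independently; and the KdV self-interaction $\partial_x(v^2)$, which admits no gain measured in the $H^s$-scale and must be treated, as in the proof of Theorem~\ref{thm:well2}, through the auxiliary spaces $Y^{s,b-1/3}\cap Y^{s',b}$ of Lemma~\ref{lem:est_dxv2}. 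One also has to track the fact that along the segment the parameters $\delta^u,\delta^v$ and the choice between the formulations \eqref{eq:perfilu}--\eqref{eq:perfilv1} and \eqref{eq:perfilu1}--\eqref{eq:perfilv} may have to change when $k-s$ crosses the interval $[-1,2]$; this is harmless since such crossings occur inside $\mathcal{A}_0$, where both formulations are available and all notions of solution coincide.
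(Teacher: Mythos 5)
Your part (i) and the core of part (ii) --- nonlinear smoothing along a finitely subdivided regularity segment, combined with the blow-up alternative and an iteration over local existence intervals --- is exactly the paper's strategy (its Steps 1 and 2), and it is correct as long as every intermediate regularity $(k_j,s_j)$ lies in $\mathcal{A}_0$. Your treatment of $\partial_x(v^2)$ differs slightly from the paper's: in the persistence step the paper simply uses the asymmetric version of Lemma \ref{lem:est_kdv} and absorbs the resulting $T^{0^+}R\|v\|_{Y^{s',b}}$ into the left-hand side, rather than invoking the auxiliary spaces $Y^{s,b-1/3}\cap Y^{s',b}$ of Lemma \ref{lem:est_dxv2}; either route works.

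The genuine gap is in the portion of the segment lying outside $\mathcal{A}_0$. Your induction hypothesis asserts that on $I_n$ the solution is known at level $(k_j,s_j)$ \emph{with the relevant Bourgain norm controlled}, and you then feed it into the multilinear lemmas of Section \ref{sec:multi}, all of which require $X^{k_j,b}\times Y^{s_j,b}$ inputs. But for $(k_j,s_j)\notin\mathcal{A}_0$ the solution is \emph{not} in $X^{k_j,b}\times Y^{s_j,b}$: Lemma \ref{lem:bordosX} controls the boundary terms in Bourgain norm only for $(k,s)\in\mathcal{A}_0$, and the proof of Theorem \ref{thm:well2} (Case 2, Step 2, and the Remark following it) shows that at such regularities the solution is the sum of a Bourgain-space element $r=u-i\mathcal{B}^u[u,v]$ and a boundary term living merely in $C_tH^k_x$. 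Iterating your sub-step upgrade through such levels would therefore require estimates for compositions of the form $\mathcal{N}^u_j[\mathcal{B}^u[u,v],\dots]$ at every intermediate regularity (the analogue of Lemma \ref{lem:N0B}, which the paper establishes only for the single jump from $k-1$ to $k$ with $k\ge 1$), and you supply none of these. Your parenthetical that crossings of the strip $-1<k-s<2$ ``occur inside $\mathcal{A}_0$'' does not resolve this: if, say, $2\le k-s$ and $2\le k'-s'$, the entire segment lies outside $\mathcal{A}_0$. The paper sidesteps the issue in its Step 3: for each endpoint outside $\mathcal{A}_0$ it extracts, from the structure of the existence proof itself, an associated regularity $(k_0,s_0)\in\mathcal{A}_0$ for which persistence already holds by construction, runs the segment bootstrap entirely inside $\mathcal{A}_0$ between $(k_0,s_0)$ and $(k'_0,s'_0)$, and then transfers back. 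You should either adopt that reduction or prove the missing composed estimates.
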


\begin{proof}
\textit{Step 1. Local persistence in $\mathcal{A}_0$.} Take $\eta_T$ as in the proof of Theorem \ref{thm:well2}. Let us first recall the local existence result obtained by Wu in \cite{wu}: given $(k,s)\in \mathcal{A}_0$ and $(u_0,v_0)\in H^k\times H^s$, there exists $T>0$, depending only on the norm of the initial data, and a unique
$$
(u,v)\in X^{k,b}\times Y^{s,b}
$$
(depending continuously on $(u_0,v_0)$) such that 
\begin{align}
	u(t)&=\eta(t)\left( e^{it\partial^2_x}u_0 -i \int_0^t \eta_T(s) e^{i(t-s)\partial_x^2}(uv + |u|^2u)(s)ds \right)\\\\
	v(t)&=\eta(t)\left( e^{-t\partial^3_x}v_0 + \int_0^t\eta_T(s) e^{-(t-s)\partial_x^3}(\partial_x(|u|^2) + \partial_x(v^2))(s)ds\right).
\end{align}
\medskip

Take $(k,s)\in\mathcal{A}_0$ and $\epsilon$ satisfying the conditions of Lemmas \ref{lem:1} to \ref{lem:probV}. For fixed $k\le k'\le k'+\epsilon$ and $s\le s'\le s+\epsilon$, suppose that there exists an initial data $(u_0,v_0)\in H^{k'}\times H^{s'}$ for which the maximal time of existence in $H^{k'}\times H^{s'}$, $T_{max}'$, is strictly smaller than the maximal time in $H^k\times H^s$, $T_{max}$. Since
$$
\sup_{t\in [0,T_{max}']} \|(u(t),v(t))\|_{H^k\times H^s} \le R < \infty,
$$
after a possible time translation, we can assume that $T_{max}'$ is smaller than the local time of existence $T$ for initial data whose $H^k\times H^s$ norm is smaller than $R$. By uniqueness, the $H^{k'}\times H^{s'}$ solution coincides on $[0,T_{max}')$ with the $H^k\times H^s$ local solution, for which one has
$$
\|u\|_{X^{k,b}} + \|v\|_{Y^{s,b}} \lesssim R.
$$
Applying the nonlinear smoothing estimates of Lemmas \eqref{lem:1} to \eqref{lem:probV},
\begin{align*}
	\left\|\int_0^t \eta_T(s) e^{i(t-s)\partial_x^2}(uv + |u|^2u)(s)ds\right\|_{X^{k',b}}\lesssim T^{0^+}\|u\|_{X^{k,b}}\left(\|u\|_{X^{k,b}}^2+ \|v\|_{Y^{s,b}}^2\right) \lesssim T^{0^+}R^2(1+R)
\end{align*}
and
\begin{align*}
	\left\|\int_0^t\eta_T(s) e^{-(t-s)\partial_x^3}(\partial_x(|u|^2))(s)ds\right\|_{Y^{s',b}}\lesssim T^{0^+}\|u\|_{X^{k,b}}^2 \lesssim T^{0^+}R^2.
\end{align*}
Moreover, Lemma \ref{lem:est_kdv} implies that
\begin{align*}
	\left\|\int_0^t\eta_T(s) e^{-(t-s)\partial_x^3}(\partial_x(v^2))(s)ds\right\|_{Y^{s',b}}\lesssim T^{0^+}\|v\|_{Y^{s,b}}\|v\|_{Y^{s',b}} \lesssim T^{0^+}R\|v\|_{Y^{s',b}}.
\end{align*}
Therefore,
$$
\|u\|_{X^{k',b}} + \|v\|_{Y^{s',b}} \lesssim \|u_0\|_{H^{k'}} + \|v_0\|_{H^{s'}} + T^{0^+}R^2(1+R) + T^{0^+}R\|v\|_{Y^{s',b}}.
$$
Since $T$ can be made small depending on $R$, the last term can be absorbed into the left-hand side and thus, for all $t<T'_{max}$,
$$
\|u(t)\|_{H^{k'}} + \|v(t)\|_{H^{s'}}\lesssim \|u\|_{X^{k',b}} + \|v\|_{Y^{s',b}} \lesssim \|u_0\|_{H^{k'}} + \|v_0\|_{H^{s'}} +  T^{0^+}R^2(1+R) <\infty.
$$
This is a contradiction with the blow-up alternative and the persistence property holds.

\smallskip
\textit{Step 2. Global persistence in $\mathcal{A}_0$.} Given any two regularities $(k,s)$ and $(k',s')$ in $\mathcal{A}_0$ with $k'\ge k$ and $s'\ge s$, one may find a uniform $\epsilon$ for which the nonlinear smoothing estimates hold for all the regularities in the segment between $(k,s)$ and $(k',s')$. Therefore the (finite) iteration of the local persistence property from Step 1 yields the persistence property for  $(k,s)$ and $(k',s')$.

\smallskip
\textit{Step 3. Global persistence in $\mathcal{A}$.} Take regularities $(k,s)$ and $(k',s')$ in $\mathcal{A}$ such that $k'\ge k$ and $s'\ge s$.

If $(k,s)\notin \mathcal{A}_0$, the proof of Theorem \ref{thm:well2} implies that there exists a regularity level $(k_0,s_0)\in \mathcal{A}_0$ with $k_0\le k$ and $s_0\le s$ such that the persistence property holds between $(k,s)$ and $(k_0,s_0)$ (that is, the maximal time of existence at both regularities coincides). If $(k,s)\in \mathcal{A}_0$, this holds for the trivial regularity $(k_0,s_0)=(k,s)$. 

Analogously, we may define $(k_0',s_0')\in \mathcal{A}_0$ so that persistence holds between $(k',s')$ and $(k'_0,s'_0)$. Moreover, one may check that $k'_0\ge k_0$ and $s'_0\ge s_0$.

In particular, by the previous step, the persistence property holds for the pair $(k_0,s_0)$ and $(k'_0,s'_0)$ and therefore the maximal time of existence in all four regularity levels must coincide.
\end{proof}

\begin{proof}[Proof of Theorem \ref{thm:gwp}]
It suffices to combine the global existence result for $k=s>1/2$ \cite{wu} with Proposition \ref{prop:persist}
\end{proof}

\section{Ill-posedness}\label{sec:ill}

In this section, we prove Theorem \ref{thm:illposed}. First, observe that the known ill-posedness results for (NLS) below $L^2(\R)$ \cite{christcolliandertao} and (KdV) below $H^{-3/4}(\R)$ \cite{bou_ill_kdv} are immediately extendible to system \eqref{nlskdv} by considering initial data of the form $(u_0,0)$ or $(0,v_0)$. Furthermore, it was proven in \cite{wu} that \eqref{nlskdv} is ill-posed for $s>4k$. As such, we are left with proving ill-posedness in the regions
$$
\mathcal{I}_1=\left\{(k,s)\in \R^2: k>0,\quad  s>-\frac{3}{4},\quad s<4k,\quad k-s>3\right\}
$$
and
$$
\mathcal{I}_2=\left\{(k,s)\in \R^2: k>0,\quad  s>-\frac{3}{4},\quad s<4k,\quad k-s<-2\right\}.
$$

\begin{lem}
	System \eqref{nlskdv} is $C^2$-ill-posed in $H^k(\R)\times H^s(\R)$ for $(k,s)\in \mathcal{I}_1$.
\end{lem}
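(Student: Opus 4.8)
The plan is to reduce the claim to the failure of a single bilinear estimate and then exhibit an explicit family of counterexamples. If \eqref{nlskdv} were $C^2$-well-posed in $H^k\times H^s$ on some interval $[0,T]$, then differentiating the flow twice at the origin — equivalently, inspecting the second Picard iterate of the Duhamel map — and using that the only quadratic interaction coupling the two unknowns is the term $\alpha uv$ in the Schrödinger equation (indeed $\beta|u|^2u$ is cubic, while $\tfrac12(v^2)_x$ and $\gamma(|u|^2)_x$ are each quadratic in a single unknown and the latter sits in the KdV equation), one is forced into the bilinear bound
\begin{equation}\label{eq:bil_nec}
	\left\| \int_0^t e^{i(t-s)\partial_x^2}\bigl( e^{is\partial_x^2}u_0 \cdot e^{-s\partial_x^3}v_0\bigr)\,ds \right\|_{L^\infty_{[0,T]}H^k_x} \lesssim \|u_0\|_{H^k}\|v_0\|_{H^s}.
\end{equation}
Hence it suffices to disprove \eqref{eq:bil_nec} whenever $k-s>3$.

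I would then pass to the spatial Fourier side: after integrating in $s$, the quantity inside the norm in \eqref{eq:bil_nec} has, at frequency $\xi$, modulus
$$
\left| \int_{\xi=\xi_1+\xi_2}\frac{e^{it\Phi_1^u}-1}{i\,\Phi_1^u}\,\widehat{u_0}(\xi_1)\,\widehat{v_0}(\xi_2)\,d\xi_1\right|,\qquad \Phi_1^u=\xi^2-\xi_1^2+\xi_2^3,
$$
which I would test against $\widehat{u_0}=\mathbbm{1}_{[0,1]}$ and $\widehat{v_0}=N^{-s}\mathbbm{1}_{[N,N+1]}$ for a large parameter $N$. These data are smooth with finite Sobolev norms of every order — so the reduction above is legitimate — and satisfy $\|u_0\|_{H^k}\sim\|v_0\|_{H^s}\sim1$ uniformly in $N$. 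On the support of the integrand $|\xi_1|\lesssim1$ and $|\xi_2|\sim|\xi|\sim N$, hence $\Phi_1^u=\xi_2^3\bigl(1+O(N^{-1})\bigr)\sim N^3$ and $\partial_{\xi_1}\Phi_1^u=-2\xi_1-3\xi_2^2\sim -N^2$; this is exactly the low$\times$high (Schrödinger $\times$ KdV) resonance that accounts for the restriction $k-s<3$ in $\mathcal A$ (cf.\ the discussion around Lemma~\ref{lem:bdryHs}).

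The next step is to calibrate the evaluation time to the resonance, taking $t=t_N:=c\,N^{-3}$ with $c>0$ a fixed small constant chosen so that $t_N\Phi_1^u$ stays, over the whole support, inside a fixed compact subinterval of $(0,2\pi)$ (possible since $t_N\Phi_1^u=c+O(N^{-1})$). Then $|e^{it_N\Phi_1^u}-1|\gtrsim1$, while $t_N\,|\partial_{\xi_1}\Phi_1^u|\lesssim N^{-1}\ll1$ forces the argument of the integrand to be essentially constant along the $\xi_1$-integration, so that no cancellation occurs. It follows that the inner integral is $\gtrsim N^{-s}N^{-3}$ on a $\xi$-interval of length $\sim1$ around $N$, whence the left-hand side of \eqref{eq:bil_nec}, evaluated at $t=t_N$, is $\gtrsim N^{k}N^{-s-3}=N^{k-s-3}$. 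Since $t_N\in[0,T]$ for $N$ large and $k-s>3$, letting $N\to\infty$ contradicts \eqref{eq:bil_nec}, and the lemma follows.

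The only real bookkeeping I anticipate is in the reduction to \eqref{eq:bil_nec}: one must argue that a hypothetical $C^2$ data-to-solution map, a priori defined only on a ball of $H^k\times H^s$, restricts to the classical flow on the dense class of smooth data, so that the second-order Taylor coefficient of the flow genuinely is the bilinear Duhamel term above. This is the standard second-Picard-iterate argument (as in, e.g., \cite{wu}) and carries no essential difficulty. The analytic heart — that the oscillatory factor $\tfrac{e^{it_N\Phi_1^u}-1}{\Phi_1^u}$ retains its expected size $\sim N^{-3}$ rather than collapsing through cancellation in $\xi_1$ — is elementary once $t_N$ is tied to $N^{-3}$ and the frequency packets are taken of width $O(1)$; all the content of the lemma lies in the exponent $k-s-3$, dictated by $\Phi_1^u\sim N^3$.
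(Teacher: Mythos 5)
Your proposal is correct and follows essentially the same route as the paper: the same reduction to the second Picard iterate for the $u$-component, the same low$\times$high test data concentrated at frequencies $O(1)$ and $N$, the same time calibration $t\sim N^{-3}$ to the resonance $\Phi_1^u\sim N^3$, and the same resulting lower bound $N^{k-s-3}$ (your data are normalized, the paper's are not, which is immaterial). The only cosmetic difference is how non-cancellation is justified — you track the variation of the phase in $\xi_1$, while the paper takes the real part of $(e^{it\Phi}-1)/(i\Phi)$, which is single-signed there; both are fine.
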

\begin{proof}
For a fixed $N\gg 1$, take
$$
\hat{u}_0=\mathbbm{1}_{[-1,1]},\quad \hat{v}_0=\mathbbm{1}_{[N-1, N+1]}.
$$
We have $\|u_0\|_{H^k}\sim 1$ and $\|v_0\|_{H^s}\sim N^s$. Assuming that the flow is $C^2$ at the origin, if the initial data is of the form $(\epsilon u_0, \epsilon v_0)$, then the $\epsilon^2$-term for $u$ is
$$
I[u_0,v_0]:=\int_0^t e^{i(t-s)\partial_x^2}(e^{is\partial_x^2}u_0 \cdot e^{-s\partial_x^3}v_0)ds
$$
which writes, in frequency variables, as
$$
\int_0^t \int e^{is\Phi}\hat{u}_0(\xi_1)\hat{v}_0(\xi_2)d\xi_1ds = \int \frac{e^{it\Phi}-1}{i\Phi}\hat{u}_0(\xi_1)\hat{v}_0(\xi_2)d\xi_1
$$
(notice that this is exactly the boundary term found in the integration by parts in time). Since $|\Phi|\sim N^3$, for $t=cN^{-3}$ ($c$ is a small but fixed constant),
$$
\Re \frac{e^{it\Phi}-1}{i\Phi} \gtrsim N^{-3}.
$$
Therefore
\begin{align*}
\left\| I[u_0,v_0]\right\|_{H^k} \gtrsim N^{k}\left\|\int \frac{e^{it\Phi}-1}{i\Phi}\hat{u}_0(\xi_1)\hat{v}_0(\xi_2)d\xi_1 \right\|_{L^2} \gtrsim N^{k-3}\|\hat{u}_0\ast\hat{v}_0\|_{L^2}\gtrsim N^{k-3}. 
\end{align*}
However, the continuity of $I$ as an operator from $H^k\times H^s$ into $H^s$ imposes
$$
N^{k-3}\lesssim 	\left\| I[u_0,v_0]\right\|_{H^k}  \lesssim N^{s}
$$
If $k-s>3$, this is a contradiction for large $N$.

\end{proof}

\begin{lem}
	System \eqref{nlskdv} is $C^2$-ill-posed in $H^k(\R)\times H^s(\R)$ for $(k,s)\in \mathcal{I}_2$.
\end{lem}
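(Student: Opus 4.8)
The plan is to adapt the proof of the previous lemma, relocating the obstruction from the $uv$ coupling in the Schr\"odinger equation to the $\partial_x(|u|^2)$ coupling in the KdV equation --- equivalently, to the time-boundary term $\mathcal{B}^v$ isolated in Lemma \ref{lem:bdryHs}. A single quadratic test function, of the type used for $\mathcal{I}_1$, would only produce a contradiction under the stronger restriction $s>2k+2$, which does not exhaust the wedge $\mathcal{I}_2$; hence I will \emph{polarize}, exploiting that if the flow were $C^2$ at the origin then its second differential would be a bounded bilinear map. Feeding initial data $(\epsilon u_0+\epsilon' w_0,0)$ into the system, the $\epsilon\epsilon'$-coefficient of the $v$-component of the solution is
$$J[u_0,w_0](t)=\gamma\int_0^t e^{-(t-s)\partial_x^3}\partial_x\big(e^{is\partial_x^2}u_0\cdot\overline{e^{is\partial_x^2}w_0}+e^{is\partial_x^2}w_0\cdot\overline{e^{is\partial_x^2}u_0}\big)\,ds,$$
and $C^2$-regularity would force $\|J[u_0,w_0]\|_{C([0,T];H^s)}\lesssim\|u_0\|_{H^k}\|w_0\|_{H^k}$; the goal is to contradict this.

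The next step is to pass to Fourier variables: writing $\xi=\xi_1-\xi_2$ and $\Phi^v_1=-\xi^3-\xi_1^2+\xi_2^2$, the frequency-$\xi$ part of the first summand of $J$ equals $i\gamma\,\xi\,e^{it\xi^3}\int\frac{e^{it\Phi^v_1}-1}{i\Phi^v_1}\hat{u}_0(\xi_1)\overline{\hat{w}_0(\xi_2)}\,d\xi_1$, which is precisely the boundary term produced upon integrating \eqref{eq:perfilv} by parts. I then take $\hat{u}_0=\mathbbm{1}_{[N-1,N+1]}$ and $\hat{w}_0=\mathbbm{1}_{[-1,1]}$, so that $\|u_0\|_{H^k}\sim N^k$ and $\|w_0\|_{H^k}\sim 1$. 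For this choice the output frequency ranges over $[N-2,N+2]$, the two summands of $J$ have disjoint Fourier supports (near $+N$ and near $-N$) and are therefore orthogonal in $H^s$, and $\Phi^v_1$ keeps a fixed sign with $|\Phi^v_1|\sim N^3$ on the relevant region. Choosing $t=cN^{-3}$ with $c$ small makes $|t\Phi^v_1|\lesssim c$, so that $\Re\,\frac{e^{it\Phi^v_1}-1}{i\Phi^v_1}=\frac{\sin(t\Phi^v_1)}{\Phi^v_1}\gtrsim N^{-3}$ with a fixed sign uniformly on the region; since $\partial_{\xi_1}\Phi^v_1=-2\xi$ and the $\xi_1$-domain has length $\lesssim1$, the phase $t\Phi^v_1$ varies by only $\lesssim N^{-2}$, so no cancellation occurs in the $\xi_1$-integral and, because $\hat{u}_0,\hat{w}_0\ge0$, that integral is $\gtrsim N^{-3}$ for $\xi$ in a unit window around $N$. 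Multiplying in the weights $\jap{\xi}^s\sim N^s$ from $H^s$ and $|\xi|\sim N$ from $\partial_x$ yields $\|J[u_0,w_0](t)\|_{H^s}\gtrsim N^{s-2}$; comparing with $N^{s-2}\lesssim\|u_0\|_{H^k}\|w_0\|_{H^k}\sim N^k$ gives $s-k\le2$, which contradicts $k-s<-2$ for $N$ large.

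The main obstacle is the conceptual step of realizing that the naive single-profile construction does not reach the whole of $\mathcal{I}_2$ and that polarization is required, together with the bookkeeping that accompanies it: the two test profiles must be chosen with well-separated frequency supports so that the high-frequency resonant interaction is isolated, no cancellation arises between the two summands of $J$ (nor against the low-frequency self-interactions that a single profile would generate), and the lower bound for $\Re\,\frac{e^{it\Phi^v_1}-1}{i\Phi^v_1}$ is both sign-definite and uniform in $\xi_1$. The remaining estimates are the same routine non-oscillation computations already performed in the proof of the previous lemma.
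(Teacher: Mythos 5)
Your proof is correct, and it reaches the sharp threshold $k-s<-2$, but it follows a genuinely different route from the paper. The paper keeps the un-polarized second G\^ateaux derivative in a single direction and instead spreads the Schr\"odinger datum over frequencies: it takes $\hat{u}_0=\jap{\xi}^{-\rho}\mathbbm{1}_{[0,N]}$ with $k+\tfrac12<\rho<s-\tfrac32$ (so that $\|u_0\|_{H^k}\lesssim 1$), together with $\hat v_0=\mathbbm{1}_{[-1,1]}$, and lower-bounds the self-interaction $\partial_x\int_0^t e^{-(t-s)\partial_x^3}|e^{is\partial_x^2}u_0|^2\,ds$ on the output window $\xi\in[N/2,N]$ (restricting to $|\xi|>2$ to discard the $v_0$ contribution); the gap $\rho\in(k+\tfrac12,s-\tfrac32)$ is nonempty exactly when $k-s<-2$. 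You instead polarize $D^2\Theta(0)$ and use two bumps at separated frequencies $N$ and $0$, isolating the cross term by disjointness of Fourier supports. The oscillatory core is identical in both arguments ($|\Phi_1^v|\sim N^3$, $t=cN^{-3}$, $\Re\frac{e^{it\Phi}-1}{i\Phi}=\frac{\sin(t\Phi)}{\Phi}\gtrsim N^{-3}$ with fixed sign), and your bookkeeping — $\partial_{\xi_1}\Phi_1^v=-2\xi$, unit-length $\xi_1$-domain, $\jap{\xi}^s|\xi|\sim N^{s+1}$ — is sound, yielding $N^{s-2}\lesssim N^k$ as claimed. What each approach buys: yours is the exact mirror of the $\mathcal{I}_1$ argument (high$\times$low in the other coupling term) and is arguably more transparent; the paper's avoids appealing to the boundedness of the polarized bilinear form by testing a single direction, at the price of a more delicate weighted profile. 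Note also that polarization is not strictly forced: rescaling the low bump to $N^k\mathbbm{1}_{[-1,1]}$ and adding it to the high bump balances the two norms and recovers the same threshold $s\le k+2$ from the single-direction second derivative, since the two self-interactions live at frequencies $|\xi|\le 2$ and are removed by projecting onto $\xi\sim N$.
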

\begin{proof}
	For $N\gg 1$, take
	$$
	k+\frac{1}{2}<\rho<s-2+\frac{1}{2}
	$$
	and
	$$
	\hat{u}_0=\frac{1}{\jap{\xi}^\rho}\mathbbm{1}_{[0,N]},\quad \hat{v}_0=\mathbbm{1}_{[-1,1]}.
	$$ 
	The first condition on $\rho$ implies that $u_0$ is uniformly bounded in $H^k$. Assuming that the flow is $C^2$ at the origin, if the initial data is of the form $(\epsilon u_0, \epsilon v_0)$, then the $\epsilon^2$-term for $v$ is
	$$
	I[u_0,v_0]:=\partial_x\int_0^t e^{-(t-s)\partial_x^3}\left(|e^{is\partial_x^2}u_0|^2\right)ds + \partial_x\int_0^t e^{-(t-s)\partial_x^3}\left(e^{-s\partial_x^3}v_0\right)^2ds
	$$
	We consider the projection of this term for $|\xi|>2$, in which case the second term gives a zero contribution. The remaining term reduces, in frequency variables, to
	$$
	\int \frac{e^{it\Phi}-1}{i\Phi}\xi\hat{u}_0(\xi_1)\widehat{\bar{u}}_0(\xi_2)d\xi_1 =  \int \frac{e^{it\Phi}-1}{i\Phi}\xi\hat{u}_0(\xi_1)\bar{\hat{u}}_0(-\xi_2)d\xi_1,\quad \Phi=\xi^3-\xi_1^2+\xi_2^2.
	$$
	For $N/2<\xi<N$, we can ensure that $\Phi\sim N^3$ for $N$ large. Therefore, choosing $t=cN^{-3}$,
	$$\Re \frac{e^{it\Phi}-1}{i\Phi} \gtrsim N^{-3}.
	$$
	Thus
	\begin{align*}
	\left|\int \frac{e^{it\Phi}-1}{i\Phi}\xi\hat{u}_0(\xi_1)\bar{\hat{u}}_0(-\xi_2)d\xi_1\right| \gtrsim N^{-3}N\int_{0<\xi_2<N/4} \frac{1}{\jap{\xi_1}^\rho\jap{\xi_2}^\rho}d\xi_1 \gtrsim N^{-2-\rho}\int_{0<\xi_2<N/4}\frac{1}{\jap{\xi_2}^\rho}d\xi_2
	\end{align*}
	and the $L^2(\jap{\xi}^sd\xi)$ norm over the interval $[N/2,N]$ is larger than
	$$
	N^{s-2-\rho}\int_{0<\xi_2<N/4}\frac{1}{\jap{\xi_2}^\rho}d\xi_2\left(\int_{N/2}^N d\xi \right)^{1/2}= N^{s-2-\rho+\frac{1}{2}}\int_{0<\xi_2<N/4}\frac{1}{\jap{\xi_2}^\rho}d\xi_2 \gtrsim N^{s-2-\rho+\frac{1}{2}}
	$$
	(notice that $\rho>1$). By the second condition on $\rho$, this term blows up as $N\to \infty$, contradicting the boundedness of $I$.
\end{proof}

\bibliography{biblio}

\begin{thebibliography}{10}

\bibitem{bit}
Anatoli~V. Babin, Alexei~A. Ilyin, and Edriss~S. Titi.
\newblock On the regularization mechanism for the periodic {K}orteweg-de
  {V}ries equation.
\newblock {\em Comm. Pure Appl. Math.}, 64(5):591--648, 2011.

\bibitem{barbosa}
Isnaldo~Isaac Barbosa.
\newblock The {C}auchy problem for nonlinear quadratic interactions of the
  {S}chr\"odinger type in one dimensional space.
\newblock {\em J. Math. Phys.}, 59(7):071515, 24, 2018.

\bibitem{taobejenaru}
Ioan Bejenaru and Terence Tao.
\newblock Sharp well-posedness and ill-posedness results for a quadratic
  non-linear {S}chr\"{o}dinger equation.
\newblock {\em J. Funct. Anal.}, 233(1):228--259, 2006.

\bibitem{BOP97}
Daniella Bekiranov, Takayoshi Ogawa, and Gustavo Ponce.
\newblock Weak solvability and well-posedness of a coupled
  {S}chr\"odinger-{K}orteweg de {V}ries equation for capillary-gravity wave
  interactions.
\newblock {\em Proc. Amer. Math. Soc.}, 125(10):2907--2919, 1997.

\bibitem{BOP98}
Daniella Bekiranov, Takayoshi Ogawa, and Gustavo Ponce.
\newblock Interaction equations for short and long dispersive waves.
\newblock {\em J. Funct. Anal.}, 158(2):357--388, 1998.

\bibitem{BPSS}
J.~L. Bona, G.~Ponce, J.-C. Saut, and C.~Sparber.
\newblock Dispersive blow-up for nonlinear {S}chr\"{o}dinger equations
  revisited.
\newblock {\em J. Math. Pures Appl. (9)}, 102(4):782--811, 2014.

\bibitem{bourg1}
J.~Bourgain.
\newblock Fourier transform restriction phenomena for certain lattice subsets
  and applications to nonlinear evolution equations. {I}. {S}chr\"{o}dinger
  equations.
\newblock {\em Geom. Funct. Anal.}, 3(2):107--156, 1993.

\bibitem{bourg2}
J.~Bourgain.
\newblock Fourier transform restriction phenomena for certain lattice subsets
  and applications to nonlinear evolution equations. {II}. {T}he
  {K}d{V}-equation.
\newblock {\em Geom. Funct. Anal.}, 3(3):209--262, 1993.

\bibitem{bourg_kpii}
J.~Bourgain.
\newblock On the {C}auchy problem for the {K}adomtsev-{P}etviashvili equation.
\newblock {\em Geom. Funct. Anal.}, 3(4):315--341, 1993.

\bibitem{bou_ill_kdv}
J.~Bourgain.
\newblock Periodic {K}orteweg de {V}ries equation with measures as initial
  data.
\newblock {\em Selecta Math. (N.S.)}, 3(2):115--159, 1997.

\bibitem{christcolliandertao}
Michael Christ, James Colliander, and Terence Tao.
\newblock A priori bounds and weak solutions for the nonlinear
  {S}chr\"{o}dinger equation in {S}obolev spaces of negative order.
\newblock {\em J. Funct. Anal.}, 254(2):368--395, 2008.

\bibitem{corcholinares}
A.~J. Corcho and F.~Linares.
\newblock Well-posedness for the {S}chr\"{o}dinger-{K}orteweg-de {V}ries
  system.
\newblock {\em Trans. Amer. Math. Soc.}, 359(9):4089--4106, 2007.

\bibitem{CS20}
Sim\~ao Correia and Jorge~Drumond Silva.
\newblock Nonlinear smoothing for dispersive {PDE}: a unified approach.
\newblock {\em J. Differential Equations}, 269(5):4253--4285, 2020.

\bibitem{COS}
Simão Correia, Filipe Oliveira, and Jorge Silva.
\newblock Sharp local well-posedness and nonlinear smoothing for dispersive
  equations through frequency-restricted estimates.
\newblock {\em to appear in SIAM J. Math. Anal.}, 2024.

\bibitem{domingues}
Leandro Domingues.
\newblock Sharp well-posedness results for the {S}chr\"odinger-{B}enjamin-{O}no
  system.
\newblock {\em Adv. Differential Equations}, 21(1-2):31--54, 2016.

\bibitem{GMS}
Pierre Germain, Nader Masmoudi, and Jalal Shatah.
\newblock Global solutions for 3{D} quadratic {S}chr\"odinger equations.
\newblock {\em Int. Math. Res. Not. IMRN}, (3):414--432, 2009.

\bibitem{guowang}
Zihua Guo and Yuzhao Wang.
\newblock On the well-posedness of the {S}chr\"{o}dinger-{K}orteweg-de {V}ries
  system.
\newblock {\em J. Differential Equations}, 249(10):2500--2520, 2010.

\bibitem{hadac}
Martin Hadac.
\newblock Well-posedness for the {K}adomtsev-{P}etviashvili {II} equation and
  generalisations.
\newblock {\em Trans. Amer. Math. Soc.}, 360(12):6555--6572, 2008.

\bibitem{imt}
J.~Pedro Isaza, L.~Jorge Mej\'{\i}a, and Nikolay Tzvetkov.
\newblock A smoothing effect and polynomial growth of the {S}obolev norms for
  the {KP}-{II} equation.
\newblock {\em J. Differential Equations}, 220(1):1--17, 2006.

\bibitem{kpv_bilin}
Carlos~E. Kenig, Gustavo Ponce, and Luis Vega.
\newblock A bilinear estimate with applications to the {K}d{V} equation.
\newblock {\em J. Amer. Math. Soc.}, 9(2):573--603, 1996.

\bibitem{kishimoto}
N.~Kishimoto.
\newblock Unconditional uniqueness of solutions for nonlinear dispersive
  equations.
\newblock {\em Preprint, arxiv:1911.04349}.

\bibitem{OS12}
Seungly Oh and Atanas Stefanov.
\newblock On quadratic {S}chr\"odinger equations in {${\bf R}^{1+1}$}: a normal
  form approach.
\newblock {\em J. Lond. Math. Soc. (2)}, 86(2):499--519, 2012.

\bibitem{pecher}
Hartmut Pecher.
\newblock The {C}auchy problem for a {S}chr\"{o}dinger-{K}orteweg-de {V}ries
  system with rough data.
\newblock {\em Differential Integral Equations}, 18(10):1147--1174, 2005.

\bibitem{takaokatzvetkov}
H.~Takaoka and N.~Tzvetkov.
\newblock On the local regularity of the {K}adomtsev-{P}etviashvili-{II}
  equation.
\newblock {\em Internat. Math. Res. Notices}, (2):77--114, 2001.

\bibitem{tsutsumi}
Masayoshi Tsutsumi.
\newblock Well-posedness of the {C}auchy problem for a coupled
  {S}chr\"odinger-{K}d{V} equation.
\newblock In {\em Nonlinear mathematical problems in industry, {II} ({I}waki,
  1992)}, volume~2 of {\em GAKUTO Internat. Ser. Math. Sci. Appl.}, pages
  513--528. Gakkotosho, Tokyo, 1993.

\bibitem{wangcui}
Hua Wang and Shangbin Cui.
\newblock The {C}auchy problem for the {S}chr\"{o}dinger-{K}d{V} system.
\newblock {\em J. Differential Equations}, 250(9):3559--3583, 2011.

\bibitem{wu}
Yifei Wu.
\newblock The {C}auchy problem of the {S}chr\"{o}dinger-{K}orteweg-de {V}ries
  system.
\newblock {\em Differential Integral Equations}, 23(5-6):569--600, 2010.

\end{thebibliography}
\bibliographystyle{plain}

	\normalsize

\begin{center}
	{\scshape Simão Correia}\\
	{\footnotesize
		Center for Mathematical Analysis, Geometry and Dynamical Systems,\\
		Department of Mathematics,\\
		Instituto Superior T\'ecnico, Universidade de Lisboa\\
		Av. Rovisco Pais, 1049-001 Lisboa, Portugal\\
		simao.f.correia@tecnico.ulisboa.pt
	}
	\bigskip
	
	{\scshape Felipe Linares}\\
{\footnotesize
	Instituto de Matemática Pura e Aplicada,\\
	Estrada Dona Castorina, 110
	Jardim Botânico, Rio de Janeiro, Brazil\\
	linares@impa.br
}

\bigskip

	{\scshape Jorge Drumond Silva}\\
{\footnotesize
	Center for Mathematical Analysis, Geometry and Dynamical Systems,\\
	Department of Mathematics,\\
	Instituto Superior T\'ecnico, Universidade de Lisboa\\
	Av. Rovisco Pais, 1049-001 Lisboa, Portugal\\
	jsilva@math.tecnico.ulisboa.pt
}
	
\end{center}

\end{document}